\documentclass[11pt,letterpaper,twoside,reqno]{amsart}
\usepackage{amsmath,amsfonts,amsbsy,amsgen,amscd,mathrsfs,amssymb,amsthm,mathtools,tensor}
\usepackage{siunitx}
\usepackage{algorithm,algorithmic}
\usepackage[shortlabels]{enumitem}

\usepackage{authblk}
\usepackage[foot]{amsaddr}

\usepackage[toc, page]{appendix}

\usepackage[margin=1in]{geometry}
\usepackage[dvipsnames]{xcolor}
\definecolor{MyColor}{HTML}{0047AB}
\usepackage{fancyhdr}

\usepackage{hyperref}
\hypersetup{
colorlinks=true, %set true if you want colored links
linktoc=all,     %set to all if you want both sections and subsections linked
linkcolor=blue,  %choose some color if you want links to stand out
}

\usepackage{etoolbox}
\makeatletter
\renewcommand{\@secnumfont}{\bfseries}
\makeatother
\patchcmd{\section}{\scshape}{\bfseries}{}{}
\patchcmd{\section}{\normalfont}{\normalfont\color{MyColor}}{}{}
\patchcmd{\subsection}{\normalfont}{\normalfont\color{MyColor}}{}{}
\makeatletter
\def\subsubsection{\@startsection{subsubsection}{3}%
\z@{.5\linespacing\@plus.7\linespacing}{-.5em}%
{\normalfont\bfseries}}
\makeatother

\usepackage{graphicx}
\usepackage{float}
\usepackage{caption}
\usepackage{subcaption}
\usepackage{tikz}

\usepackage{booktabs} 
\usepackage{multirow}
\usepackage{tabu} 

\usepackage[normalem]{ulem} 

\usepackage[color=yellow]{todonotes}

\newtheorem{theorem}{Theorem}[section]
\newtheorem{lemma}[theorem]{Lemma}
\newtheorem{definition}[theorem]{Definition}

\newtheorem{proposition}[theorem]{Proposition}

\numberwithin{equation}{section}

\newtheorem{remark}[theorem]{Remark}
\newtheorem{example}{Example}[section]

\DeclareMathOperator*{\argmin}{arg\,min}

\makeatletter
\@tfor\next:=abcdefghijklmnopqrstuvwxyzABCDEFGHIJKLMNOPQRSTUVWXYZ\do{
\def\command@factory#1{
\expandafter\def\csname b#1\endcsname{\mathbf{#1}}
\expandafter\def\csname fk#1\endcsname{\mathfrak{#1}}
\expandafter\def\csname bb#1\endcsname{\mathbb{#1}}
\expandafter\def\csname cl#1\endcsname{\mathcal{#1}}
\expandafter\def\csname bcl#1\endcsname{\mathbfcal{#1}}
}
\expandafter\command@factory\next
}

\makeatletter
\@namedef{subjclassname@2020}{\textup{2020} Mathematics Subject Classification}
\makeatother

\begin{document}

\title[Gradient flows for regularised MDPs in  Polish spaces]{A Fisher--Rao gradient flow for entropy-regularised Markov decision processes in  Polish spaces}

\author{Bekzhan Kerimkulov$^1$}
\email{b.kerimkulov@ed.ac.uk}
\author{James-Michael Leahy$^2$}
\email{j.leahy@imperial.ac.uk}
\author{David Siska$^1$}
\email{d.siska@ed.ac.uk}
\author{Lukasz Szpruch$^1$}
\email{l.szpruch@ed.ac.uk}
\author{Yufei Zhang$^2$}
\email{yufei.zhang@imperial.ac.uk}

\address{$^1$ School of Mathematics, University of Edinburgh, United Kingdom}
\address{$^2$ Department of Mathematics, Imperial College London, United Kingdom}

\subjclass[2020]{
90C40, % MDPs
93E20, % Optimal stochastic control
% 68T05,% Learning and adaptive systems
90C26, % Non convex programming and global optimization
60B05, % Probability measures on topological spaces
% 49Q20,% Variational problems in a geometric measure-theoretic setting
% 90C31, % Sensitivity, stability, parametric optimization
90C53% Methods of quasi-Newton type 
}
\keywords{
Markov decision process,
Entropy regularization,
Reinforcement learning,
Non-convex optimization,
Mirror descent method,
Fisher--Rao gradient flow,
Global convergence,
Function approximation,
Actor-critic method,
Natural gradient method}

%\commby{}

%\dedicatory{Communicated by L\'ena\"ic Chizat}

\maketitle
\begin{abstract}

We study the global convergence of a Fisher--Rao policy gradient flow for infinite-horizon entropy-regularised Markov decision processes with Polish state and action spaces. 
The flow is  a continuous-time analogue of a  policy mirror descent method.
We establish the global well-posedness of the gradient flow and demonstrate its exponential convergence to the optimal policy. Moreover, we prove the flow is stable with respect to gradient evaluation, offering insights into the performance of a natural policy gradient flow with log-linear policy parameterisation. To overcome challenges stemming from the lack of the convexity of the objective function and the discontinuity arising from the entropy regulariser, we leverage the performance difference lemma  and the duality relationship between the gradient and mirror descent flows.
Our analysis  provides a theoretical foundation for developing various %more efficient 
discrete policy gradient algorithms.
\end{abstract}

{\hypersetup{linkcolor=MyColor}
\setcounter{tocdepth}{2}
\tableofcontents}

\section{Introduction}
\label{sec:introduction}

Policy gradient algorithms are a cornerstone of recent successes of reinforcement learning (RL) \cite{sutton2018reinforcement}.
Gradient-based algorithms like policy gradient methods \cite{sutton1999policy}, actor-critic methods \cite{haarnoja2018soft}, and mirror descent-based methods \cite{kakade2001natural, tomar2020mirror}  have proven highly effective. 
They provide adaptability, especially when paired with powerful function approximations, to handle  Markov Decision Processes (MDPs)  with continuous state and action spaces \cite{doya2000reinforcement, van2012reinforcement, manna2022learning}.  
Despite the practical success,  a mathematical theory that provides guarantees for the convergence of these algorithms has been elusive, partially due to the pervasive nonconvexity inherent in the objective function.
Indeed, existing theoretical works are largely confined to MDPs with a finite number of states and actions, namely the tabular setting. A convergence analysis of policy optimisation algorithms with function approximation for MDPs with general state and action spaces remains a challenging open problem.  

Analysing gradient-based algorithms for continuous state and action spaces presents unique technical challenges compared to their discrete counterparts. 
For tabular MDPs, algorithm convergence is often achieved by ensuring that the algorithm has a positive probability of uniformly visiting all states and actions, with the convergence guarantees frequently depending on the cardinality of the state and action spaces; see Section~\ref{sec:intro_lit_rev}  for more details.
These arguments and error bounds do not extend to continuous state and action spaces.
To deal with continuous state and action spaces,  new analytical approaches are required to investigate the optimization objective landscape and develop convergent algorithms. 
Incorporating function approximation into the algorithm further introduces function approximation error and complicates the analysis.

This work sheds light on this challenging problem in the context of entropy-regularised MDPs, which have gained considerable attention due to their impressive empirical performance and favorable theoretical properties \cite{haarnoja2017reinforcement,  geist2019theory, agarwal2020optimality, mei2020global, cayci2021linear, lan2023policy, zhan2023policy}.

We introduce a continuous-time 
{policy mirror descent algorithm} for solving entropy-regularised MDPs with state and action spaces being  Polish spaces. This gradient flow naturally extends the Fisher--Rao flow for probability measures \cite{gallouet2017jko, liu2023polyak} to the present setting with conditional probability measures. 
We prove that the flow is globally well-posed and exhibits exponential convergence to the optimal policy. Furthermore, we establish the flow's stability under inexact gradient evaluation, and further   deduce performance guarantees for a natural policy gradient flow with log-linear parameterised policies.
Our analysis provides a theoretical foundation for deriving discrete policy gradient algorithms
(see Section~\ref{sec:discrete}
and Appendix~\ref{sec:numerical})
and offers insights into the optimal convergence rate achievable by discrete-time policy gradient methods in general settings (see Remarks~\ref{rmk:convergence} and~\ref{rmk:discrete_mirror}).

\subsection{Overview of main results}
\label{sec:intro_overview}
In this section, we provide a road map of the key ideas and contributions of this work without introducing needless technicalities. The precise assumptions and statements of the results can be found in Section~\ref{sec:main_result}.

\subsubsection*{Regularised Markov decision process}
Consider an infinite horizon Markov decision model $(S,A,P,c,\gamma)$,  
where the state space $S$ and action space $A$ are Polish spaces with possibly   infinite cardinality, $P\in \clP(S|S\times A)$ is the transition probability kernel, $c$ is a bounded cost function, and $\gamma\in [0,1)$ is the discount factor. 
Let $\mu \in \mathcal \clP(A)$ denote a reference probability measure and  $\tau>0$ denote a regularisation parameter.   
For each  stochastic policy $\pi \in \clP(A|S)$ and $s\in S$, define the regularised value function by
\begin{equation}
\label{eq:value_introduction}
V^{\pi}_{\tau}(s)= \bbE_{s}^{\pi}\left[\sum_{n=0}^\infty\gamma^n \Big(c(s_n,a_n) + \tau \operatorname{KL}(\pi(\cdot|s_n)|\mu)\Big)\right]\in \bbR\cup \{\infty\}\,,
\end{equation}
where $\operatorname{KL}(\pi(\cdot|s)|\mu) 
$ is the    Kullback--Leibler (KL) divergence  of $\pi(\cdot|s)$ with respect to $\mu$, defined as  
$
\operatorname{KL}(\pi(\cdot|s)|\mu)
\coloneqq \int_{A} \ln\frac{\mathrm{d} \pi(\cdot|s)}{\mathrm{d}\mu}(a) \pi(d a|s)  
$ if $\pi(\cdot |s)$ is absolutely continuous with respect to $\mu$,
and infinity otherwise. 
For a fixed $\rho \in \clP(S)$,   we consider the following minimisation problem:
\begin{equation}
\label{eq:value_rho_introduction}
\min_{\pi\in \mathcal P(A|S)}
V^{\pi}_{\tau}(\rho),
\quad 
\textnormal{with $
V_\tau^\pi(\rho) \coloneqq  \int_S V_\tau^\pi(s)\rho(ds)
$}.
\end{equation}

This paper proposes and analyzes a
gradient flow 
in  $\clP(A|S)$ for the 
minimisation problem
\eqref{eq:value_rho_introduction}. 
By the Bellman principle,~\eqref{eq:value_rho_introduction} admits a unique optimal policy 
$\pi^*_{\tau}\in \Pi_{\mu}=\{\pi \in \clP(A|S): \ln \frac{\rm d \pi}{\rm d\mu}\in B_b(S\times A)\}$,  which is independent of $\rho$     
and satisfies 
\[
\pi^*_{\tau}(da|s) = \exp\left(-\frac{1}{\tau }(Q^{\pi^*_{\tau}}_{\tau}(s,a)-V^{\pi^*_{\tau}}_{\tau}(s))\right)\mu(da)\,,
\quad \forall s\in S\,,
\]
where for all $\pi\in \Pi_{\mu}$, 
$Q^\pi$ is  the  associated $Q$-function  defined by
\[
Q^{\pi}_\tau(s,a)= c(s,a) + \gamma \int_S V_{\tau}^{\pi}(s')P(ds'|s,a)\,.
\]
Note that  $\pi^*_\tau$ exists for merely measurable transition kernels and cost functions, thanks to the KL divergence in \eqref{eq:value_introduction}; see Remark~\ref{rmk:KL_role} for a more detailed discussion on its role in our analysis.

\subsubsection*{Flat derivative and  Fisher--Rao flow}
The minimisation problem \eqref{eq:value_rho_introduction} is contrained  over all transition probability kernels,  which form a convex subset of the space of bounded signed kernels $b\clM(A|S)$.
Hence, the gradient of $\pi\mapsto V^\pi_\tau(\rho)$ must be chosen carefully so that the updates remain in the set  $\clP(A|S)$. 
This requires identify a proper dual space of $\clP(A|S)$.

To this end, for a given $\nu \in \clP(S)$, define a duality pairing $\langle \cdot, \cdot\rangle_{\nu}: B_b(S\times A)\times b\clM(A|S)\rightarrow \mathbb{R}$  by
\begin{equation}
\label{eq:dual_pair}
\langle Z, m\rangle_{\nu} = \frac{1}{1-\gamma}\int_{S}\int_{A} Z(s,a)m(da|s)\nu(ds)\,,\quad (Z, m)\in B_b(S\times A)\times b\clM(A|S).  
\end{equation}
The flat derivative of $V^{\cdot}_{\tau}(\rho)$ relative to the duality pairing $\langle \cdot, \cdot\rangle_{\nu}$ is a map  
$\frac{\delta V^{\cdot}_{\tau}(\rho)}{\delta \pi}\big|_{\nu}:  \Pi_{\mu}\to B_b(S\times A)$ such that for every $\pi,\pi'\in \Pi_\mu$,
\begin{equation*}
\lim_{\varepsilon\searrow 0}\frac{V^{(1-\varepsilon)\pi+\varepsilon\pi'}_\tau(\rho) - V^\pi_\tau(\rho)}{\varepsilon} = \left\langle \frac{\delta V^{\pi}_{\tau}(\rho)}{\delta \pi}\bigg|_{\nu}, \pi' - \pi  \right\rangle_\nu\,\,\,
\text{and}\,\,\,\left\langle \frac{\delta V^{\pi}_{\tau}(\rho)}{\delta \pi}\bigg|_{\nu}, \pi  \right \rangle_\nu = 0.    
\end{equation*}
One can show that (see Appendix \ref{sec:policy_gradient}),  the flat derivative is given by
\begin{equation}
\label{eq:delta_V_delta_pi-intro}
\frac{\delta V^{\pi}_{\tau}(\rho)}{\delta \pi}\bigg|_{\nu}(s,a)= \left(Q^{\pi}_\tau (s,a) + \tau \ln\frac{\mathrm{d}  \pi}{\mathrm{d}  \mu} (s,a) -V^{\pi}_\tau(s)\right)\frac{
\mathrm{d}
d^{\pi}_{\rho}}{\mathrm{d} \nu}(s)\,,
\end{equation}
where $d^{\pi}_{\rho}\in \clP(S)$ is the occupancy measure associated with $\pi$.
The flat derivative \eqref{eq:delta_V_delta_pi-intro} 
represents 
the  first-order  variation of the value function with respect to perturbations within
the policy space $\Pi_\mu$ (see Appendix~\ref{sec:policy_gradient} for a more detailed comparison with the classical policy gradient formula established for tabular MDPs).
It   generalises the notation of the flat derivative applied to probability measures (see e.g., \cite{liu2023polyak}) to encompass probability transition kernels.

Using the flat derivative,  we propose the following  gradient flow for \eqref{eq:value_rho_introduction}:
\begin{equation}
\label{eq:gradient_flow_introduction}
\begin{aligned}
\partial_t \pi_t(da|s) &= -\frac{\delta V^{\pi_t}_{\tau}}{\delta \pi}(s,a)\pi_t(da|s),\quad t>0\,.
\end{aligned}
\end{equation}
where for each $\pi$,  $\frac{\delta V^{\pi}_{\tau}}{\delta \pi}$ denotes  
the flat derivative \eqref{eq:delta_V_delta_pi-intro} relative to the pairing  
$\langle \cdot, \cdot\rangle_{d^{\pi}_{\rho}}$:
\begin{equation}
\label{eq:delta_V_delta_pi-intro-state-dep-pairing}
\frac{\delta V^{\pi}_{\tau}}{\delta \pi}\coloneqq \frac{\delta V^{\pi}_{\tau}(\rho)}{\delta \pi}\bigg|_{d^{\pi}_{\rho}}= Q^{\pi}_\tau  + \tau \ln\frac{\mathrm{d}  \pi}{\mathrm{d}  \mu}  -V^{\pi}_\tau\,.
\end{equation}

We refer to \eqref{eq:gradient_flow_introduction} as the Fisher--Rao flow, since in the special case when $S=\emptyset$, it reduces to the Fisher--Rao gradient flow for probability measures (see, e.g., \cite{liu2023polyak}).
Multiplying the first variation $\frac{\delta V^{\pi_t}_{\tau}}{\delta \pi}(s,a)$ by $ \pi_t$ alongside $\left\langle \frac{\delta V^{\pi_t}_{\tau}(\rho)}{\delta \pi}\big|_{d^{\pi_t}_{\rho}}, \pi_t  \right \rangle_{d^{\pi_t}_{\rho}} = 0$  ensures the flow remains within $\clP(A|S)$. This multiplication can also be viewed as pre-conditioning the flat derivative with  
the Hessian of state-integrated negative entropy.
In Appendix \ref{sec:natural_gradient_flows}, we show this entropy function induces a Riemannian metric on $\Pi_{\mu}$ and~\eqref{eq:gradient_flow_introduction} corresponds to a Riemannian gradient flow 
with respect to this metric, see~\eqref{eq:natural_gradient_pi_flow}.
This Riemannian interpretation of the Fisher--Rao flow 
is not required in our subsequent analysis, but
has later been used in~\cite{muller2024fisher}
for optimizing over state-action occupation measures for tabular MDPs.

\subsubsection*{Connection to mirror descent}
We now highlight   that the flow~\eqref{eq:gradient_flow_introduction} 
coincides with the  continuous-time limit of a mirror descent algorithm. This observation not only offers an alternative motivation for the flow \eqref{eq:gradient_flow_introduction}, but is also essential for its convergence analysis.

Specifically, let $\lambda>0$, $\pi_0\in \Pi_{\mu}$ and $n\in \bbN$, and consider  the  mirror descent update   given by 
\begin{align}
\label{eq:mirror_des_direct_intro}
\begin{split}
\pi^{n+1} &= \underset{\pi \in \clP(A|S)}{\operatorname{arg\,min}}\left[ \left\langle \frac{\delta V^{\pi^n}_\tau}{\delta \pi}, \pi-\pi^n \right\rangle_{d_{\rho}^{\pi^n}}  + \frac{\lambda}{1-\gamma}\int_{S}\operatorname{KL}(\pi(\cdot|s)|\pi^n(\cdot|s))d^{\pi^n}_{\rho}(ds)\right]\\
&=  \underset{\pi \in \clP(A|S)}{\operatorname{arg\,min}}\int_{S}\left[\int_{A}\frac{\delta V^{\pi^n}_\tau}{\delta \pi}(s,a)(\pi-\pi^n)(da|s) + \lambda \operatorname{KL}(\pi(\cdot|s)|\pi^n(\cdot|s))\right]d^{\pi^n}_{\rho}(ds)
\,.
\end{split}
\end{align}
The $\operatorname{KL}$-divergence term is an adaptive Bregman divergence induced by a state-integrated negative entropy, where the integration measure $d^{\pi^n}_{\rho}$ is chosen adaptively based on the current iterate $\pi^n$;
see \eqref{def:state_integrated_entropy} in Appendix \ref{sec:natural_gradient_flows}.
The minimum above can be achieved by a pointwise in $s\in S$ optimization:
\begin{equation}\label{eq:pointwise_min}
\pi^{n+1}(\cdot|s)= \underset{m \in \mathcal P(A)}{\operatorname{arg\,min}}\left[ \int_A\frac{\delta V^{\pi^n}_\tau}{\delta \pi}(s,a)(m(da) - \pi^n(da|s)) + \lambda \operatorname{KL}(m|\pi^n(\cdot|s))\right]\,,
\end{equation}
which aligns with the policy mirror descent algorithm studied in \cite{lan2022policy} (i.e., Algorithm 1) and \cite{xiao2022convergence}. By~\cite[Lemma 1.4.3]{dupuis1997weak}, the pointwise minimization in \eqref{eq:pointwise_min} is achieved by
\begin{equation}\label{eq:Dupuis_update}
\frac{\mathrm d \pi^{n+1}}{\mathrm d \pi^n}(s,a) = \frac{\exp{\left(-\frac1\lambda\frac{\delta V^{\pi^n}_\tau}{\delta \pi}(s,a)\right)}}{\int_A \exp{\left(-\frac1\lambda \frac{\delta V^{\pi^n}_\tau}{\delta \pi}(s,a')\right)\pi^n(da'|s)}}\,.
\end{equation}
Taking the logarithm of \eqref{eq:Dupuis_update} and rearranging the terms yield
\begin{equation}
\label{eq:explict_euler_for_mirror_intro}
\lambda \left(\ln \frac{\mathrm d \pi^{n+1}}{\mathrm d \mu}(s,a) - \ln \frac{\mathrm d \pi^{n}}{\mathrm d \mu}(s,a)\right) = -\frac{\delta V^{\pi^n}_\tau}{\delta \pi}(s,a)
- \lambda \ln \int_A e^{-\frac1\lambda \frac{\delta V^{\pi^n}_\tau}{\delta \pi}(s,a')}\pi^n(da'|s)\,.
\end{equation}
Interpolating in the time variable and letting $\lambda \to \infty$, we obtain
\begin{equation*}
\partial_t \ln \frac{\mathrm d \pi_t}{\mathrm d \mu}(s,a) = -\left(\frac{\delta V^{\pi_t}_\tau}{\delta \pi} (s,a) -\int_A \frac{\delta V^{\pi_t}_\tau}{\delta \pi}(s,a')\pi_t(da'|s)\right)=-\frac{\delta V^{\pi_t}_\tau}{\delta \pi} (s,a)\,.
\end{equation*}
This along with \eqref{eq:delta_V_delta_pi-intro-state-dep-pairing} implies   $Z_t: = \ln \frac{\mathrm d \pi_t}{\mathrm d \mu}$ 
satisfies  
\begin{equation}\label{eq:mirror_descent_intro}
\partial_t Z_t (s,a) 
=-(Q^{\pi_t}_{\tau}(s,a)-V^{\pi_t}_{\tau}(s)+\tau Z_t(s,a))\,, \quad \pi_t(da|s) :=\boldsymbol{\pi}(Z_t)(da|s) \,,
\quad t>0\,,
\end{equation}
where the map $\boldsymbol{\pi}: B_b(S\times A)\rightarrow \Pi_{\mu}$ is defined by
\begin{equation}\label{eq:bold_pi_intro}
\boldsymbol{\pi}(Z)(da|s)= \frac{e^{Z(s,a)}}{\int_A e^{Z(s,a')}\mu(da')}\,\mu(da)\,.
\end{equation}

The flow \eqref{eq:mirror_descent_intro} is a continuous-time mirror descent flow where $\boldsymbol{\pi}: B_b(S\times A)\rightarrow \Pi_{\mu}$ is a  mirror map from   the dual space $B_b(S\times A)$ to the primal space $\Pi_{\mu}$.
It can be viewed as a Riemannian gradient flow in  $B_b(S\times A)$ with respect to the metric induced by the Hessian of the convex conjugate of a (state-integrated) negative entropy (see \eqref{eq:natural_gradient_mirror_Z}   in Appendix \ref{sec:natural_gradient_flows}), which is an  analog of the Fisher-information matrix over the infinite-dimensional policy class  $\{\boldsymbol{\pi}(Z)\mid Z\in B_b(S\times A)\}$. This observation generalises the well-known connection between mirror descent and natural gradient flows observed in the finite-dimensional setting (see, e.g., \cite{raskutti2015information}).

To show the flow $(\pi_t)_{t\ge 0}$ given by \eqref{eq:mirror_descent_intro} indeed satisfies the Fisher--Rao flow \eqref{eq:gradient_flow_introduction},
consider the following generalisation  of \eqref{eq:mirror_descent_intro}: 
\begin{equation}\label{eq:mirror_descent_intro_baseline}
\partial_t Z_t (s,a) = -\frac{\delta V^{\pi_t}_\tau}{\delta \pi}(s,a)+f_t(s)\,, \quad \pi_t(da|s) :=\boldsymbol{\pi}(Z_t)(s,a)\,,\quad t>0\,,
\end{equation}
with some given  $f_t\in B_b(S)$. Applying the chain rule to $t\mapsto \pi_t(da|s)=\boldsymbol{\pi}(Z_t)$ yields
\begin{equation}\label{eq:grad_flow_density_intro}
\frac{1}{\frac{\mathrm d \pi_t}{\mathrm d \mu}}\partial_t \frac{\mathrm d \pi_t}{\mathrm d \mu} = -\frac{\delta V^{\pi_t}_\tau}{\delta \pi}\,,
\end{equation}
which is the formulation  of~\eqref{eq:gradient_flow_introduction} in terms of density functions (Lemma~\ref{lemma:dpi_t}). Interestingly,  the primal flow \eqref{eq:gradient_flow_introduction} is invariant under different choices of the baseline $f$ in \eqref{eq:mirror_descent_intro_baseline}.

Here we highlight that both the Fisher--Rao flow~\eqref{eq:gradient_flow_introduction} and the mirror flow~\eqref{eq:mirror_descent_intro}
provide a foundation for deriving various discrete policy
gradient algorithms through appropriate timestepping schemes.
In fact, 
an explicit Euler discretization of~\eqref{eq:gradient_flow_introduction} with an orthogonal projection yields the proximal gradient method in~\cite{liu2023projected}, an implicit Euler discretization of~\eqref{eq:gradient_flow_introduction} yields the  proximal point method, and an explicit Euler discretization of 
\eqref{eq:mirror_descent_intro} yields  
the mirror descent method~\cite{lan2022policy, xiao2022convergence, zhan2023policy}.
See Section~\ref{sec:discrete} for more details on the mirror descent method and Appendix~\ref{sec:numerical} for new policy gradient methods derived from higher-order discretizations of~\eqref{eq:mirror_descent_intro}.

\subsubsection*{Our contributions.}

This paper rigorously analyses the Fisher--Rao flow \eqref{eq:gradient_flow_introduction} by leveraging its connection with its dual formulation \eqref{eq:mirror_descent_intro}.

\begin{itemize}
\item We show that if~\eqref{eq:mirror_descent_intro} has a solution and $\pi_t(da|s) \propto e^{Z_t(s,a)}\mu(da)$, then $V^{\pi_t}_{\tau}(\rho)$ is differentiable in $t$ and $\partial_t V^{\pi_t}_\tau(s) \leq 0$ for all $s\in S$ (Proposition~\ref{prop:value_monotone}).  Then by using the value function as a Lyapunov function, we prove the the global well-posedness of~\eqref{eq:mirror_descent_intro} (Theorem~\ref{thm:wp_MD}). By further leveraging the correspondence between  \eqref{eq:gradient_flow_introduction}  and \eqref{eq:mirror_descent_intro} (Lemma~\ref{lemma:dpi_t}), we establish that for suitable initial conditions, the Fisher--Rao flow \eqref{eq:gradient_flow_introduction}
is globally well-posed.
\item We prove that
the value function along~\eqref{eq:gradient_flow_introduction} converges exponentially fast to the optimal value function, and the  policies also   converge exponentially fast to the optimal policy (Theorem \ref{thm:linear_convergence}).
The analysis leverages 
a performance difference lemma,
which quantifies 
the sub-optimality of an arbitrary policy $\pi$  by the KL-divergence between the policy and the optimal policy $\pi^*_\tau$ (Lemma~\ref{lem:performance_diff}).  
\item We prove the stability of the Fisher--Rao flow \eqref{eq:gradient_flow_introduction} when the gradient direction \eqref{eq:delta_V_delta_pi-intro-state-dep-pairing} is evaluated approximately. Specifically, we show that such an approximate Fisher--Rao flow exhibits similar exponential convergence with an additional term depending explicitly on the gradient evaluation error (Theorem \ref{thm:stability_mirror}). This stability result allows for establishing an exponential convergence of a natural policy gradient flow involving log-linear parametrised policies (Theorem \ref{thm:NPG_stability}); see below for more details.

\item 
We propose discrete policy gradient algorithms through appropriate time-stepping schemes for \eqref{eq:gradient_flow_introduction}. An explicit Euler discretization yields the mirror descent update \eqref{eq:mirror_des_direct_intro}, whose  convergence rate is analyzed in
Theorem \ref{thm:convergence_mirror_steps}. The performances of higher-order discretizations are examined numerically in Appendix~\ref{sec:numerical},  demonstrating that a carefully chosen higher-order discretization of the continuous-time flow may result in smaller error than the standard mirror descent update. 

\item 
We discuss the polynomial convergence of~\eqref{eq:gradient_flow_introduction} for unregularized MDPs (with $\tau = 0$ in~\eqref{eq:value_rho_introduction}) in Section~\ref{sec:unregularised}, and the additional challenges compared to the regularized cases.

\end{itemize}

To the best of our knowledge, this is the first work providing a  rigorous analysis of policy gradient and policy mirror descent methods for MDPs with general state and action spaces and general stochastic policies.

\subsubsection*{Heuristic derivation of exponential convergence.} To illustrate the key ideas behind the convergence analysis of \eqref{eq:gradient_flow_introduction}, we temporarily assume that~\textit{the flow \eqref{eq:gradient_flow_introduction} is well-posed, the map $t\mapsto \int_S \operatorname{KL}(\pi^*_\tau (\cdot|s)|\pi_t(\cdot|s))d^{\pi^*_\tau}_{\rho}(ds)$ is differentiable, and the orders of all integration and differentiation are interchangeable}. 
The analysis first leverages the value function as a Lyapunov function to establish the monotonicity of the flow and then utilizes the state-integrated entropy as a Lyapunov function to demonstrate the exponential convergence.
Indeed,
by the chain rule,  for all $t>0$,
\begin{equation}\label{eq:FR_flow_decreases_intro}
\frac{d}{dt}V^{\pi_t}_{\tau}(\rho)= \left\langle\frac{\delta V^{\pi_t}_{\tau}}{\delta \pi}, \partial_t\pi_t\right\rangle_{d_{\rho}^{\pi_t}}=- \left\langle \left|\frac{\delta V^{\pi_t}_{\tau}}{\delta \pi}\right|^2, \pi_t\right\rangle_{d_{\rho}^{\pi_t}}\le 0\,,
\end{equation}
which shows that   $t\mapsto V^{\pi_t}_{\tau}(\rho)$ is decreasing.
To address the non-convexity of $\pi\mapsto V^{\pi}_{\tau}(\rho)$ (see, e.g., \cite{mei2020global}), 
we prove the following performance difference lemma, extending the result for the tabular case in ~\cite{kakade2002approximately}:
for any two policies $\pi$ and $\pi'$ (Lemma~\ref{lem:performance_diff}),
\begin{equation}\label{eq:per_diff_intro}
V^{\pi}_\tau(\rho)-V^{\pi'}_\tau(\rho)= \bigg\langle \frac{\delta V_{\tau}^{\pi'}}{\delta \pi}(\rho)\bigg |_{d^{\pi}_{\rho}},\pi-\pi'\bigg\rangle_{d^{\pi}_{\rho}}  +    \frac{\tau}{1-\gamma} \int_{S}\operatorname{KL}(\pi(\cdot | s)|\pi'(\cdot | s))d^{\pi}_\rho(ds)\,.
\end{equation}  
Note that \eqref{eq:per_diff_intro} integrates the state variable  with respect to $d^{\pi}_{\rho}$ rather than $d^{\pi'}_{\rho}$, and hence   \eqref{eq:per_diff_intro} does not imply  $\pi\mapsto V^{\pi}_\tau(\rho)$ is convex, except in the special case with $S=\emptyset$, i.e., the bandit setting.
However, it does reveal that the $\tau$-dependent  KL-term serves as a type of strong convex regularisation, allowing for obtaining the exponential convergence of \eqref{eq:gradient_flow_introduction}.
Indeed, differentiating  the map $t\mapsto y_t\coloneqq \int_S \operatorname{KL}(\pi^*_\tau (\cdot|s)|\pi_t(\cdot|s))d^{\pi^*_\tau}_{\rho}(ds)$ yields
\begin{align}\label{eq:dKL_sketch}
\begin{split}
\dot{y}_t&=
\partial_t \left(\int_S \int_A\left(\ln \frac{\mathrm{d} \pi^*_\tau}{\mathrm{d} \mu }(a|s)-\ln \frac{\mathrm{d} \pi_t}{\mathrm{d} \mu }(a|s)\right)\pi^*_\tau(da|s) d^{\pi^*_\tau}_{\rho}(ds)\!\!\right)\\
&= -\int_S \int_A\frac{1}{\frac{\mathrm{d}  \pi_t }{{\mathrm{d} \mu}}(a|s)} 
\frac{\mathrm{d} \partial_t \pi_t }{\mathrm{d} \mu}(a|s)
\pi^*_\tau(da|s) d^{\pi^*_\tau}_{\rho}(ds) 
= (1-\gamma)\bigg\langle \frac{\delta V^{\pi_t}_\tau(\rho)}{\delta \pi}\Big|_{d^{\pi^\ast_\tau}_\rho},\pi^\ast_\tau - \pi_t\bigg\rangle_{d^{\pi^\ast}_\rho}\,, 
\end{split}
\end{align}
where we have used \eqref{eq:grad_flow_density_intro} and  $\left\langle \frac{\delta V^{\pi_t}_{\tau}(\rho)}{\delta \pi}\big|_{d^{\pi^\ast}_\rho}, \pi_t  \right \rangle_{d^{\pi^\ast}_\rho} = 0$.
Applying  \eqref{eq:per_diff_intro} shows that  for all $t>0$,
\begin{equation}\label{eq:dKL_dt}
\dot{y}_t=  -\tau y_t-(1-\gamma)(V^{\pi_t}_{\tau}(\rho)-V^{\pi^*_\tau}_{\tau}(\rho)) \,, 
\end{equation}
or equivalently 
\[
e^{\tau t}y_t =  y_0 -(1-\gamma) \int_0^t e^{\tau r}(V^{\pi_r}_{\tau}(\rho)-V^{\pi^*_\tau}_{\tau}(\rho))\, dr\,.
\]
Since $t\mapsto V^{\pi_t}_{\tau}(\rho)$ is decreasing (see~\eqref{eq:FR_flow_decreases_intro}) and $y_t\ge 0$, for all $t\geq 0$ we have 
\[
V^{\pi_t}_{\tau}(\rho)-V^{\pi^*_\tau}_{\tau}(\rho)\le  \frac{\tau }{(1-\gamma)(e^{ \tau t}-1)} \int_S \operatorname{KL}(\pi^*_\tau(\cdot|s)|\pi_0(\cdot|s) )d^{\pi^*_\tau}_{\rho}(ds)\,,
\]
which is the desired exponential convergence
of \eqref{eq:gradient_flow_introduction}.

However, making the above heuristic arguments rigorous involves several technical challenges, mainly due to the discontinuity of the KL-divergence $\clP(A)\ni \nu\mapsto  \operatorname{KL}(\nu|\mu)\in \mathbb{R}\cup\{\infty\}$ in \eqref{eq:value_rho_introduction}.   The well-posedness of \eqref{eq:gradient_flow_introduction} cannot be deduced from standard results for ordinary differential equations, since   \eqref{eq:gradient_flow_introduction} involves a discontinuous nonlinearity, and the solution lies in the incomplete 
subset $\Pi_{\mu}$ of $\clP(A|S)$. Moreover, the differentiability of $t\mapsto \int_S \operatorname{KL}(\pi^*_\tau (\cdot|s)|\pi_t(\cdot|s))d^{\pi^*_\tau}_{\rho}(ds)$ and the interchangeability of all necessary operations in \eqref{eq:FR_flow_decreases_intro} and \eqref{eq:dKL_sketch} remain unclear.

To address the above difficulties, we leverage the duality between \eqref{eq:gradient_flow_introduction} and \eqref{eq:mirror_descent_intro}, rather than focusing solely on the primal flow \eqref{eq:gradient_flow_introduction}. The rigorous analysis proceeds by first studying the dual variable $Z_t$ satisfying \eqref{eq:mirror_descent_intro}, and then reconstructing a policy $\pi_t= \boldsymbol{\pi}(Z_t)$ for the primal flow via the mirror map $\boldsymbol{\pi}$ in \eqref{eq:bold_pi_intro}. The main advantage of working with the dual variable $Z_t\in B_b(S\times A)$ is that directional derivatives and chain rules can be made rigorous without introducing variations directly with respect to measures. Policies are naturally constrained to be in $\Pi_{\mu}$, and we only vary in the dual space $B_b(S\times A)$, which is a Banach space.  

\subsubsection*{Natural policy gradient with log-linear policies.}

We highlight that the stability of \eqref{eq:gradient_flow_introduction}, as proven in Theorem \ref{thm:stability_mirror}, provides a systematic framework for analysing policy gradient methods with parameterised policies. Many of these algorithms can be seen as approximate Fisher--Rao flows with inexact gradient evaluations, and their performance can be quantified using Theorem \ref{thm:stability_mirror}. In the sequel, we illustrate the practical application of Theorem \ref{thm:stability_mirror} for log-linear parameterised policies.

Let $(\bbH, \|\cdot\|_{\bbH}) $ be a Hilbert space with the inner product $\langle\cdot, \cdot\rangle_{\bbH}$ (e.g., $\bbH=\mathbb{R}^N$ or $\bbH=\ell^2$) representing the parameter space, and let $g\in  B_b(S\times A; \bbH)$ be a fixed feature basis. Consider the following minimisation problem:
\begin{equation*}
\min_{\theta\in \bbH}
V^{\pi_\theta}_{\tau}(\rho)\,,
\end{equation*}
where we optimise \eqref{eq:value_introduction} over all log-linear parametrised policies $\pi_\theta$ given by: 
\begin{equation}\label{eq:linear_policy}
\pi_{\theta} =\boldsymbol{\pi}(\langle \theta, g(\cdot)\rangle_{\mathbb H})= \boldsymbol{\pi}(\langle \theta, g_{\pi_\theta}(\cdot)\rangle_{\mathbb H})\,,\,\,\text{where}\,\,\, g_{\pi_\theta}(s,a) := g(s,a) - \int_A g(s,a')\pi_\theta(da'|s)\,.
\end{equation}
By Proposition \ref{prop:differentiability_dV_df} and the chain rule established in Section \ref{sec:derivatives}: 
\begin{equation}\label{eq:V_gradient_theta}
\nabla_{\theta} V^{\pi_{\theta}}_\tau(\rho)= \frac{1}{1-\gamma}\int_{S}\int_{A} 
\left(Q^{\pi_{\theta}}_{\tau}(s,a)+\tau \ln \frac{\textrm{d}\pi_{\theta}}{\textrm{d} \mu}(a|s)\right)
g_{\pi_{\theta}}(s,a)\pi_{\theta}(da|s)d_{\rho}^{\pi_{\theta}}(ds)\,.
\end{equation}

We start by deriving a natural gradient flow with respect to  $\theta$. Let $\lambda>0$, and consider the following mirror descent  update:
\begin{equation}\label{eq:md_intro_theta_KL}
\theta^{n+1} \in \underset{\theta \in \bbH}{\operatorname{arg\,min}} \left(\langle \theta, \nabla_{\theta}V^{\pi_{\theta^n}}_\tau(\rho)\rangle_{\bbH} + \frac{\lambda}{1-\gamma}\int_{S}\operatorname{KL}(\pi_{\theta}(\cdot|s)|\pi_{\theta^n}(\cdot|s))d_{\rho}^{\pi_{\theta^n}}(ds)\right)\,,
\end{equation}
which is an analogue of \eqref{eq:mirror_des_direct_intro}
in    the   log-linear policy class.
Taylor expanding the $\operatorname{KL}$-term at $\theta=\theta^n$  yields 
\begin{align*}
\frac{1}{1-\gamma}\int_{S}\operatorname{KL}(\pi_{\theta}(\cdot|s)|\pi_{\theta^n}(\cdot|s))d_{\rho}^{\pi_{\theta^n}}(ds) 
&= \frac{1}{2}\langle \theta-\theta^n, \mathscr{F}(\theta^n) (\theta-\theta^n)\rangle_{\bbH} + O(\|\theta-\theta^n\|^3_{\bbH})\,,
\end{align*}
where $\mathscr{F}(\theta) \in \clL(\bbH) $ is the (positive semidefinite) Fisher information operator defined by  
\[
\mathscr{F}(\theta) \coloneqq  \int_{S}\int_{A}  \big(g_{\pi_{\theta}}(s,a) \otimes g_{\pi_{\theta}}(s,a) \big)\pi_{\theta}(da|s)d_{\rho}^{\pi_{\theta}}(ds)\,.
\]
For large $\lambda$, the iterate  $\theta^{n+1}$  will  remain close to $\theta^n$, and one may approximate \eqref{eq:md_intro_theta_KL} by a  more tractable update given by
\[
\theta^{n+1} =  \underset{\theta \in \bbH}{\operatorname{arg\,min}} \left(\langle \theta, \nabla_{\theta}V^{\pi_{\theta^n}}_\tau(\rho)\rangle_{\bbH} + \frac{\lambda}{2}\langle \theta-\theta^n, \mathscr{F}(\theta^n)(\theta-\theta^n)\rangle_{\bbH}\right)\,,
\]
which admits an explicit expression  
\[
\theta^{n+1} = \theta^n - \lambda^{-1}\mathscr{F}(\theta^n)^{-1}\nabla_{\theta}V^{\pi_{\theta^n}}_\tau(\rho)\,
\]
provided that $\mathscr{F}(\theta^n)$ is invertible. 
Letting  $\lambda \to \infty$ leads to the following flow: 
\begin{equation}\label{eq:NPG_flow_intro}
\partial_t {\theta}_t = -\mathscr{F}(\theta_t)^{-1}\nabla_{\theta}V^{\pi_{\theta_t}}_\tau(\rho)\,, \quad t>0\,.
\end{equation}
This flow normalises the direction of the vanilla gradient using the  inverse of the Fisher information operator $\mathscr{F}(\theta)$,
which represents the steepest descent  on the policy manifold $\{\pi_\theta\mid \theta\in \bbH\}$ endowed with the Fisher information metric, known as the natural policy gradient \cite{kakade2001natural, bhatnagar2009natural}. 
It can also be viewed as a parametric representation of the mirror descent flow \eqref{eq:mirror_descent_intro} (cf.~\eqref{eq:natural_gradient_mirror_Z} in Appendix \ref{sec:natural_gradient_flows}).

To avoid inverting $\mathscr{F}(\theta)$ in \eqref{eq:NPG_flow_intro}, we approximate the gradient $\nabla_{\theta}V^{\pi_{\theta_t}}_\tau(\rho)$ by the feature $g_{\pi_{\theta_t}}$, as in the actor-critic algorithm (e.g.~\cite{konda1999actor, haarnoja2018soft}). Here, we adopt the compatible function approximation \cite{sutton1999policy}, which employs the centered features $g_{\pi_{\theta}}$ to approximate a centered $Q$-function. In particular, for any $\theta\in \bbH$, consider the quadratic loss $\ell^{\pi_{\theta}}: \bbH \rightarrow \bbR$ defined by
\begin{equation}
\label{eq:loss_Q}
\begin{aligned}
\ell^{\pi_{\theta}}(w)&=\frac{1}{2} \int_{S}\int_A |
A^{\pi_{\theta}}_{\tau}(s,a)- \langle w,  g_{\pi_{\theta}}(s,a)\rangle_{\bbH}|^2 \pi_{\theta}(da|s)d_{\rho}^{\pi_{\theta}}(ds)\,, \\ 
\textnormal{where}&\quad  
A^{\pi_{\theta}}_{\tau}(s,a)\coloneqq Q^{\pi_{\theta}}_{\tau}(s,a)-\int_{A} Q^{\pi_{\theta}}_{\tau}(s,a')\pi_{\theta}(da'|s)\,.\footnotemark
\end{aligned}
\end{equation}
\footnotetext{By subtracting the baseline $\int_{A} Q^{\pi_{\theta}}_{\tau}(s,a')\pi_{\theta}(da'|s)$, $A^{\pi_{\theta}}_{\tau}$ is unbiased in the sense that \linebreak $\int_{A} A^{\pi_{\theta}}_{\tau}(s,a') \pi_{\theta}(da'|s)=0$.
This can serve as a variance reduction if $A^{\pi_{\theta}}_{\tau}$ needs to be estimated through samples \cite{sutton1999policy, bhatnagar2009natural,schulman2015high, cayci2021linear, zhang2020global}, for example, when the cost $c$ or the kernel $P$ of the MDP is not assumed to be known.}
If  $w^*(\theta)\in \operatorname{arg\,min}_{w\in \bbH}\ell^{\pi_{\theta}}(w)$, then   the first-order condition of \eqref{eq:loss_Q} produces   
\begin{equation}\label{eq:first_order_loss}
\mathscr{F}(\theta) w^* (\theta)=\int_{S}\int_{A} Q^{\pi_{\theta}}_{\tau}(s,a)g_{\pi_{\theta}}(s,a)\pi_{\theta}(da|s)d_{\rho}^{\pi_{\theta}}(ds) \,.   
\end{equation}
Substituting \eqref{eq:first_order_loss} into \eqref{eq:V_gradient_theta} and using 
$\pi_{\theta} = \langle\theta, g(\cdot)\rangle_{\bbH}$, we obtain
\[
\nabla_{\theta} V^{\pi_{\theta}}(\rho)=\frac{1}{1-\gamma}\mathscr{F}(\theta)\left(w^*(\theta) + \tau \theta  \right)\,,
\]
based on which,  the natural gradient flow \eqref{eq:NPG_flow_intro} can be equivalently written as:
\begin{equation}\label{eq:NPG}
\partial_t {\theta}_t = -(w^*(\theta_t)+ \tau \theta_t)\,,\quad t>0\,.
\end{equation}

To analyse \eqref{eq:NPG}, observe that the flow is, in fact,  an approximation of the Fisher--Rao flow \eqref{eq:gradient_flow_introduction}.
Indeed, let  $(\theta_t)_{t\ge 0}$ satisfy \eqref{eq:NPG}, the chain rule shows that the policies $(\pi_{\theta_t})_{t\ge 0}$  satisfies  
\begin{align*}
\begin{split}
\partial_t \pi_t (da|s)
&  =  - \left(  Q_t(s,a)  + \tau \ln\frac{\mathrm{d}  \pi_t}{\mathrm{d}  \mu}(s,a) -\int_A \left( Q_t(s,a')  + \tau \ln\frac{\mathrm{d}  \pi_t}{\mathrm{d}  \mu}(a'|s)\right)
\pi_t(da'|s)\right)\pi_t(da|s )
\\
& =
- \left( \frac{\delta V^{\pi_{t}}_{\tau}}{\delta \pi}(s,a)+ \mathcal{E}_t(s,a)   -\int_A \mathcal{E}_t(s,a') 
\pi_t(da'|s)\right)\pi_t(da|s )
\,,
\end{split}
\end{align*}
where 
\[
Q_t(s,a)=\langle w^*(\theta_t),  g(s,a)\rangle_{\bbH},
\quad 
\mathcal{E}_t(s,a)
=\langle w^*(\theta_t),  g(s,a)\rangle_{\bbH} -A^{\pi_{\theta_t}}_\tau (s,a)\,.
\]
This is  an approximate Fisher--Rao flow  with a perturbed gradient (cf.~\eqref{eq:gradient_flow_introduction}), which can be analysed based on 
Theorem \ref{thm:stability_mirror}. In particular, if  $w^*(\theta_t)$ achieves zero in \eqref{eq:loss_Q}, then the approximation error
$\mathcal{E}_t=0$ and \eqref{eq:NPG} is a parametric representation of the Fisher--Rao flow \eqref{eq:gradient_flow_introduction}. 

As the minimiser $w^*(\theta)$ of  \eqref{eq:loss_Q} may not exist or may not be sufficiently regular to guarantee the well-posedness of \eqref{eq:NPG}, in Section \ref{sec:npg_log_linear}, we analyse a slightly modified version of \eqref{eq:NPG} where we regularise the quadratic loss \eqref{eq:loss_Q} to ensure the well-posedness of the natural policy flow. Based on Theorem \ref{thm:stability_mirror},
we prove that the value functions of $(\pi_{\theta_t})_{t\ge 0}$ along the modified flow converge exponentially, up to a term quantifying the accumulated approximation error $(\mathcal{E}_t)_{t\ge 0}$. 

\subsection{Most related works}\label{sec:intro_lit_rev}

There is an enormous amount of research literature on RL and we cannot hope to do it justice here. For this reason, we focus on the subset of RL that we feel is most related to our  work.

\subsubsection*{MDPs with discrete state and action spaces}
Extensive research has been conducted on policy gradient algorithms in the discrete setting. For entropy-regularised MDPs, \cite{cen2022fast} analysed discrete-time versions of \eqref{eq:gradient_flow_introduction} and \eqref{eq:mirror_descent_intro}, demonstrating linear convergence for both value functions and policies. In the same setting,  \cite{cayci2021linear} studied the linear convergence of natural policy gradient with log-linear policies in the entropy-regularised setting, which is a discrete-time analog of~\eqref{eq:NPG}.
Building on these works, \cite{xiao2022convergence} and   \cite{KHODADADIAN2022105214} achieved linear convergence for unregularised MDPs with inexact policy evaluation by employing geometrically increasing step sizes.
Recently, \cite{lan2023policy} and  \cite{zhan2023policy} established the linear convergence of policy mirror descent for MDPs with arbitrary convex regularisers, with the latter achieving convergence rates independent of action space dimensions.
Furthermore, \cite{yuan2022linear} analysed log-linear parameterised policies with compatible $Q$-function approximations, providing estimates based on $L^2$-approximation errors instead of the $L^\infty$ estimates found in \cite{xiao2022convergence}.
These works typically require  additional assumptions on the MDP--specifically regarding the initial condition, and the structures of transition kernels and cost functions--to ensure persistent exploration for the Q-function approximation.

Note that these works primarily focus on discrete-time algorithms, and their analyses may not be directly applicable to   study   the continuous-time flows presented in \eqref{eq:gradient_flow_introduction} and \eqref{eq:mirror_descent_intro}. Moreover, most of the results (except those in \cite{zhan2023policy}) depend explicitly on the action space cardinality, and cannot be extended to continuous action spaces. 

Instead of optimizing over policies, one may choose to optimize over state-action occupation measures,
by leveraging the convexity of value function with respect to occupation measures. 
This approach has been employed in \cite{neu2017unified} for average cost problems and in \cite{zhang2020variational}
for discounted cost problems.
Compared to the discounted criterion, analyzing policy gradient methods for the average cost criterion is technically more challenging, as it requires maintaining suitable recurrence/ergodicity throughout the gradient methods.

\subsubsection*{MDPs with continuous state and action space}

Although MDPs with continuous state and action spaces are widely used in practical applications \cite{doya2000reinforcement, van2012reinforcement, manna2022learning}, the convergence analysis of policy gradient methods in this setting remains   less developed compared to its discrete counterparts.

Most existing works have focused on discrete-time linear quadratic regulator (LQR) problems with linear parameterised policies. By leveraging the linear-quadratic structure,  one can show that the objective function satisfies the Polyak--{\L}ojasiewicz (PL)  (also known as gradient dominance) condition \cite{polyak1963gradient, lojasiewicz1963topological, kurdyka1998gradients}. This condition implies that all stationary points are globally optimal, and based on this property, the policy gradient descent algorithm has been shown to converge linearly \cite{fazel2018global, bu2019lqr, hu2023toward}. These results have been extended to continuous-time systems \cite{sontag2022remarks,giegrich2024convergence} or to scenarios involving small nonlinear perturbations of LQR systems \cite{han2023policy}.

The insights from the LQR problem have been extended to encompass general MDPs with finite-dimensional parameterised policies  \cite{bhandari2019global,bhatt2019policy, bedi2021sample, bedi2022hidden, zhang2022convergence, fatkhullin2023stochastic}. These extensions typically assume that the objective function uniformly satisfies a PL condition across all feasible parameter choices, and the action space is a subset of a finite-dimensional Euclidean space. Consequently, these results are not directly applicable to our scenario with an infinite-dimensional action space. Furthermore, verifying the uniform PL condition in our context remains an open challenge. It is worth noting that even in the discrete setting, \cite{mei2021leveraging} has shown that the objective only satisfies a non-uniform PL condition depending on the current iterate.

For MDPs with general convex regularisers,
\cite{lan2022policy}  analyses policy mirror descent (i.e., discrete-time versions of \eqref{eq:gradient_flow_introduction} and \eqref{eq:mirror_descent_intro}), and achieves linear convergence  rates and stability results. However, their results are limited to action spaces in Euclidean spaces and do not extend to stochastic policies with continuous action spaces.

For a general MDP with policies being normal distributions with a fixed covariance matrix and mean given by a function from a reproducing kernel Hilbert space (RKHS)~\cite{paternain2020stochastic} show convergence of policy gradient to the optimal representative from the RKHS. 
For MDPs with mean-field softmax policies, 
which are infinite-dimensional policies parameterised by probability measures, \cite{agazzi2020global}  derives a Wasserstein gradient flow for the parameter measure. They prove that if this gradient flow converges to a stationary point with full support, the resulting softmax policy is optimal. However, they do not provide conditions under which the gradient flow converges.
The same setting is revisited in \cite{leahy2022convergence}, where they show that the value function decreases  along the Wasserstein gradient flow, and introducing strong entropic regularisation at the level of parameter measure   leads to exponential convergence.
Finally, \cite{zhang2021wasserstein} analyses a two-timescale actor-critic algorithm for unregularised MDPs. The policy is updated on a slower timescale using the Fisher--Rao flow, while the $Q$-function is updated via the Wasserstein gradient flow on a faster timescale. They establish an error bound of $O(1/T)$; however, they assume the differentiability of the function $t\mapsto  \int_S \operatorname{KL}(\pi^* (\cdot|s)|\pi_t(\cdot|s))d^{\pi^*}_{\rho}(ds)$ along the flow without providing a formal proof.
In this paper, we rigorously prove this crucial differentiability result in the setting of regularised MDPs. It is important to highlight that due to the presence of entropy regularisation, the analysis of the flow becomes more technically involved compared to the unregularised problem.

\subsubsection*{Optimisation over measure spaces}

In \cite{chizat2021convergence, aubin2022mirror}, sublinear convergence rates 
of mirror descent methods
are established for   minimising  (relatively) smooth and convex functions   over   spaces of measures.  A linear convergence rate is   proved 
in \cite{liu2023polyak}
under an additional Polyak--{\L}ojasiewicz condition for the continuous-time   Fisher--Rao flow.

However, it is important to highlight that the optimisation problem \eqref{eq:value_rho_introduction} presents unique challenges compared to the scenarios in \cite{chizat2021convergence,aubin2022mirror,liu2023polyak}. Here, we aim to optimise a non-convex objective over all 
probability transition kernels representing 
stochastic policies.
A  policy may induce a state  distribution that is a different from the optimal one (see e.g., 
the adaptive Bregman divergence in \eqref{eq:mirror_des_direct_intro}). Controlling this distribution shift throughout the flow necessitates a more intricate    analysis. 

\subsection{Outline of the paper}

In Section~\ref{sec:notation}, we state the main notation used throughout the paper. In Section~\ref{sec:main_result}, we introduce entropy-regularised MDPs on general state and action spaces, state the main results (i.e., Theorems~\ref{thm:linear_convergence},~\ref{thm:stability_mirror} and~\ref{thm:NPG_stability}), and discuss the unregularised setting. We prove some key properties of entropy-regularised MDPs in Section \ref{sec:prop_of_MDPs}. Specifically, in Section \ref{sec:basis_properties_MDP}, we establish bounds on and regularity of the key functions (i.e., the value and $Q$-functions and the log-density of policies). In Section \ref{sec:derivatives}, we define our notion of differentiability, prove a chain rule, and then prove the differentiability of key functions. We prove our main results in Section \ref{sec:proof_main_results}.
In Appendix~\ref{sec:Bellman}, we include a proof of the Bellman principle for entropy-regularised MDPs on general state and action spaces. 
In Appendix~\ref{sec:natural_gradient_flows}, we explain that all the gradient flows appearing in the paper are {\em natural} gradient flows with respect to suitable Riemannian metric.
The section also conveniently summarises some of the more technical results proved in Section~\ref{sec:derivatives}.

\subsection{Acknowledgments}
We are grateful for our encouraging discussion with colleagues, particularly A. Koppel and T. Drivas. JML is grateful for support from the US AFOSR Grant FA8655-21-1-7034.

\subsection{Notation}
\label{sec:notation}

Let $\mathbb R_+ = [0,\infty)$ and $\bbN_0=\bbN\cup\{0\}$. For given normed vector spaces $(X,\|\cdot\|_X)$ and $(Y,\|\cdot\|_Y)$,
we denote by $\clL(X,Y)$ the normed vector space of bounded linear operators $T: X\to Y$, equipped with the operator norm
$\|T\|_{\clL(X,Y)}=\sup_{\|x\|_X\le 1} \|Tx \|_Y=1$.  For simplicity, we write $\clL(X)=\clL(X,X)$.  For a given Hilbert space $(X,\langle \cdot,\cdot\rangle_X)$ and $x,y\in X$, we define the outer product $x\otimes y\in \clL(X)$ by $(x\otimes y) z=x\langle y,z\rangle_X$ for all $z\in X$.

Let $(E,d)$ denote a Polish space (i.e., a complete separable metric space). We always equip a Polish space with its Borel sigma-field $\mathcal{B}(E)$.  For a given measure $\mu$ in $E$, denote by $L^p(E,\mu)$, $p\in [1,\infty]$, the Lebesgue spaces of integrable functions. Denote by $B_b(E)$ the space of bounded measurable functions $f:E\rightarrow \mathbb{R}$ endowed with the supremum norm $\|f\|_{B_b(E)}=\sup_{x\in E}|f(x)|$. Denote by $\mathcal{M}(E)$ the Banach space of finite signed measures $\mu$ on $E$ endowed with the total variation norm $\|\mu\|_{\mathcal{M}(A)}=|\mu|(E)$, where $|\mu|$ is the total variation measure. 
Recall that if $\mu=f d\rho$, where $\rho \in \mathcal{M}_+(E)$ is a nonnegative measure and $f\in L^1(E,\rho)$, then $\|\mu\|_{\mathcal{M}(E)}=\|f\|_{L^1(E,\rho)}$.  Denote by $\mathcal{P}(E)\subset\mathcal{M}(E)$ the set of probability measures on $E$. For given $\mu,\mu'\in \mathcal{P}(E)$, we write   $\mu\ll \mu'$ if $\mu$ is absolutely continuous with respect to $\mu'$, and define the Kullback-Leibler (KL) divergence  of $\mu$ with respect to $\mu'$ (or  relative entropy of $\mu$ relative to $\mu'$)  by 
$\textnormal{KL}(\mu|\mu') = \int_{E}\ln\frac{\mathrm{d} \mu}{\mathrm{d}\mu'}(x)\mu(dx) $ if $\mu\ll \mu'$, and $\infty$ otherwise. 

Given Polish spaces $(E_1,d_1)$ and $(E_2,d_2)$, we introduce notation for measurable functions $k:E_1\rightarrow \mathcal{M}(E_2)$. 
Denote by $b\mathcal{M}(E_1|E_2)$ the Banach space of bounded signed kernels $k: E_2\rightarrow \mathcal{M}(E_1)$ endowed with the norm $\|k\|_{b\mathcal{M}(E_1|E_2)}=\sup_{x\in E_2}\|k(x)\|_{\mathcal{M}(E_1)}$; that is, $k(U|\cdot): E_2\rightarrow \mathbb{R}$ is measurable for all $U\in \mathcal{M}(E_1)$ and $k(\cdot|x)\in \mathcal{M}(E_1)$ for all $x\in E_2$ \cite{gerlach2015lattice}. For given $k_1,k_2\in b\clM(E_1|E_2)$, we write $k_1\ll k_2$ if there exists a measurable function $\eta :E_2\times E_1\to \mathbb{R}_+$ such that for all $x\in E_2$ and $B\in \mathcal{B}(E_1)$, $k_1(B|x)= \int_B \eta(x,y)k_2(dy|x)$. For a fixed positive measure $\mu\in \clM(E_1)$ and $k\in  b\clM(E_1|E_2)$, we write $\mu\ll k$ (resp.~$ k\ll \mu$) if the associated kernel $k_\mu \in b\clM(E_1|E_2)$ given by $k_\mu(dy|x)=\mu(dy)$ for all $x\in E_2$ satisfies   $ k_\mu\ll k$ (resp.~$  k\ll k_\mu$). We denote by $b\mathcal{M}_{\mu}(E_1|E_2)$ the space of kernels $k\in  b\clM(E_1|E_2)$ such that $k\ll \mu$. We denote by $\mathcal{P}(E_1|E_2)$ (resp.~$\mathcal{P}_\mu(E_1|E_2)$) the  collection  of $P\in b\mathcal{M}(E_1|E_2)$ (resp.~$P\in b\mathcal{M}_\mu(E_1|E_2)$) such that $P(\cdot|x)\in \mathcal{P}(E_1)$ for all $x\in E_2$. For given $\mu\in \clP(E_2)$ and $k\in \clP(E_1|E_2)$,  we define the semidirect product $\mu\otimes k\in \clP(E_2\times E_1) $ of $\mu$ with $k$ by $ (\mu\otimes k)(A\times B)=\int_A k(B|x)\mu(dx)$, for all $A\in \clB(E_2),B\in \clB(E_1) $.

\section{Problem formulation and statements of main results} 
\label{sec:main_result}

\subsection{Entropy-regularised MDPs}
\label{sec:main_results_entropy_reg_MDPs}

In this section, we formulate the entropy-regularised MDPs with continuous state and action spaces. 
Let $S$ and $A$ be Polish spaces,\footnote{
The condition that 
$S$ and $A$  are Polish spaces is essential for applying the Kolmogorov extension theorem, which 
allows for the construction of  the unique probability measure  $\mathbb P^\pi_\rho$ satisfying  \eqref{eq:markov_like} for each  policy  $\pi$;  see    \cite[Proposition 7.28]{bertsekas2004stochastic} and \cite[Section 2.2]{hernandez2012discrete}.
} $P\in \clP(S|S\times A)$, $c\in B_b(S\times A)$  and   $\gamma\in [0,1)$. 
The five-tuple  $(S,A,P,c,\gamma)$ determines an infinite horizon Markov decision model, where $S$ and $A$ represent the state and action spaces, respectively, $P$ represents the transition probability,  $c$ represents the cost function and $\gamma$ represents the discount factor.  
Let  $\Pi=\{\pi=\{\pi_n\}_{n\in \bbN_0}: \pi_n\in \clP(A|H_n)\}$ denote the set of (possibly non-Markovian) stochastic policies, where for each $n\in \bbN_0$, $H_n\coloneqq (S\times A)^{n}\times S$ is the space of admissible histories.

Let $(\Omega:=(S\times A)^{\bbN_0},\mathcal{F})$ denote the canonical sample space, where $\clF=\clB(\Omega)$ is the corresponding Borel sigma-algebra. Elements of $\Omega$ are of the form $(s_0,a_0,s_1,a_1,\ldots)$ with $s_n\in S$ and $a_n\in A$ denoting the projections and called the state and action variables, at time $n\in \mathbb{N}_0$, respectively. By \cite[Proposition 7.28]{bertsekas2004stochastic}, for any given initial distribution $\rho\in\clP(S)$ and policy $\pi\in \Pi$, there exists a unique product probability measure $\bbP^{\pi}_{\rho}$ on $(\Omega,\mathcal{F})$ with expectation denoted $\mathbb{E}^{\pi}_{\rho}$ such that for all $n\in \bbN_0$, $B\in \clB(S)$ and $C\in  \clB(A)$, $\bbP_{\rho}^{\pi}(s_0\in B)=\rho(B)$ and 
\begin{equation} \label{eq:markov_like} 
\bbP_{\rho}^{\pi}(a_n\in C|h_n)=\pi_n(C|h_n),
\quad 
\bbP_{\rho}^{\pi}(s_{n+1}\in B|h_n, a_n)=P(B|s_n,a_n)\,, 
\end{equation}
where $h_n=(s_0,a_0,\ldots, s_{n-1}, a_{n-1}, s_{n})\in H_n$. In particular, if $\pi$ is a  Markov stochastic policy (i.e., $\pi_n\in \clP(A|S)$ for all $n\in \bbN_0$), then $\{s_n\}_{n\in \bbN_0}$ is a Markov process with kernel $\{P_{\pi,n}\}_{n\in \bbN_0}\in \clP(S|S)$ given by
\[
P_{\pi,n}(ds'|s)=\int_{A}P(ds'|s,a)\pi_n(da|s),
\quad \forall s\in S, n\in \bbN_0\,.
\]
For $s\in S$, we denote $\mathbb{E}^{\pi}_{s}=\mathbb{E}^{\pi}_{\delta_s}$, where $\delta_s\in \mathcal{P}(S)$ denotes the Dirac measure at $s\in S$.

Let $\mu \in \mathcal \clP(A)$ denote a reference measure and $\tau\in (0,\infty)$ denote a regularisation parameter. For each $\pi=\{\pi_n\}_{n\in \bbN_0} \in \Pi$ and $s\in S$, define the following regularised value function:
\begin{equation}
\label{eq:V_pi_tau}
V^{\pi}_{\tau}(s)= \bbE_{s}^{\pi}\left[\sum_{n=0}^\infty\gamma^n \Big(c(s_n,a_n) + \tau \operatorname{KL}(\pi_n(\cdot|h_n)|\mu)\Big)\right]\in \bbR\cup \{\infty\}\,,
\end{equation}
which may be infinite if  $\pi_n\not \in \clP_\mu(A|S)$ for some $n\in \mathbb{N}_0$, or if $\bbE_{s}^{\pi}\left[ \sum_{n=0}^\infty\gamma^n   \operatorname{KL}(\pi_n(\cdot|h_n)|\mu)\right]$ diverges. Since $c$ is bounded and $H_n\ni h_n\mapsto \operatorname{KL}(\pi_n(\cdot|h_n)|\mu)\in [0,\infty]$ is non-negative and measurable, $V^{\pi}_\tau: S\rightarrow \bbR\cup \{\infty\}$ is a well-defined measurable function. We define the optimal value function $V^*_{\tau}: S\rightarrow \bbR\cup \{\infty\}$ by
\begin{equation}
\label{eq:optimal_value}
V^*_{\tau}(s)=\inf_{\pi \in \Pi}V^{\pi}_{\tau}(s),
\quad \forall s\in S\,,
\end{equation}
and refer to  $\pi^* \in \Pi$ as an optimal policy if  $V^{\pi^*}_{\tau}(s)=V^*_{\tau}(s)$, for all  $s\in S$. 

One can prove that $V^*_{\tau}$ satisfies a dynamic programming principle (DPP) as stated in  Theorem~\ref{thm:DPP}, which implies that $V^*_{\tau}\in B_b(S)$ and for all $s\in S$, 
\[
V^{\ast}_{\tau}(s)=-\tau\ln\int_{A}\exp\left(-
\frac{1}{\tau}Q^{\ast}_{\tau}(s,a)\right)\mu(da),
\]
where $Q^*_{\tau}\in B_b(S\times A)$ is defined by  
\[
Q^{*}_{\tau}(s,a)=c(s,a)+\gamma\int_S V_{\tau}^{*}(s')P(ds'|s,a)\,,
\quad \forall (s,a)\in S\times A\,.
\]
Moreover, there is an optimal policy $\pi^*_{\tau} \in \clP_{\mu}(A|S)$  given by
\begin{equation}
\label{eq:optimal_policy}
\pi^*_{\tau}(da|s) = \exp\left(-\frac{1}{\tau }(Q^{\ast}_{\tau}(s,a)-V^{\ast}_{\tau}(s))\right)\mu(da)\,,
\quad \forall s\in S.
\end{equation}

\begin{remark}
\label{rmk:KL_role}
The KL divergence in \eqref{eq:V_pi_tau} 
is essential for ensuring the existence of optimal policies and designing a convergent gradient flow for general state and action spaces.
Due to the KL divergence,~\eqref{eq:optimal_value}
satisfies the DPP with merely measurable transition kernels $P$ and cost functions $c$.
This contrasts with unregularized MDPs, which  require additional continuity conditions on $P$ and $c$ to  establish the DPP and construct an optimal policy, see~\cite[Assumption 4.2.1]{hernandez2012discrete}.

The KL divergence also ensures persistent exploration.
By~\eqref{eq:optimal_policy}, any policy equivalent to $\mu$ (specifically those  in Definition \ref{def:pi_mu})  
explores the optimal policy $\pi^*_\tau$ in the sense that the Radon--Nikodym derivative of a policy w.r.t.~the optimal policy has an upper and lower bound. 
This property is  absent in the unregularized problem, where the optimal action often is given by a Dirac distribution, making it challenging to design a gradient flow that maintains  exploration; 
see Example \ref{example:bandit}  and the discussion following it for details. 

The KL divergence further ensures that the value function $\pi\mapsto V^\pi_\tau (\rho)$  admits a strong convexity-like expansion given by the performance difference lemma (Lemma~\ref{lem:performance_diff}), allowing us to establish the exponential convergence of the proposed gradient flow.
It is worth noting that~\cite{neu2017unified} shows for  average-cost MDPs in the tabular setting, the regularized value function  is convex in the state-action occupancy measure. 
Although we expect a similar result   in the present setting, we do not explicitly leverage this property in the design and analysis of our algorithms.
Instead, we directly focus on optimizing over the policy spaces.
\end{remark}

The expression of 
$\pi^*_\tau$ in \eqref{eq:optimal_policy}
suggests that, without loss of generality, it suffices to minimise \eqref{eq:V_pi_tau} over the class of stationary Markov policies that are equivalent to the reference measure $\mu$.

\begin{definition}
\label{def:pi_mu}
Let $\Pi_{\mu}$ denote the class of policies $\pi =\{\pi_n\}_{n\in \mathbb{N}_0} \in  \Pi $ such that $\pi_n\in \clP_\mu(A|S)$ for all $n\in \mathbb{N}_0$, and for which there exists $f\in B_b(S\times A)$ such that $\pi_n (da|s) =\frac{\exp\left(f(s,a)\right)}{ \int_{A}  \exp\left(f(s,a)\right) \mu(da)} \mu(da)$ for all $s\in S$ and $n\in \mathbb{N}_0$. In the sequel, we identify $ \Pi_\mu$ with the set  $\{\boldsymbol{\pi}(f)\mid f\in B_b(S\times A)\}\subset \clP_{\mu}(A|S)$, where $ \boldsymbol{\pi} :B_b(S\times A) \to \clP_{\mu}(A|S)$ is defined by 
\begin{equation}
\label{eq:pi_f_mu}
\boldsymbol{\pi}(f)(da|s)= \frac{e^{f(s,a)}}{\int_A e^{f(s,a')}\mu(d a')}\mu(d a),
\quad \forall f\in B_b(S\times A) \,.    
\end{equation}
\end{definition}

For each $\pi \in \Pi_{\mu} $, we define the $Q$-function $Q^{\pi}_{\tau}\in B_b(S\times A)$ by 
\begin{equation}\label{def:Q_fn}
Q^{\pi}_{\tau}(s,a)=c(s,a)+\gamma\int_S V_{\tau}^{\pi}(s')P(ds'|s,a)\,.
\end{equation}
Then by the Bellman principle (see Lemma~\ref{lem:on_policy}), for all $\pi \in \Pi_{\mu} $ and $s\in S$,
\begin{equation}\label{eq:on_policy}
V^{\pi}_{\tau}(s)=\int_{A}\left(Q_\tau^\pi(s,a)+\tau \ln \frac{\mathrm{d} \pi}{\mathrm{d} \mu}(a|s)\right)\pi(da|s)\,.
\end{equation}
For each $\pi\in \clP(A|S)$, we define the occupancy kernel $d^{\pi}\in\clP(S|S)$ by
\begin{equation}\label{eq:occupancy_s}
d^{\pi}(ds'|s)=(1-\gamma)\sum_{n=0}^{\infty}\gamma^nP^n_{\pi}(ds'|s)\,,
\end{equation}
where $P^n_{\pi}$ is the $n$-times  product of the kernel $P_{\pi}$ with $P^0_{\pi}(ds'|s)\coloneqq \delta_s(ds')$ and the convergence is understood in $b\clM(S|S)$. For a given initial distribution $\rho\in \clP(S)$, we define
\[
V^{\pi}_{\tau}(\rho)=\int_{S}V^{\pi}_{\tau}(s) \rho(ds) \quad \textnormal{and} \quad 
d^{\pi}_{\rho}(ds)=\int_{S}d^{\pi}(ds|s')\rho(ds')\,.
\]

\subsection{Convergence of  the gradient flow }
\label{sec:MD_approach_main}

This section analyses the well-posedness and the convergence of the gradient flow \eqref{eq:gradient_flow_introduction} for minimising  \eqref{eq:V_pi_tau} over $\Pi_\mu$. We recall the gradient flow here for the reader's convenience: 
for given  $\pi_0\in \clP_\mu(A|S)$,   
consider the initial value problem:
\begin{equation}\label{eq:FR_flow}
\left\{
\begin{aligned}
&\partial_t {\pi_t}(da|s) =-\left(Q^{\pi_t}_\tau(s,a)+ \tau \ln\frac{\mathrm{d}  \pi_t}{\mathrm{d}\mu}(a|s) -V^{\pi_t}_\tau(s)\right)\pi_t(da|s)\,,\quad   t>0,
\\
&\pi\vert_{t=0}=\pi_0\,,
\end{aligned}
\right.
\end{equation}
which is an ordinary differential equation on the infinite-dimensional convex space $\mathcal P(A|S)$.

It is well-known that for given $\rho\in \clP(S)$, the map $ \Pi_\mu \ni \pi\mapsto  V^\pi_\tau(\rho)\in \mathbb{R} $ is, in general,  non-convex  (see e.g., \cite{agarwal2020optimality,mei2020global}).
Consequently, the convergence of \eqref{eq:FR_flow} cannot be inferred from standard convergence results for gradient flows in convex optimization problems. 
The following lemma  is crucial for addressing this non-convexity issue, 
whose proof is given in 
Section~\ref{sec:basis_properties_MDP}.

\begin{lemma}[Performance difference]
\label{lem:performance_diff}
For all $\rho \in \clP(S)$ and $\pi,\pi'\in \Pi_{\mu}$, 
\begin{align*}
&V^{\pi}_\tau(\rho)-V^{\pi'}_\tau(\rho) \\
&\quad = \frac{1}{1-\gamma}\int_S \bigg[\int_A\left(Q^{\pi'}_{\tau}(s,a)+\tau \ln \frac{\mathrm{d} \pi'}{\mathrm{d}\mu}(a|s)\right)(\pi-\pi')(da|s) + \tau    \operatorname{KL}(\pi(\cdot | s)|\pi'(\cdot | s)) \bigg]d^{\pi}_\rho(ds)\,.
\end{align*}
\end{lemma}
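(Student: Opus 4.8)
The plan is to first establish an advantage-type representation of $V^{\pi}_\tau(\rho)-V^{\pi'}_\tau(\rho)$ in which the state variable is integrated against the occupancy measure $d^{\pi}_\rho$, and then to convert the log-density $\ln\frac{\mathrm{d}\pi}{\mathrm{d}\mu}$ that arises naturally into the combination $\ln\frac{\mathrm{d}\pi'}{\mathrm{d}\mu}$ plus a relative-entropy term by invoking the on-policy Bellman identity \eqref{eq:on_policy} for $\pi'$. Throughout I work with $\pi,\pi'\in\Pi_{\mu}$, so that by \eqref{eq:pi_f_mu} both densities $\frac{\mathrm{d}\pi}{\mathrm{d}\mu}$ and $\frac{\mathrm{d}\pi'}{\mathrm{d}\mu}$ are bounded above and below by positive constants; in particular $\pi\sim\pi'\sim\mu$, the functions $V^{\pi'}_\tau,Q^{\pi'}_\tau$ are bounded (by the regularity results of Section~\ref{sec:basis_properties_MDP}), and $\ln\frac{\mathrm{d}\pi}{\mathrm{d}\pi'}$ is bounded, so every integral below is finite.

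First I would absorb the KL-penalty into the running cost: since under $\bbP^{\pi}_\rho$ we have $a_n\sim\pi(\cdot|s_n)$, the identity $\operatorname{KL}(\pi(\cdot|s_n)|\mu)=\int_A\ln\frac{\mathrm{d}\pi}{\mathrm{d}\mu}(a|s_n)\pi(da|s_n)$ together with \eqref{eq:V_pi_tau} gives
\[
V^{\pi}_\tau(\rho)=\bbE^{\pi}_\rho\!\left[\sum_{n=0}^\infty\gamma^n\Big(c(s_n,a_n)+\tau\ln\tfrac{\mathrm{d}\pi}{\mathrm{d}\mu}(a_n|s_n)\Big)\right].
\]
Next I subtract $V^{\pi'}_\tau(\rho)=\bbE^{\pi}_\rho[V^{\pi'}_\tau(s_0)]$ via the pathwise telescoping identity $V^{\pi'}_\tau(s_0)=\sum_{n=0}^\infty\gamma^n\big(V^{\pi'}_\tau(s_n)-\gamma V^{\pi'}_\tau(s_{n+1})\big)$, valid because $V^{\pi'}_\tau$ is bounded and $\gamma<1$ (so $\gamma^{N}V^{\pi'}_\tau(s_N)\to0$); boundedness also legitimises taking $\bbE^{\pi}_\rho$ termwise by dominated convergence. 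Conditioning on $(h_n,a_n)$ and using the Markov rule \eqref{eq:markov_like} with the definition \eqref{def:Q_fn} of $Q^{\pi'}_\tau$ turns $\gamma\,\bbE^{\pi}_\rho[V^{\pi'}_\tau(s_{n+1})\mid h_n,a_n]$ into $Q^{\pi'}_\tau(s_n,a_n)-c(s_n,a_n)$. Combining the two displays, the terms $c(s_n,a_n)$ cancel and I obtain
\[
V^{\pi}_\tau(\rho)-V^{\pi'}_\tau(\rho)=\bbE^{\pi}_\rho\!\left[\sum_{n=0}^\infty\gamma^n\Big(Q^{\pi'}_\tau(s_n,a_n)+\tau\ln\tfrac{\mathrm{d}\pi}{\mathrm{d}\mu}(a_n|s_n)-V^{\pi'}_\tau(s_n)\Big)\right].
\]

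Then I would pass to the occupancy measure: since the summand depends only on $(s_n,a_n)$ and $a_n\sim\pi(\cdot|s_n)$, the definition \eqref{eq:occupancy_s} recasts the discounted expectation as $\frac{1}{1-\gamma}\int_S\int_A(\cdots)\pi(da|s)\,d^{\pi}_\rho(ds)$. It remains to convert the integrand pointwise in $s$. Using that $V^{\pi'}_\tau(s)$ is $a$-independent and substituting the on-policy identity \eqref{eq:on_policy}, $V^{\pi'}_\tau(s)=\int_A\big(Q^{\pi'}_\tau(s,a)+\tau\ln\frac{\mathrm{d}\pi'}{\mathrm{d}\mu}(a|s)\big)\pi'(da|s)$, the inner integral becomes
\[
\int_A\Big(Q^{\pi'}_\tau+\tau\ln\tfrac{\mathrm{d}\pi}{\mathrm{d}\mu}\Big)\pi(da|s)-\int_A\Big(Q^{\pi'}_\tau+\tau\ln\tfrac{\mathrm{d}\pi'}{\mathrm{d}\mu}\Big)\pi'(da|s).
\]
Finally, the Radon--Nikodym chain rule $\ln\frac{\mathrm{d}\pi}{\mathrm{d}\mu}=\ln\frac{\mathrm{d}\pi}{\mathrm{d}\pi'}+\ln\frac{\mathrm{d}\pi'}{\mathrm{d}\mu}$ in the $\pi$-integral splits off exactly $\tau\int_A\ln\frac{\mathrm{d}\pi}{\mathrm{d}\pi'}\,\pi(da|s)=\tau\operatorname{KL}(\pi(\cdot|s)|\pi'(\cdot|s))$ and regroups the rest into $\int_A(Q^{\pi'}_\tau+\tau\ln\frac{\mathrm{d}\pi'}{\mathrm{d}\mu})(\pi-\pi')(da|s)$, which is the claim.

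I expect the only genuinely delicate points to be the rigorous justification of the infinite-sum manipulations — the termwise telescoping, the interchange of summation with $\bbE^{\pi}_\rho$, and the re-expression of the discounted series via $d^{\pi}_\rho$ in \eqref{eq:occupancy_s} — all of which hinge on the uniform boundedness of $V^{\pi'}_\tau$, $Q^{\pi'}_\tau$ and of $\ln\frac{\mathrm{d}\pi}{\mathrm{d}\mu}$ for $\pi\in\Pi_{\mu}$ established in Section~\ref{sec:basis_properties_MDP}; once those bounds are in hand, dominated convergence and Fubini apply and the remaining steps are routine.
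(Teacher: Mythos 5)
Your proof is correct, but it takes a genuinely different route from the paper's. You prove the advantage representation
\[
V^{\pi}_\tau(\rho)-V^{\pi'}_\tau(\rho)=\bbE^{\pi}_\rho\Big[\sum_{n=0}^\infty\gamma^n\big(Q^{\pi'}_\tau(s_n,a_n)+\tau\ln\tfrac{\mathrm{d}\pi}{\mathrm{d}\mu}(a_n|s_n)-V^{\pi'}_\tau(s_n)\big)\Big]
\]
probabilistically, by absorbing the KL penalty into the running cost, telescoping $V^{\pi'}_\tau(s_0)$ along the trajectory under $\bbP^{\pi}_\rho$, and conditioning via \eqref{eq:markov_like}; only afterwards do you pass to the occupancy measure and invoke \eqref{eq:on_policy} together with $\ln\frac{\mathrm{d}\pi}{\mathrm{d}\mu}=\ln\frac{\mathrm{d}\pi}{\mathrm{d}\pi'}+\ln\frac{\mathrm{d}\pi'}{\mathrm{d}\mu}$ to produce the KL term. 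The paper instead works entirely at the level of the Bellman equation: it subtracts the two on-policy identities \eqref{eq:on_policy} pointwise in $s$, performs the add-and-subtract and the KL split first, recognises that the remainder is $\gamma\int_A\int_S(V^{\pi}_\tau-V^{\pi'}_\tau)(s')P(\mathrm{d}s'|s,a)\pi(\mathrm{d}a|s)$, and then inverts the resulting fixed-point relation with the Neumann-series resolvent identity of Lemma \ref{lem:inverse_1-gammaP} to bring in $d^{\pi}$. The two arguments are of course two faces of the same computation (the Neumann series is the operator-theoretic encoding of your telescoping sum), but the paper's version avoids any manipulation of infinite sums on path space -- the only limiting argument is hidden inside the convergence of the Neumann series, already established in Lemma \ref{lem:inverse_1-gammaP} -- whereas your version requires the dominated-convergence and Fubini justifications you flag at the end, all of which do go through given the uniform bounds on $V^{\pi'}_\tau$, $Q^{\pi'}_\tau$ and $\ln\frac{\mathrm{d}\pi}{\mathrm{d}\mu}$ from Proposition \ref{prop:boundedness_Q}. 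Your route has the advantage of making the classical Kakade--Langford interpretation transparent; the paper's is slightly more economical given that Lemma \ref{lem:inverse_1-gammaP} is needed elsewhere anyway.
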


Lemma~\ref{lem:performance_diff} generalizes the performance difference lemma from \cite[Lemma 25]{mei2020global}, which was established for entropy-regularized MDPs in the tabular setting, to accommodate general state and action spaces.
The performance difference lemma was first introduced for (unregularized) tabular MDPs in \cite{kakade2002approximately} and has since become fundamental in the analysis of various policy gradient algorithms for tabular MDPs
(see e.g., \cite{agarwal2020optimality,xiao2022convergence,  lan2022policy, zhan2023policy}).
In our analysis, it acts as a substitute for strong convexity of value function. 
Indeed, 
recall that  $Q^{\pi'}_{\tau}-V_\tau^{\pi'} + \tau \ln \frac{\mathrm{d} \pi'}{\mathrm{d} \mu}$
is the first variation of $\pi\mapsto V^{\pi}_{\tau}(\rho)$ at $\pi'$ (see \eqref{eq:delta_V_delta_pi-intro}), and hence  Lemma \ref{lem:performance_diff}
allows for treating  the $\tau$-dependent  KL-term  as a  strong convex regularisation (if  the occupancy measure $d^\pi_\rho$ is ignored).

In the sequel, we rigorously prove the well-posedness and    convergence of \eqref{eq:FR_flow}. 
Analyzing   \eqref{eq:FR_flow}
for continuous state and action spaces
is   more complex than for tabular MDPs~\cite{muller2024essentially, muller2024fisher}.  
Specifically, in the present setting,~\eqref{eq:FR_flow} lies in the infinite-dimensional metric space $\mathcal P(A|S)$, which reduces to a finite-dimensional set in the tabular case.
Moreover, justifying the term $\ln\frac{\mathrm{d}\pi_t}{\mathrm{d}\mu} $ in~\eqref{eq:FR_flow} is necessary for general action spaces $A$,  since not all measures in $\mathcal P(A)$  are absolutely continuous with respect to $\mu$.
This issue does not arise for finite action spaces, where all measures are absolutely continuous with respect to the uniform distribution.

An essential tool for analyzing  \eqref{eq:FR_flow} is the following mirror descent dynamics, which is the dual flow of \eqref{eq:FR_flow} in $B_b(S\times A)$.  Specifically, for  given  $Z_0\in B_b(S\times A)$, 
consider the initial value problem:
\begin{equation}\label{eq:mirror_descent}
\left\{
\begin{aligned}
&\partial_t Z_t(s,a) =   - \left( Q^{\pi_t}_\tau(s,a)  + \tau Z_t(s,a)-V^{\pi_t}_\tau(s)\right), \quad  \pi_t (da|s) = \boldsymbol{\pi}(Z_t)(da|s)\,,
\quad t>0,
\\
&Z\vert_{t=0}=Z_0.
\end{aligned}
\right.
\end{equation}
where  $ \boldsymbol{\pi} :B_b(S\times A) \to  \clP_{\mu}(A|S)$ is  defined by \eqref{eq:pi_f_mu}. Equation \eqref{eq:mirror_descent} extends the (finite-dimensional) mirror descent ODE system in \cite{krichene2015accelerated}
to the infinite-dimensional space $\Pi_\mu$. At each $t>0$, it updates the dual variable $Z_t$ in the dual space $B_b(S\times A)$ along the gradient direction, and maps it back to the primal space $\clP_{\mu}(A|S)$ using the map $\boldsymbol{\pi}$. As we will show in Lemma \ref{lemma:dpi_t},  \eqref{eq:mirror_descent} produces the same flow for $(\pi_t)_{t\ge 0}$ as \eqref{eq:FR_flow}. 

Here, we emphasize two significant theoretical advantages of the dynamics \eqref{eq:mirror_descent} over \eqref{eq:FR_flow}: (i) \eqref{eq:mirror_descent} lies within the Banach space $B_b(S\times A)$, while the solution to \eqref{eq:FR_flow} lies in the incomplete metric space $\clP_\mu(A|S)$. (ii) For each $t>0$, the policy $\pi_t$ obtained from \eqref{eq:mirror_descent} naturally belongs to $\Pi_\mu$, whereas it is initially unclear whether the solution to \eqref{eq:FR_flow} is absolutely continuous with respect to $\mu$. These advantageous features of \eqref{eq:mirror_descent} make it a more convenient choice for establishing the well-posedness of the flow $(\pi_t)_{t\ge 0}$ and to ensure the required differentiability along the flow. 

The next lemma shows the equivalence between \eqref{eq:FR_flow} and \eqref{eq:mirror_descent}. The proof is given in Section \ref{sec:convergence_FR_flow}.

\begin{lemma}
\label{lemma:dpi_t}
Let $T>0$. 
\begin{enumerate}[(1)]
\item \label{item:from_MD_to_FR}
Let $Z\in  C^1([0,T); B_b(S\times A))$ be such that~\eqref{eq:mirror_descent} holds and define $\pi_t =\boldsymbol{\pi}(Z_t)$ for all $t\in [0,T) $.
Then $\pi\in  C^1([0,T); \Pi_\mu  )$ satisfies \eqref{eq:FR_flow} with $\pi_0=\boldsymbol{\pi}(Z_0)$.\footnotemark
\item \label{item:from_FR_to_MD}
Let $\pi\in  C^1([0,T); \Pi_\mu )$ be such that~\eqref{eq:FR_flow} holds and define $Z_t =\ln\frac{\mathrm{d}  \pi_t}{\mathrm{d}  \mu} $ for all $t\in[0,T)$. Then $Z\in  C^1([0,T); B_b(S\times A))$ satisfies \eqref{eq:mirror_descent}
with $Z_0=\ln\frac{\mathrm{d}  \pi_0}{\mathrm{d}  \mu} $.
\footnotetext{For any $I\subset  [0,\infty)$, we write  $\pi\in  C^1(I; \Pi_\mu )$ if $\pi\in C^1(I; b\clM(A|S))$ and $\pi_t\in \Pi_\mu$ for all $t\in I$.}
\end{enumerate}
\end{lemma}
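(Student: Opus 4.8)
The plan is to verify each flow directly from the other by exploiting the elementary relation between the dual variable $Z$ and the normalised policy $\boldsymbol{\pi}(Z)$, with the on-policy Bellman identity \eqref{eq:on_policy} supplying the key cancellation. Throughout I write $c_t(s) = \ln\int_A e^{Z_t(s,a')}\mu(da')$ for the log-normaliser, so that by \eqref{eq:pi_f_mu} the density of $\boldsymbol{\pi}(Z_t)$ decomposes as $\ln\frac{\mathrm{d}\boldsymbol{\pi}(Z_t)}{\mathrm{d}\mu}(a|s) = Z_t(s,a) - c_t(s)$, which is the bookkeeping device that drives both directions.

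\textbf{Direction \eqref{item:from_MD_to_FR}.} First I would record that $t\mapsto \pi_t = \boldsymbol{\pi}(Z_t)$ inherits $C^1$-regularity into $b\clM(A|S)$ from $Z\in C^1([0,T);B_b(S\times A))$, invoking the differentiability of the mirror map $\boldsymbol{\pi}$ and the chain rule established in Section~\ref{sec:derivatives}; this also ensures $\pi_t\in\Pi_\mu$ for all $t$. Next I differentiate $\ln\frac{\mathrm{d}\pi_t}{\mathrm{d}\mu} = Z_t - c_t$, obtaining $\partial_t\ln\frac{\mathrm{d}\pi_t}{\mathrm{d}\mu} = \partial_t Z_t - \partial_t c_t$, where differentiating under the integral (justified by dominated convergence, since $Z$ and $\partial_t Z$ are uniformly bounded on compact time intervals as $Z\in C^1([0,T);B_b)$) gives $\partial_t c_t(s) = \int_A \partial_t Z_t(s,a')\pi_t(da'|s)$. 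Substituting the mirror-descent equation \eqref{eq:mirror_descent} for $\partial_t Z_t$, the baseline $V^{\pi_t}_\tau(s)$ cancels between $\partial_t Z_t(s,a)$ and its $\pi_t$-average in $\partial_t c_t(s)$, leaving
\[
\partial_t\ln\frac{\mathrm{d}\pi_t}{\mathrm{d}\mu}(a|s) = -\Big(Q^{\pi_t}_\tau(s,a)+\tau Z_t(s,a)\Big) + \int_A\Big(Q^{\pi_t}_\tau(s,a')+\tau Z_t(s,a')\Big)\pi_t(da'|s)\,.
\]
Writing $\tau Z_t = \tau\ln\frac{\mathrm{d}\pi_t}{\mathrm{d}\mu} + \tau c_t$, the constant-in-$a$ terms $\tau c_t(s)$ cancel, and the on-policy identity \eqref{eq:on_policy} identifies the $\pi_t$-average of $Q^{\pi_t}_\tau + \tau\ln\frac{\mathrm{d}\pi_t}{\mathrm{d}\mu}$ with $V^{\pi_t}_\tau(s)$; the right-hand side collapses to $-(Q^{\pi_t}_\tau(s,a)+\tau\ln\frac{\mathrm{d}\pi_t}{\mathrm{d}\mu}(a|s)-V^{\pi_t}_\tau(s))$. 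Multiplying through by $\pi_t$ (using $\partial_t\pi_t = (\partial_t\ln\frac{\mathrm{d}\pi_t}{\mathrm{d}\mu})\pi_t$) yields exactly \eqref{eq:FR_flow} with $\pi_0=\boldsymbol{\pi}(Z_0)$.

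\textbf{Direction \eqref{item:from_FR_to_MD}.} This direction is more direct. Given $\pi\in C^1([0,T);\Pi_\mu)$ solving \eqref{eq:FR_flow}, set $Z_t = \ln\frac{\mathrm{d}\pi_t}{\mathrm{d}\mu}$. Since $\pi_t\in\Pi_\mu$, the density is uniformly bounded above and below away from zero on compact time intervals, so $Z_t\in B_b(S\times A)$ and $t\mapsto Z_t$ is $C^1$ into $B_b(S\times A)$ (composing $\ln$ with the $C^1$ density and using the positive lower bound). Because $\int_A \frac{\mathrm{d}\pi_t}{\mathrm{d}\mu}(a'|s)\mu(da')=1$, one checks $\boldsymbol{\pi}(Z_t)=\pi_t$, so the coupling in \eqref{eq:mirror_descent} is consistent. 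Finally, differentiating $Z_t=\ln\frac{\mathrm{d}\pi_t}{\mathrm{d}\mu}$ and inserting \eqref{eq:FR_flow} gives $\partial_t Z_t = -(Q^{\pi_t}_\tau + \tau\ln\frac{\mathrm{d}\pi_t}{\mathrm{d}\mu} - V^{\pi_t}_\tau) = -(Q^{\pi_t}_\tau + \tau Z_t - V^{\pi_t}_\tau)$, which is precisely \eqref{eq:mirror_descent} with $Z_0=\ln\frac{\mathrm{d}\pi_0}{\mathrm{d}\mu}$.

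\textbf{Main obstacle.} The algebra is elementary once the decomposition $\ln\frac{\mathrm{d}\pi_t}{\mathrm{d}\mu}=Z_t-c_t$ and the identity \eqref{eq:on_policy} are in place; the genuine work is the analytic bookkeeping in Direction \eqref{item:from_MD_to_FR}: establishing that $\boldsymbol{\pi}(Z_t)$ is a $C^1$ curve in $b\clM(A|S)$ and that differentiation passes under the integral defining $c_t$. I would dispatch this by invoking the differentiability of $\boldsymbol{\pi}$ and the chain rule of Section~\ref{sec:derivatives}, together with the uniform bounds on $Z_t$ and $\partial_t Z_t$ over compact time intervals, which simultaneously give dominated convergence and a uniform positive lower bound on the density, so that all pointwise-in-$(s,a)$ identities lift to identities in $b\clM(A|S)$ and $B_b(S\times A)$.
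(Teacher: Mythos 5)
Your proposal is correct and follows essentially the same route as the paper: Direction (1) is the chain-rule computation for the mirror map (your $\partial_t Z_t - \partial_t c_t$ decomposition is exactly the paper's formula for $\mathfrak{d}\boldsymbol{\pi}(Z_t)\partial_t Z_t$ from Proposition \ref{prop:differentiability_Pi}), followed by the same cancellation via the on-policy identity \eqref{eq:on_policy}; Direction (2) is the same logarithmic-derivative argument, which the paper merely packages through the time-integrated form of \eqref{eq:FR_flow} and the fundamental theorem of calculus before dividing by the density.
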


\begin{remark}
The update of $(Z_t)_{t\ge 0}$ can be modified without altering the flow $(\pi_t)_{t\ge 0}$. Indeed, by \eqref{eq:dpi_dt},   for any $f\in C(\bbR_+; B_b(S))$, if $Z\in C^1(\mathbb{R}_+; B_b(S\times A))$ satisfies for all $t>0$, 
\begin{equation*} 
\partial_t Z_t(s,a)  =   - \left( Q^{\pi_t}_\tau(s,a) + \tau Z_t(s,a)+f_t(s)\right),
\quad     \pi_t (da|s)  =    \boldsymbol{\pi}(Z_t)(da|s)\,,
\end{equation*}
then $\pi\in  C^1(\bbR_+; \Pi_\mu  )$ satisfies \eqref{eq:FR_flow}. 
This provides additional flexibility in constructing a solution of \eqref{eq:FR_flow} by choosing an appropriate term $f$. In our case, we select $f=-V^{\pi_t}_\tau$ in \eqref{eq:mirror_descent}  as it facilitates deducing the uniqueness of \eqref{eq:FR_flow} from Lemma \ref{lemma:dpi_t} Item \ref{item:from_FR_to_MD}.
\end{remark}

The maps $Z\mapsto Q^{\boldsymbol{\pi}(Z)}_\tau$ and $Z\mapsto V^{\boldsymbol{\pi}(Z)}_\tau$ are merely locally Lipschitz continuous, due to the presence of the KL divergence in  $Q^{\pi}_\tau$ and $V^{\pi}_\tau$ (see Lemma \ref{lemma:Q_local_lipschitz}). Consequently,  \eqref{eq:mirror_descent}  involves a locally Lipschitz non-linearity, which prevents us from directly deducing the existence of a global solution from the Picard--Lindel{\"o}f theorem. To overcome this difficulty, we first establish that the value function decreases along the flow $(Z_t)_{t\ge 0}$, and hence is uniformly bounded in $t$.

\begin{proposition}
\label{prop:value_monotone}
Let $Z_0\in B_b(S\times A)$ and $T>0$. If $Z\in  C^1([0,T); B_b(S\times A))$
satisfies \eqref{eq:mirror_descent} and $\pi_t =\boldsymbol{\pi}(Z_t)$ for all $t\in [0,T)$, then $t\mapsto V^{\pi_t}_\tau $ is differentiable, and $\partial_t V^{\pi_t}_\tau (s)\le 0$ for all $s\in S$ and $t\in [0,T)$. 
\end{proposition}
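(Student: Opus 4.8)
The plan is to prove the monotonicity by differentiating the value function along the flow and showing the derivative is pointwise nonpositive. The key object is the time derivative $\partial_t V^{\pi_t}_\tau(s)$, and the strategy is to establish a \emph{self-referential linear relation} that it satisfies. First I would fix $s\in S$ and define $W_t(s) \coloneqq \partial_t V^{\pi_t}_\tau(s)$, assuming for the moment that this derivative exists and that differentiation can be interchanged with the expectation/integration defining $V^{\pi_t}_\tau$; justifying this interchange is where the regularity established earlier (local Lipschitz continuity of $Z\mapsto V^{\boldsymbol{\pi}(Z)}_\tau$ from Lemma~\ref{lemma:Q_local_lipschitz}, and the fact that $Z\in C^1$) must be invoked carefully.

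The core computation would proceed from the on-policy Bellman identity \eqref{eq:on_policy}, namely $V^{\pi_t}_\tau(s)=\int_A \big(Q^{\pi_t}_\tau(s,a)+\tau Z_t(s,a)\big)\pi_t(da|s)$, where I write $Z_t=\ln\frac{\mathrm d\pi_t}{\mathrm d\mu}$. Differentiating in $t$ produces three contributions: one from $\partial_t Q^{\pi_t}_\tau$, one from $\tau\,\partial_t Z_t$, and one from $\partial_t\pi_t$ acting on the bracket. The key simplification comes from the flow equation itself: substituting $\partial_t Z_t = -(Q^{\pi_t}_\tau+\tau Z_t - V^{\pi_t}_\tau)$ from \eqref{eq:mirror_descent}, and using the density form $\partial_t\pi_t(da|s) = \partial_t Z_t(s,a)\,\pi_t(da|s) - \big(\int_A \partial_t Z_t(s,a')\pi_t(da'|s)\big)\pi_t(da|s)$ (obtained by differentiating $\boldsymbol{\pi}(Z_t)$, cf.\ the chain-rule computation leading to \eqref{eq:grad_flow_density_intro}). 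Meanwhile $\partial_t Q^{\pi_t}_\tau(s,a)=\gamma\int_S \partial_t V^{\pi_t}_\tau(s')\,P(ds'|s,a) = \gamma\int_S W_t(s')P(ds'|s,a)$ directly from \eqref{def:Q_fn}. The plan is to collect terms: the contribution involving $\tau\partial_t Z_t$ plus the first-variation-weighted $\partial_t\pi_t$ term should combine, using $\int_A \frac{\delta V}{\delta\pi}\pi_t\,da=0$, into a manifestly nonpositive quadratic expression $-\int_A\big|Q^{\pi_t}_\tau+\tau Z_t-V^{\pi_t}_\tau\big|^2\pi_t(da|s)$, while the $Q$-term yields $\gamma\int_S\int_A W_t(s')P(ds'|s,a)\pi_t(da|s)=\gamma\int_S W_t(s')P_{\pi_t}(ds'|s)$.

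This yields an identity of the schematic form
\begin{equation*}
W_t(s) = \gamma\int_S W_t(s')\,P_{\pi_t}(ds'|s) - \int_A\Big|Q^{\pi_t}_\tau(s,a)+\tau Z_t(s,a)-V^{\pi_t}_\tau(s)\Big|^2\pi_t(da|s)\,,
\end{equation*}
i.e.\ $W_t = \gamma P_{\pi_t} W_t - g_t$ with $g_t(s)\ge 0$ for all $s$. The final step is to solve this fixed-point relation: since $\gamma P_{\pi_t}$ is a contraction on $B_b(S)$ with $\|\gamma P_{\pi_t}\|\le\gamma<1$, the operator $(\mathrm{Id}-\gamma P_{\pi_t})$ is boundedly invertible with Neumann series $\sum_{n\ge 0}(\gamma P_{\pi_t})^n$ having nonnegative coefficients, giving $W_t = -\sum_{n\ge 0}\gamma^n P^n_{\pi_t}g_t \le 0$ pointwise, which is exactly the claim $\partial_t V^{\pi_t}_\tau(s)\le 0$.

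The main obstacle I anticipate is not the algebra but the \emph{rigorous justification of differentiability} of $t\mapsto V^{\pi_t}_\tau(s)$ and the interchange of $\partial_t$ with the infinite-horizon sum/integral defining $V^{\pi_t}_\tau$; the term $\tau\,\mathrm{KL}(\pi_t(\cdot|s_n)|\mu)=\tau\int_A Z_t\,\pi_t\,da - \tau\ln\int_A e^{Z_t}d\mu$ depends on $Z_t$ both through the density and through the normaliser, so one must control $\partial_t$ of the KL term uniformly in the history to differentiate under $\mathbb E^{\pi}_s$. I would handle this by working through the dual variable $Z_t\in B_b(S\times A)$ where $t\mapsto Z_t$ is genuinely $C^1$ in a Banach space, using the chain rule and differentiability results flagged in Section~\ref{sec:derivatives} (in particular Proposition~\ref{prop:differentiability_dV_df}) to legitimately differentiate $Z\mapsto V^{\boldsymbol{\pi}(Z)}_\tau$, rather than differentiating the measure-valued flow directly; this is precisely the advantage of the mirror-descent formulation emphasised in the text.
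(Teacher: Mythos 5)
Your proposal is correct and follows essentially the same route as the paper: the differentiability of $t\mapsto V^{\pi_t}_\tau$ is obtained by composing the $C^1$ dual flow $(Z_t)$ with the Hadamard differential of $Z\mapsto V^{\boldsymbol{\pi}(Z)}_\tau$ (Proposition~\ref{prop:differentiability_dV_df} together with the chain rule of Lemma~\ref{lemma:chain_rule}), and the sign is reduced, after centring with the on-policy identity \eqref{eq:on_policy}, to a nonnegative square integrated against a positive kernel. The only cosmetic difference is that you re-derive the resolvent $(\operatorname{id}-\gamma P_{\pi_t})^{-1}=\sum_{n\ge 0}\gamma^n P^n_{\pi_t}$ by hand from the self-referential relation $W_t=\gamma P_{\pi_t}W_t-g_t$, whereas the paper has already packaged exactly this Neumann-series step into the occupancy kernel $d^{\pi_t}$ appearing in \eqref{eq:d_J_fg} (via Lemma~\ref{lem:inverse_1-gammaP}).
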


The proof of Proposition \ref{prop:value_monotone} is given in Section \ref{sec:convergence_FR_flow}.
To accommodate general state and action spaces, 
a key technical step is to establish the Hadamard differentiability of 
$B_b(S\times A)\ni Z\mapsto V^{\boldsymbol{\pi}(Z)}_\tau\in \mathbb R$ (Proposition \ref{prop:differentiability_dV_df}).
We adopt Hadamard differentiability as it is the weakest form of differentiability that permits a chain rule.

Using Proposition \ref{prop:value_monotone},
we   prove that a solution of \eqref{eq:mirror_descent} does not explode in any finite time (Proposition \ref{prop:a_priori_bound_Z}), and subsequently establish the well-posedness of \eqref{eq:mirror_descent} (or equivalently \eqref{eq:FR_flow}), as stated in the following theorem.

\begin{theorem}\label{thm:wp_MD}
For each $Z_0\in B_b(S\times A)$, there exists a unique $Z\in  C^1(\mathbb{R}_+; B_b(S\times A))$ satisfying \eqref{eq:mirror_descent} for all $t\in \bbR_+  $. Consequently, for all $\pi_0\in \Pi_\mu$, there exists a unique  $\pi\in  C^1(\mathbb{R}_+; \Pi_\mu)$  satisfying \eqref{eq:FR_flow} for all  $t\in \bbR_+   $.
\end{theorem}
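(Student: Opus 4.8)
The plan is to establish global well-posedness for the dual mirror-descent flow \eqref{eq:mirror_descent} in the Banach space $B_b(S\times A)$, and then transport the result to the primal Fisher--Rao flow \eqref{eq:FR_flow} via the equivalence in Lemma~\ref{lemma:dpi_t}. Writing the right-hand side of \eqref{eq:mirror_descent} as a map $F\colon B_b(S\times A)\to B_b(S\times A)$, $F(Z):=-\big(Q^{\boldsymbol{\pi}(Z)}_\tau - V^{\boldsymbol{\pi}(Z)}_\tau + \tau Z\big)$ (with $V^{\boldsymbol{\pi}(Z)}_\tau$ read as a function on $S\times A$ that is constant in $a$), I would first record that $F$ is locally Lipschitz: $Z\mapsto\boldsymbol{\pi}(Z)$ is locally Lipschitz by \eqref{eq:pi_f_mu}, $Z\mapsto Q^{\boldsymbol{\pi}(Z)}_\tau$ and $Z\mapsto V^{\boldsymbol{\pi}(Z)}_\tau$ are locally Lipschitz by Lemma~\ref{lemma:Q_local_lipschitz}, and $Z\mapsto\tau Z$ is bounded linear. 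Since $B_b(S\times A)$ is complete, the Banach-space Picard--Lindel\"of theorem yields, for each $Z_0$, a unique maximal solution $Z\in C^1([0,T_{\max}); B_b(S\times A))$.

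The crux is to rule out finite-time blow-up, i.e.\ to show $T_{\max}=\infty$, and here I would exploit the linear damping $-\tau Z_t$ together with Proposition~\ref{prop:value_monotone}. Variation of constants applied to \eqref{eq:mirror_descent} gives
\[
Z_t = e^{-\tau t}Z_0 - \int_0^t e^{-\tau(t-r)}\big(Q^{\pi_r}_\tau - V^{\pi_r}_\tau\big)\,\mathrm{d}r, \qquad t\in[0,T_{\max}).
\]
By Proposition~\ref{prop:value_monotone} the map $t\mapsto V^{\pi_t}_\tau(s)$ is nonincreasing on $[0,T_{\max})$, so $V^{\pi_t}_\tau(s)\le V^{\pi_0}_\tau(s)$ with $V^{\pi_0}_\tau\in B_b(S)$; combined with the trivial lower bound $V^{\pi_t}_\tau\ge -\|c\|_{B_b(S\times A)}/(1-\gamma)$ (using that $c$ is bounded and $\operatorname{KL}\ge 0$), this produces a bound $\|V^{\pi_t}_\tau\|_{B_b(S)}\le V_{\max}$ uniform in $t$, whence $\|Q^{\pi_t}_\tau\|_{B_b(S\times A)}\le \|c\|_{B_b(S\times A)}+\gamma V_{\max}$ by \eqref{def:Q_fn}. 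Inserting $M:=\sup_r\|Q^{\pi_r}_\tau - V^{\pi_r}_\tau\|_{B_b(S\times A)}<\infty$ into the integral representation and using $\int_0^t e^{-\tau(t-r)}\,\mathrm{d}r\le \tau^{-1}$ gives the uniform a priori estimate $\sup_{t\in[0,T_{\max})}\|Z_t\|_{B_b(S\times A)}\le \|Z_0\|_{B_b(S\times A)}+M/\tau$ (this is the content of Proposition~\ref{prop:a_priori_bound_Z}). The standard blow-up alternative for locally Lipschitz ODEs (a maximal solution with $T_{\max}<\infty$ must satisfy $\|Z_t\|\to\infty$) then forces $T_{\max}=\infty$; equivalently one runs the truncation argument, solving the globally Lipschitz equation with $F$ cut off outside a ball of radius $R>\|Z_0\|_{B_b(S\times A)}+M/\tau$ and noting the solution never leaves that ball. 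Global uniqueness follows from local uniqueness by patching.

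Finally I would transfer to the primal flow. Given $\pi_0\in\Pi_\mu$, the definition of $\Pi_\mu$ provides $Z_0:=\ln\frac{\mathrm{d}\pi_0}{\mathrm{d}\mu}\in B_b(S\times A)$ with $\pi_0=\boldsymbol{\pi}(Z_0)$; solving \eqref{eq:mirror_descent} globally and setting $\pi_t=\boldsymbol{\pi}(Z_t)$ produces, via Lemma~\ref{lemma:dpi_t}, Item~\ref{item:from_MD_to_FR}, a solution $\pi\in C^1(\mathbb{R}_+;\Pi_\mu)$ of \eqref{eq:FR_flow}. For uniqueness, if $\pi,\tilde{\pi}\in C^1(\mathbb{R}_+;\Pi_\mu)$ both solve \eqref{eq:FR_flow} with the same datum, then Lemma~\ref{lemma:dpi_t}, Item~\ref{item:from_FR_to_MD}, shows $\ln\frac{\mathrm{d}\pi_t}{\mathrm{d}\mu}$ and $\ln\frac{\mathrm{d}\tilde{\pi}_t}{\mathrm{d}\mu}$ both solve \eqref{eq:mirror_descent} from the same initial condition, so they coincide by the uniqueness already established, and hence $\pi=\boldsymbol{\pi}(Z)=\tilde{\pi}$.

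I expect the main obstacle to be the a priori estimate, specifically converting the Lyapunov-type control on the \emph{value} function from Proposition~\ref{prop:value_monotone} into a genuine uniform bound on $\|Z_t\|_{B_b(S\times A)}$. This is exactly where the damping term $\tau Z_t$ is indispensable: without it the only generic estimate available from the merely locally Lipschitz nonlinearity would be a Gr\"onwall bound that does not obviously preclude blow-up, and it is the interplay of the damping with the boundedness of $Q^{\pi_t}_\tau-V^{\pi_t}_\tau$ (itself a consequence of the monotonicity of the value) that closes the argument cleanly.
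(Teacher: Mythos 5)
Your proposal is correct and follows essentially the same route as the paper: local Lipschitz continuity of the right-hand side via Lemma~\ref{lemma:Q_local_lipschitz}, the a priori bound of Proposition~\ref{prop:a_priori_bound_Z} obtained from Duhamel's formula and the monotonicity of the value function (Proposition~\ref{prop:value_monotone}), a truncation/blow-up-alternative argument to obtain global existence and uniqueness, and the transfer to the primal flow via both items of Lemma~\ref{lemma:dpi_t}. The only cosmetic difference is that the paper runs the truncation argument directly (constructing the global solution of the cut-off equation by a Banach fixed point in a weighted norm) rather than first invoking a maximal-solution/blow-up alternative, but you explicitly note this equivalence yourself.
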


The proof of Theorem \ref{thm:wp_MD} is given in Section \ref{sec:convergence_FR_flow}.

Recall that Proposition \ref{prop:value_monotone} shows that the value function decreases along the flow  \eqref{eq:mirror_descent} (or equivalently \eqref{eq:FR_flow}).   
The following theorem proves an exponential convergence of the value function and the policy. 

\begin{theorem}
\label{thm:linear_convergence}
Let  $Z\in  C^1(\mathbb{R}_+; B_b(S\times A))$ satisfy \eqref{eq:mirror_descent}. Then for all $\rho\in \clP(S)$ and $t>0$,
\begin{equation*}
V^{\pi_{t}}_{\tau}(\rho)-V^{\pi^*_\tau}_{\tau}(\rho)\le \frac{\tau }{(1-\gamma)(e^{ \tau t}-1)} \int_S \operatorname{KL}(\pi^*_\tau(\cdot|s)|\pi_0(\cdot|s) )d^{\pi^*_\tau}_{\rho} (ds)\,,
\end{equation*}
and 
\begin{equation*}
\int_S\|\pi_t(\cdot|s) -\pi^*_\tau(\cdot|s)\|^2_{\clM(A)}d^{\pi^*_\tau}_{\rho} (ds)
\le 2 e^{-\tau t}\int_S 
\operatorname{KL}(\pi^*_\tau(\cdot|s)|\pi_0(\cdot|s) )d^{\pi^*_\tau}_{\rho} (ds)\,,
\end{equation*}
where  $\pi_t =\boldsymbol{\pi}(Z_t)$ for all $t\ge 0$ and $\pi^*_\tau$ is the optimal policy defined in \eqref{eq:optimal_policy}. The same convergence result also holds for the unique solution $\pi\in  C^1(\mathbb{R}_+; \Pi_\mu)$ of \eqref{eq:FR_flow}. 
\end{theorem}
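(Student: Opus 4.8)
The plan is to run the standard Lyapunov argument sketched in \eqref{eq:dKL_sketch}--\eqref{eq:dKL_dt}, but to make every step rigorous by keeping the computation on the Banach-space dual variable $Z_t$ rather than on the measure-valued flow $\pi_t$. I take $y_t \coloneqq \int_S \operatorname{KL}(\pi^*_\tau(\cdot|s)|\pi_t(\cdot|s))\,d^{\pi^*_\tau}_\rho(ds)$ as the Lyapunov function. The decisive structural feature is that the integration measure $d^{\pi^*_\tau}_\rho$ and the test kernel $\pi^*_\tau$ are independent of $t$; only $\ln\frac{\mathrm{d}\pi_t}{\mathrm{d}\mu}$ varies. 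Using $\pi_t=\boldsymbol{\pi}(Z_t)$ I would first rewrite
\[
y_t = \int_S\int_A \Big(\ln\tfrac{\mathrm{d}\pi^*_\tau}{\mathrm{d}\mu}(a|s) - Z_t(s,a) + \ln\!\int_A e^{Z_t(s,a')}\mu(da')\Big)\pi^*_\tau(da|s)\,d^{\pi^*_\tau}_\rho(ds),
\]
so that the only $t$-dependence enters through the $C^1$ curve $t\mapsto Z_t\in B_b(S\times A)$ from \eqref{eq:mirror_descent}. Since $\pi^*_\tau\in\Pi_\mu$ has $\ln\frac{\mathrm{d}\pi^*_\tau}{\mathrm{d}\mu}\in B_b(S\times A)$, and the integrand together with its $t$-derivative is bounded uniformly on compact time intervals (by the chain rule and regularity of Section~\ref{sec:derivatives}, cf.\ Proposition~\ref{prop:differentiability_dV_df}), dominated convergence justifies differentiating under the double integral.

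Next I would compute $\dot y_t$. Differentiating and inserting the density form of \eqref{eq:FR_flow}, which by Lemma~\ref{lemma:dpi_t} holds along the flow, namely $\partial_t\ln\frac{\mathrm{d}\pi_t}{\mathrm{d}\mu}(a|s)=-(Q^{\pi_t}_\tau(s,a)+\tau\ln\frac{\mathrm{d}\pi_t}{\mathrm{d}\mu}(a|s)-V^{\pi_t}_\tau(s))$, gives
\[
\dot y_t = \int_S\int_A\Big(Q^{\pi_t}_\tau(s,a)+\tau\ln\tfrac{\mathrm{d}\pi_t}{\mathrm{d}\mu}(a|s)-V^{\pi_t}_\tau(s)\Big)\pi^*_\tau(da|s)\,d^{\pi^*_\tau}_\rho(ds).
\]
I would then apply the performance difference lemma (Lemma~\ref{lem:performance_diff}) with $\pi=\pi^*_\tau$ and $\pi'=\pi_t$, noting that its state integration is precisely against $d^{\pi^*_\tau}_\rho$, matching $\dot y_t$. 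Because $\int_A V^{\pi_t}_\tau(s)(\pi^*_\tau-\pi_t)(da|s)=0$ and the on-policy identity \eqref{eq:on_policy} gives $\int_A(Q^{\pi_t}_\tau+\tau\ln\frac{\mathrm{d}\pi_t}{\mathrm{d}\mu}-V^{\pi_t}_\tau)(s,a)\pi_t(da|s)=0$, the advantage integrated against $\pi^*_\tau$ equals that integrated against $\pi^*_\tau-\pi_t$; substituting the lemma yields the exact linear ODE
\[
\dot y_t = -\tau y_t-(1-\gamma)\big(V^{\pi_t}_\tau(\rho)-V^{\pi^*_\tau}_\tau(\rho)\big), \qquad t>0.
\]

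Finally I would integrate with the factor $e^{\tau t}$ to get $e^{\tau t}y_t-y_0=-(1-\gamma)\int_0^t e^{\tau r}(V^{\pi_r}_\tau(\rho)-V^{\pi^*_\tau}_\tau(\rho))\,dr$. Optimality of $\pi^*_\tau$ gives $V^{\pi_r}_\tau(\rho)\ge V^{\pi^*_\tau}_\tau(\rho)$, hence $y_t\le e^{-\tau t}y_0$; combining this with Pinsker's inequality $\|\pi_t(\cdot|s)-\pi^*_\tau(\cdot|s)\|^2_{\clM(A)}\le 2\operatorname{KL}(\pi^*_\tau(\cdot|s)|\pi_t(\cdot|s))$ and integrating against $d^{\pi^*_\tau}_\rho$ produces the policy estimate. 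For the value estimate, Proposition~\ref{prop:value_monotone} makes $r\mapsto V^{\pi_r}_\tau(\rho)$ non-increasing, so $V^{\pi_r}_\tau(\rho)-V^{\pi^*_\tau}_\tau(\rho)\ge V^{\pi_t}_\tau(\rho)-V^{\pi^*_\tau}_\tau(\rho)$ for $r\le t$; together with $y_t\ge0$ and $\int_0^t e^{\tau r}\,dr=(e^{\tau t}-1)/\tau$, rearranging $y_0\ge(1-\gamma)(V^{\pi_t}_\tau(\rho)-V^{\pi^*_\tau}_\tau(\rho))(e^{\tau t}-1)/\tau$ gives the stated rate. The claim for \eqref{eq:FR_flow} follows from the equivalence in Lemma~\ref{lemma:dpi_t} and the well-posedness in Theorem~\ref{thm:wp_MD}. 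I expect the main obstacle to be the first step, namely the rigorous differentiation of $y_t$ and the interchange of differentiation with the state/action integrals; this is exactly why the argument is carried on the dual variable $Z_t$, where $t\mapsto Z_t$ is a genuine $C^1$ curve in the Banach space $B_b(S\times A)$ and the integration is against the fixed measure $d^{\pi^*_\tau}_\rho$, sidestepping the discontinuity of $\nu\mapsto\operatorname{KL}(\nu|\mu)$ and the incompleteness of $\Pi_\mu$.
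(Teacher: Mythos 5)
Your proposal is correct and follows essentially the same route as the paper: your $y_t$ is exactly the Bregman divergence $D_{d^{\pi^*_\tau}_{\rho}}(Z_t,Z^*)$ of Lemma~\ref{lemma:D_KL}, your differentiation on the dual variable reproduces \eqref{eq:DZtZ_derivative} (the rigorous step being Proposition~\ref{prop:derivative_integral_logexp} rather than Proposition~\ref{prop:differentiability_dV_df}), and the remaining steps --- performance difference lemma, Duhamel/integrating factor, value monotonicity, and Pinsker --- coincide with the paper's argument.
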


\begin{remark}
\label{rmk:convergence}   
The exponential convergence in Theorem \ref{thm:linear_convergence} holds for all initial distributions, which implies that $(V^{\pi_t}_\tau (s))_{t\ge 0}$ converges to $V^{\pi_t}_\tau(s)$ for any $s\in S$.
This result provides valuable insights into the optimal theoretical convergence rate that a suitable discrete-time policy gradient method can achieve. 
It also highlights the gap between the convergence rates of existing discrete mirror descent updates and this optimal rate; see Section \ref{sec:discrete} for more details.

An interesting question is whether an analogous convergence result holds for the average-cost criterion (see e.g., \cite[Chapter 5]{hernandez2012discrete} and \cite{neu2017unified}). Addressing this would involve extending the current argument through a rigorous investigation of the geometry and regularity of the optimization landscape. Certain structural assumptions on MDPs may need to be imposed to ensure recurrence or ergodicity of  the controlled transition kernels along the flow.
\end{remark}

The proof of Theorem \ref{thm:linear_convergence} is given in Section \ref{sec:convergence_FR_flow}.
It relies on introducing a   Bregman divergence  on the dual space 
$B_b(S\times A) $ (see \eqref{eq:bregman_Dfg}),
and proving that along the  flow  \eqref{eq:mirror_descent}, 
this Bregman divergence is differentiable and coincides with the KL-divergence of the corresponding policies.

\subsection{Stability of the gradient flow}

In this section,  we state a stability result for the Fisher--Rao gradient flow \eqref{eq:FR_flow}, where the policies are updated with approximate $Q$-functions. This stability result will be utilised to quantify the performance of a policy gradient algorithm with log-linear parametrised policies. 

More precisely, consider policies $(\pi_t)_{t\ge 0}$ that satisfy the following approximate Fisher--Rao gradient flow: for $t\ge 0$,
\begin{align}
\label{eq:FR_flow_approx}
\left\{
\begin{aligned}
&\partial_t \pi_t(da|s )  =  - \left( Q_t(s,a)  + \tau \ln\frac{\mathrm{d}  \pi_t}{\mathrm{d}  \mu}(a|s)-\int_A \left(Q_t(s,a')  + \tau \ln\frac{\mathrm{d}  \pi_t}{\mathrm{d}  \mu}(a'|s)\right)
\pi_t(da'|s)\right)\pi_t(da|s )\,,
\\
&  \pi\vert_{t=0}=\pi_0\,.
\end{aligned}
\right.
\end{align}
where $Q: \mathbb{R}_+\to  B_b(S\times A))$ is a given function.
For each $t>0$, $Q_t$  approximates  $ Q^{\pi_t}_{\tau}$ and may be constructed depending on  $(\pi_r)_{r\in [0,t]}$; see the subsequent discussion for a concrete example. The dynamics \eqref{eq:FR_flow_approx} replaces $V^{\pi_t}_\tau $ in \eqref{eq:mirror_descent} by the average of  the approximate $Q$-function, which ensures that $\pi_t(\cdot|s)\in \clP(A) $  for all $t>0$ and $s\in S$.

Theorem \ref{thm:stability_mirror} quantifies the performance of $(\pi_t)_{t\ge 0}$ generated by \eqref{eq:FR_flow_approx}. The proof is given in Section \ref{sec:stability_FR} and leverages a mirror descent representation of \eqref{eq:FR_flow_approx} (see \eqref{eq:mirror_descent_inexact}).

\begin{theorem}
\label{thm:stability_mirror} 
Assume that $\pi\in C(\mathbb{R}_+; \Pi_\mu)$  satisfies   \eqref{eq:FR_flow_approx} with some $Q:\mathbb{R}_+\to  B_b(S\times A)$. Then  for all $\rho\in \clP(S)$ and  $t>0$, 
\begin{align}
\label{eq:stability_estimate_statement}
\begin{split}
\min_{r\in [0,t]}V^{\pi_{r}}_{\tau}(\rho)-V^{\pi^*_\tau}_{\tau}(\rho) 
& \le\frac{\tau   }{(1-\gamma)(e^{\tau t}-1)} \bigg(\int_S \operatorname{KL}(\pi^*_\tau(\cdot|s)|\pi_0(\cdot|s) )d^{\pi^*_\tau}_{\rho} (ds)  \\
&\quad +2 \kappa   \int_0^t e^{\tau r } \left\|Q^{\pi_r}_\tau   -Q_r\right\|_{L^1\big(S\times A,  \rho_{\rm ref}\otimes \frac{\pi_r+ \pi_{\rm ref}}{2}\big)} dr\bigg)\,,
\end{split}
\end{align}
where  $\pi^*_\tau$ is the optimal policy  defined in \eqref{eq:optimal_policy}, $\rho_{\rm ref} \in \clP(S)$ is a reference measure such that $\rho\ll \rho_{\rm ref}$, $\pi_{\rm ref}\in \clP(A|S)$ is a reference policy such that  $\mu\ll \pi_{\rm ref}$, and $\kappa\ge 1$ is the concentrability coefficient defined by
\begin{equation}
\label{eq:concentration_coefficient}
\kappa \coloneqq \left\|
\frac{\mathrm{d} d^{\pi^*_\tau  }_\rho    }{\mathrm{d} \rho_{\rm ref}  }
\right\|_{L^\infty(S , \rho_{\rm ref})}+  
\left\|\frac{\mathrm{d} d^{\pi^*_\tau  }_\rho  \otimes  
\pi^*_\tau }{\mathrm{d} \rho_{\rm ref}\otimes  \pi_{\rm{ref}} }
\right\|_{L^\infty(S\times A, \rho_{\rm ref}\otimes  \pi_{\textrm{ref}})}\,.
\end{equation}
The estimate \eqref{eq:stability_estimate_statement} holds with  $Q^{\pi_r}_\tau   -Q_r$ replaced by $Q^{\pi_r}_\tau   -Q_r+ F_r$ for any measurable $F:\mathbb{R}_+\to B_b(S)$. 
\end{theorem}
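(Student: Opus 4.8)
The plan is to follow the heuristic argument for exponential convergence sketched in the introduction, but to carry along the gradient-evaluation error and to replace the value monotonicity (which the perturbed flow need not enjoy) by a running minimum over $[0,t]$. First I would pass to the dual picture: writing $Z_t := \ln\frac{\mathrm d \pi_t}{\mathrm d \mu}$, the primal flow \eqref{eq:FR_flow_approx} admits the mirror descent representation \eqref{eq:mirror_descent_inexact}, of the form $\partial_t Z_t = -(Q_t + \tau Z_t) + f_t$ with a baseline $f_t \in B_b(S)$ whose precise value is irrelevant since it depends only on $s$. Fixing $Z^* \in B_b(S\times A)$ with $\pi^*_\tau = \boldsymbol{\pi}(Z^*)$, I would track the Lyapunov quantity $y_t := D_{d^{\pi^*_\tau}_\rho}(Z_t, Z^*)$, which by Lemma~\ref{lemma:D_KL} equals $\int_S \operatorname{KL}(\pi^*_\tau(\cdot|s)|\pi_t(\cdot|s))\,d^{\pi^*_\tau}_\rho(ds) \ge 0$. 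Differentiating the Bregman functional \eqref{eq:bregman_Dfg} at the fixed measure $\nu = d^{\pi^*_\tau}_\rho$ via the chain rule of Section~\ref{sec:derivatives} (using that the derivative of $\Phi$ in the dual variable is the map $\boldsymbol{\pi}$) gives $\dot y_t = -\int_S\int_A \partial_t Z_t(s,a)\,(\pi^*_\tau - \pi_t)(da|s)\,d^{\pi^*_\tau}_\rho(ds)$. I would insert the dual dynamics, drop the baseline $f_t$ (it integrates to zero against the signed kernel $\pi^*_\tau - \pi_t$), and split $Q_t = Q^{\pi_t}_\tau - (Q^{\pi_t}_\tau - Q_t)$; the piece carrying $Q^{\pi_t}_\tau + \tau Z_t$ is the flat derivative up to an $s$-only term, so Lemma~\ref{lem:performance_diff} applied with $(\pi,\pi')=(\pi^*_\tau,\pi_t)$ converts it into $-(1-\gamma)(V^{\pi_t}_\tau(\rho) - V^{\pi^*_\tau}_\tau(\rho)) - \tau y_t$. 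Collecting terms yields
\begin{equation*}
\dot y_t + \tau y_t = -(1-\gamma)\big(V^{\pi_t}_\tau(\rho) - V^{\pi^*_\tau}_\tau(\rho)\big) - \mathcal{R}_t, \qquad \mathcal{R}_t := \int_S\int_A \big(Q^{\pi_t}_\tau - Q_t\big)(s,a)\,(\pi^*_\tau - \pi_t)(da|s)\,d^{\pi^*_\tau}_\rho(ds).
\end{equation*}

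Next I would integrate against the factor $e^{\tau r}$ on $[0,t]$ and use $y_t \ge 0$ to obtain $(1-\gamma)\int_0^t e^{\tau r}(V^{\pi_r}_\tau(\rho) - V^{\pi^*_\tau}_\tau(\rho))\,dr \le y_0 + \int_0^t e^{\tau r}|\mathcal{R}_r|\,dr$. Because $\pi^*_\tau$ is optimal, each integrand $V^{\pi_r}_\tau(\rho) - V^{\pi^*_\tau}_\tau(\rho)$ is nonnegative, so I would bound it below by its running minimum and use $\int_0^t e^{\tau r}\,dr = (e^{\tau t}-1)/\tau$ to extract the prefactor $\frac{\tau}{(1-\gamma)(e^{\tau t}-1)}$; this substitution of the running minimum for monotonicity is exactly why the left-hand side of \eqref{eq:stability_estimate_statement} carries $\min_{r\in[0,t]}$. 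The stated $F_r$-flexibility is then immediate, since for any $F_r \in B_b(S)$ the term $F_r$ integrates to zero against $\pi^*_\tau - \pi_t$ and hence leaves $\mathcal{R}_r$ unchanged, so the same chain of inequalities goes through with $Q^{\pi_r}_\tau - Q_r$ replaced by $Q^{\pi_r}_\tau - Q_r + F_r$.

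It remains to control $|\mathcal{R}_r|$ by the concentrability coefficient $\kappa$. I would bound $|\mathcal{R}_r| \le \int_S\int_A |Q^{\pi_r}_\tau - Q_r|\,\pi^*_\tau(da|s)\,d^{\pi^*_\tau}_\rho(ds) + \int_S\int_A |Q^{\pi_r}_\tau - Q_r|\,\pi_r(da|s)\,d^{\pi^*_\tau}_\rho(ds)$ and change measures in each: the first integral against $\rho_{\rm ref}\otimes\pi_{\rm ref}$ costs the second Radon--Nikodym factor in \eqref{eq:concentration_coefficient}, while the second, after dominating $d^{\pi^*_\tau}_\rho$ by $\rho_{\rm ref}$, costs the first factor. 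The device that merges both into a single norm is the elementary domination $\pi_{\rm ref} \le 2\cdot\frac{\pi_r + \pi_{\rm ref}}{2}$ and $\pi_r \le 2\cdot\frac{\pi_r + \pi_{\rm ref}}{2}$, which rewrites each contribution as $\|Q^{\pi_r}_\tau - Q_r\|_{L^1(S\times A,\,\rho_{\rm ref}\otimes\frac{\pi_r+\pi_{\rm ref}}{2})}$ at the cost of a factor $2$; summing the two factors gives precisely $|\mathcal{R}_r| \le 2\kappa\,\|Q^{\pi_r}_\tau - Q_r\|_{L^1(\cdots)}$, and the estimate \eqref{eq:stability_estimate_statement} follows (when $\kappa = \infty$ the bound is vacuous).

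The main obstacle I anticipate is not any single estimate but the rigorous justification of the derivative $\dot y_t$: this requires differentiability of $t \mapsto Z_t$ in the Banach space $B_b(S\times A)$ together with the legitimacy of interchanging differentiation with the $s$- and $a$-integrals, which is exactly the difficulty that the dual formulation is designed to circumvent. I would therefore lean on the chain rule and differentiability results of Section~\ref{sec:derivatives} (notably Proposition~\ref{prop:differentiability_dV_df}) both to establish the mirror descent representation \eqref{eq:mirror_descent_inexact} and to differentiate $D_{d^{\pi^*_\tau}_\rho}(Z_t, Z^*)$ along the flow, while the absolute-continuity hypotheses implicit in the finiteness of $\kappa$ legitimise the change-of-measure step.
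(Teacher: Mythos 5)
Your proposal is correct and follows essentially the same route as the paper's proof: the mirror-descent representation of the perturbed flow, the Bregman divergence $D_{d^{\pi^*_\tau}_\rho}(Z_t,Z^*)$ as Lyapunov function, the performance difference lemma, integration with $e^{\tau r}$ together with nonnegativity of the divergence and the running minimum, and the change-of-measure bound on the error term via the averaged reference kernel $\tfrac{\pi_r+\pi_{\rm ref}}{2}$ yielding the factor $2\kappa$. The only cosmetic difference is that the paper bounds the error directly in $L^1$ of the averaged measure using its two change-of-measure lemmas, whereas you split into the $\pi^*_\tau$ and $\pi_r$ integrals first and then dominate each by the averaged kernel; these are the same estimate.
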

\begin{remark}
Theorem \ref{thm:stability_mirror} indicates that  the policies $(\pi_t)_{t\ge 0}$ 
in \eqref{eq:mirror_descent_inexact} converge to the optimal policy at the same exponential rate as in Theorem \ref{thm:linear_convergence}, subject to a policy evaluation error   $(Q_t-Q^{\pi_t}_{\tau})_{t\ge 0}$. 
The approximation error of $(Q_t)_{t\ge 0}$
is quantified with respect to a suitable reference measure $\rho_{\rm ref}$
and a reference policy $\pi_{\rm ref}$.
The similarity between this reference  policy and the optimal policy is quantified by the concentrability coefficient $\kappa$, which is commonly used in the reinforcement learning literature (see, e.g., \cite{liu2019neural, agarwal2020optimality, mei2020global, wang2019neural, cayci2021linear, zhang2021wasserstein}).

Theorem \ref{thm:stability_mirror}   provides a unified approach for analyzing policy gradient methods with inexact Q-functions $(Q_t)_{t\ge 0}$ estimated from data. It    reduces the task  to quantifying  the Q-function error 
$ (Q^{\pi_t}_\tau-Q_t)_{t\ge 0}  $.
To bound this error, precise parameterizations and estimation procedures for the Q-function must be specified, and additional structural assumptions on the MDP may be necessary to ensure persistent exploration. 
See  \cite{agarwal2020optimality,   cen2022fast, xiao2022convergence, zhan2023policy}
for more details in the tabular setting.

\end{remark}

\subsection{Convergence of natural gradient flow for log-linear policies}\label{sec:npg_log_linear}
In what follows, we will apply Theorem \ref{thm:stability_mirror} to quantify the accuracy of policies  $(\pi_t)_{t\ge 0}$ arising from a continuous-time natural policy gradient algorithm (see e.g., \cite{kakade2001natural, cayci2021linear}). The algorithm considers log-linear parametrised policies  and updates the policy parameter by incorporating the local geometry of the parameter space. More precisely, let $(\bbH, \|\cdot\|_{\bbH}) $ be a Hilbert space with the inner product $\langle\cdot, \cdot\rangle_{\bbH}$ representing the parameter space (e.g., $\bbH=\mathbb{R}^N$ or $\bbH=\ell^2$) and let $g\in  B_b(S\times A; \bbH)$ be a fixed feature basis. Consider minimising the value function \eqref{eq:V_pi_tau} over the following parametrised policies: 
\[
\left\{\pi_{\theta} = \boldsymbol{\pi}(\langle\theta, g(\cdot)\rangle_{\bbH})\mid \theta\in \bbH \right\}\subset \Pi_\mu\,,
\]
where $ \boldsymbol{\pi} :B_b(S\times A)\to\Pi_\mu $ is  defined by \eqref{eq:pi_f_mu}.
Instead of updating $\pi_\theta$ along the natural policy gradient flow \eqref{eq:NPG} derived in Section \ref{sec:introduction},  
for convenience, we consider the following approximate natural policy gradient flow: 
\begin{equation}
\label{eq:npg_clipping}
\partial_t {\theta}_t = -(w_t(\theta_t) + \tau \theta_t), \quad t>0; \quad \theta\vert_{t=0}=\bar{\theta}\,, 
\end{equation}
where $w_t$ is  the minimiser of the following  regularised loss 
\begin{equation}\label{eq:regularised_loss}
w_t(\theta_t) = \underset{\|w\|_{\bbH}\le R_t}{\operatorname{arg\,min}}\left(\int_{S}\int_A |A^{\pi_{\theta_t}}_{\tau}(s,a)  - \langle w, g_{\pi_{\theta_t}}(s,a)\rangle_{\bbH}|^2 \pi_{\theta_t}(da|s)d_{\rho}^{\pi_{\theta_t}}(ds)+\lambda_t \|w\|^2_{\bbH}\right)\,,
\end{equation}
$g_{\pi_{\theta_t}}$ is the centered feature defined in \eqref{eq:linear_policy}, 
and $R_t>0$ and $ \lambda_t>0$ are given parameters. The regularisation term $\lambda_t \|w\|^2_{\bbH}$ ensures that $w_t$ exists and depends locally Lipschitz continuously on $\theta_t$. The gradient clipping with radius $R_t>0$ yields an a priori  bound of $w_t$, which allows for proving the well-posedness of \eqref{eq:npg_clipping}. In practice, one can set $\lambda_t\to 0$ and $R_t\to \infty$ as $t\to \infty$ for the  consistency between  \eqref{eq:loss_Q} and \eqref{eq:regularised_loss}.

The following theorem establishes the well-posedness of \eqref{eq:npg_clipping} and further quantifies the performance of $(\pi_{\theta_t})_{t\ge 0}$. 
The proof is given in Section \ref{sec:NPG_proof}. 
The well-posedness of~\eqref{eq:npg_clipping} relies on analysing the regularity of $w_t$ in $t$ and $\theta_t$. The performance guarantee follows directly from  
Theorem~\ref{thm:stability_mirror} and the observation that  $(\pi_{\theta_t})_{t\ge 0}$ satisfies 
the approximate Fisher--Rao flow \eqref{eq:FR_flow_approx} with  $Q_t(s,a) =\langle w_t,  g(s,a)\rangle_{\bbH}$ for all $t>0$ (see \eqref{eq:npg_Fisher_Rao}). 

\begin{theorem}
\label{thm:NPG_stability}
Let $\rho\in \clP(S)$, $\bar{\theta}\in \bbH$ and  $R,\lambda\in C(\mathbb{R}_+; (0,\infty))$. Then there exists a unique $\theta \in C^1(\mathbb{R}_+; \bbH)$ satisfying \eqref{eq:npg_clipping} for all $t>0$. Moreover,  for all $t>0$, 
\begin{align}
\label{eq:stability_estimate_NPG}
\begin{split}
\min_{r\in [0,t]}V^{\pi_{\theta_r}}_{\tau}(\rho)-V^{\pi^*_\tau}_{\tau}(\rho) & \le  \frac{\tau}{(1-\gamma)(e^{\tau t}-1)} \bigg(\int_S \operatorname{KL}(\pi^*_\tau(\cdot|s)|\pi_{\bar{\theta}}(\cdot|s) )d^{\pi^*_\tau}_{\rho} (ds) \\
&\quad +2 \kappa \int_0^t e^{\tau r } \left\|A^{\pi_{\theta_r}}_{\tau}(\cdot) -\langle w_r,   g_{\pi_{\theta_r}}(\cdot)\rangle_{\bbH}\right\|_{L^1\big(S\times A,  \rho \otimes\frac{\pi_r+\pi_{\rm ref}}{2}\big)} dr\bigg)\,,
\end{split}
\end{align}
where $\pi^*_\tau$ is the optimal policy  defined in \eqref{eq:optimal_policy}, $\pi_{\rm ref}\in \clP(A|S)$ is a reference policy such that $\mu\ll \pi_{\rm ref}$, and $\kappa\ge 1$ is the concentrability coefficient defined by
\[
\kappa \coloneqq \left\|\frac{\mathrm{d} d^{\pi^*_\tau  }_\rho}{\mathrm{d}\rho}\right\|_{L^\infty(S , \rho )}+ \left\|\frac{\mathrm{d} d^{\pi^*_\tau  }_\rho  \otimes  \pi^*_\tau }{\mathrm{d} \rho \otimes  \pi_{\rm{ref}} }\right\|_{L^\infty(S\times A, \rho \otimes  \pi_{\textrm{ref}})}\,.
\]
\end{theorem}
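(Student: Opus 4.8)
The plan is to split the proof into two largely independent parts: the global well-posedness of the flow \eqref{eq:npg_clipping} as an ODE in the Hilbert space $\bbH$, and the performance estimate \eqref{eq:stability_estimate_NPG}, which I would deduce as a direct application of Theorem \ref{thm:stability_mirror}. The performance estimate is the more routine of the two once existence is in hand; the analytic core of the theorem is establishing enough regularity of the map $(t,\theta)\mapsto w_t(\theta)$ to run a Banach-space Picard--Lindel\"of argument.

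For well-posedness, I would first record that for each $t\ge 0$ and $\theta\in\bbH$ the regularised loss in \eqref{eq:regularised_loss} is strongly convex in $w$ with modulus $2\lambda_t>0$, so its minimiser over the closed ball $\{\|w\|_{\bbH}\le R_t\}$ exists and is unique, defining $w_t(\theta)$ unambiguously. The next step is to show that the data entering \eqref{eq:regularised_loss}, namely $\pi_\theta$, the advantage $A^{\pi_\theta}_\tau$, the centred feature $g_{\pi_\theta}$, and the occupancy measure $d^{\pi_\theta}_\rho$, depend locally Lipschitz continuously on $\theta$. Since $\theta\mapsto\langle\theta,g(\cdot)\rangle_{\bbH}$ is a bounded linear map from $\bbH$ into $B_b(S\times A)$, and $\boldsymbol{\pi}$ together with the maps $Z\mapsto Q^{\boldsymbol{\pi}(Z)}_\tau$ and $Z\mapsto V^{\boldsymbol{\pi}(Z)}_\tau$ are locally Lipschitz (Lemma \ref{lemma:Q_local_lipschitz}), all of these quantities inherit local Lipschitz dependence on $\theta$; the occupancy measure additionally uses the Lipschitz dependence of the kernel $P_{\pi_\theta}$ through the Neumann-series representation \eqref{eq:occupancy_s}. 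A standard perturbation estimate for strongly convex constrained minimisation, obtained from the first-order optimality condition written through $\mathscr{F}(\theta)+\lambda_t\,\mathrm{Id}$ (whose inverse is bounded by $\lambda_t^{-1}$) composed with the $1$-Lipschitz projection onto the ball, then yields local Lipschitz continuity of $\theta\mapsto w_t(\theta)$, locally uniformly in $t$, while continuity of $t\mapsto w_t(\theta)$ follows from continuity of $R$ and $\lambda$. With these properties, Picard--Lindel\"of gives a unique maximal solution, and global existence follows from the a priori bound $\frac{d}{dt}\|\theta_t\|_{\bbH}\le R_t-\tau\|\theta_t\|_{\bbH}$, a consequence of $\|w_t(\theta_t)\|_{\bbH}\le R_t$ and the dissipativity of $-\tau\theta_t$, which by Gr\"onwall stays finite on every finite interval.

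For the performance estimate, the key is the chain-rule identity sketched in Section \ref{sec:introduction}: writing $Z_t=\langle\theta_t,g(\cdot)\rangle_{\bbH}$ so that $\pi_{\theta_t}=\boldsymbol{\pi}(Z_t)$ and differentiating through \eqref{eq:npg_clipping} gives $\partial_t Z_t=-(\langle w_t,g(\cdot)\rangle_{\bbH}+\tau Z_t)$; applying the density chain rule \eqref{eq:grad_flow_density_intro} (equivalently Lemma \ref{lemma:dpi_t}) and cancelling the state-dependent normaliser shows that $(\pi_{\theta_t})_{t\ge 0}$ solves the approximate Fisher--Rao flow \eqref{eq:FR_flow_approx} with $Q_t(s,a)=\langle w_t,g(s,a)\rangle_{\bbH}$. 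I would then apply Theorem \ref{thm:stability_mirror} with $\rho_{\rm ref}=\rho$ (so $\rho\ll\rho_{\rm ref}$ trivially and $\kappa$ takes the stated form). To match the error term $A^{\pi_{\theta_r}}_\tau-\langle w_r,g_{\pi_{\theta_r}}\rangle_{\bbH}$ in \eqref{eq:stability_estimate_NPG}, I invoke the baseline freedom in the final sentence of Theorem \ref{thm:stability_mirror}: by the definitions of $A^{\pi_\theta}_\tau$ and $g_{\pi_\theta}$,
\[
A^{\pi_{\theta_r}}_\tau-\langle w_r,g_{\pi_{\theta_r}}\rangle_{\bbH}=\big(Q^{\pi_{\theta_r}}_\tau-\langle w_r,g\rangle_{\bbH}\big)+F_r,
\]
where $F_r(s)=-\int_A\big(Q^{\pi_{\theta_r}}_\tau(s,a')-\langle w_r,g(s,a')\rangle_{\bbH}\big)\pi_{\theta_r}(da'|s)\in B_b(S)$ is state-dependent; replacing $Q^{\pi_r}_\tau-Q_r$ by $Q^{\pi_r}_\tau-Q_r+F_r$ in \eqref{eq:stability_estimate_statement} reproduces \eqref{eq:stability_estimate_NPG} exactly.

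The main obstacle will be the regularity analysis of $w_t(\theta)$ for the constrained, Fisher-preconditioned least-squares problem \eqref{eq:regularised_loss}: one must control how $\mathscr{F}(\theta)$ and the right-hand side $\int_S\int_A A^{\pi_\theta}_\tau\,g_{\pi_\theta}\,\pi_\theta\,d^{\pi_\theta}_\rho$ vary with $\theta$, which hinges on the merely locally Lipschitz dependence of $Q^{\pi_\theta}_\tau$ on $\theta$ through the entropy-regularised value function, and on tracking the parameters $\lambda_t,R_t$ carefully so that the resulting Lipschitz constants remain finite and locally uniform in $t$.
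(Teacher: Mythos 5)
Your proposal follows essentially the same route as the paper: well-posedness via local Lipschitz regularity of the regularised constrained least-squares map $\theta\mapsto w_t(\theta)$ together with the a priori bound $\|w_t(\theta_t)\|_{\bbH}\le R_t$ and a truncation/Picard--Lindel\"of argument, and the performance estimate by identifying $(\pi_{\theta_t})_{t\ge 0}$ as a solution of the approximate Fisher--Rao flow \eqref{eq:FR_flow_approx} with $Q_t=\langle w_t,g(\cdot)\rangle_{\bbH}$ and invoking Theorem \ref{thm:stability_mirror} with $\rho_{\rm ref}=\rho$ and the baseline shift. The second half, including your choice of $F_r$, matches the paper's argument exactly.

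One step of your well-posedness sketch is stated incorrectly, although the conclusion survives. The minimiser of $w\mapsto\tfrac12\langle(\mathscr{F}(\theta)+\lambda_t I)w,w\rangle_{\bbH}-\langle h_\theta,w\rangle_{\bbH}$ over the ball $\{\|w\|_{\bbH}\le R_t\}$ is \emph{not} the Euclidean projection of the unconstrained minimiser onto the ball; it is the projection in the norm induced by $(\mathscr{F}(\theta)+\lambda_t I)^{1/2}$, so ``inverse composed with the $1$-Lipschitz projection'' does not yield the perturbation bound. The correct and still elementary route, which is what the paper does in Lemma \ref{lemma:stability_quadratic}, is to subtract the variational inequalities $\langle Gx+g,\,y-x\rangle_{\bbH}\ge 0$ at the two minimisers and use strong monotonicity, giving $\|x_{G,g,K}-x_{G',g',K}\|_{\bbH}\le \lambda_{\min}(G)^{-1}\left(\|G-G'\|_{\clL(\bbH)}\|x_{G',g',K}\|_{\bbH}+\|g-g'\|_{\bbH}\right)$ with $\lambda_{\min}(G)\ge\lambda_t$. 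Relatedly, the continuity of the vector field in $t$ through the radius $R_t$ is not automatic: the paper devotes a separate argument (Lemma \ref{lemma:project_continuity}, applied in the $G^{1/2}$-metric) to the continuity of metric projections under Hausdorff perturbations of the constraint set. Both points are repairable with standard arguments; the overall architecture of your proof is the paper's.
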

\begin{remark}
Theorem~\ref{thm:NPG_stability} shows that the policies $(\pi_{\theta_t})_{t\ge 0}$ associated with \eqref{eq:npg_clipping} converge exponentially to the optimal policy,  up to the approximation error $(A^{\pi_{\theta_t}}_{\tau} -\langle w_t, g_{\pi_{\theta_t}}\rangle_{\bbH})_{t\ge 0}$ (cf.~\eqref{eq:regularised_loss}). This result extends the performance guarantees of natural policy gradient methods for MDPs with discrete state and action spaces in \cite{agarwal2020optimality, cayci2021linear} to the present setting with continuous state and action spaces. It would be of interest to select appropriate regularisation parameters $R$ and $\lambda$ to ensure that the approximation error decays sufficiently fast as $t\to \infty$. A comprehensive convergence analysis in this direction is left for future research.
\end{remark}

\subsection{Discussion: discretizations of gradient flows and their convergences}
\label{sec:discrete}

The analysis of the continuous-time flow \eqref{eq:mirror_descent}
provides a theoretical foundation for deriving various discrete policy gradient algorithms through appropriate timestepping schemes. 
Potential timestepping choices include the explicit Euler method and higher-order explicit RK methods.
The analysis of these discrete policy gradient methods is more intricate than for the continuous-time flow, as it requires establishing suitable discrete analogues of the continuous-time analysis.
In this section, we analyze the convergence of the discrete update resulting from   
the simplest discretization of \eqref{eq:mirror_descent}, i.e., the explicit Euler method.
The convergence of other discretization methods is illustrated numerically in Appendix \ref{sec:numerical}.

More precisely, let $\lambda>0$,
$Z_0\in B_b(S\times A)$,
and consider updating $(Z^n)_{n\in \mathbb{N}_0}$   using the Euler discretization of \eqref{eq:mirror_descent}:
$Z^0=Z_0$ and 
for all $n\in \mathbb N_0$,
\begin{equation}
\label{eq:mirror_descent_euler}
Z^{n+1}(s,a) =Z^{n}(s,a)    -\frac{1}{\lambda}  \left( Q^{\pi^n}_\tau  (s,a) + \tau Z^n (s,a) -V^{\pi^n }_\tau (s) \right), 
\quad \textnormal{with $ \pi^n    \coloneqq  \boldsymbol{\pi}(Z^n)$}\,. 
\end{equation}
As shown in \eqref{eq:pointwise_min},
the corresponding update for $(\pi^n)_{n\in\mathbb N_0}$ 
is given by
\begin{align}
\label{eq:mirror_des_direct_appendix}
\begin{split}
\pi^{n+1}(\cdot|s) 
=  \underset{m \in \clP(A)}{\operatorname{arg\,min}}\left(\int_{A}\frac{\delta V^{\pi^n}_\tau}{\delta \pi}(s,a)(m(da) - \pi^n(da|s)) + \lambda \operatorname{KL}(m|\pi^n(\cdot|s))\right)
\,,
\end{split}
\end{align}
which  extends  the policy mirror descent algorithm proposed  in \cite{lan2022policy, xiao2022convergence, zhan2023policy}  for tabular MDPs to the current setting.
The following theorem presents the convergence rate of the update \eqref{eq:mirror_des_direct_appendix},
whose proof is given in Section \ref{sec:proof_mirror_descent}.

\begin{theorem}
\label{thm:convergence_mirror_steps}
Let 
$(\pi^n)_{n\in \bbN_0} \subset  \Pi_\mu$ be given by~\eqref{eq:mirror_des_direct_appendix}.
For all
$\rho\in \mathcal{P}(S)$,  $\lambda \ge \tau$ and $n\in \bbN_0$,
\begin{equation}
\label{eq:linear_convergence}
0 \leq  V^{\pi^n}_\tau (\rho)-V^\ast_\tau(\rho)
\leq  
\frac1{1-\gamma}\bigg(\lambda\int_S \operatorname{KL}
(\pi^*_\tau(\cdot|s)|\pi_0(\cdot|s) )
d^{\pi^\ast_\tau}_\rho(ds) +  \int_S (V^0_\tau-V^\ast_\tau)(s)	d^{\pi^\ast_\tau}_\rho(ds) \bigg)\frac{1}{n}\,.
\end{equation}
Assume in addition that 
$d^{\pi^\ast_\tau}_\rho\ll \rho$
and   $\xi := (1-\gamma)^{-1}\big\|\frac{\mathrm d d^{\pi^\ast_\tau}_\rho}{\mathrm d \rho}\big\|_{B_b(S)} < \infty$. Then for all   $\tau\le \lambda \leq \xi \tau$, 
\begin{equation}
\label{eq:mirror_exp_conv_with_dist_mismatch_coeff}
\begin{split}
& V^{\pi^n}_\tau (\rho)-V^\ast_\tau(\rho) + \frac{\lambda}{(1-\gamma)\xi}\int_S \operatorname{KL}(\pi^\ast_\tau(\cdot|s)| \pi^n(\cdot| s)) d^{\pi^\ast_\tau}_\rho(ds)\\
&\quad  \leq \bigg(\frac{\xi-1}{\xi}\bigg)^n\bigg( V^{\pi^0}_\tau(\rho)   - V^\ast_\tau(\rho) +\frac{\lambda}{(1-\gamma)\xi}\int_S \operatorname{KL}
(\pi^\ast_\tau(\cdot|s)| \pi^0(\cdot| s))
d^{\pi^\ast_\tau}_\rho(ds)\bigg)\,. 		
\end{split}
\end{equation}

\end{theorem}

\begin{remark}
\label{rmk:discrete_mirror}

Theorem \ref{thm:convergence_mirror_steps} 
achieves the best-known convergence rate for policy mirror descent updates, as established in \cite{lan2022policy} for MDPs with finite-dimensional action spaces, under the additional ergodicity condition that  each policy induces a unique stationary distribution of the MDP, and the initial distribution 
$\rho$  coincides with the stationary distribution of the optimal policy $\pi^*_\tau$. 
Theorem~\ref{thm:convergence_mirror_steps} improves upon this result by extending it to MDPs with Polish state and action spaces, removing the stringent ergodicity condition, and accommodating any initial distribution that dominates the optimal state occupancy measure.

However, it is unclear whether exponential convergence of the   update \eqref{eq:mirror_des_direct_appendix} can be established for general   initial distributions, as   is the case for the continuous-time flow \eqref{eq:mirror_descent}  in Theorem \ref{thm:linear_convergence}. The  condition  $d^{\pi^\ast_\tau}_\rho\ll \rho$  can be interpreted as a persistent exploration condition for the MDP, which is also imposed in \cite{xiao2022convergence} for analyzing policy gradient methods for tabular MDPs.
The distinction between the error bounds for the continuous-time flow
\eqref{eq:mirror_descent} in 
Theorem  
\ref{thm:linear_convergence} and 
the discrete-time update 
\eqref{eq:mirror_des_direct_appendix}
in Theorem 
\ref{thm:convergence_mirror_steps} 
suggests that there is potential to design more effective discrete updates through alternative timestepping schemes for \eqref{eq:mirror_descent}. Our numerical experiments in Appendix \ref{sec:numerical} provide an initial step  in this direction, showing that higher-order discretizations of  \eqref{eq:mirror_descent}  can achieve smaller errors than \eqref{eq:mirror_des_direct_appendix}. A detailed study of these alternatives is left for future work.

\end{remark}

\subsection{Discussion: gradient flow for unregularised MDPs}\label{sec:unregularised}
The gradient flow \eqref{eq:FR_flow} 
and its analysis  extend  natually   to unregularized MDPs.
Indeed, 
setting $\tau$ to $0$ in \eqref{eq:FR_flow} yields the following  dynamics for $(\pi_t)_{t\ge 0}$:
\begin{equation}\label{eq:FR_unregularised}
\partial_t \pi_t(da|s) =-\left(Q^{\pi_t}(s,a)-V^{\pi_t}(s) \right)\pi_t(da|s),\quad t>0\,,    \end{equation}
where $Q^{\pi_t}$ and $V^{\pi_t}$ are the $Q$-function and the value function for the unregularised MDP, respectively (cf.~\eqref{eq:V_pi_tau} and \eqref{def:Q_fn} with $\tau=0$). Formally sending $\tau\to 0$ in Theorem \ref{thm:linear_convergence} suggests that for all $\rho\in \clP(S)$ and $t>0$,
\begin{equation}\label{eq:sublinear}
V^{\pi_{t}}(\rho)-V^{\pi^*}(\rho)\le \frac{1 }{(1-\gamma)t } \int_S \operatorname{KL}(\pi^* (\cdot|s)|\pi_0(\cdot|s) )d^{\pi^*}_{\rho} (ds)\,,
\end{equation}
where $\pi^*$ is an optimal policy  of the unregularised MDP (if it exists, see, e.g., \cite{hernandez2012discrete}). In other words, $(V^{\pi_t})_{t\ge 0}$ converges to the optimal (unregularised) value function at a polynomial rate.  

However, three technical difficulties need to be addressed to prove the estimate \eqref{eq:sublinear} rigorously:
(i)  An optimal policy $\pi^*$ may  not exist for  continuous state and action spaces, 
due to a lack of compactness.
(ii) The term $\int_S \operatorname{KL}(\pi^*(\cdot|s)|\pi_0(\cdot|s))d^{\pi^*}_\rho(ds)$ may not be finite  if there is a substantial discrepancy between the optimal policy and the initial policy.
(iii) The map  $t\mapsto \int_S \operatorname{KL}(\pi^*(\cdot|s)|\pi_t(\cdot|s))d^{\pi^*}_\rho(ds)$ 
may not be differentiable, which poses challenges for the convergence analysis.

These technical challenges can be overcome by assuming the existence of an optimal policy $\pi^*$ and initialising \eqref{eq:FR_unregularised} with a policy that is compatible with $\pi^*$. For instance, assume that there exists an optimal policy $\pi^*\in \Pi_\mu$ for some  $\mu\in \clP(A)$.  In this case, one can initialise \eqref{eq:FR_unregularised} with $\pi_0\in \Pi_\mu$ (i.e., $\pi_0=\boldsymbol{\pi}(Z_0)$ for some $Z_0\in B_b(S\times A)$) and adapt the mirror descent approach to the unregularised setting. In particular, given $Z_0\in B_b(S\times A)$, consider for all $t\ge 0$,
\begin{equation}\label{eq:MD_unregularised}
\partial_t Z_t(s,a) = \, - \left( Q^{\pi_t}(s,a)  -V^{\pi_t} (s)\right), \quad  \pi_t (da|s)= \boldsymbol{\pi}(Z_t)(da|s)\,,
\end{equation}
with $\boldsymbol{\pi} :B_b(S\times A) \to \Pi_\mu$ is defined by \eqref{eq:pi_f_mu}.
Then similar to Section \ref{sec:MD_approach_main}, one can prove that \eqref{eq:MD_unregularised} admits a unique solution  $Z\in C^1(\mathbb{R}_+; B_b(S\times A) )$ and the policies $(\pi_t)_{t\ge 0}$ satisfies \eqref{eq:FR_unregularised}. The differentiability of   $t\mapsto \int_S \operatorname{KL}(\pi^*(\cdot|s)|\pi_t(\cdot|s))d^{\pi^*}_\rho(ds)$ can be proved by leveraging  the differentability of  $t\mapsto D_{d^{\pi^*}_{\rho}}(Z_t, Z^*)$, with $Z^*\in B_b(S\times A)$ satisfying $\pi^*=\boldsymbol{\pi} (Z^*)$. This allows for proving the polynomial convergence rate given in \eqref{eq:sublinear}. 

We end this section with an illustrative example that highlights the crucial role of the absolute continuity of $\pi^*$ with respect to the initial policy $\pi_0$ in the convergence of \eqref{eq:FR_unregularised}. 

\begin{example}\label{example:bandit}
Consider the bandit setting where $S=\emptyset$ and  $\gamma=0$. Let $a_0\in A$ and $c(a) = -1_{\{a_0\}}(a)$ for all $a\in A$. The optimal policy is $\pi^*=\delta_{a_0}$ and the optimal value is $V^* = -1$. For each  $\mu\in \mathcal{P}(A)$, \eqref{eq:FR_unregularised} simplifies to  the following dynamics: for all $t\ge 0$,
\begin{equation}\label{eq:birth-death_bandit}
\partial_t \pi_t(da)  =-\left(c( a)-\int_A c(a)\pi_t(da) \right)\pi_t(da),
\quad \pi\vert_{t=0} =\mu\,.    
\end{equation}
Equation \eqref{eq:birth-death_bandit} has a unique solution in $C^1(\mathbb{R}_+; \clP(A))$ and is given by $\pi_t(da)=\frac{\exp(Z_t(a))\mu(da)}{\int_A \exp(Z_t(a))\mu(da)}$,
where  $Z\in C^1(\mathbb{R}_+; B_b(A))$ satisfies $Z_0=0$ and $\partial_t Z_t  =- c $ for all $t>0$.
Since  $c=-1_{\{a_0\}}$, for all $t>0$, $Z_t = t 1_{\{a_0\}}$ and   
\begin{align*}
V^{\pi_t}&=\int_A c(a)\pi_t(da)=-\pi_t(\{a_0\})=-\frac{\int_{\{a_0\}} \exp(Z_t(a))\mu(da) }{
\int_{\{a_0\}} \exp(Z_t(a))\mu(da)+\int_{A\setminus\{a_0\}} \exp(Z_t(a))\mu(da)}\\
&=-\frac{ \exp(t)\mu(\{a_0\}) }{\exp( t)\mu(\{a_0\})+\mu(A\setminus\{a_0\})}=-\frac{ \mu(\{a_0\}) }{
\mu(\{a_0\})+\exp(-t) \mu(A\setminus\{a_0\})}\,.
\end{align*}
Hence, $\lim_{t\to \infty} V^{\pi_t}=V^*$ if and only if $\mu(\{a_0\})>0$, which is equivalent to the condition that $\pi^* $ is absolutely continuous with respect to $\mu$.
\end{example}

Example \ref{example:bandit} shows that in the unregularised setting,
without imposing additional regularity conditions on the cost functions, ensuring the convergence of \eqref{eq:FR_unregularised}  requires  initialising \eqref{eq:FR_unregularised} with positive masses assigned to the exact support of $\pi^*$,
which is required to guarantee persistent exploration along the flow (see Remark~\ref{rmk:KL_role}).
This is not an issue if the action space has finite cardinality, as one can initialise the flow with the uniform distribution, which dominates all probability measures supported on the action space.
However, for continuous action spaces,
this necessitates knowing the precise support of $\pi^*$ a priori, which is typically infeasible for unregularised MDPs.
In contrast, for regularised MDPs, the support of the optimal policy  $\pi^*_\tau$ is determined by the reference measure specified in the KL divergence, providing more manageable conditions for the convergence of gradient flows.

\section{Properties of regularised MDPs}
\label{sec:prop_of_MDPs}
Before we can prove the main results on convergence and stability of the mirror descent flow, we need to establish some key properties of entropy-regularised MDPs.

\subsection{Boundedness and regularity}
\label{sec:basis_properties_MDP}

This section proves the regularity of the value function and $Q$-function, including Lemma \ref{lem:performance_diff}. We begin by establishing the boundedness of the value function and the $Q$-function.

\begin{proposition}\label{prop:boundedness_Q}
Let $f\in B_b(S\times A)$ and $\pi\in \Pi_\mu$  be such that  $\pi(da|s)= \frac{\exp(f(s,a))\mu(da)}{\int_A \exp(f(s,a'))\mu(da')}$ 
for all $s\in S$. Then
\begin{align*}
&\left\|\ln \frac{\mathrm{d} \pi}{\mathrm{d} \mu}\right\|_{B_b(S\times A)} \le 2\|f\|_{B_b(S\times A)}\,, \quad \|V^\pi_\tau\|_{B_b(S)} \le \frac{1}{1-\gamma}\left(\|c\|_{B_b(S\times A)}+ 2\tau \|f\|_{B_b(S\times A)}\right)\,,\\
&\|Q^\pi_\tau\|_{B_b(S\times A)}\le \frac{1}{1-\gamma}\left(\|c\|_{B_b(S\times A)}+ 2\tau \gamma\|f\|_{B_b(S\times A)}\right)\,.
\end{align*}   
\end{proposition}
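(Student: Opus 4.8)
The plan is to prove the three estimates in sequence, bootstrapping each from the previous one; the only structural input is that $\mu\in\clP(A)$, which controls the normalising constant of $\boldsymbol{\pi}(f)$.

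First I would establish the log-density bound. Writing the density explicitly,
\[
\ln\frac{\mathrm{d}\pi}{\mathrm{d}\mu}(s,a)=f(s,a)-\Phi(f)(s),\qquad \Phi(f)(s)=\ln\int_A e^{f(s,a')}\mu(da')\,.
\]
Since $\mu$ is a probability measure and $e^{-\|f\|_{B_b(S\times A)}}\le e^{f(s,a')}\le e^{\|f\|_{B_b(S\times A)}}$ pointwise, integrating against $\mu$ and taking logarithms gives $|\Phi(f)(s)|\le \|f\|_{B_b(S\times A)}$ for every $s\in S$. The triangle inequality then yields $\big|\ln\frac{\mathrm{d}\pi}{\mathrm{d}\mu}(s,a)\big|\le |f(s,a)|+|\Phi(f)(s)|\le 2\|f\|_{B_b(S\times A)}$, which is the first claim.

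Next I would bound the per-state relative entropy. Because $\operatorname{KL}(\pi(\cdot|s)|\mu)=\int_A \ln\frac{\mathrm{d}\pi}{\mathrm{d}\mu}(s,a)\pi(da|s)$ and $\pi(\cdot|s)\in\clP(A)$, the uniform bound just obtained gives $0\le \operatorname{KL}(\pi(\cdot|s)|\mu)\le 2\|f\|_{B_b(S\times A)}$ for all $s\in S$, where nonnegativity comes from the entropy and the upper bound from the log-density estimate. Consequently the running cost in \eqref{eq:V_pi_tau} obeys $-\|c\|_{B_b(S\times A)}\le c(s_n,a_n)+\tau\operatorname{KL}(\pi(\cdot|s_n)|\mu)\le \|c\|_{B_b(S\times A)}+2\tau\|f\|_{B_b(S\times A)}$ almost surely, so its modulus is at most $\|c\|_{B_b(S\times A)}+2\tau\|f\|_{B_b(S\times A)}$. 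Pulling this uniform bound through the expectation in \eqref{eq:V_pi_tau} and summing $\sum_{n\ge 0}\gamma^n=\tfrac{1}{1-\gamma}$ yields the bound on $\|V^\pi_\tau\|_{B_b(S)}$, and in particular shows that $V^\pi_\tau$ is finite.

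Finally, for the $Q$-function I would use the definition \eqref{def:Q_fn} together with the fact that $P(\cdot|s,a)\in\clP(S)$: taking absolute values gives $|Q^\pi_\tau(s,a)|\le \|c\|_{B_b(S\times A)}+\gamma\|V^\pi_\tau\|_{B_b(S)}$, and substituting the previous bound together with the identity $1+\tfrac{\gamma}{1-\gamma}=\tfrac{1}{1-\gamma}$ gives the third estimate. I do not expect a genuine obstacle: the argument is a chain of elementary $L^\infty$ bounds, and the only step needing slight care is the two-sided control of the KL term, since the running cost must be bounded both above and below in order to justify passing the uniform bound through the expectation and the geometric summation.
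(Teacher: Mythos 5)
Your proof is correct and follows essentially the same route as the paper's: bound $\Phi(f)$ by $\|f\|_{B_b(S\times A)}$ to get the log-density estimate, push the resulting uniform bound on the running cost through the expectation and the geometric series for $V^\pi_\tau$, and then use $|Q^\pi_\tau|\le\|c\|_{B_b(S\times A)}+\gamma\|V^\pi_\tau\|_{B_b(S)}$. The only cosmetic difference is that you control the entropy term via the per-state KL divergence while the paper bounds the pointwise log-density inside the expectation; these are equivalent by the tower property.
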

\begin{proof}
As $\mu(A)=1$,
for all $g\in B_b(S\times A)$ and $s\in S$, 
\begin{align*}
\begin{split}
\ln\int_A\exp(g(s,a'))\mu(da')&\le \ln\left(e^{\|g\|_{B_b(S\times A)}}\mu(A)\right) =\|g\|_{B_b(S\times A)}\,,\\
\ln\int_A\exp(g(s,a'))\mu(da')&\ge \ln\left(e^{-\|g\|_{B_b(S\times A)}}\mu(A)\right) =   -\|g\|_{B_b(S\times A)}\,.
\end{split}
\end{align*}
Then, for all $(s,a)\in S\times A$, using  $ \ln \frac{\mathrm{d} \pi}{\mathrm{d} \mu}(a|s) = f(s,a)-\ln\int_A\exp(f(s,a'))\mu(da')$, 
\begin{align*}
\left|\ln \frac{\mathrm{d} \pi}{\mathrm{d} \mu}(a|s)\right| \le |f(s,a)| + \left|\ln\int_A\exp(f(s,a'))\mu(da')\right|\le 2 \|f\|_{B_b(S\times A)}\,,
\end{align*}
which implies that 
\begin{align*}
\left|\bbE_{s}^{\pi}\left[\sum_{t=0}^\infty\gamma^t \bigg(  \tau \ln\frac{\mathrm{d} \pi}{\mathrm{d} \mu}(a_t|s_t)\bigg)\right]\right|
\le  2\tau \|f\|_{B_b(S\times A)}\sum_{t=0}^\infty \gamma^t =\frac{2\tau \|f\|_{B_b(S\times A)}}{1-\gamma}\,.
\end{align*}
By \eqref{eq:V_pi_tau}, for all $s\in S$,  
\begin{align*} 
|V^\pi_\tau (s)|&\le \bbE_{s}^{\pi}\left[\sum_{t=0}^\infty\gamma^t |c(s_t,a_t)|\right]+ 
\left|\bbE_{s}^{\pi}\left[\sum_{t=0}^\infty\gamma^t \bigg(  \tau \ln\frac{\mathrm{d} \pi}{\mathrm{d} \mu}(a_t|s_t)\bigg)\right]\right|\\
& \le \frac{1}{1-\gamma}\left(\|c\|_{B_b(S\times A)}+ 2\tau \|f\|_{B_b(S\times A)}\right).
\end{align*}
Hence, for all $(s,a)\in S\times A$, by \eqref{def:Q_fn},
\[
|Q^\pi_\tau (s,a)|\le \|c\|_{B_b(S\times A)}+\gamma   \|V^\pi_\tau\|_{B_b(S)}
\le \frac{1}{1-\gamma}\|c\|_{B_b(S\times A)}
+\frac{2\tau\gamma}{1-\gamma}\|f\|_{B_b(S\times A)}\,.
\]
This proves the desired bound of $Q_\tau^\pi$.
\end{proof}

The following lemma expresses the resolvent of the transition kernel using the occupancy kernel, which will be used in proving Lemma \ref{lem:performance_diff}.

\begin{lemma}\label{lem:inverse_1-gammaP}
Let  $\pi\in \clP(A|S)$ and $f,g\in B_b(S)$ be such that for all  $s\in S$, 
\begin{equation*}
f(s) = \gamma  \int_S \int_A f(s) P(ds'|s,a)\pi(da|s)+g(s)\,.    
\end{equation*}
Then $f(s) = \frac{1}{1-\gamma}\int_S  g(s') d^{\pi}(ds'|s) $ for all $s\in S$.
\end{lemma}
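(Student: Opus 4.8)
The plan is to recognise the hypothesis as the assertion that $f$ is the image of $g$ under the resolvent $(I-\gamma P_\pi)^{-1}$, where $P_\pi$ is the one-step transition operator associated with $\pi$, and then to unfold that resolvent as a Neumann series whose partial sums are exactly the truncated occupancy kernel. (I first note that the integrand in the hypothesis should read $f(s')$ rather than $f(s)$; with the literal $f(s)$ the equation collapses to $f=g/(1-\gamma)$, which is inconsistent with the stated conclusion, so this is plainly a typo.)

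First I would introduce the operator $P_\pi:B_b(S)\to B_b(S)$ defined by $(P_\pi h)(s)=\int_S\int_A h(s')P(ds'|s,a)\pi(da|s)$, and observe that since $\pi(\cdot|s)$ and $P(\cdot|s,a)$ are probability measures, $P_\pi$ is a positive linear contraction, i.e.\ $\|P_\pi h\|_{B_b(S)}\le \|h\|_{B_b(S)}$ for all $h\in B_b(S)$; iterating gives $\|P_\pi^n\|_{\clL(B_b(S))}\le 1$ for every $n\in\bbN_0$. In this notation the hypothesis becomes the fixed-point relation $f=g+\gamma P_\pi f$ in $B_b(S)$.

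Next I would iterate this relation $N+1$ times to obtain, for every $N\in\bbN_0$, the identity $f=\sum_{n=0}^{N}\gamma^n P_\pi^n g+\gamma^{N+1}P_\pi^{N+1}f$. Because $f\in B_b(S)$ and $P_\pi$ is a contraction, the remainder satisfies $\|\gamma^{N+1}P_\pi^{N+1}f\|_{B_b(S)}\le \gamma^{N+1}\|f\|_{B_b(S)}\to 0$ as $N\to\infty$, using $\gamma\in[0,1)$. Simultaneously $\sum_{n=0}^{\infty}\gamma^n\|P_\pi^n g\|_{B_b(S)}\le \|g\|_{B_b(S)}\sum_{n=0}^{\infty}\gamma^n<\infty$, so the series converges absolutely in $B_b(S)$. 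Passing to the limit yields $f=\sum_{n=0}^{\infty}\gamma^n P_\pi^n g$ pointwise on $S$.

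Finally I would match this against the definition of the occupancy kernel. By \eqref{eq:occupancy_s}, $d^{\pi}(ds'|s)=(1-\gamma)\sum_{n=0}^{\infty}\gamma^n P_\pi^n(ds'|s)$ with the series converging in $b\clM(S|S)$; integrating $g$ and using the absolute convergence just established gives $\int_S g(s')d^{\pi}(ds'|s)=(1-\gamma)\sum_{n=0}^{\infty}\gamma^n (P_\pi^n g)(s)=(1-\gamma)f(s)$, and dividing by $1-\gamma$ gives the claim. The only point that needs care is the interchange of the infinite sum with integration against $g$ and the identification $(P_\pi^n g)(s)=\int_S g(s')P_\pi^n(ds'|s)$; both are justified by the contraction bound and absolute convergence, so I expect no genuine obstacle here beyond routine bookkeeping.
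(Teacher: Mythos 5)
Your proof is correct and follows essentially the same route as the paper: both arguments identify the hypothesis as $(\operatorname{id}-\gamma P_\pi)f=g$, invert via the Neumann series using $\|\gamma P_\pi\|_{\clL(B_b(S))}\le\gamma<1$, and match $\sum_{n\ge 0}\gamma^n P_\pi^n$ against the definition of $d^\pi$ in \eqref{eq:occupancy_s}; your explicit iteration with a vanishing remainder term is just a hands-on derivation of the same resolvent identity. Your observation that the integrand in the statement should read $f(s')$ rather than $f(s)$ is also correct.
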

\begin{proof}
Recall that a kernel $k\in b\clM(S|S)$ induces a linear operator $L_k\in \clL(B_b(S))$ such that for all $h\in B_b(S)$, $L_k h(s)=\int_S h(s')k(ds'|s)$. Since $\|L_{k}  h\|_{ B_b(S)}\le \|h\|_{B_b(S)}\|k\|_{b\clM(S|S)}$ for all $h\in B_b(S)$, $\|L_k\|_{\clL(B_b(S))}\le \|k\|_{b\clM(S|S)}$. Consider the kernel $\gamma P_{\pi} \in b\clM(S|S)$  defined by $(\gamma P_{\pi})(B)=\gamma\int_B \int_A P(ds'|s,a) \pi(da|s)$ for all $B\in \clB(S)$. 
Then  as $P_\pi\in \clP(S|S)$
and $\|  P_{\pi}\|_{b\clM(S|S)}=1$,
\[
\|L_{\gamma P_{\pi}}  \|_{\clL(B_b(S))} \le \|\gamma P_{\pi}\|_{b\clM(S|S)}=\gamma \| P_{\pi}\|_{b\clM(S|S)}=\gamma\,.
\] 

The condition on $f$ and $g$ implies that $(\operatorname{id}-L_{\gamma P_{\pi}})f =g $, where $\operatorname{id} $ is the identity operator on $B_b(S)$. As $\|L_{\gamma P_{\pi}}\|_{\clL(B_b(S))}\le \gamma <1$, the operator $\operatorname{id}-L_{\gamma P_{\pi}} \in \clL(B_b(S))$ is invertible, and the inverse operator is given by the Neumann series $(\operatorname{id}-L_{\gamma P_{\pi}})^{-1}=\sum_{n=0}^\infty L_{\gamma P_{\pi}}^n$.  Thus, $f=\sum_{n=0}^\infty L_{\gamma P_{\pi}}^n g$. Observe that $L_{\gamma P_{\pi}}^n= L_{\gamma^n P^n_{\pi}}$ for all $n\in \bbN_0$, where  $P^n_{\pi}$ is the $n$-times  product of the kernel $P_{\pi}$ with $P^0_{\pi}(ds'|s)\coloneqq \delta_s(ds')$. Then by the definition \eqref{eq:occupancy_s} of $d^{\pi}\in \clP(S|S)$,  $f=\sum_{n=0}^\infty L_{\gamma P_{\pi}}^n g= L_{(1-\gamma)^{-1}d^{\pi}}g $. This proves the desired identity. 
\end{proof} 

We are now ready to prove Lemma  \ref{lem:performance_diff}.

\begin{proof}[Proof of Lemma \ref{lem:performance_diff}]
By \eqref{eq:on_policy}, for all $s\in S$,
\begin{align*}
\begin{split}
&V^\pi_{\tau}(s) - V^{\pi'}_{\tau}(s) \\
&\quad = \int_A\left( Q^{\pi}_{\tau}(a|s)+ \tau \ln \frac{\mathrm{d} \pi}{\mathrm{d} \mu}(a|s) \right)\pi(da|s) - \int_A \left( Q^{\pi'}_{\tau}(s,a)+ \tau \ln \frac{\mathrm{d} \pi'}{\mathrm{d} \mu}(a|s) \right)\pi'(da|s)\\
&\quad  =\int_A\left(Q^{\pi'}_{\tau}(s,a)+\tau \ln \frac{\mathrm{d} \pi'}{\mathrm{d} \mu}(a|s)\right)(\pi-\pi')(da|s) \\
&\qquad+ \int_A\left( Q^{\pi}_{\tau}(s,a)+ \tau \ln \frac{\mathrm{d} \pi}{\mathrm{d}\mu}(a|s)-Q^{\pi'}_{\tau}(s,a) -\tau \ln \frac{\mathrm{d} \pi'}{\mathrm{d} \mu}(a|s) \right)\pi(da|s)\\
&\quad  =\int_A\left(Q^{\pi'}_{\tau}(s,a)+\tau \ln \frac{\mathrm{d}\pi'}{\mathrm{d} \mu}(a|s)\right)(\pi-\pi')(da|s)\\
&\qquad +\gamma\int_A \int_S \left(V^\pi_{\tau}(s') - V^{\pi'}_{\tau}(s')\right)P(ds'|s,a) \pi(da|s)+ \tau  \operatorname{KL}(\pi(\cdot|s)|\pi'(\cdot|s))  \,.
\end{split}
\end{align*}
where the last equality used \eqref{def:Q_fn} and the fact that  $ \operatorname{KL}(\pi(\cdot|s)|\pi'(\cdot|s)) =\int_A  \ln \frac{\mathrm{d} \pi}{\mathrm{d} \pi'}(a|s) \pi(da|s)$.
Hence, by Fubini's theorem and  Lemma \ref{lem:inverse_1-gammaP}, for all $s\in S$, 
\begin{align*}
&V^{\pi}_\tau(s)-V^{\pi'}_\tau(s) \\
&\quad = \frac{1}{1-\gamma}\int_S \bigg[\int_A\left(Q^{\pi'}_{\tau}(s',a)+\tau \ln \frac{\mathrm{d} \pi'}{\mathrm{d}\mu}(a|s')\right)(\pi-\pi')(da|s') + \tau    \operatorname{KL}(\pi(\cdot | s')|\pi'(\cdot | s')) \bigg]d^{\pi}(ds'|s).
\end{align*}
Integrating both sides with respect to $\rho$ yields the desired identity.
\end{proof}

Based on Lemma \ref{lem:performance_diff} and Proposition \ref{prop:boundedness_Q}, the following proposition quantifies the difference $Q^{\pi'}_\tau-Q^{\pi}_\tau$ in terms of $\pi$ and $\pi'$.

\begin{proposition}\label{prop:Q_continuity}
Let $\pi,\pi'\in \Pi_\mu$ be such that $\pi(da|s)= \frac{\exp(f(s,a))\mu(da)}{\int_A \exp(f(s,a'))\mu(da')}$ for all $s\in S$. Then 
\begin{align*}
\|Q^{\pi'}_\tau-Q^{\pi}_\tau\|_{B_b(S\times A)} &\le \frac{\gamma}{(1-\gamma)^2}
\left(\|c\|_{B_b(S\times A)}+2\tau  \|f\|_{B_b(S\times A)}\right)\| \pi-\pi'\|_{b\clM(A|S)} \\
&\quad +\frac{\tau \gamma}{1-\gamma}\left\|\ln \frac{\mathrm{d} \pi'}{\mathrm{d} \pi}   \right\|_{B_b(S\times A)}\,. 
\end{align*}
\end{proposition}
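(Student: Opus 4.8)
The plan is to reduce the $Q$-function difference to a value-function difference and then control the latter through the performance difference lemma (Lemma~\ref{lem:performance_diff}). First I would note that by the definition \eqref{def:Q_fn} the common reward term $c$ cancels, so for all $(s,a)$,
\[
Q^{\pi'}_\tau(s,a)-Q^{\pi}_\tau(s,a)=\gamma\int_S\big(V^{\pi'}_\tau(s')-V^{\pi}_\tau(s')\big)P(ds'|s,a),
\]
and since $P(\cdot|s,a)\in\clP(S)$ this gives $\|Q^{\pi'}_\tau-Q^{\pi}_\tau\|_{B_b(S\times A)}\le \gamma\|V^{\pi'}_\tau-V^{\pi}_\tau\|_{B_b(S)}$. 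Hence it suffices to bound the value-function difference by the bracketed expression with the factor $\gamma$ stripped off.

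Next, to estimate $\|V^{\pi'}_\tau-V^{\pi}_\tau\|_{B_b(S)}$ I would apply Lemma~\ref{lem:performance_diff} with $\rho=\delta_s$, crucially in the orientation that evaluates the first variation at $\pi$ and integrates the state against the occupancy kernel $d^{\pi'}(\cdot|s)$ of $\pi'$:
\[
V^{\pi'}_\tau(s)-V^{\pi}_\tau(s)=\frac{1}{1-\gamma}\int_S\Big[\int_A\big(Q^{\pi}_\tau+\tau\ln\tfrac{\mathrm d\pi}{\mathrm d\mu}\big)(a|s')(\pi'-\pi)(da|s')+\tau\operatorname{KL}(\pi'(\cdot|s')|\pi(\cdot|s'))\Big]d^{\pi'}(ds'|s).
\]
This orientation choice is the decisive step: the hypothesis only supplies a potential $f$ for $\pi$, so Proposition~\ref{prop:boundedness_Q} delivers clean sup-norm bounds for the objects attached to $\pi$, namely $\|Q^{\pi}_\tau\|_{B_b(S\times A)}\le\frac{1}{1-\gamma}(\|c\|+2\tau\gamma\|f\|)$ and $\tau\|\ln\tfrac{\mathrm d\pi}{\mathrm d\mu}\|_{B_b(S\times A)}\le 2\tau\|f\|$, whereas the opposite orientation would force bounds on quantities attached to $\pi'$, for which no potential is available. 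Adding these two estimates, the algebraic collapse $\frac{2\tau\gamma}{1-\gamma}+2\tau=\frac{2\tau}{1-\gamma}$ reduces the first-variation bound to the clean form $\|Q^{\pi}_\tau+\tau\ln\tfrac{\mathrm d\pi}{\mathrm d\mu}\|_{B_b(S\times A)}\le\frac{1}{1-\gamma}(\|c\|+2\tau\|f\|)$.

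Then I would bound the two inner terms separately. The pairing against the signed kernel $(\pi'-\pi)(\cdot|s')$ is controlled by $B_b$–$\clM$ duality together with $\|(\pi'-\pi)(\cdot|s')\|_{\clM(A)}\le\|\pi-\pi'\|_{b\clM(A|S)}$, giving the bound $\frac{1}{1-\gamma}(\|c\|+2\tau\|f\|)\|\pi-\pi'\|_{b\clM(A|S)}$; meanwhile $0\le\operatorname{KL}(\pi'(\cdot|s')|\pi(\cdot|s'))=\int_A\ln\tfrac{\mathrm d\pi'}{\mathrm d\pi}(a|s')\pi'(da|s')\le\|\ln\tfrac{\mathrm d\pi'}{\mathrm d\pi}\|_{B_b(S\times A)}$ because $\pi'$ is a probability measure. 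Integrating against the probability kernel $d^{\pi'}(\cdot|s)$ introduces no extra constant, and the prefactor $\frac{1}{1-\gamma}$ supplies the remaining power. To pass from this signed identity to $|V^{\pi'}_\tau(s)-V^{\pi}_\tau(s)|$ I note that the nonnegative KL term only inflates the upper bound while the lower bound is already dominated by the same right-hand side, so a single orientation suffices; multiplying through by $\gamma$ recovers exactly the asserted estimate.

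I expect the only genuine subtlety to be this orientation choice combined with carefully tracking the constants so that they telescope to the stated coefficients; the remaining work is routine application of the triangle inequality, duality, and the boundedness bounds of Proposition~\ref{prop:boundedness_Q}.
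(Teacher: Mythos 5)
Your proof is correct and follows the same overall strategy as the paper's: reduce the $Q$-function gap to the value-function gap via the Bellman recursion $\|Q^{\pi'}_\tau-Q^{\pi}_\tau\|_{B_b(S\times A)}\le\gamma\|V^{\pi'}_\tau-V^{\pi}_\tau\|_{B_b(S)}$, expand the value gap with Lemma~\ref{lem:performance_diff}, control the first-variation term with Proposition~\ref{prop:boundedness_Q}, and bound the KL term by the sup norm of the log-density ratio. The one genuine difference is the orientation of the performance difference lemma. The paper writes $V^{\pi}_\tau-V^{\pi'}_\tau$ with the first variation evaluated at $\pi'$ and the state integrated against $d^{\pi}$, and then quotes the bound $\|Q^{\pi'}_{\tau}+\tau \ln \frac{\mathrm{d} \pi'}{\mathrm{d} \mu}\|_{B_b(S\times A)}\le \frac{1}{1-\gamma}(\|c\|_{B_b(S\times A)}+2\tau\|f\|_{B_b(S\times A)})$ — an application of Proposition~\ref{prop:boundedness_Q} that, read literally, attaches the potential $f$ of $\pi$ to quantities built from $\pi'$, and so really presupposes exactly the swap you perform. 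Your orientation (first variation at $\pi$, integration against $d^{\pi'}$) is the one under which Proposition~\ref{prop:boundedness_Q} applies verbatim to the policy whose potential is actually given, so your write-up is, if anything, the more careful of the two. The constants, the telescoping $\frac{2\tau\gamma}{1-\gamma}+2\tau=\frac{2\tau}{1-\gamma}$, and the passage from the signed identity to $|V^{\pi'}_\tau(s)-V^{\pi}_\tau(s)|$ using nonnegativity of the KL term all check out and reproduce the stated coefficients exactly; note also that the two orientations produce $\|\ln\frac{\mathrm{d}\pi}{\mathrm{d}\pi'}\|_{B_b(S\times A)}$ and $\|\ln\frac{\mathrm{d}\pi'}{\mathrm{d}\pi}\|_{B_b(S\times A)}$ respectively, which coincide, so the final estimate is identical either way.
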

\begin{proof}
By Lemma \ref{lem:performance_diff}, for all $s\in S$,
\begin{align*}
|V^{\pi}_{\tau}(s)-V^{\pi'}_{\tau}(s) |&\le \frac{1}{1-\gamma}\left|\int_S\int_A\left(Q^{\pi'}_{\tau}(s',a)+\tau \ln \frac{\mathrm{d} \pi'}{\mathrm{d} \mu}(a|s')\right)(\pi-\pi')(da|s') d^{\pi}_{s}(ds')\right|\\
&\quad +\frac{\tau}{1-\gamma} \int_S\int_A\ln \frac{\mathrm{d} \pi}{\mathrm{d} \pi'}(a|s')\pi(da|s')d^{\pi}_{s}(ds') \\
&\le \frac{1}{1-\gamma}\left\| Q^{\pi'}_{\tau}+\tau \ln \frac{\mathrm{d} \pi'}{\mathrm{d} \mu}\right\|_{B_b(S\times A)}\| \pi-\pi'\|_{b\clM(A|S)} +\frac{\tau }{1-\gamma}\left\|\ln \frac{\mathrm{d}\pi}{\mathrm{d} \pi'}   \right\|_{B_b(S\times A)}.
\end{align*}
Thus, by \eqref{def:Q_fn}, for all $(s,a)\in S\times A$, 
\[
|Q^{\pi'}_\tau (s,a)-Q^{\pi}_\tau (s,a)| \le \gamma \int_S | V^{\pi'}_\tau (s')- V^{\pi}_\tau (s')| P(ds'|s,a)\le \gamma\| V^{\pi'}_\tau  - V^{\pi}_\tau  \|_{B_b(S)}\,.
\]
By Proposition \ref{prop:boundedness_Q},
\begin{align*}
\left\| Q^{\pi'}_{\tau}+\tau \ln \frac{\mathrm{d} \pi'}{\mathrm{d} \mu}\right\|_{B_b(S\times A)}& \le \frac{1}{1-\gamma}\left(\|c\|_{B_b(S\times A)}+2\tau\gamma \|f\|_{B_b(S\times A)}\right)+2\tau \|f\|_{B_b(S\times A)}\\
& \le \frac{1}{1-\gamma}\left(\|c\|_{B_b(S\times A)}+2\tau  \|f\|_{B_b(S\times A)}\right).
\end{align*}
Combining the above inequalities yields the desired estimate.
\end{proof}

\subsection{Differentiability of the value function, policy, and Bregman divergence}
\label{sec:derivatives}

We first introduce the notion of Hadamard differentiability used in this paper (see, e.g., \cite{bonnans2013perturbation}).  
This notion is chosen as it is the weakest form of differentiability  for which a chain rule holds (see Lemma \ref{lemma:chain_rule}).

\begin{definition}
Let $X$ and $Y$ be Banach spaces.  We say $f:X\to Y$ is (Hadamard) differentiable if there exists $\mathfrak{d} f:X\to \clL(X,Y)$,  
called the  differential of $f$, such that for all $x, v \in X$,
\[
\lim_{n\to \infty}\frac{f(x+t_n v_n)-f(x)}{t_n}=\mathfrak{d}f(x)v
\]
for all sequences  $(t_n)_{n\in \mathbb{N}} \subset (0,1)$ and $(v_n)_{n\in \mathbb{N}}\subset X$ such that  $\lim_{n\to \infty} t_n =0$ and $\lim_{n\to \infty} v_n =v$. 
\end{definition}

The following lemma shows that function composition preserves the Hadamard differentiability. The proof can be found in \cite[Proposition 2.47]{bonnans2013perturbation} and is given below for the reader's convenience. 

\begin{lemma}[Chain rule]\label{lemma:chain_rule}
Let $X$, $Y$ and $Z$ be Banach spaces, and let $f:Y\to Z$ and $g:X\to Y$ be  differentiable functions with differentials $\mathfrak{d}f:Y\to \clL(Y,Z)$ and $\mathfrak{d}g:X\to \clL(X,Y)$, respectively. Then  $f\circ g:X\to Z$ is differentiable and  $\mathfrak{d}(f\circ g)(x)=\mathfrak{d}f(g(x))\mathfrak{d}g(x) $ for all $x\in X$.
\end{lemma}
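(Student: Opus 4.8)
The plan is to verify the Hadamard differentiability of $f\circ g$ directly from the definition, taking as candidate differential $\mathfrak{d}(f\circ g)(x)\coloneqq \mathfrak{d}f(g(x))\,\mathfrak{d}g(x)$. First I would note that this candidate is well-defined as an element of $\clL(X,Z)$, being the composition of the bounded linear operators $\mathfrak{d}g(x)\in\clL(X,Y)$ and $\mathfrak{d}f(g(x))\in\clL(Y,Z)$. Then I fix arbitrary $x,v\in X$ together with arbitrary sequences $(t_n)_{n\in\bbN}\subset(0,1)$ and $(v_n)_{n\in\bbN}\subset X$ satisfying $t_n\to 0$ and $v_n\to v$, and aim to establish
\[
\lim_{n\to\infty}\frac{(f\circ g)(x+t_n v_n)-(f\circ g)(x)}{t_n}=\mathfrak{d}f(g(x))\,\mathfrak{d}g(x)\,v\,.
\]

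The key construction is to introduce the difference quotients of $g$, namely $w_n\coloneqq t_n^{-1}\big(g(x+t_n v_n)-g(x)\big)$ for $n\in\bbN$. By the Hadamard differentiability of $g$ applied along $(t_n)$ and $(v_n)$, we obtain $w_n\to w\coloneqq \mathfrak{d}g(x)\,v$. Rewriting $g(x+t_n v_n)=g(x)+t_n w_n$, the difference quotient of $f\circ g$ becomes
\[
\frac{(f\circ g)(x+t_n v_n)-(f\circ g)(x)}{t_n}=\frac{f(g(x)+t_n w_n)-f(g(x))}{t_n}\,.
\]
Next I would invoke the Hadamard differentiability of $f$ \emph{at the point} $g(x)$, using the same scalar sequence $(t_n)\subset(0,1)$ and the direction sequence $(w_n)$, which converges to $w=\mathfrak{d}g(x)v$. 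This yields exactly
\[
\lim_{n\to\infty}\frac{f(g(x)+t_n w_n)-f(g(x))}{t_n}=\mathfrak{d}f(g(x))\,w=\mathfrak{d}f(g(x))\,\mathfrak{d}g(x)\,v\,,
\]
which is the claim. Since $x$, $v$, $(t_n)$ and $(v_n)$ were arbitrary, this proves differentiability of $f\circ g$ with the asserted differential.

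The only genuinely delicate point --- and the reason the hypothesis is phrased with Hadamard rather than mere G\^ateaux differentiability --- is that the direction sequence $(w_n)$ feeding into $f$ is \emph{not} constant; it merely converges to $w$. Hadamard differentiability is precisely the notion that permits differentiation along such varying directions, so the final step goes through verbatim from the definition, whereas a G\^ateaux-type hypothesis (a fixed direction $w$) would be insufficient, as it gives no control on $f$ along the perturbed directions $w_n$. Beyond correctly identifying the sequence $(w_n)$ and establishing its convergence, no further estimates are required: the result follows from two successive appeals to the defining limit.
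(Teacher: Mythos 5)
Your proof is correct and follows exactly the same route as the paper's: rewrite the difference quotient of $f\circ g$ as a difference quotient of $f$ at $g(x)$ along the varying directions $w_n=t_n^{-1}(g(x+t_nv_n)-g(x))\to\mathfrak{d}g(x)v$, and invoke the Hadamard definition for $f$. Your closing remark about why Hadamard (rather than G\^ateaux) differentiability is the right hypothesis is exactly the point the paper's argument silently relies on.
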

\begin{proof}
Fix $x,v\in X$, and sequences  $(t_n)_{n\in \mathbb{N}} \subset (0,1)$ and $(v_n)_{n\in \mathbb{N}}\subset X$  with $\lim_{n\to \infty} t_n =0$ and  $\lim_{n\to \infty} v_n =v$. Then for all $n\in \mathbb{N}$,
\[
\frac{f(g(x+t_nv_n))-f(g(x))}{t_n}=\frac{f\left(g(x)+t_n\frac{g(x+t_nv_n)-g(x)}{t_n}\right)-f(g(x))}{t_n}\,.
\]
The differentiability of $g$ implies that $\lim_{n\to \infty}\frac{g(x+t_nv_n)-g(x)}{t_n}=\mathfrak{d}g(x)v$ in $Y$, which along with the differentiability of $f$  implies that $\lim_{n\to\infty}\frac{f(g(x+t_nv_n))-f(g(x))}{t_n}=\mathfrak{d}f(g(x))\mathfrak{d}g(x)v$ in $Z$. Since $\mathfrak{d}g(x)\in \clL(X,Y)$ and $\mathfrak{d}f(g(x))\in \clL(Y,Z)$,  $\mathfrak{d}f(g(x))\mathfrak{d}g(x)\in  \clL(X,Z)$. This proves the differentiability of $f\circ g$.
\end{proof}

In the sequel, we establish the differentiability of several important functions. The following proposition proves the differentiability of the policy operator $\boldsymbol{\pi}$ defined by \eqref{eq:pi_f_mu}.

\begin{proposition}\label{prop:differentiability_Pi}
The map $\boldsymbol{\pi} :B_b(S\times A) \to \clP_{\mu}(A|S)$ defined by \eqref{eq:pi_f_mu} is differentiable and for all  $f,g\in B_b(S\times A)$,
\begin{equation}\label{eq:Pi_gateaux}
(\mathfrak{d}\boldsymbol{\pi}(f)g)(da|s)=\left(g(s,a)-\int_A g(s,a')\boldsymbol{\pi}(f)(da'|s) \right)\boldsymbol{\pi}(f)(da|s)\,.    
\end{equation}
Moreover, for all $f\in B_b(S\times A)$, $\|\mathfrak{d}\boldsymbol{\pi}(f)\|_{\clL(B_b(S\times A), b\clM(A|S))}\le 2$.
\end{proposition}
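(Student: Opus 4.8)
The plan is to reduce the statement to a \emph{uniform} (sup-norm) convergence of density functions and then verify that convergence by an explicit differentiation in the scalar parameter, using the two-sided density bounds from Proposition~\ref{prop:boundedness_Q}. Write $\rho_f(s,a)=\frac{\mathrm{d}\boldsymbol{\pi}(f)}{\mathrm{d}\mu}(s,a)=e^{f(s,a)-\Phi(f)(s)}$ with $\Phi$ as in \eqref{def:Phi}, so that every kernel in play is absolutely continuous with respect to $\mu$. Since $\mu\in\clP(A)$, any kernel $k(da|s)=h(s,a)\mu(da)$ obeys $\|k\|_{b\clM(A|S)}=\sup_s\int_A|h(s,a)|\mu(da)\le\|h\|_{B_b(S\times A)}$. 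Hence, to prove Hadamard differentiability with differential \eqref{eq:Pi_gateaux}, it suffices to fix $f,v\in B_b(S\times A)$ and sequences $t_n\downarrow 0$ and $v_n\to v$ in $B_b(S\times A)$, and show that the density difference quotient $\tfrac{1}{t_n}(\rho_{f+t_nv_n}-\rho_f)$ converges in $B_b(S\times A)$ to $D(s,a):=\bigl(v(s,a)-\int_A v(s,a')\boldsymbol{\pi}(f)(da'|s)\bigr)\rho_f(s,a)$, the $\mu$-density of $\mathfrak{d}\boldsymbol{\pi}(f)v$ prescribed by \eqref{eq:Pi_gateaux}.

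Next I would differentiate in the scalar variable. Fix $(s,a)$ and set $\psi_n(t):=\rho_{f+tv_n}(s,a)$ for $t\in[0,1]$. Differentiation under the integral sign in the normalising factor is legitimate because the integrand $e^{f+tv_n}$ and its $t$-derivative $v_ne^{f+tv_n}$ are uniformly bounded on $[0,1]$ and $\mu$ is finite, so the quotient rule gives $\psi_n'(t)=\bigl(v_n(s,a)-\bar v_n(s,t)\bigr)\rho_{f+tv_n}(s,a)$, where $\bar v_n(s,t):=\int_A v_n(s,a')\boldsymbol{\pi}(f+tv_n)(da'|s)$. As $\psi_n\in C^1([0,1])$, the fundamental theorem of calculus yields
\[
\frac{\rho_{f+t_nv_n}(s,a)-\rho_f(s,a)}{t_n}=\frac{1}{t_n}\int_0^{t_n}\psi_n'(t)\,dt .
\]

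The core of the argument is then a set of uniform estimates. By Proposition~\ref{prop:boundedness_Q}, $\|\ln\rho_{f+tv_n}\|_{B_b(S\times A)}\le 2\|f+tv_n\|_{B_b(S\times A)}$, which is uniformly bounded for $t\in[0,1]$ and large $n$; this produces a two-sided bound $0<c\le\rho_{f+tv_n}\le C$ and, through the elementary local Lipschitz continuity of $f\mapsto\rho_f$ on bounded sets (a log-sum-exp estimate $|\Phi(f)-\Phi(g)|\le\|f-g\|_{B_b(S\times A)}$ together with boundedness of the exponents), it gives $\sup_{t\in[0,t_n]}\|\rho_{f+tv_n}-\rho_f\|_{B_b(S\times A)}\to 0$. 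Because each $\rho_{f+tv_n}(s,\cdot)$ is a probability density against $\mu$, the barycentres satisfy $\sup_{t\in[0,t_n]}\|\bar v_n(\cdot,t)-\bar v\|_{B_b(S)}\to 0$ with $\bar v(s)=\int_A v\,\boldsymbol{\pi}(f)(\cdot|s)$. Writing $\psi_n'(t)-D=[(v_n-\bar v_n(\cdot,t))-(v-\bar v)]\rho_{f+tv_n}+(v-\bar v)(\rho_{f+tv_n}-\rho_f)$ and bounding each factor in $B_b$ then gives $\sup_{t\in[0,t_n]}\|\psi_n'(t)-D\|_{B_b(S\times A)}\to 0$, so the averaged difference quotient converges to $D$ in $B_b(S\times A)$. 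This establishes \eqref{eq:Pi_gateaux}, and linearity of $g\mapsto\mathfrak{d}\boldsymbol{\pi}(f)g$ is read off from the formula.

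For the norm bound, the $\mu$-density of $\mathfrak{d}\boldsymbol{\pi}(f)g$ is $(g-\bar g)\rho_f$ with $\bar g(s)=\int_A g\,\boldsymbol{\pi}(f)(\cdot|s)$; since $|g-\bar g|\le 2\|g\|_{B_b(S\times A)}$ pointwise and $\int_A\rho_f(s,a)\mu(da)=1$, one obtains $\|\mathfrak{d}\boldsymbol{\pi}(f)g\|_{b\clM(A|S)}=\sup_s\int_A|g-\bar g|\,\boldsymbol{\pi}(f)(da|s)\le 2\|g\|_{B_b(S\times A)}$, hence $\mathfrak{d}\boldsymbol{\pi}(f)\in\clL(B_b(S\times A),b\clM(A|S))$ with $\|\mathfrak{d}\boldsymbol{\pi}(f)\|_{\clL(B_b(S\times A),b\clM(A|S))}\le 2$. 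The main obstacle is not any single estimate but maintaining uniformity simultaneously in $(s,a)$ and over $t\in[0,t_n]$: this rests entirely on the two-sided density bounds supplied by Proposition~\ref{prop:boundedness_Q} and on $\mu$ being a probability measure, which is what both controls the barycentre terms and converts sup-norm convergence of densities into total-variation convergence of kernels.
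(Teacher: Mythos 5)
Your proposal is correct, but it follows a genuinely different route from the paper's proof. The paper works directly with the difference of normalised densities, decomposing it algebraically via the unnormalised kernels $\tilde\pi_n$, $\tilde\pi_\infty$, then Taylor-expands $e^{t_ng_n}-1$ as an explicit power series and tracks the remainder terms $I_n^1$, $I_n^2$ by hand in the $b\clM(A|S)$ norm $\sup_s\int_A|\cdot|\,\mu(da)$. You instead parametrise along the segment $t\mapsto f+tv_n$, compute the exact scalar derivative $\psi_n'(t)=\bigl(v_n-\bar v_n(\cdot,t)\bigr)\rho_{f+tv_n}$ by the quotient rule (with differentiation under the integral sign justified by boundedness of the exponents and finiteness of $\mu$), and use the fundamental theorem of calculus to express the difference quotient as the average $\tfrac1{t_n}\int_0^{t_n}\psi_n'(t)\,dt$; the problem then reduces to uniform continuity of $\psi_n'$ along the segment, which follows from the two-sided density bounds of Proposition~\ref{prop:boundedness_Q} and the elementary Lipschitz estimate $|\Phi(f)-\Phi(g)|\le\|f-g\|_{B_b(S\times A)}$. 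What your approach buys is the avoidance of any series manipulation, replacing it with a one-line mean-value identity, and it in fact delivers sup-norm convergence of the densities, which is strictly stronger than the kernel-norm convergence the paper establishes (since $\mu(A)=1$ converts sup-norm bounds on densities into $b\clM(A|S)$ bounds on kernels). What the paper's explicit expansion buys is a self-contained quantitative remainder bound of order $t_n$ plus $\|g_n-g\|_{B_b(S\times A)}$ that is reused verbatim in the proof of Proposition~\ref{prop:differentiablity_log_density} (via \eqref{eq:dPi_t-dPi_0} and \eqref{eq:delta_pi_n}); if you adopted your route you would need to rerun the segment argument for the log-density map as well. Your treatment of the operator-norm bound is identical to the paper's.
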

\begin{proof}
Fix $f,g \in B_b(S\times A)$ and sequences  $(t_n)_{n\in \mathbb{N}} \subset (0,1)$ and $(g_n)_{n\in \mathbb{N}}\subset B_b( S\times A)$ such that  $\lim_{n\to \infty} t_n =0$ and  $\lim_{n\to \infty} g_n =g$. Define $\pi_\infty=\boldsymbol{\pi}(f)$ and $\pi_{n}=\boldsymbol{\pi}(f+t_ng_n)$ for all $n\in \mathbb {N}$. For each $s\in S$, let $\Psi(da|s)$ be the right hand side of \eqref{eq:Pi_gateaux}. Recall that for any $\nu\in \clM(A)$, if  $\nu=h d\mu$ for some $h\in L^1(A,\mu)$, then  $\|\nu\|_{\mathcal{M}(A)}=\int_A|h(a)|\mu (da)$.  As  $\pi_{n}(da|s)$ is absolutely continuous with respect to $\mu$, for all $n\in \mathbb N$,
\begin{align}
\label{eq:differnce_quotation_pi}
\begin{split}
& \left\|\frac{\pi_n-\pi_\infty}{t_n}-\Psi \right\|_{b\clM(A|S)}= \sup_{s\in S}
\int_A \left|\frac{\frac{\mathrm{d} \pi_n}{\mathrm{d} \mu}(a|s)-\frac{\mathrm{d}\pi_\infty}{\mathrm{d} \mu}(a|s)}{t_n}-\frac{\mathrm{d} \Psi }{\mathrm{d} \mu}(a|s)\right| \mu(da)\,.
\end{split}
\end{align}
To simplify the notation, we define the unnormalized kernels $\tilde{\pi}_n(da|s) = e^{f(s,a)+t_n g_n(s,a)} \mu(d a)$ and $\tilde{\pi}_\infty(da|s) = e^{f(s,a)} \mu(d a)$.  For all $(s,a)\in S\times A$,
\begin{align}\label{eq:dPi_t-dPi_0}
\begin{split}
&\frac{\mathrm{d}\pi_n}{\mathrm{d} \mu}(a|s)-\frac{\mathrm{d} \pi_\infty}{\mathrm{d} \mu}(a|s)
=\frac{\frac{\mathrm{d} \tilde{\pi}_n}{\mathrm{d} \mu}(a|s)}{\tilde{\pi}_n(A|s)}-\frac{\frac{\mathrm{d} \tilde{\pi}_\infty}{\mathrm{d} \mu}(a|s)}{\tilde{\pi}_\infty(A|s)}\\
&\quad =\frac{\frac{\mathrm{d} \tilde{\pi}_n}{\mathrm{d} \mu}(a|s)-\frac{\mathrm{d} \tilde{\pi}_\infty}{\mathrm{d} \mu}(a|s)}{\tilde{\pi}_n(A|s)}+\frac{\mathrm{d} \tilde{\pi}_\infty}{\mathrm{d} \mu}(a|s)\frac{\tilde{\pi}_\infty(A|s)-\tilde{\pi}_n(A|s)} {\tilde{\pi}_n(A|s)\tilde{\pi}_\infty(A|s)}\\
&\quad =\left(\frac{\frac{\mathrm{d} \tilde{\pi}_n}{\mathrm{d} \mu}(a|s)-\frac{\mathrm{d} \tilde{\pi}_\infty}{\mathrm{d} \mu}(a|s)}{\tilde{\pi}_\infty(A|s)}+
\frac{\mathrm{d} {\pi}_\infty}{\mathrm{d} \mu}(a|s)\frac{\tilde{\pi}_\infty(A|s)-\tilde{\pi}_n(A|s)} {\tilde{\pi}_\infty(A|s)}\right)\frac{\tilde{\pi}_\infty(A|s)}{\tilde{\pi}_n(A|s)}\\
&\quad =\left[\frac{\frac{\mathrm{d} \tilde{\pi}_n}{\mathrm{d} \mu}(a|s)-\frac{\mathrm{d} \tilde{\pi}_\infty}{\mathrm{d} \mu}(a|s)}{\tilde{\pi}_\infty(A|s)}-\frac{\mathrm{d} {\pi}_\infty}{\mathrm{d} \mu}(a|s)\int_A\left(\frac{\frac{\mathrm{d} \tilde{\pi}_n}{\mathrm{d} \mu}(a|s)-\frac{\mathrm{d} \tilde{\pi}_\infty}{\mathrm{d} \mu}(a|s)}{\tilde{\pi}_\infty(A|s)}\right)\mu(da)\right]\frac{\tilde{\pi}_\infty(A|s)}{\tilde{\pi}_n(A|s)}\,.
\end{split}
\end{align}
Write $\Delta \pi_n (a|s)=t_n^{-1}\frac{\frac{\mathrm{d} \tilde{\pi}_n}{\mathrm{d} \mu}(a|s)-\frac{\mathrm{d} \tilde{\pi}_\infty}{\mathrm{d} \mu}(a|s)}{\tilde{\pi}_\infty(A|s)}$.
By \eqref{eq:differnce_quotation_pi} and \eqref{eq:dPi_t-dPi_0},
\[
\left\|\frac{\pi_n-\pi_\infty}{t_n}-\Psi \right\|_{b\clM(A|S)}=  \sup_{s\in S}\int_A \left|
\frac{\frac{\mathrm{d}\pi_n}{\mathrm{d} \mu}(a|s)-\frac{\mathrm{d} \pi_\infty}{\mathrm{d} \mu}(a|s)}{t_n}-\frac{\mathrm{d}\Psi}{\mathrm{d} \mu}(a|s)\right| \mu(da)\le I_n^1 +I_n^2\,,
\]
where the terms $I_n^1$ and $I_n^2$ are defined as follows:
\begin{align*}
I_n^1&=\sup_{s\in S}\int_A \left|\left[\Delta \pi_n (a|s)-\frac{\mathrm{d} {\pi}_\infty}{\mathrm{d}\mu}(a|s)\int_A\Delta \pi_n (a|s)\mu(da)-\frac{\mathrm{d} \Psi}{\mathrm{d} \mu}(a|s)\right]\frac{\tilde{\pi}_\infty(A|s)}{\tilde{\pi}_n(A|s)}\right| \mu(da),\\
I^2_n&=\sup_{s\in S}\int_A \left|\frac{\mathrm{d} \Psi}{\mathrm{d} \mu}(a|s)\left[\frac{\tilde{\pi}_\infty(A|s)}{\tilde{\pi}_n(A|s)}-1\right]\right| \mu(da)\,.
\end{align*}
We now prove $\lim_{n\to \infty} I^1_n=\lim_{n\to \infty} I^2_n=0$. For each $n\in \mathbb{N}$, the definition of $\tilde{\pi}_n$ implies for all $(s,a)\in S\times A$,
\[
\frac{\mathrm{d} \tilde{\pi}_n}{\mathrm{d} \mu}(a|s)-\frac{\mathrm{d} \tilde{\pi}_\infty}{\mathrm{d} \mu}(a|s)
=e^{f(s,a)}\left(e^{t_ng_n(s,a)} -1\right),
\]
which, along with the Taylor expansion of the exponential function at $0$ yields
\begin{equation}
\label{eq:delta_pi_n}
\Delta \pi_n(a|s)=  t_n^{-1}\frac{\frac{\mathrm{d} \tilde{\pi}_n}{\mathrm{d} \mu}(a|s)-\frac{\mathrm{d} \tilde{\pi}_\infty}{\mathrm{d} \mu}(a|s)}{\tilde{\pi}_\infty(A|s)}
=\frac{\mathrm{d} {\pi}_\infty}{\mathrm{d} \mu}(a|s)\left(g_n(s,a)+t_n\sum_{k=2}^\infty t_n^{k-2}\frac{\left(g_n(s,a)\right)^k}{k!}\right).
\end{equation}
Therefore, by the definition of $\Psi$, for all $(s,a)\in S\times A$ and $n\in \mathbb{N}$,
\begin{align*}
& \left| \Delta \pi_n (a|s)-\frac{\mathrm{d} {\pi}_\infty}{\mathrm{d} \mu}(a|s)\int_A
\Delta \pi_n (a|s)\mu(da)-\frac{\mathrm{d}\Psi}{\mathrm{d} \mu}(a|s)\right|\\
&=\left|t_n\left(\sum_{k=2}^\infty t_n^{k-2}\frac{\left(g_n(s,a)\right)^k}{k!}- \frac{\mathrm{d} {\pi}_\infty}{\mathrm{d} \mu}(a|s)\int_A \sum_{k=2}^\infty t_n^{k-2}\frac{\left(g_n(s,a)\right)^k}{k!}\pi_\infty(da|s)\right)\right|\\
&\quad +\left|\frac{\mathrm{d} {\pi}_\infty}{\mathrm{d} \mu}(a|s) (g_n(s,a)-g(s,a))- \frac{\mathrm{d} {\pi}_\infty}{\mathrm{d} \mu}(a|s) \int_A (g_n(s,a')-g(s,a'))\pi_\infty(da'|s)\right|\\
& \le t_n\exp\left(\sup_{n\in \mathbb{N}}\|g_n\|_{B_b(S\times A)}\right)\left(1 +\frac{\mathrm{d} {\pi}_\infty}{\mathrm{d} \mu}(a|s)\right)+2\|g_n-g\|_{B_b(S\times A)}\frac{\mathrm{d} {\pi}_\infty}{\mathrm{d} \mu}(a|s),
\end{align*}
where the last inequality used $\pi_\infty(A|s)=1$. Substituting this estimate in the definition of $I^1_n$ yields for all $n\in \mathbb{N}$,
\begin{align*}
\begin{split}
I^1_n&\le  \sup_{s\in S}\int_A \left| \Delta \pi_n (a|s)-\frac{\mathrm{d} {\pi}_\infty}{\mathrm{d} \mu}(a|s)\int_A\Delta \pi_n (a|s)\mu(da)-\frac{\mathrm{d} \Psi}{\mathrm{d} \mu}(a|s)\right| \mu(da) \sup_{s\in S}\frac{\tilde{\pi}_\infty(A|s)}{\tilde{\pi}_n(A|s)}\\
&\le\sup_{s\in S}\int_A \left|t_n\exp\left(\sup_{n\in \mathbb{N}}\|g_n\|_{B_b(S\times A)}\right)\left(1+\frac{\mathrm{d} {\pi}_\infty}{\mathrm{d} \mu}(a|s)\right)\right| \mu(da)
\sup_{s\in S}\frac{\tilde{\pi}_\infty(A|s)}{\tilde{\pi}_n(A|s)}\\
&\quad + \sup_{s\in S}\int_A \left|2\|g_n-g\|_{B_b(S\times A)}\frac{\mathrm{d} {\pi}_\infty}{\mathrm{d} \mu}(a|s)\right| \mu(da)\sup_{s\in S}\frac{\tilde{\pi}_\infty(A|s)}{\tilde{\pi}_n(A|s)}\\
&\le2\left[t_n\exp\left(\sup_{n\in \mathbb{N}}\|g_n\|_{B_b(S\times A)}\right) +\|g_n-g\|_{B_b(S\times A)}\right]\sup_{s\in S}\frac{\tilde{\pi}_\infty(A|s)}{\tilde{\pi}_n(A|s)}\,.
\end{split}
\end{align*}
Since $\lim_{n\to \mathbb{N}}\|g_n- g\|_{B_b(S\times A)}=0$,
$\sup_{s\in S}\frac{\tilde{\pi}_\infty(A|s)}{\tilde{\pi}_n(A|s)}$ is  uniformly bounded for all $n\in \mathbb{N}$,
which along with $\lim_{n\to \mathbb{N}}t_n=0$ yields 
$\lim_{n\to \infty} I^1_n=0$.
To prove the convergence of $I^2_n$,
note that 
\begin{align*}
I^2_n&\le \sup_{s\in S}\int_A \left|\frac{\mathrm{d} \Psi}{\mathrm{d} \mu}(a|s)\right| \mu(da)
\sup_{s\in S}\left|\frac{\tilde{\pi}_\infty(A|s)}{\tilde{\pi}_n(A|s)}-1\right|\\
& \le \sup_{s\in S}\int_A \left|\frac{\mathrm{d} \Psi}{\mathrm{d} \mu}(a|s)\right| \mu(da)
\frac{1}{\inf_{s\in S}\tilde{\pi}_n(A|s)}\sup_{s\in S}\left|\tilde{\pi}_\infty(A|s)- \tilde{\pi}_n(A|s)\right|.
\end{align*}
The uniform boundedness of $f$ and $(g_n)_{n\in \mathbb{N}}$ implies that 
\[
\sup_{n\in \mathbb{N}}\left(\sup_{s\in S}\int_A \left|\frac{\mathrm{d}\Psi}{\mathrm{d} \mu}(a|s)\right|\mu(da)\frac{1}{\inf_{s\in S}\tilde{\pi}_n(A|s)}\right)<\infty\,,
\]
and the Taylor expansion of the exponential function at $0$ implies that 
\begin{align*}
\lim_{n\to \infty}  \sup_{s\in S}\left|\tilde{\pi}_\infty(A|s)- \tilde{\pi}_n(A|s)\right|\le \lim_{n\to \infty}\sup_{s\in S}\left|\int_A e^{f(s,a)}(e^{t_n g_n(s,a)}-1)\mu (da)\right|=0\,.
\end{align*}
This yields that $\lim_{n\to \infty}I^2_n=0$ and  proves the  differentiability of $\boldsymbol{\pi}:B_b(S\times A)\to b\clM_b(A|S)$. Finally, for all  $f,g\in B_b(S\times A)$,
\begin{align*}
\|(\mathfrak{d} \boldsymbol{\pi}(f)g)\|_{b\clM(A|S)}&=\sup_{s\in S}\int_A \left|g(s,a)\frac{\mathrm{d} \boldsymbol{\pi}(f)}{\mathrm{d} \mu}(a|s)-\frac{\mathrm{d}\boldsymbol{\pi}(f)}{\mathrm{d} \mu}(a|s)\int_A g(s,a')\boldsymbol{\pi}(f)(da'|s)\right|\mu(da)\\
&\le \|g\|_{B_b(S\times A) }\sup_{s\in S}\int_A \left(\frac{\mathrm{d}\boldsymbol{\pi}(f)}{\mathrm{d} \mu}(a|s)+\frac{\mathrm{d}\boldsymbol{\pi}(f)}{\mathrm{d} \mu}(a|s)\int_A \boldsymbol{\pi}(f)(da'|s)\right)\mu(da)\\
&=2\|g\|_{B_b(S\times A) }\,,
\end{align*}
where the last inequality used the fact that  $\boldsymbol{\pi}(f)(da|s)\in \clP(A)$ for all $s\in S$. This proves $\|\mathfrak{d}\boldsymbol{\pi}(f)\|_{\clL(B_b(S\times A), b\clM(A|S))}\le 2$.
\end{proof}

The next proposition proves that the logarithm of the Radon-Nikodym derivative of a policy is differentiable.  

\begin{proposition}
\label{prop:differentiablity_log_density}
Let $\boldsymbol{\pi}:B_b(S\times A) \to \clP_{\mu}(A|S)$ be defined by \eqref{eq:pi_f_mu}. Then the map $\ln \frac{\mathrm{d} \boldsymbol \pi(\cdot)}{\mathrm{d} \mu}:B_b(S\times A) \to B_b(S\times A)$ is differentiable and for all $f,g\in B_b(S\times A)$,
\begin{equation}\label{eq:d_ln_dpi_dmu}
\left(\mathfrak{d}\ln\frac{\mathrm{d}\boldsymbol{\pi}(f)}{\mathrm{d} \mu} g\right)(s,a) = g(s,a)-\int_A g(s,a') {\boldsymbol{\pi}(f)} (da'|s)\,.
\end{equation}
Moreover, for all $f\in B_b(S\times A)$, 
$\|\mathfrak{d}\ln\frac{\mathrm{d} \boldsymbol{\pi}(f)}{\mathrm{d} \mu}\|_{\clL(B_b(S\times A),B_b(S\times A))}
\le 2$.
\end{proposition}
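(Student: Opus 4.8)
The plan is to exploit the additive decomposition of the log-density. By the definitions \eqref{eq:pi_f_mu} and \eqref{def:Phi}, every $f\in B_b(S\times A)$ satisfies
\[
\ln\frac{\mathrm{d}\boldsymbol{\pi}(f)}{\mathrm{d}\mu}(s,a)=f(s,a)-\Phi(f)(s),
\]
where $\Phi(f)(s)=\ln\int_A e^{f(s,a)}\mu(da)$ is viewed as an element of $B_b(S\times A)$ that is constant in $a$. The identity map $f\mapsto f$ is trivially Hadamard differentiable with differential $\mathrm{id}$, and since the difference of two Hadamard-differentiable maps is again differentiable with differential the difference of the differentials (immediate from the definition, as the defining limits are linear), the whole statement reduces to proving that $\Phi$ is Hadamard differentiable with
\[
(\mathfrak{d}\Phi(f)g)(s)=\int_A g(s,a)\,\boldsymbol{\pi}(f)(da|s).
\]

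To establish this, I would fix $f,g\in B_b(S\times A)$ and sequences $(t_n)\subset(0,1)$ with $t_n\to0$ and $(g_n)\subset B_b(S\times A)$ with $g_n\to g$, and examine the difference quotient. Writing $Z(s)=\int_A e^{f(s,a)}\mu(da)$, the key observation is that $\int_A e^{f+t_ng_n}\mathrm{d}\mu / Z(s)=\int_A e^{t_ng_n(s,a)}\boldsymbol{\pi}(f)(da|s)$, so that
\[
\frac{\Phi(f+t_ng_n)(s)-\Phi(f)(s)}{t_n}=\frac{1}{t_n}\ln\!\left(\int_A e^{t_n g_n(s,a)}\,\boldsymbol{\pi}(f)(da|s)\right).
\]
Taylor-expanding the exponential under the integral (as in \eqref{eq:delta_pi_n}) and then applying $\ln(1+x)=x+O(x^2)$ yields
\[
\frac{\Phi(f+t_ng_n)(s)-\Phi(f)(s)}{t_n}=\int_A g_n(s,a)\,\boldsymbol{\pi}(f)(da|s)+R_n(s),
\]
with a remainder $R_n$ of order $t_n$. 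Because $\boldsymbol{\pi}(f)(\cdot|s)$ is a probability measure, $\big|\int_A(g_n-g)(s,a)\,\boldsymbol{\pi}(f)(da|s)\big|\le\|g_n-g\|_{B_b(S\times A)}\to0$ uniformly in $s$, so the leading term converges to $\mathfrak{d}\Phi(f)g$ in $B_b(S)$.

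The one point requiring care — and the main (mild) obstacle — is controlling the remainder $R_n$ \emph{uniformly in $s$}, since Hadamard differentiability here is with respect to the supremum norm. This is handled by the uniform boundedness of $f$ and of the convergent sequence $(g_n)$: setting $C=\|f\|_{B_b(S\times A)}+\sup_n\|g_n\|_{B_b(S\times A)}<\infty$, the quantity $t_ng_n$ is uniformly bounded, which keeps the arguments of both the exponential and the logarithm in a fixed compact set and gives an $s$-independent bound of the form $\sup_s|R_n(s)|\le c\,t_n e^{C}\to0$. Combining with the decomposition above yields the claimed formula \eqref{eq:d_ln_dpi_dmu}. Finally, for the operator-norm bound, since $\boldsymbol{\pi}(f)(\cdot|s)\in\clP(A)$, both $|g(s,a)|$ and $\big|\int_A g(s,a')\boldsymbol{\pi}(f)(da'|s)\big|$ are at most $\|g\|_{B_b(S\times A)}$, so their difference is at most $2\|g\|_{B_b(S\times A)}$, giving $\|\mathfrak{d}\ln\tfrac{\mathrm{d}\boldsymbol{\pi}(f)}{\mathrm{d}\mu}\|_{\clL(B_b(S\times A))}\le2$.
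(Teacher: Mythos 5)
Your proof is correct, but it follows a genuinely different route from the paper's. The paper proves the proposition by working directly with the difference quotient of the log-density: it reuses the expansion of $\frac{\mathrm{d}\pi_n}{\mathrm{d}\mu}-\frac{\mathrm{d}\pi_\infty}{\mathrm{d}\mu}$ from the proof of Proposition \ref{prop:differentiability_Pi} (equations \eqref{eq:dPi_t-dPi_0} and \eqref{eq:delta_pi_n}), forms the relative increment $\bigl(\frac{\mathrm{d}\pi_n}{\mathrm{d}\mu}-\frac{\mathrm{d}\pi_\infty}{\mathrm{d}\mu}\bigr)/\frac{\mathrm{d}\pi_\infty}{\mathrm{d}\mu}$, and applies $\ln(1+x)-x=O(x^2)$ to that ratio, so the argument is tied to the density-quotient estimates already established for $\boldsymbol{\pi}$ itself. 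You instead use the additive splitting $\ln\frac{\mathrm{d}\boldsymbol{\pi}(f)}{\mathrm{d}\mu}=f-\Phi(f)$ with $\Phi$ as in \eqref{def:Phi}, reduce everything to the Hadamard differentiability of $\Phi$, and prove that by the clean identity $\Phi(f+t_ng_n)(s)-\Phi(f)(s)=\ln\int_A e^{t_ng_n(s,a)}\boldsymbol{\pi}(f)(da|s)$ followed by uniform Taylor expansion. This is essentially the argument the paper deploys later for the map $T_1$ in Proposition \ref{prop:derivative_integral_logexp}, so your route anticipates and reuses that lemma; it is more modular, avoids re-deriving the Radon--Nikodym quotient bounds, and makes the structure of the differential (identity minus conditional averaging) transparent. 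The only point you must keep explicit, and you do, is that the linear embedding of $B_b(S)$ into $B_b(S\times A)$ as functions constant in $a$ is bounded, so the difference of the two Hadamard differentials is again a Hadamard differential in the supremum norm, and that the remainder in the expansion of $\Phi$ is controlled uniformly in $s$ via the uniform bound on $(g_n)$.
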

\begin{proof}
Fix  $f,g\in B_b(S\times A)$ and sequences  $(t_n)_{n\in \mathbb{N}} \subset (0,1)$ and $(g_n)_{n\in \mathbb{N}}\subset B_b(S\times A)$  such that  $\lim_{n\to \infty} t_n =0$ and $\lim_{n\to \infty} g_n =g$.
Define $\pi_\infty=\boldsymbol{\pi}(f)$, $\tilde{\pi}_\infty(da|s) = e^{f(s,a)} \mu(d a)$, and 
for each $n\in \mathbb{N}$, define $\pi_n=\boldsymbol{\pi}(f+t_ng_n)$ and $\tilde{\pi}_n(da|s) = e^{f(s,a)+t_n g_n(s,a)} \mu(da)$.  For each $(s,a)\in S\times A$, let $\Psi(s,a)$ be the right-hand-side of \eqref{eq:d_ln_dpi_dmu}. By \eqref{eq:dPi_t-dPi_0} and \eqref{eq:delta_pi_n}, for all $(s,a)\in S\times A$,
\begin{align}
\label{eq:dpi_dmu_dt}
\begin{split}
&\frac{1}{t_n}\left(\frac{\mathrm{d}\pi_n}{\mathrm{d} \mu}(a|s)-\frac{\mathrm{d}\pi_\infty}{\mathrm{d} \mu}(a|s)\right)\\
&\quad =\left[\frac{\frac{\mathrm{d}\tilde{\pi}_n}{\mathrm{d} \mu}(a|s)-\frac{\mathrm{d}\tilde{\pi}_\infty}{\mathrm{d} \mu}(a|s)}{t_n\tilde{\pi}_\infty(A|s)}-\frac{\mathrm{d}{\pi}_\infty}{\mathrm{d} \mu}(a|s)\int_A
\left(\frac{\frac{\mathrm{d}\tilde{\pi}_n}{\mathrm{d} \mu}(a|s)-\frac{\mathrm{d}\tilde{\pi}_\infty}{\mathrm{d} \mu}(a|s)} {t_n\tilde{\pi}_\infty(A|s)}
\right)\mu(da)\right]\frac{\tilde{\pi}_\infty(A|s)}{\tilde{\pi}_n(A|s)}\\
&\quad =\frac{\mathrm{d} 
{\pi}_\infty}{\mathrm{d} \mu}(a|s)\bigg\{\left(g_n(s,a)+t_n\sum_{k=2}^\infty t_n^{k-2}\frac{\left(g_n(s,a)\right)^k}{k!}\right)\\
&\qquad -\int_A\frac{\mathrm{d} {\pi}_\infty}{\mathrm{d} \mu}(a|s)\left(g_n(s,a)+t_n\sum_{k=2}^\infty t_n^{k-2}\frac{\left(g_n(s,a)\right)^k}{k!}\right) \mu(da)\bigg\}\frac{\tilde{\pi}_\infty(A|s)}{\tilde{\pi}_n(A|s)}\,.
\end{split}
\end{align}
which along with $t_n\in (0,1]$, $\sup_{n\in \mathbb{N}}\|g_n\|_{B_b(S\times A)}<\infty$ and $\sup_{n\in \mathbb{N}}\frac{\tilde{\pi}_\infty(A|s)}{\tilde{\pi}_n(A|s)}<\infty$ implies that there exists a constant $C>0$ such that for all $n\in \bbN$,
\begin{equation}\label{eq:quotation_RN_derivative}
\left\| \frac{ \frac{\mathrm{d} \pi_n}{\mathrm{d} \mu} -\frac{\mathrm{d} \pi_\infty}{\mathrm{d} \mu}}{\frac{\mathrm{d} {\pi}_\infty}{\mathrm{d} \mu}  }\right\|_{B_b(S\times A)}\le Ct_n\,. 
\end{equation} 
As $\lim_{x\to 0}\frac{\ln(1+x)-x}{x^2}=-\frac{1}{2}$, there exists $N\in \mathbb{N}$ such that for all $n\ge N$ and $(s,a)\in S\times A$,  
\begin{align*}
&\left|  \ln \frac{\mathrm{d} \pi_n}{\mathrm{d} \mu}(a|s)-\ln\frac{\mathrm{d} \pi_\infty}{\mathrm{d} \mu}(a|s) -\frac{\frac{\mathrm{d} \pi_n}{\mathrm{d} \mu}(a|s)- \frac{\mathrm{d}\pi_\infty}{\mathrm{d} \mu}(a|s)}{\frac{\mathrm{d}\pi_\infty}{\mathrm{d} \mu}(a|s)}\right|\\
&\quad =\left|\ln \left(1+\frac{\frac{\mathrm{d} \pi_n}{\mathrm{d}  \mu}(a|s)- \frac{\mathrm{d} \pi_\infty}{\mathrm{d}  \mu}(a|s)}{\frac{\mathrm{d} \pi_\infty}{\mathrm{d}  \mu}(a|s)}\right) 
-\frac{\frac{\mathrm{d} \pi_n}{\mathrm{d}  \mu}(a|s)- \frac{\mathrm{d} \pi_\infty}{\mathrm{d}  \mu}(a|s)}{\frac{\mathrm{d} \pi_\infty}{\mathrm{d}  \mu}(a|s)}\right|\le \left|\frac{\frac{\mathrm{d} \pi_n}{\mathrm{d}  \mu}(a|s)- \frac{\mathrm{d} \pi_\infty}{\mathrm{d}  \mu}(a|s)}{\frac{\mathrm{d} \pi_\infty}{\mathrm{d}  \mu}(a|s)}\right|^2.
\end{align*}
Multiplying both sides by $1/t_n$ and applying the triangle inequality, for all $(s,a)\in S\times A$,  
\begin{align*}
&\left| \frac{1}{t_n}\left( \ln \frac{\mathrm{d} \pi_n}{\mathrm{d}  \mu}(a|s)-\ln\frac{\mathrm{d} \pi_\infty}{\mathrm{d}  \mu}(a|s) \right)-\Psi(s,a) \right|\\
&\quad \le \frac{1}{t_n}\left|\frac{\frac{\mathrm{d} \pi_n}{\mathrm{d}  \mu}(a|s)- \frac{\mathrm{d} \pi_\infty}{\mathrm{d}  \mu}(a|s)}{\frac{\mathrm{d} \pi_\infty}{\mathrm{d}  \mu}(a|s)}\right|^2+\left|\Psi(s,a) 
-\frac{1}{t_n}\frac{\frac{\mathrm{d} \pi_n}{\mathrm{d}  \mu}(a|s)- \frac{\mathrm{d} \pi_\infty}{\mathrm{d} \mu}(a|s)}{\frac{\mathrm{d} \pi_\infty}{\mathrm{d}  \mu}(a|s)} \right|.
\end{align*}
As $n\to \infty$, the first term converges to $0$ in the norm $\|\cdot\|_{B_b(S\times A)}$ due to \eqref{eq:quotation_RN_derivative}. By \eqref{eq:dpi_dmu_dt}, the second term can be upper bounded by
\begin{align*}
& \left|
\Psi(s,a)  -\frac{1}{t_n}\frac{\frac{\mathrm{d} \pi_n}{\mathrm{d}  \mu}(a|s)- \frac{\mathrm{d} \pi_\infty}{\mathrm{d}  \mu}(a|s)}{\frac{\mathrm{d} \pi_\infty}{\mathrm{d}  \mu}(a|s)} \right|\\
&\quad \le \left|\left(g(s,a)-\int_A g(s,a') {\pi_\infty} (da'|s)\right)-\left(g_n(s,a')-\int_A g_n(s,a') {\pi_\infty} (da'|s)\right)\right|\\
&\qquad +\left|\left(g_n(s,a)-\int_A g_n(s,a') {\pi_\infty} (da'|s)\right) \left(1-\frac{\tilde{\pi}_\infty(A|s)}{\tilde{\pi}_n(A|s)}\right)\right|\\
&\qquad +t_n \left|\left(\sum_{k=2}^\infty t_n^{k-2}\frac{\left(g_n(s,a)\right)^k}{k!}-\int_A
\sum_{k=2}^\infty t_n^{k-2}\frac{\left(g_n(s,a')\right)^k}{k!}{\pi}_\infty(da')\right)\frac{\tilde{\pi}_\infty(A|s)}{\tilde{\pi}_n(A|s)} \right|,
\end{align*}
which converges to zero in the norm $\|\cdot\|_{B_b(S\times A)}$ due to  $\lim_{n\to\infty}\|g_n-g\|_{B_b(S\times A)}=0$ and $\lim_{n\to \infty}t_n=0$. This proves the desired differentiability. Note that for all $f, g\in B_b(S\times A)$ and $(s,a)\in S\times A$,
\[
\left|\left(\mathfrak{d}\ln\frac{\mathrm{d} \boldsymbol{\pi}(f)}{\mathrm{d}  \mu} g\right)(s,a) \right|=
\left| g(s,a)-\int_A g(s,a') {\boldsymbol{\pi}(f)} (da'|s)\right|\le 2\|g\|_{B_b(S\times A)}\,,
\]
which shows that $\|\mathfrak{d}\ln\frac{\mathrm{d} \boldsymbol{\pi}(f)}{\mathrm{d}  \mu}\|_{\clL(B_b(S\times A),B_b(S\times A))}\le 2$.
\end{proof}

The next proposition proves the differentiability of the value function. 
\begin{proposition}\label{prop:differentiability_dV_df}
Let 
$\boldsymbol{\pi}:B_b(S\times A) \to \clP_{\mu}(A|S)$ be defined by \eqref{eq:pi_f_mu}, and for each $s\in S$, let $V^\cdot_\tau (s): \clP(A|S)\to {\mathbb{R}}\cup\{\infty\}$ be defined by \eqref{eq:V_pi_tau}. Then the map $B_b(S\times A)\ni f\mapsto J(f)\coloneqq V^{\boldsymbol{\pi}(f)}_\tau\in   B_b(S) $ is differentiable and for all $f,g\in B_b(S\times A)$ and $s\in S$,
\begin{equation}\label{eq:d_J_fg}
\left(\mathfrak{d}  J(f)g\right)(s)=\frac{1}{1-\gamma} \int_S \int_A \left(Q^{\boldsymbol{\pi}(f)}_\tau (s',a) +\tau \ln\frac{\mathrm{d}  \boldsymbol{\pi}(f)}{\mathrm{d}  \mu}(a|s')\right)(\mathfrak{d}\boldsymbol{\pi}(f)g)(da|s')d^{\boldsymbol{\pi}(f)}(ds'|s)\,,
\end{equation}
where $\mathfrak{d}\boldsymbol{\pi}(f)g$ is defined as in Proposition \ref{prop:differentiability_Pi}.
Moreover, for all $f\in B_b(S\times A)$,\linebreak $\|\mathfrak{d}  J(f)\|_{\clL(B_b(S\times A),B_b(S))}\le \frac{2}{(1-\gamma)^2}\left(\|c\|_{B_b(S\times A)}+2\tau  \|f\|_{B_b(S\times A)}\right)$.
\end{proposition}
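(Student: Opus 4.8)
The plan is to avoid differentiating the series/fixed-point representation of $V^{\boldsymbol{\pi}(f)}_\tau$ directly, and instead to read off the derivative from the \emph{exact} performance difference identity of Lemma~\ref{lem:performance_diff}. Fix $f,g\in B_b(S\times A)$ together with sequences $(t_n)_{n\in\bbN}\subset(0,1)$ and $(g_n)_{n\in\bbN}\subset B_b(S\times A)$ with $t_n\to0$ and $g_n\to g$, and set $\pi_n=\boldsymbol{\pi}(f+t_ng_n)$, $\pi_f=\boldsymbol{\pi}(f)$, $Z_n=\ln\frac{\mathrm d\pi_n}{\mathrm d\mu}$ and $Z_f=\ln\frac{\mathrm d\pi_f}{\mathrm d\mu}$. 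Applying Lemma~\ref{lem:performance_diff} with $\rho=\delta_s$, $\pi=\pi_n$ and $\pi'=\pi_f$ (so that $d^{\pi_n}_{\delta_s}(ds')=d^{\pi_n}(ds'|s)$), and dividing by $t_n$, gives for every $s\in S$
\begin{equation*}
\frac{J(f+t_ng_n)(s)-J(f)(s)}{t_n}
=\frac{1}{1-\gamma}\int_S\Big[\int_A W(s',a)\,\tfrac{(\pi_n-\pi_f)(da|s')}{t_n}+\tfrac{\tau}{t_n}\operatorname{KL}(\pi_n(\cdot|s')|\pi_f(\cdot|s'))\Big]d^{\pi_n}(ds'|s),
\end{equation*}
where $W:=Q^{\pi_f}_\tau+\tau Z_f\in B_b(S\times A)$. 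It therefore suffices to pass to the limit in the two bracketed terms, uniformly in $s$.

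The KL remainder is second order and drops out. Indeed, since the symmetrised divergence satisfies $\operatorname{KL}(\pi_n|\pi_f)+\operatorname{KL}(\pi_f|\pi_n)=\int_A (Z_n-Z_f)(\pi_n-\pi_f)(da)$ and each summand is nonnegative, we obtain the pointwise bound $0\le\operatorname{KL}(\pi_n(\cdot|s')|\pi_f(\cdot|s'))\le\|Z_n-Z_f\|_{B_b(S\times A)}\,\|\pi_n-\pi_f\|_{b\clM(A|S)}$. By Propositions~\ref{prop:differentiability_Pi} and~\ref{prop:differentiablity_log_density} both $\|\pi_n-\pi_f\|_{b\clM(A|S)}$ and $\|Z_n-Z_f\|_{B_b(S\times A)}$ are $O(t_n)$, so the supremum over $s'$ of the KL term is $O(t_n^2)$ and its $t_n^{-1}$-rescaling vanishes uniformly; integrating against the probability kernel $d^{\pi_n}$ preserves this.

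For the linear term set $\Psi:=\mathfrak{d}\boldsymbol{\pi}(f)g$, $\phi_n(s'):=\int_A W(s',a)\frac{(\pi_n-\pi_f)(da|s')}{t_n}$ and $\phi(s'):=\int_A W(s',a)\Psi(da|s')$. Proposition~\ref{prop:differentiability_Pi} gives $\|\phi_n-\phi\|_{B_b(S)}\le\|W\|_{B_b(S\times A)}\big\|\tfrac{\pi_n-\pi_f}{t_n}-\Psi\big\|_{b\clM(A|S)}\to0$. I then split $\int_S\phi_n\,d^{\pi_n}(\cdot|s)-\int_S\phi\,d^{\pi_f}(\cdot|s)=\int_S(\phi_n-\phi)\,d^{\pi_n}(\cdot|s)+(L_{d^{\pi_n}}-L_{d^{\pi_f}})\phi$, with $L_k$ the operator induced by the kernel $k$ as in the proof of Lemma~\ref{lem:inverse_1-gammaP}. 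The first summand is bounded by $\|\phi_n-\phi\|_{B_b(S)}\to0$, uniformly in $s$. For the second I use that $\pi\mapsto d^\pi$ is continuous in operator norm: since $\|P_{\pi_n}-P_{\pi_f}\|_{b\clM(S|S)}\le\|\pi_n-\pi_f\|_{b\clM(A|S)}$ (each $P(\cdot|s,a)$ being a probability measure) and $L_{d^\pi}=(1-\gamma)(\operatorname{id}-\gamma L_{P_\pi})^{-1}$ by Lemma~\ref{lem:inverse_1-gammaP}, the standard resolvent estimate $\|(\operatorname{id}-\gamma L_{P_{\pi_n}})^{-1}-(\operatorname{id}-\gamma L_{P_{\pi_f}})^{-1}\|_{\clL(B_b(S))}\le\frac{\gamma}{(1-\gamma)^2}\|\pi_n-\pi_f\|_{b\clM(A|S)}$ yields $\|L_{d^{\pi_n}}-L_{d^{\pi_f}}\|_{\clL(B_b(S))}\to0$, hence $\|(L_{d^{\pi_n}}-L_{d^{\pi_f}})\phi\|_{B_b(S)}\to0$. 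Combining the three limits identifies $\mathfrak{d}J(f)g$ with the right-hand side of~\eqref{eq:d_J_fg}.

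Finally, linearity of $g\mapsto\mathfrak{d}J(f)g$ is inherited from that of the bounded operator $\mathfrak{d}\boldsymbol{\pi}(f)$, and the norm bound follows from $\|\mathfrak{d}J(f)g\|_{B_b(S)}\le\frac{1}{1-\gamma}\|W\|_{B_b(S\times A)}\|\mathfrak{d}\boldsymbol{\pi}(f)g\|_{b\clM(A|S)}$ together with $\|\mathfrak{d}\boldsymbol{\pi}(f)\|\le2$ (Proposition~\ref{prop:differentiability_Pi}) and the estimate $\|W\|_{B_b(S\times A)}\le\frac{1}{1-\gamma}(\|c\|_{B_b(S\times A)}+2\tau\|f\|_{B_b(S\times A)})$ obtained exactly as in the proof of Proposition~\ref{prop:Q_continuity}. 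I expect the main obstacle to be the uniform-in-$s$ passage to the limit in the product of the two converging factors (the rescaled policy increment integrated against the perturbed occupancy kernel $d^{\pi_n}$), which hinges on the operator-norm continuity of $\pi\mapsto d^\pi$; the resolvent identity of Lemma~\ref{lem:inverse_1-gammaP} is precisely what makes this step tractable.
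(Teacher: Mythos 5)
Your proof is correct, but it follows a genuinely different route from the paper's. The paper expands the Bellman fixed-point identity \eqref{eq:on_policy} for the difference quotient, obtains a recursion in $\frac{J(f+t_ng_n)-J(f)}{t_n}$, and resolves it with Lemma~\ref{lem:inverse_1-gammaP}; this leaves the perturbation sitting in the $Q$-function and log-density of the \emph{perturbed} policy, integrated against the \emph{fixed} occupancy kernel $d^{\boldsymbol{\pi}(f)}$, so the limit passage leans on Proposition~\ref{prop:Q_continuity} (continuity of $\pi\mapsto Q^\pi_\tau$) and Proposition~\ref{prop:differentiablity_log_density}. You instead invoke Lemma~\ref{lem:performance_diff} with $\rho=\delta_s$, $\pi=\pi_n$, $\pi'=\boldsymbol{\pi}(f)$, which keeps the integrand $W=Q^{\boldsymbol{\pi}(f)}_\tau+\tau\ln\frac{\mathrm{d}\boldsymbol{\pi}(f)}{\mathrm{d}\mu}$ frozen at the base point but moves the perturbation into the occupancy kernel $d^{\pi_n}$ and into an explicit KL remainder. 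The two ingredients you then need — the symmetrised-divergence bound $0\le\operatorname{KL}(\pi_n|\pi_f)\le\|Z_n-Z_f\|_{B_b(S\times A)}\|\pi_n-\pi_f\|_{b\clM(A|S)}=O(t_n^2)$, and the operator-norm Lipschitz continuity of $\pi\mapsto d^{\pi}$ via the resolvent identity for $(\operatorname{id}-\gamma L_{P_\pi})^{-1}$ — are both sound and uniform in $s$, so the convergence is indeed in $B_b(S)$ as Hadamard differentiability requires; the latter estimate is in fact the same $\frac{\gamma}{1-\gamma}$-Lipschitz bound the paper imports from \cite[Lemma 5]{leahy2022convergence} in the proof of Proposition~\ref{prop:lipschitz_npg}. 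What your approach buys is a cleaner treatment of the entropy contribution (the KL term is manifestly a second-order remainder, so you never need the full differentiability of the log-density integrated against $\pi_\infty$, only its $O(t_n)$ Lipschitz bound) at the cost of having to control the perturbed occupancy kernel; the paper's approach avoids the latter entirely but pays with the continuity estimate for $Q^\pi_\tau$. The final norm bound is obtained identically in both arguments.
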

\begin{proof}
Fix $f,g\in B_b(S\times A)$,  and sequences  $(t_n)_{n\in \mathbb{N}} \subset (0,1)$ and $(g_n)_{n\in \mathbb{N}}\subset B_b(S\times A)$ such that  
$\lim_{n\to \infty} t_n =0$ and $\lim_{n\to \infty} g_n =g$. Define $\pi_\infty=\boldsymbol{\pi}(f)$, and  for each $n\in \mathbb{N}$, define  $\pi_n=\boldsymbol{\pi}(f+t_ng_n)$. For each $s\in S$, let  $\Psi(s)$ be the right-hand side of \eqref{eq:d_J_fg}.
By \eqref{def:Q_fn} and  \eqref{eq:on_policy},
\begin{align*}
& \frac{J(f+t_n g_n)(s)-J(f)(s)}{t_n}\\
& =\frac{1}{t_n}\bigg[\int_A \left( Q^{\pi_n}_{\tau}(s,a)+ \tau \ln \frac{\mathrm{d} \pi_n}{\mathrm{d} \mu}(s,a) \right) \pi_n(da| s)-\int_A \left( Q^{\pi_\infty}_{\tau}(s,a)+ \tau \ln \frac{\mathrm{d}\pi_\infty}{\mathrm{d} \mu}(s,a) \right) \pi_\infty(da|s)\bigg]\\
&=\int_A   \left( Q^{\pi_n}_{\tau}(s,a)+ \tau \ln \frac{\mathrm{d} \pi_n}{\mathrm{d} \mu}(s,a) \right) 
\frac{\pi_n(da | s)-\pi_\infty(da| s)}{t_n}\\
&\quad +\int_A   \frac{Q^{\pi_n}_{\tau}(s,a)- Q^{\pi_\infty}_{\tau}(s,a)}{t_n}\pi_\infty(da| s)
+\tau \int_A \frac{1}{t_n}\left(\ln \frac{\mathrm{d} \pi_n}{\mathrm{d} \mu}(s,a) -\ln \frac{\mathrm{d} \pi_\infty}{\mathrm{d} \mu}(s,a)  \right)\pi_\infty(da| s)\\
&=I^1_n(s)+I^2_n(s)+\gamma \int_A\int_S \frac{V^{\pi_n}_{\tau}(s')-V^{\pi_\infty}_{\tau}(s')}{t_n} P(ds'|s,a)\pi_\infty(da|s)\\
&=I^1_n(s)+I^2_n(s)+\gamma \int_A\int_S \frac{J(f+t_n g_n)(s')-J(f)(s')}{t_n}  P(ds'|s,a) \pi_\infty(da|s)\,,
\end{align*}
where $I^i_n(s)$, $i=1,2$, are defined as follows:
\begin{align*}
I^1_n(s) &= \int_A \left( Q^{\pi_n}_{\tau}(s,a)+ \tau \ln \frac{\mathrm{d} \pi_n}{\mathrm{d} \mu}(s,a) \right)\frac{\pi_n(da | s)-\pi_\infty(da| s)}{t_n}\,,\\
I^2_n(s) &=\tau \int_A\frac{1}{t_n}\left(\ln \frac{\mathrm{d} \pi_n}{\mathrm{d} \mu}(s,a)-\ln\frac{\mathrm{d} \pi_\infty}{\mathrm{d} \mu}(s,a)\right)\pi_\infty(da| s)\,.
\end{align*}
By Fubini's theorem and Lemma \ref{lem:inverse_1-gammaP}, for all $s\in S$,
\begin{align*}
\frac{J(f+t_n g_n)(s)-J(f)(s)}{t_n} &= \frac{1}{1-\gamma}\int_S \left(I_n^1(s')+I_n^2(s')\right)
d^{\pi_\infty} (ds'|s)\,,
\end{align*}
which implies that 
\begin{align*}
&\frac{J(f+t_ng_n) (s)-J(f)(s)}{t_n}-\Psi(s)\\
&\quad =\frac{1}{1-\gamma}\int_S \left[ I_n^1(s')+I_n^2(s')-\int_A\left(Q^{\pi_\infty}_\tau (s',a) +\tau \ln\frac{\mathrm{d}   \pi_\infty}{\mathrm{d}  \mu}(a|s')\right)
(\mathfrak{d}  \boldsymbol{\pi}(f) g)(da|s')\right]d^{\pi_\infty}  (ds'|s)\\
&\quad = \frac{1}{1-\gamma} (I^3_n(s) +I^4_n(s)+I^5_n(s))\,,
\end{align*}
where $I^i_n(s)$, $i=3,4,5$, are defined as follows:
\begin{align*}
I^3_n(s)&=\int_S\int_A\bigg[\left(Q^{\pi_n}_\tau (s',a) +\tau \ln\frac{\mathrm{d}\pi_n}{\mathrm{d}  \mu}(a|s')\right)\\
&\qquad -\left(Q^{\pi_\infty}_\tau (s',a) +\tau \ln\frac{\mathrm{d}\pi_\infty}{\mathrm{d}\mu}(a|s')\right)\bigg] \frac{\pi_n(da | s')-\pi_\infty(da| s')}{t_n} d^{\pi_\infty}  (ds'|s)\,,\\
I^4_n(s)&=\tau \int_S \int_A \frac{1 }{t_n}\left(\ln \frac{\mathrm{d} \pi_n}{\mathrm{d}\mu}(s',a)-\ln \frac{\mathrm{d} \pi_\infty}{\mathrm{d} \mu}(s',a)  \right)\pi_\infty(da| s')d^{\pi_\infty}  (ds'|s)\,,\\
I^5_n(s)& =\int_S \int_A \left(Q^{\pi_\infty}_\tau (s',a) +\tau \ln\frac{\mathrm{d}   \pi_\infty}{\mathrm{d}  \mu}(a|s')\right)\\
&\qquad \qquad \qquad \times \left(\frac{\pi_n(da | s')-\pi_\infty(da| s')}{t_n}-(\mathfrak{d} \boldsymbol{\pi}(f) g)(da|s')\right)d^{\pi_\infty}  (ds'|s)\,.
\end{align*}
It remains to prove $I_n^i$, $i=3,4,5$, tends to zero in the norm $\|\cdot\|_{B_b(S)}$ as $n\to \infty$.
Observe that by Proposition \ref{prop:differentiability_Pi}, $\boldsymbol{\pi}:B_b(S\times A) \to \clP_{\mu}(A|S)$ is differentiable,  and hence 
\[
\lim_{n\to\infty}\left\|\frac{\pi_n -\pi_\infty }{t_n}- \mathfrak{d}  \boldsymbol{\pi}(f) g  \right\|_{b\clM(A|S)}=0.
\]
Then as $\|d^{\pi_\infty}(\cdot|s)\|_{ \clM(S)}=1$ for all $s\in S$,
\[
\lim_{n\to\infty}\|I_n^5\|_{B_b(S)}\le  \left\|Q^{\pi_\infty}_\tau   +\tau \ln\frac{\mathrm{d}   \pi_\infty}{\mathrm{d}  \mu}\right\|_{B_b(S\times A)}\lim_{n\to\infty} \left\|\frac{\pi_n -\pi_\infty }{t_n}-(\mathfrak{d}  \boldsymbol{\pi}(f) g) \right\|_{b\clM(A|S)}=0\,.
\]
Since $\int_A \left(\mathfrak{d}\ln\frac{\mathrm{d} \pi_\infty}{\mathrm{d}  \mu} g\right)(s,a) {\pi_\infty} (da|s)=\int_A\left(g(s,a)-\int_A g(s,a) {\pi_\infty} (da|s)\right){\pi_\infty} (da|s)=0
$,
\begin{align*}
\lim_{n\to\infty}\|I_n^4\|_{B_b(S)}&
\le \lim_{n\to\infty}\tau \left\|
\int_A \left[\frac{\ln \frac{\mathrm{d} \pi_n}{\mathrm{d} \mu}(\cdot,a)-\ln \frac{\mathrm{d} \pi_\infty}{\mathrm{d} \mu}(\cdot,a)  }{t_n}
-\left(\mathfrak{d}\ln\frac{\mathrm{d} \pi_\infty}{\mathrm{d}  \mu} g\right)(\cdot,a) \right]\pi_\infty(da| \cdot) \right\|_{B_b(S)}\\
&\le \tau \lim_{n\to\infty} \left\|\frac{1}{t_n}\left(\ln \frac{\mathrm{d} \pi_n}{\mathrm{d}\mu}-\ln \frac{\mathrm{d} \pi_\infty}{\mathrm{d}\mu} \right)- \mathfrak{d}\ln\frac{\mathrm{d} \pi_\infty}{\mathrm{d}  \mu} g  \right\|_{B_b(S\times A)}=0\,,
\end{align*}
where the last identity follows from   Lemma \ref{prop:differentiablity_log_density}. Finally, by the triangle inequality, for all $n\in \mathbb{N}$,
\begin{align*}
\|I_n^3\|_{B_b(S)}\le    
\left(\left\|
Q^{\pi_n}_\tau  -Q^{\pi_\infty}_\tau  
\right\|_{B_b(S\times A)}
+\tau 
\left\|
\ln\frac{\mathrm{d}   \pi_n}{\mathrm{d}  \mu} 
- \ln\frac{\mathrm{d}   \pi_\infty}{\mathrm{d}  \mu} 
\right\|_{B_b(S\times A)}\right)
\left\|\frac{\pi_n -\pi_\infty }{t_n}
\right\|_{b\clM(A|S)}.
\end{align*}
By Propositions~\ref{prop:differentiability_Pi} and~\ref{prop:differentiablity_log_density},
\[
\sup_{n\in\mathbb{N}}\left\|\frac{\pi_n -\pi_\infty }{t_n}
\right\|_{b\clM(A|S)}<\infty,
\quad 
\lim_{n\to\infty} \left\|
\ln\frac{\mathrm{d}   \pi_n}{\mathrm{d}  \mu} 
- \ln\frac{\mathrm{d}   \pi_\infty}{\mathrm{d}  \mu} 
\right\|_{B_b(S\times A)}=0\,.
\]  
Moreover, by Proposition \ref{prop:Q_continuity},
\begin{align*}
\|Q^{\pi_n}_\tau-Q^{\pi_\infty}_\tau\|_{B_b(S\times A)} 
&\le \frac{\gamma}{(1-\gamma)^2}
\left(
\|c\|_{B_b(S\times A)}
+2\tau  \|f \|_{B_b(S\times A)}
\right)
\| \pi_n-\pi_\infty\|_{b\clM(A|S)} 
\\
&\quad 
+\frac{\tau \gamma}{1-\gamma}
\left\|\ln \frac{\mathrm{d} \pi_n}{\mathrm{d} \mu}-\ln \frac{\mathrm{d} \pi_\infty}{\mathrm{d} \mu}   \right\|_{B_b(S\times A)},
\end{align*}
which converges to zero as $n\to \infty$.
This shows that $\lim_{n\to\infty}\|I_n^3\|_{B_b(S)}=0$
and proves the differentiability of $J$.

Note that for all 
$f, g\in B_b(S\times A)$ and $s\in S$,
\begin{align*}
|\left(\mathfrak{d}  J(f)g\right)(s)|
&\le \left|\frac{1}{1-\gamma} \int_S 
\int_A   
\left(
Q^{\boldsymbol{\pi}(f)}_\tau (s',a) +\tau \ln\frac{\mathrm{d}  \boldsymbol{\pi}(f)}{\mathrm{d}  \mu}(a|s')
\right)
(d\boldsymbol{\pi}(f)g)(da|s')
d^{\boldsymbol{\pi}(f)}  (ds'|s)\right| 
\\
&\le \frac{1}{1-\gamma}
\left\|Q^{\boldsymbol{\pi}(f)}_\tau   +\tau \ln\frac{\mathrm{d}  \boldsymbol{\pi}(f)}{\mathrm{d}  \mu} \right\|_{B_b(S\times A)}\|\mathfrak{d}\boldsymbol{\pi}(f)g\|_{b\clM(A|S)}\,,
\end{align*}
which along with Propositions \ref{prop:boundedness_Q} and 
\ref{prop:differentiability_Pi}
imply that 
\begin{align*}
\|\mathfrak{d}  J(f)g\|_{B_b(S)}
&\le  
\frac{1}{(1-\gamma)^2}\left(\|c\|_{B_b(S\times A)}
+2\tau  \|f\|_{B_b(S\times A)}\right)
\|\mathfrak{d}\boldsymbol{\pi}(f)\|_{\clL(B_b(S\times A), b\clM(A|S))}
\| g\|_{B_b(S\times A)} 
\\
&\le  
\frac{2}{(1-\gamma)^2}\left(\|c\|_{B_b(S\times A)}
+2\tau  \|f\|_{B_b(S\times A)}\right)
\| g\|_{B_b(S\times A)}\,.
\end{align*}
This proves that
$\|\mathfrak{d}  J(f)\|_{\clL(B_b(S\times A),B_b(S))}
\le 
\frac{2}{(1-\gamma)^2}\left(\|c\|_{B_b(S\times A)}
+2\tau  \|f\|_{B_b(S\times A)}\right)
$. 
\end{proof}

The following proposition will be used to prove the convergence of the flow \eqref{eq:mirror_descent}. 
\begin{proposition}
\label{prop:derivative_integral_logexp}
Let
$\nu\in \mathcal{P}(S)$ and 
$\mu\in \clP(A)$. The map  
$T:B_b(S\times A) \to \mathbb{R}$ defined
by 
\[
T(f)= \int_{S} \ln \left(\int_A e^{f(s,a)}\mu(da)\right)\nu(ds)\,
\]
is  differentiable and for all $f,g\in B_b(S\times A)$,
\begin{equation*}
\mathfrak{d}T(f)g = \int_{S} \int_A g(s,a)  \boldsymbol{\pi}(f)(da|s)  \nu(ds)\,,
\end{equation*}
where $\boldsymbol{\pi}:B_b(S\times A) \to \clP_{\mu}(A|S)$
is defined by
\eqref{eq:pi_f_mu}.
Moreover, 
$\|\mathfrak{d}T(f)\|_{\clL(B_b(S\times A), \mathbb{R})}\le 1$
for all $f\in B_b(S\times A)$.
\end{proposition}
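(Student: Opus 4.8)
The plan is to recognise $T$ as a composition of maps whose differentiability has already been established, and then apply the chain rule (Lemma~\ref{lemma:chain_rule}), rather than redoing a Hadamard-expansion argument from scratch. The starting point is the algebraic identity that, by the definition \eqref{eq:pi_f_mu} of $\boldsymbol{\pi}$,
\[
\ln\frac{\mathrm{d}\boldsymbol{\pi}(f)}{\mathrm{d}\mu}(a|s) = f(s,a) - \ln\Big(\int_A e^{f(s,a')}\mu(da')\Big) = f(s,a) - \Phi(f)(s),
\]
where $\Phi$ is defined in \eqref{def:Phi}. In particular the function $(s,a)\mapsto f(s,a) - \ln\frac{\mathrm{d}\boldsymbol{\pi}(f)}{\mathrm{d}\mu}(a|s)$ is independent of $a$ and equals $\Phi(f)(s)$, which is exactly the quantity integrated in $T$.

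Accordingly, first I would introduce the auxiliary map $\tilde\Phi : B_b(S\times A)\to B_b(S\times A)$, $\tilde\Phi(f) = f - \ln\frac{\mathrm{d}\boldsymbol{\pi}(f)}{\mathrm{d}\mu}$. Since the identity map on $B_b(S\times A)$ is bounded and linear, hence differentiable with itself as differential, and since $\ln\frac{\mathrm{d}\boldsymbol{\pi}(\cdot)}{\mathrm{d}\mu}$ is differentiable by Proposition~\ref{prop:differentiablity_log_density}, the map $\tilde\Phi$ is differentiable, and using \eqref{eq:d_ln_dpi_dmu},
\[
(\mathfrak{d}\tilde\Phi(f)g)(s,a) = g(s,a) - \Big(g(s,a) - \int_A g(s,a')\boldsymbol{\pi}(f)(da'|s)\Big) = \int_A g(s,a')\boldsymbol{\pi}(f)(da'|s),
\]
which is again independent of $a$. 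Next I would introduce the bounded linear functional $L : B_b(S\times A)\to\bbR$, $L(h)=\int_S\int_A h(s,a)\mu(da)\nu(ds)$, which satisfies $|L(h)|\le\|h\|_{B_b(S\times A)}$ because $\mu$ and $\nu$ are probability measures, and is therefore differentiable with $\mathfrak{d}L(h)=L$ for every $h$. Because $\tilde\Phi(f)(s,a)=\Phi(f)(s)$ is constant in $a$ and $\mu(A)=1$, we have $T = L\circ\tilde\Phi$.

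Applying Lemma~\ref{lemma:chain_rule} then gives the differentiability of $T$ together with
\[
\mathfrak{d}T(f)g = L\big(\mathfrak{d}\tilde\Phi(f)g\big) = \int_S\int_A\Big(\int_A g(s,a')\boldsymbol{\pi}(f)(da'|s)\Big)\mu(da)\nu(ds) = \int_S\int_A g(s,a)\boldsymbol{\pi}(f)(da|s)\nu(ds),
\]
using $\mu(A)=1$ once more. The operator-norm bound is then immediate: since $\boldsymbol{\pi}(f)(\cdot|s)$ and $\nu$ are probability measures, $|\mathfrak{d}T(f)g|\le\|g\|_{B_b(S\times A)}$, so $\|\mathfrak{d}T(f)\|_{\clL(B_b(S\times A),\bbR)}\le 1$. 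I do not expect a genuine obstacle here, since all the analytic work is absorbed into Proposition~\ref{prop:differentiablity_log_density}; the only point requiring care is the bookkeeping that $f - \ln\frac{\mathrm{d}\boldsymbol{\pi}(f)}{\mathrm{d}\mu}$ recovers $\Phi(f)$ as an $a$-independent function, which is precisely what makes the reduction to the bounded linear functional $L$ valid. A fully self-contained alternative, mirroring the Taylor-expansion computations in the preceding proofs, would instead expand $\ln(1+h_n(s))$ with $h_n(s)=\int_A(e^{t_n g_n(s,a)}-1)\boldsymbol{\pi}(f)(da|s)$ and estimate the remainder uniformly in $s$; I would keep this in reserve but prefer the composition argument for brevity.
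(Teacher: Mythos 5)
Your proof is correct, and it takes a genuinely different route from the paper's. The paper factors $T=T_2\circ T_1$ with $T_1=\Phi$ (the log-partition map $f\mapsto \ln\int_A e^{f(\cdot,a)}\mu(da)$) and $T_2(h)=\int_S h\,d\nu$, and then proves the Hadamard differentiability of $T_1$ from scratch by expanding $e^{t_ng_n}-1$ and controlling $\ln(1+x)-x=O(x^2)$ uniformly in $s$ — essentially a rerun of the analytic estimates already carried out for Proposition~\ref{prop:differentiablity_log_density}. You instead observe that $\Phi(f)=f-\ln\frac{\mathrm{d}\boldsymbol{\pi}(f)}{\mathrm{d}\mu}$, so that all the hard analysis is absorbed into the already-proven differentiability of the log-density map, and the rest is the chain rule applied to a bounded linear functional. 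Your key steps all check out: the identity map is trivially Hadamard differentiable, differences of Hadamard differentiable maps are Hadamard differentiable, formula \eqref{eq:d_ln_dpi_dmu} gives $(\mathfrak{d}\tilde\Phi(f)g)(s,a)=\int_A g(s,a')\boldsymbol{\pi}(f)(da'|s)$, and the normalisations $\mu(A)=1$, $\boldsymbol{\pi}(f)(A|s)=1$, $\nu(S)=1$ deliver both the stated formula for $\mathfrak{d}T(f)g$ and the bound $\|\mathfrak{d}T(f)\|_{\clL(B_b(S\times A),\mathbb{R})}\le 1$. There is also no circularity, since Proposition~\ref{prop:differentiablity_log_density} precedes this result and does not depend on it. What your approach buys is brevity and the elimination of a duplicated Taylor-expansion argument; what the paper's approach buys is self-containedness (its proof of this proposition does not lean on the log-density result, so the two can be read independently). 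Either is acceptable; yours is arguably the cleaner organisation given the lemmas already available.
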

\begin{proof}
Let $T_1: B_b(S\times A)\to B_b(S)$ be such that $T_1(f)(s)=\ln \left(\int_A e^{f(s,a)}\mu(da)\right)$ for all $s\in S$, and  let $T_2:B_b(S)\to \bbR$ be such that  $T_2(f)=\int_S f(s)\nu(ds)$. It is clear that $T_2$ is differentiable with  $\mathfrak{d}T_2(f)g =\int_S g(s)\nu(ds)$. 

We now prove that $T_1$ is differentiable. Fix  $f,g\in B_b(S\times A)$  
and sequences  $(t_n)_{n\in \mathbb{N}} \subset (0,1)$ and $(g_n)_{n\in \mathbb{N}}\subset B_b(S\times A)$ 
such that  
$\lim_{n\to \infty} t_n =0$ and 
$\lim_{n\to \infty} g_n =g$.
Let  
$ \Psi (s) \coloneqq \int_A g(s,a)  \boldsymbol{\pi}(f)(da|s)$
for all $s\in S$.
For all $n\in \bbN$ and $s\in S$,
\begin{align*}
T_1(f+t_ng_n)(s)-T_1(f)(s) 
&=
\ln\left(1+\frac{\int_A e^{(f+t_ng_n)(s,a)}\mu(da)-\int_A e^{f(s,a)}\mu(da)}{\int_A e^{f(s,a)}\mu(da)}\right)
\\
&=\ln\left(1+ \int_A \left(e^{ t_ng_n (s,a)}-1\right) \boldsymbol{\pi}(f)(da|s) \right)\,.
\end{align*}
Thus, for all $n\in \bbN$ and $s\in S$,
\begin{align}
\label{eq:T1_difference}
\begin{split}
&\left|\frac{T_1(f+t_ng_n)(s)-T_1(f)(s)}{t_n}-\Psi(s)
\right|
\\
&
=
\frac{1}{t_n} \left| \ln\left(1+ \int_A \left(e^{ t_ng_n (s,a)}-1\right) \boldsymbol{\pi}(f)(da|s) \right)
-
\int_A \left(e^{ t_ng_n (s,a)}-1\right) \boldsymbol{\pi}(f)(da|s) 
\right|
\\
&\quad 
+ \left| 
\int_A \left(
\frac{1}{t_n} \left(e^{ t_ng_n (s,a)}-1\right)-g_n(s,a)\right) \boldsymbol{\pi}(f)(da|s) 
\right|
+ \left| 
\int_A \left(
g_n(s,a)-g(s,a)\right) \boldsymbol{\pi}(f)(da|s) 
\right|    \,.
\end{split}
\end{align}
The Taylor expansion of the exponential function at $0$ yields that 
\begin{align*}
\left\|
\frac{1}{t_n} \left(e^{ t_ng_n}-1\right)-g_n \right\|_{B_b(S\times A)}
=\left\|
\sum_{k=2}^\infty t_n^{k-1}\frac{g_n^k}{k!}
\right\|_{B_b(S\times A)}  
\le t_n \exp\left(\sup_{n\in \bbN}\|g_n\|_{B_b(S\times A)}\right)\,.
\end{align*}
This implies that 
$\sup_{n\in \bbN} 
\frac{1}{t_n}\|e^{ t_ng_n }-1\|_{B_b(S\times A )}<\infty $
and 
\[\lim_{n\to \infty}\left\| \int_A \left(e^{ t_ng_n (\cdot,a)}-1\right) \boldsymbol{\pi}(f)(da|\cdot) 
\right\|_{B_b(S)}=0\,.
\]
Thus, by
$\lim_{x\to 0}\frac{\ln(1+x)-x}{x^2}=-\frac{1}{2}$ and \eqref{eq:T1_difference},
one can show that 
\[
\lim_{n\to\infty}\left\|\frac{T_1(f+t_ng_n)-T_1(f)}{t_n}-\Psi
\right\|_{B_b(S)}=0\,.
\]
This proves that    for all $f,g\in B_b(S\times A)$
and $s\in S$,
$(\mathfrak{d}T_1(f)g)(s)=\int_A g(s,a)  \boldsymbol{\pi}(f)(da|s)$.

By Lemma \ref{lemma:chain_rule},
$T=T_2\circ T_1$ is 
differentiable, and 
for all $f, g\in B_b(S\times A)$,
\[
\mathfrak{d}T(f)g =  \mathfrak{d}T_2(T_1(f))
\big(\mathfrak{d}T_1(f)g\big)=\int_S \int_A g(s,a)  \boldsymbol{\pi}(f)(da|s) \nu(ds)\,.
\]
Finally, observe  that  for all $s\in S$, 
$\boldsymbol{\pi}(f)(da|s)\in \clP(A)$ 
and $\nu\in \clP(S)$.
Hence, for all $g\in B_b(S\times A)$,
\[
|\mathfrak{d}T(f)g|
=\left|\int_{S}
\int_A g(s,a)   \boldsymbol{\pi}(f)(da|s) \nu(ds)\right|
\le \|g\|_{B_b(S\times A)}\left|\int_{S}
\int_A    \boldsymbol{\pi}(f)(da|s) \nu(ds)\right|
=\|g\|_{B_b(S\times A)}\,,
\]
which proves that $\|\mathfrak{d}T(f)\|_{\clL(B_b(S\times A), \mathbb{R})}\le 1$.
\end{proof}

\section{Proofs of main results}\label{sec:proof_main_results}
\subsection{Proofs of Lemma \ref{lemma:dpi_t}, 
Proposition \ref{prop:value_monotone},
Theorem  \ref{thm:wp_MD}
and Theorem \ref{thm:linear_convergence}
}
\label{sec:convergence_FR_flow}
\begin{proof}[Proof of Lemma \ref{lemma:dpi_t}]
To prove Item 
\ref{item:from_MD_to_FR}, 
let $Z\in  C^1([0,T); B_b(S\times A))$
satisfy \eqref{eq:mirror_descent},
and  $\pi_t =\boldsymbol{\pi}(Z_t)$ for all $t\in [0,T) $.
By the chain rule (see Lemma 
\ref{lemma:chain_rule}) and 
Proposition 
\ref{prop:differentiability_Pi}, 
for all $t\in [0,T) $ and $s\in S$,
\begin{align}
\label{eq:dpi_dt}
\begin{split}
\partial_t {\pi_t} (da|s)
&=
(\mathfrak{d}\boldsymbol{\pi}(Z_t)\partial_t Z_t)(da|s)
=\left(
\partial_t Z_t(s,a)
-
\int_A \partial_t Z_t(s,a')\pi_t (da'|s)
\right)\pi_t(da|s)
\\
&=-\left(
Q^{\pi_t}_\tau(s,a)
+ \tau Z_t(s,a)
-
\int_A (Q^{\pi_t}_\tau(s,a')
+ \tau Z_t(s,a')) \pi_t (da'|s)
\right)\pi_t(da|s)\,,
\end{split}
\end{align}
where the last line used 
$V_{\tau}^{\pi_t}(s) = \int_A V_{\tau}^{\pi_t}(s) \pi_t (da'|s)$.
The definition   $\pi_t =\boldsymbol{\pi}(Z_t)$ 
implies that for all $(s',a)\in S\times A$,
\begin{equation*}
\ln\frac{\mathrm{d}  \pi_t}{\mathrm{d}  \mu}(a|s') =Z_t(s',a)
-\ln
\left( {\int_A e^{Z_t(s',a)}\mu(d a)} \right).
\end{equation*}
Substituting this identity into 
\eqref{eq:dpi_dt} yields
\begin{align*}
\partial_t {\pi_t} (da|s)
&=-\left(
Q^{\pi_t}_\tau(s,a)
+ \tau \ln\frac{\mathrm{d}  \pi_t}{\mathrm{d}  \mu}(a|s) 
-
\int_A \left(Q^{\pi_t}_\tau(s,a')
+ \tau \ln\frac{\mathrm{d}  \pi_t}{\mathrm{d}  \mu}(a'|s) \right) \pi_t (da'|s)
\right)\pi_t(da|s)
\\
&= -\left(
Q^{\pi_t}_\tau(s,a)
+ \tau \ln\frac{\mathrm{d}  \pi_t}{\mathrm{d}  \mu}(a|s) 
-V^{\pi_t}_\tau(s)
\right)\pi_t(da|s)\,,
\end{align*}
where the last line used  \eqref{eq:on_policy}.
This proves Item \ref{item:from_MD_to_FR}. 

To prove Item  \ref{item:from_FR_to_MD},  let  $\pi\in  C^1((0,T); \Pi_\mu )$ satisfy \eqref{eq:FR_flow}. For all $t\in (0,T)$, $s\in S$ and $B\in \clB(A)$,
\begin{align*}
\pi_t (B|s)&=\pi_0(B|s)-\int_{(0,t)\times B} \beta_r(s,a)\pi_r(da|s) \,dr\\
&=\pi_0(B|s)-\int_{(0,t)\times B} \beta_r(s,a)\frac{\mathrm{d}  \pi_r}{\mathrm{d} \mu}(a|s) \mu(da)dr\,,
\end{align*}
where $\beta_t :=Q^{\pi_t}_\tau + \tau \ln\frac{\mathrm{d}  \pi_t}{\mathrm{d}  \mu} 
-V^{\pi_t}_\tau  $.
Hence,
\begin{align*}
\int_B \frac{\mathrm{d}  \pi_t}{\mathrm{d} \mu}(a|s) \mu(da)
&=\int_B\frac{d\pi_0}{d\mu}(a|s)\mu(da)-\int_B \int_0^t \beta_r(s,a)
\frac{\mathrm{d}  \pi_r}{\mathrm{d}  \mu}  (a|s) dr\mu(da)\,,
\end{align*} 
which implies that
\[
\frac{ \mathrm{d} \pi_t}{\mathrm{d} \mu}(a|s)=\frac{ \mathrm{d} \pi_0}{\mathrm{d} \mu}(a|s)-  \int_0^t \beta_r(s,a)\frac{\mathrm{d}  \pi_r}{\mathrm{d}  \mu}  (a|s) dr\,.
\] 
By  the fundamental theorem of calculus, $\partial_t \frac{\mathrm{d}  \pi_t}{\mathrm{d}  \mu} = -\beta_t \frac{\mathrm{d}  \pi_t}{\mathrm{d}  \mu}$ for all $t\in (0,T) $.
This along with   $\partial_t  \ln  \frac{\mathrm{d}  \pi_t}{\mathrm{d}  \mu}(a|s) =\frac{1}{\frac{\mathrm{d}  \pi_t}{\mathrm{d}  \mu}(a|s)} \partial_t \frac{\mathrm{d}  \pi_t}{\mathrm{d}  \mu}(a|s) $  implies that 
$Z_t = \ln\frac{\mathrm{d}  \pi_t}{\mathrm{d}  \mu}$
satisfies for all $t\in (0,T)$,
\begin{align*}
\partial_t {Z}_t(s,a) =  - \left( Q^{\pi_t}_\tau(s,a) + \tau  Z_t (a|s)   -V^{\pi_t}_\tau(s) \right)
=  - \left( Q^{\boldsymbol{\pi} (Z_t)}_\tau(s,a) + \tau  Z_t (a|s)   -V^{\boldsymbol{\pi} (Z_t)}_\tau(s) \right),
\end{align*}
where the last identity used 
$\boldsymbol{\pi} \left(\ln\frac{\mathrm{d}  \pi_t}{\mathrm{d}  \mu}\right)(da|s)
= \frac{\mathrm{d}  \pi_t}{\mathrm{d}  \mu}(a|s) \mu(da) =\pi_t(da|s)$.
This proves Item 
\ref{item:from_FR_to_MD}. 
\end{proof}

\begin{proof}[Proof of Proposition \ref{prop:value_monotone}]
By the chain rule (see Lemma 
\ref{lemma:chain_rule}), 
Proposition 
\ref{prop:differentiability_dV_df}
and Lemma \ref{lemma:dpi_t} Item \ref{item:from_MD_to_FR},
for all $t\in [0,T)$ and $s\in S$,
\begin{align}
\label{eq:dV_dt_step1}
\begin{split}
\partial_t V^{\pi_t }_\tau (s)
&=(\mathfrak{d}  V^{\boldsymbol{\pi}(Z_t)}_\tau \partial_t Z_t)(s)
\\
&=
\frac{1}{1-\gamma} \int_S 
\int_A   
\left(
Q^{\pi_t}_\tau (s',a) +\tau \ln\frac{\mathrm{d}  \pi_t}{\mathrm{d}  \mu}(a|s')
\right)
(\mathfrak{d}\boldsymbol{\pi}(Z_t) \partial_t Z_t)(da|s')
d^{\pi_t}(ds'|s)
\\
&=
\frac{1}{1-\gamma} \int_S 
\int_A   
\left(
Q^{\pi_t}_\tau (s',a) +\tau \ln\frac{\mathrm{d}  \pi_t}{\mathrm{d}  \mu}(a|s')
\right)\partial_t \pi_t(da|s')
d^{\pi_t}(ds'|s)
\\
&=
- \frac{1}{1-\gamma} \int_S 
\int_A   
\left(
Q^{\pi_t}_\tau (s',a) +\tau \ln\frac{\mathrm{d}  \pi_t}{\mathrm{d}  \mu}(a|s')
\right)
\\
&\quad \times 
\left(
Q^{\pi_t}_\tau(s',a)
+ \tau \ln\frac{\mathrm{d}  \pi_t}{\mathrm{d}  \mu}(a|s') 
-V^{\pi_t}_\tau(s')
\right)\pi_t (da|s')
d^{\pi_t}(ds'|s)\,.
\end{split}
\end{align}
By \eqref{eq:on_policy},
for all $s'\in S$,
\begin{align*}
\begin{split}
& 
\int_A \left(Q^{\pi_t}_\tau (s',a)
+ \tau \ln\frac{\mathrm{d}  \pi_t}{\mathrm{d}  \mu}(a|s')  
-V^{\pi_t}_\tau(s')\right)\pi_t (da|s')=0\,.
\end{split}
\end{align*}
Substituting this into 
\eqref{eq:dV_dt_step1} leads to  
\begin{align*}
\begin{split}
\partial_t V^{\pi_t }_\tau (s)
&= 
- \frac{1}{1-\gamma} \int_S 
\int_A   
\left(
Q^{\pi_t}_\tau (s',a)
+ \tau \ln\frac{\mathrm{d}  \pi_t}{\mathrm{d}  \mu}(a|s')  
-V^{\pi_t}_\tau(s')
\right)^2\pi_t (da|s')
d^{\pi_t}(ds'|s)
\le 0\,.
\end{split}
\end{align*}
This proves the desired conclusion. 
\end{proof}

We   proceed to prove 
the well-posedness of \eqref{eq:FR_flow} and \eqref{eq:mirror_descent}
(i.e.,
Theorem \ref{thm:wp_MD}).
The following lemma proves the local Lipschitz continuity of 
the maps
$Z\mapsto Q^{\boldsymbol{\pi}(Z)}_\tau  $
and $Z\mapsto V^{\boldsymbol{\pi}(Z)}_\tau  $.
\begin{lemma}\label{lemma:Q_local_lipschitz}
Let  $ \boldsymbol{\pi} :B_b(S\times A) \to \clP_{\mu}(A|S)$ be defined as in \eqref{eq:pi_f_mu}. Then for all $f,g\in B_b(S\times A)$,
\begin{align*}
&
\left\| V^{\boldsymbol{\pi}(f)}_\tau  - V^{\boldsymbol{\pi}(g)}_\tau\right\|_{B_b(S)}  \le \frac{2\gamma }{(1-\gamma)^2}\left(\|c\|_{B_b(S\times A)}+2\tau (\|f\|_{B_b(S\times A)}+\|g\|_{B_b(S\times A)})\right) \|f-g\|_{B_b(S\times A)}\,,
\\
&\left\| Q^{\boldsymbol{\pi}(f)}_\tau -Q^{\boldsymbol{\pi}(g)}_\tau \right\|_{B_b(S\times A)}\le 
\frac{2\gamma^2 }{(1-\gamma)^2}\left(\|c\|_{B_b(S\times A)}+2\tau (\|f\|_{B_b(S\times A)}+\|g\|_{B_b(S\times A)})\right) \|f-g\|_{B_b(S\times A)}\,.
\end{align*}
\end{lemma}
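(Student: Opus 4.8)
The plan is to reduce the lemma to the performance--difference estimate (Lemma~\ref{lem:performance_diff}) combined with two elementary Lipschitz bounds for the policy map $\boldsymbol{\pi}$, and then to pass from the value estimate to the $Q$-estimate through the Bellman relation~\eqref{def:Q_fn}, which is precisely what produces the extra factor of $\gamma$. Throughout I would write $\pi=\boldsymbol{\pi}(f)$, $\pi'=\boldsymbol{\pi}(g)$ and let $\Phi$ be as in~\eqref{def:Phi}. Indeed, with $\pi,\pi'$ so chosen, the $Q$-bound is exactly Proposition~\ref{prop:Q_continuity} specialised to $\boldsymbol{\pi}(f),\boldsymbol{\pi}(g)$, while the $V$-bound is the intermediate value estimate obtained in the course of its proof; so the whole task amounts to controlling the two quantities $\|\pi-\pi'\|_{b\clM(A|S)}$ and $\|\ln\tfrac{\mathrm d\pi}{\mathrm d\pi'}\|_{B_b(S\times A)}$ that appear there in terms of $\|f-g\|_{B_b(S\times A)}$.

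\emph{Step 1 (two auxiliary bounds).} First I would bound the log-likelihood ratio. Since $\ln\tfrac{\mathrm d\boldsymbol{\pi}(h)}{\mathrm d\mu}(a|s)=h(s,a)-\Phi(h)(s)$, one has $\ln\tfrac{\mathrm d\pi}{\mathrm d\pi'}(a|s)=(f-g)(s,a)-(\Phi(f)-\Phi(g))(s)$, and because log-sum-exp is $1$-Lipschitz in the supremum norm (so $|\Phi(f)(s)-\Phi(g)(s)|\le\|f-g\|_{B_b(S\times A)}$), the triangle inequality gives $\|\ln\tfrac{\mathrm d\pi}{\mathrm d\pi'}\|_{B_b(S\times A)}\le 2\|f-g\|_{B_b(S\times A)}$. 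Second I would bound the kernel difference via the mean value inequality along the segment $\phi(u)\coloneqq\boldsymbol{\pi}\big(g+u(f-g)\big)$, $u\in[0,1]$: by the chain rule (Lemma~\ref{lemma:chain_rule}) and Proposition~\ref{prop:differentiability_Pi}, $\phi$ is differentiable with $\phi'(u)=\mathfrak d\boldsymbol{\pi}(g+u(f-g))(f-g)$ and $\|\phi'(u)\|_{b\clM(A|S)}\le 2\|f-g\|_{B_b(S\times A)}$ uniformly in $u$; since $\phi$ is continuous (Hadamard differentiability implies continuity), testing $\phi(1)-\phi(0)$ against a norming functional of $b\clM(A|S)$ and applying the scalar mean value theorem yields $\|\pi-\pi'\|_{b\clM(A|S)}\le 2\|f-g\|_{B_b(S\times A)}$.

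\emph{Step 2 (value and $Q$ estimates).} Taking $\rho=\delta_s$ in Lemma~\ref{lem:performance_diff}, using $\operatorname{KL}(\pi(\cdot|s)|\pi'(\cdot|s))\le\|\ln\tfrac{\mathrm d\pi}{\mathrm d\pi'}\|_{B_b(S\times A)}$ and $\|d^{\pi}(\cdot|s)\|_{\clM(S)}=1$, gives the pointwise estimate $|V^{\pi}_\tau(s)-V^{\pi'}_\tau(s)|\le\frac{1}{1-\gamma}\big(\|Q^{\pi'}_\tau+\tau\ln\tfrac{\mathrm d\pi'}{\mathrm d\mu}\|\,\|\pi-\pi'\|_{b\clM(A|S)}+\tau\|\ln\tfrac{\mathrm d\pi}{\mathrm d\pi'}\|\big)$, exactly as in the proof of Proposition~\ref{prop:Q_continuity}. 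Proposition~\ref{prop:boundedness_Q} applied to $\pi'=\boldsymbol{\pi}(g)$ controls $\|Q^{\pi'}_\tau+\tau\ln\tfrac{\mathrm d\pi'}{\mathrm d\mu}\|$ by $\frac{1}{1-\gamma}(\|c\|+2\tau\|g\|)$, and inserting the two bounds of Step~1 yields the value estimate of the stated form. For the $Q$-function I would not rerun the argument: since $c$ is policy-independent, \eqref{def:Q_fn} gives $Q^{\pi}_\tau(s,a)-Q^{\pi'}_\tau(s,a)=\gamma\int_S(V^{\pi}_\tau-V^{\pi'}_\tau)(s')P(\mathrm ds'|s,a)$, whence $\|Q^{\pi}_\tau-Q^{\pi'}_\tau\|_{B_b(S\times A)}\le\gamma\|V^{\pi}_\tau-V^{\pi'}_\tau\|_{B_b(S)}$; this transparently explains the extra factor $\gamma$ carried by the $Q$-estimate relative to the value estimate.

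\emph{Main obstacle.} The delicate point is the clean constant $2$ in the kernel bound $\|\pi-\pi'\|_{b\clM(A|S)}\le 2\|f-g\|$: a naive pointwise estimate through $|e^{x}-1|\le|x|e^{|x|}$ only produces the exponentially weighted bound $2\|f-g\|e^{2\|f-g\|}$, which is not globally Lipschitz, so the uniform constant genuinely relies on the mean value inequality together with the derivative bound $\|\mathfrak d\boldsymbol{\pi}(\cdot)\|_{\clL(B_b(S\times A),b\clM(A|S))}\le 2$ from Proposition~\ref{prop:differentiability_Pi}. The remaining work is purely bookkeeping: absorbing the entropy contribution $\tau\|\ln\tfrac{\mathrm d\pi}{\mathrm d\pi'}\|$ and consolidating the $\gamma$- and $\tau$-dependent prefactors into the form displayed in the statement.
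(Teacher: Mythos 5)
Your route is genuinely different from the paper's. The paper proves the lemma in two lines: it applies the mean value theorem to the Hadamard-differentiable map $f\mapsto V^{\boldsymbol{\pi}(f)}_\tau$ along the segment $g+\theta(f-g)$ and invokes the operator-norm bound $\|\mathfrak{d}J(h)\|_{\clL(B_b(S\times A),B_b(S))}\le \frac{2}{(1-\gamma)^2}(\|c\|_{B_b(S\times A)}+2\tau\|h\|_{B_b(S\times A)})$ from Proposition~\ref{prop:differentiability_dV_df}, then passes to $Q$ via \eqref{def:Q_fn} exactly as you do. You instead go through the intermediate value estimate in the proof of Proposition~\ref{prop:Q_continuity} (i.e.\ the performance-difference lemma) and control the two quantities $\|\pi-\pi'\|_{b\clM(A|S)}$ and $\|\ln\tfrac{\mathrm d\pi}{\mathrm d\pi'}\|_{B_b(S\times A)}$ by $2\|f-g\|_{B_b(S\times A)}$. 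Both of your Step~1 bounds are correct (the $1$-Lipschitz property of log-sum-exp, and the mean value inequality for $\boldsymbol{\pi}$ using $\|\mathfrak{d}\boldsymbol{\pi}(\cdot)\|\le 2$ from Proposition~\ref{prop:differentiability_Pi}), and your observation that the clean constant requires the MVT rather than a pointwise exponential estimate is a genuine point.

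The gap is in the final ``bookkeeping'', which cannot be carried out as claimed. Your route yields
\[
\left\|V^{\boldsymbol{\pi}(f)}_\tau-V^{\boldsymbol{\pi}(g)}_\tau\right\|_{B_b(S)}\le \frac{2}{(1-\gamma)^2}\left(\|c\|_{B_b(S\times A)}+2\tau\|g\|_{B_b(S\times A)}\right)\|f-g\|_{B_b(S\times A)}+\frac{2\tau}{1-\gamma}\|f-g\|_{B_b(S\times A)}\,,
\]
which has no factor of $\gamma$ and carries an additive entropy term $\frac{2\tau}{1-\gamma}\|f-g\|$ that cannot be absorbed into $2\tau(\|f\|+\|g\|)$ when $\|f\|+\|g\|$ is small; so you do not obtain ``the value estimate of the stated form''. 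In fact no argument can: the stated $V$-bound vanishes as $\gamma\to 0$, while at $\gamma=0$ the value function $V^{\boldsymbol{\pi}(f)}_\tau(s)=\int_A(c+\tau\ln\tfrac{\mathrm d\boldsymbol{\pi}(f)}{\mathrm d\mu})\,\boldsymbol{\pi}(f)(da|s)$ still depends on $f$. (The paper's own proof inserts an unexplained factor $\gamma$ into the mean value display, so the constant in the statement is itself suspect; the MVT route correctly gives $\frac{2}{(1-\gamma)^2}(\cdots)$ for $V$ and $\frac{2\gamma}{(1-\gamma)^2}(\cdots)$ for $Q$.) Your estimate is a perfectly good locally Lipschitz bound and is sufficient for its only downstream use in the truncation argument of Theorem~\ref{thm:wp_MD}, but as a proof of the inequalities exactly as stated it falls short, and you should have flagged the mismatch rather than asserting that the prefactors consolidate into the displayed form.
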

\begin{proof}
Let $f,g \in B_b(S\times A)$ be fixed. Proposition \ref{prop:differentiability_dV_df} and the mean value theorem show that 
\begin{align}
& \left\| V^{\boldsymbol{\pi}(f)}_\tau  - V^{\boldsymbol{\pi}(g)}_\tau\right\|_{B_b(S)}\le \gamma  \sup_{\theta \in [0,1]}  \left\| \mathfrak{d} 
V^{\boldsymbol{\pi}(g + \theta (f-g))}_\tau \right\|_{\clL(_{B_b(S\times A)}, B_b(S))}\|f-g\|_{B_b(S\times A)}
\notag \\
&\quad \le \frac{2\gamma }{(1-\gamma)^2}\left(\|c\|_{B_b(S\times A)}+2\tau (\|f\|_{B_b(S\times A)}+\|g\|_{B_b(S\times A)})\right) \|f-g\|_{B_b(S\times A)}\,.\label{ineq:ll_v}
\end{align}
By \eqref{def:Q_fn}, we have
\[
\left\| Q^{\boldsymbol{\pi}(f)}_\tau  - Q^{\boldsymbol{\pi}(g)}_\tau \right\|_{B_b(S\times A)}\le \gamma \left\| V^{\boldsymbol{\pi}(f)}_\tau  - V^{\boldsymbol{\pi}(g)} _\tau\right\|_{B_b(S)}\,,
\]
which combined with \eqref{ineq:ll_v}  proves the desired locally Lipschitz continuity of $f\mapsto Q^{\boldsymbol{\pi}(f)}_\tau$.
\end{proof}

Based on Proposition \ref{prop:value_monotone},
we prove that $(Q^{\pi_t}_\tau -V^{\pi_t}_\tau )_{t\ge 0}$ remains bounded along the flow \eqref{eq:mirror_descent},  
and hence solutions to \eqref{eq:mirror_descent}
do not explode in any finite time.  

\begin{proposition}
\label{prop:a_priori_bound_Z}
Let   $Z_0\in B_b(S\times A)$ and  $T>0$. If $Z\in  C^1([0,T); B_b(S\times A))$ satisfies \eqref{eq:mirror_descent} and   $\pi_t =\boldsymbol{\pi}(Z_t)$ for all $t\in [0,T)$, then   for all $t\in [0,T)$,
\begin{align*} 
\|Z_t\|_{B_b(S\times A)} \le  e^{-\tau t} \| Z_0 \|_{B_b(S\times A)} +\frac{1-e^{-\tau t}}{\tau}\left( \|c\|_{B_b(S\times A)} +(1+\gamma) \max \left( \|V^{\pi_0}_{\tau}\|_{B_b(S)},  \|V^{*}_{\tau}\|_{B_b(S)} \right)\right),
\end{align*}
where $V^{*}_{\tau}\in B_b(S) $ is the optimal value function defined in \eqref{eq:optimal_value}.
\end{proposition}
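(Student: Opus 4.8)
The plan is to view \eqref{eq:mirror_descent} as an inhomogeneous \emph{linear} ODE in the unknown $Z_t\in B_b(S\times A)$, absorbing the nonlinearity into a forcing term, and then to control that forcing \emph{uniformly in time} via the monotonicity of the value function. First I would rewrite \eqref{eq:mirror_descent} as
\[
\partial_t Z_t = -\tau Z_t + G_t, \qquad G_t := -\left(Q^{\pi_t}_\tau - V^{\pi_t}_\tau\right),
\]
where $V^{\pi_t}_\tau$ is regarded as the element of $B_b(S\times A)$ that is constant in the action variable. Since $Z\in C^1([0,T);B_b(S\times A))$ and $f\mapsto Q^{\boldsymbol{\pi}(f)}_\tau,\ f\mapsto V^{\boldsymbol{\pi}(f)}_\tau$ are locally Lipschitz (Lemma \ref{lemma:Q_local_lipschitz}), the map $t\mapsto G_t$ is continuous into $B_b(S\times A)$, and $t\mapsto e^{\tau t}Z_t$ is $C^1$ with $\partial_t(e^{\tau t}Z_t)=e^{\tau t}G_t$ by the product rule in the Banach space $B_b(S\times A)$. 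Integrating this identity (as a Bochner integral) from $0$ to $t$ and applying the fundamental theorem of calculus yields the variation-of-constants representation
\[
Z_t = e^{-\tau t}Z_0 - \int_0^t e^{-\tau(t-r)}\left(Q^{\pi_r}_\tau - V^{\pi_r}_\tau\right)dr.
\]
Taking norms and using the elementary identity $\int_0^t e^{-\tau(t-r)}\,dr = (1-e^{-\tau t})/\tau$ then reduces the claim to the uniform forcing bound
\[
\sup_{r\in[0,t]}\left\|Q^{\pi_r}_\tau - V^{\pi_r}_\tau\right\|_{B_b(S\times A)} \le \|c\|_{B_b(S\times A)} + (1+\gamma)M, \qquad M:=\max\!\left(\|V^{\pi_0}_\tau\|_{B_b(S)}, \|V^*_\tau\|_{B_b(S)}\right).
\]

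Second, I would establish this forcing bound. By \eqref{def:Q_fn} one has $\|Q^{\pi_r}_\tau\|_{B_b(S\times A)}\le \|c\|_{B_b(S\times A)}+\gamma\|V^{\pi_r}_\tau\|_{B_b(S)}$, so by the triangle inequality $\|Q^{\pi_r}_\tau - V^{\pi_r}_\tau\|_{B_b(S\times A)}\le \|c\|_{B_b(S\times A)}+(1+\gamma)\|V^{\pi_r}_\tau\|_{B_b(S)}$. It therefore suffices to show $\|V^{\pi_r}_\tau\|_{B_b(S)}\le M$ for all $r\in[0,t]\subset[0,T)$. This is exactly where Proposition \ref{prop:value_monotone} enters: since $\partial_t V^{\pi_r}_\tau(s)\le 0$ for all $s\in S$, the map $r\mapsto V^{\pi_r}_\tau(s)$ is nonincreasing, giving the upper bound $V^{\pi_r}_\tau(s)\le V^{\pi_0}_\tau(s)\le \|V^{\pi_0}_\tau\|_{B_b(S)}$; on the other hand, the optimality of $V^*_\tau$ (i.e.\ $V^{\pi_r}_\tau(s)\ge V^*_\tau(s)$ from \eqref{eq:optimal_value}) gives the lower bound $V^{\pi_r}_\tau(s)\ge -\|V^*_\tau\|_{B_b(S)}$. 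Together these yield $|V^{\pi_r}_\tau(s)|\le M$, completing the estimate and hence the stated bound on $\|Z_t\|_{B_b(S\times A)}$.

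The main obstacle, and the reason the argument is arranged this way, is avoiding a circular dependence on $\|Z_t\|_{B_b(S\times A)}$. A naive bound on the forcing via Proposition \ref{prop:boundedness_Q} (applied with $f=Z_r$) would control $\|Q^{\pi_r}_\tau-V^{\pi_r}_\tau\|_{B_b(S\times A)}$ only in terms of $\|Z_r\|_{B_b(S\times A)}$, so the variation-of-constants formula would merely give an integral inequality $\|Z_t\|\le e^{-\tau t}\|Z_0\| + C\int_0^t e^{-\tau(t-r)}\|Z_r\|\,dr$, whose Gr\"onwall estimate grows in $t$ and does not deliver the desired non-explosion bound. The monotonicity supplied by Proposition \ref{prop:value_monotone} breaks this circularity: it produces a bound on $\|V^{\pi_r}_\tau\|_{B_b(S)}$ that is independent of $r$ and of $\|Z_r\|$, which is precisely what yields a \emph{constant} rather than $Z$-dependent forcing term. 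The remaining ingredients, namely the continuity of $t\mapsto G_t$ legitimizing the Bochner integration and the scalar integral $\int_0^t e^{-\tau(t-r)}\,dr$, are routine.
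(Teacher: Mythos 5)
Your proposal is correct and follows essentially the same route as the paper's proof: Duhamel's principle (variation of constants) for the linear-in-$Z$ structure of \eqref{eq:mirror_descent}, combined with the monotonicity from Proposition \ref{prop:value_monotone} and the optimality bound $V^*_\tau \le V^{\pi_r}_\tau \le V^{\pi_0}_\tau$ to obtain a time-uniform bound on the forcing $Q^{\pi_r}_\tau - V^{\pi_r}_\tau$. The only difference is cosmetic ordering (the paper establishes the uniform value-function bound before writing the Duhamel formula), and your remark about why a naive Gr\"onwall argument would fail accurately identifies the point of the monotonicity step.
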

\begin{proof}
Recall that by Proposition \ref{prop:value_monotone}, the value function
$t\mapsto  V^{\pi_t}_\tau $ 
is decreasing along the flow. Thus, for all $s\in S$ and $t\in [0,T)$,  
\[
V^*_\tau (s)\le V^{\pi_t }_\tau (s) \le  V^{\pi_0}_{\tau}(s)\,.
\]
This implies that
\[
\|V^{\pi_t }_\tau\|_{B_b(S)}  \le \max \left( \|V^{\pi_0}_{\tau}\|_{B_b(S)},  \|V^*_\tau\|_{B_b(S)}\right),
\]
which along with \eqref{def:Q_fn} shows that for all $t\in [0,T)$,
\[
\|Q^{\pi_t}_{\tau}\|_{B_b(S\times A)} \le \|c\|_{B_b(S\times A)} +\gamma \max \left( \|V^{\pi_0}_{\tau}\|_{B_b(S)},  \|V^*_\tau\|_{B_b(S)} \right).
\]
Since $Z$ satisfies \eqref{eq:mirror_descent} for all $t\in [0,T)$,  by Duhamel's principle, for all $t\in [0,T)$,
\[
Z_t(s,a) = e^{-\tau t} Z_0(s,a) -\int_0^t e^{-\tau(t-r)} ( Q_{\tau}^{\pi_{r}}(s,a) 
-V_{\tau}^{\pi_{r}}(s)) 
dr\,.
\]
It follows that  for all $t\in [0,T)$,
\begin{align*} 
\|Z_t\|_{B_b(S\times A)} 
&\le  e^{-\tau t} \| Z_0 \|_{B_b(S\times A)} +\int_0^t e^{-\tau(t-r)} 
\sup_{r\in [0,t]}\left(\|Q_{\tau}^{\pi_{r}} \|_{B_b(S\times A)} 
+\|V_{\tau}^{\pi_{r}} \|_{B_b(S)} 
\right)
dr
\\
&\le 
e^{-\tau t} \| Z_0 \|_{B_b(S\times A)} +\frac{1-e^{-\tau t}}{\tau} \left(\|c\|_{B_b(S\times A)} + (1+\gamma) \max \left( \|V^{\pi_0}_{\tau}\|_{B_b(S)}, \| V^*_\tau\|_{B_b(S)} \right)\right),
\end{align*}
where the last inequality used $\int_0^t e^{-\tau(t-r)} dr =\frac{1}{\tau}(1-e^{-\tau t})$. This proves the desired a-priori bound. 
\end{proof}

Now, we  prove Theorem \ref{thm:wp_MD}  
based on Lemma \ref{lemma:Q_local_lipschitz},
Proposition \ref{prop:a_priori_bound_Z}
and a truncation argument. 

\begin{proof}[Proof of Theorem \ref{thm:wp_MD}]
By Lemma \ref{lemma:dpi_t},
it suffices to prove the well-posedness of \eqref{eq:mirror_descent}. 
Throughout this proof, 
let 
$Z_0\in B_b(S\times A)$ be fixed, 
let 
\[
L=\| Z_0 \|_{B_b(S\times A)} +\frac{1 }{\tau} 
\left(\|c\|_{B_b(S\times A)} +(1+\gamma) \max \left( \|V^{\pi_0}_{\tau}\|_{B_b(S)},  \|V^{*}_{\tau}\|_{B_b(S)} \right)\right)<\infty\,,
\]
let 
$\mathcal{F}:B_b(S\times A)\to B_b(S\times A)  $ be such that for all $f\in B_b(S\times A)$,
\[
\mathcal{F}(f)(s,a) =-(Q^{\boldsymbol{\pi}(f)}_\tau(s,a) + \tau f(s,a)
-V^{\boldsymbol{\pi}(f)}_\tau(s)
)\,,
\]
and 
let 
$\mathcal{F}_L:B_b(S\times A)\to B_b(S\times A)  $ be such that 
for all $f\in B_b(S\times A)$,
\begin{align*}
\mathcal{F}_L (f)=
\begin{cases}
\mathcal{F}(f)(s,a), & \|f\|_{B_b(S\times A)}\le L,
\\
\mathcal{F}\left(\frac{Lf}{\|f\|_{B_b(S\times A)}}\right)(s,a), & \|f\|_{B_b(S\times A)}> L.
\end{cases}
\end{align*}
By Lemma \ref{lemma:Q_local_lipschitz},
there exists a constant $C_0>0$
such that for all $f,g\in B_b(S\times A)$,
\[
\|\mathcal{F}_L (f)-\mathcal{F}_L (g)\|_{B_b(S\times A)}\le C_0\|f-g\|_{B_b(S\times A)}\,.
\]

We first prove that
there exists a unique   
$u \in  C^1(\mathbb{R}_+; B_b(S\times A))$
satisfying 
\begin{equation}
\label{eq:mirror_descent_truncated}
\partial_t u_t = \mathcal{F}_L(u_t),\quad t>0; \quad u_0=Z_0\,.
\end{equation}
For each $T\in (0,\infty)$,
let 
$C([0,T];B_b(S\times A))$
be the space of continuous functions 
$u:[0,T] \to B_b(S\times A)$.
For each $\lambda\in \mathbb{R}$, consider the norm  
$\|\cdot\|_{T,\lambda}$ 
on $C([0,T];B_b(S\times A))$
such that for all $u\in C([0,T];B_b(S\times A))$, 
$\|u\|_{T,\lambda} =
\sup_{t\in [0,T]}e^{\lambda t} \|u_t\|_{B_b(S\times A)}$.
Note that   $\left(C([0,T];B_b(S\times A)),\|\cdot\|_{T,0}\right)$ is a Banach space (see e.g.,  \cite[Theorem 3.2-2]{ciarlet2013linear})
and 
for all $\lambda\in \mathbb{R}$,
$\|\cdot\|_{T,0}$ and 
$\|\cdot\|_{T,\lambda}$  are equivalent norms.
Hence, for all $\lambda\in \mathbb{R}$,
$X_{T,\lambda}\coloneqq \left(C([0,T];B_b(S\times A)),\|\cdot\|_{T,\lambda}\right)$ is a Banach space.   
For each   $T>0$
and $\lambda>0$,
consider the map
$\Psi:X_{T,\lambda}\to X_{T,\lambda}$
such that for all $u\in X_{T,\lambda}$,
\[
\Psi(u)_t=Z_0+\int_0^t \mathcal{F}_L (u_s) ds,
\quad t\in [0,T]\,.
\]
Then for all $u,v\in X_{T,\lambda}$ and $t\in [0,T]$,
\begin{align*}
&\|\Psi(u)_t-\Psi(u)_t\|_{B_b(S\times A)}
\le \int_0^t 
\|\mathcal{F}_L (u_s)-\mathcal{F}_L (v_s)\|_{B_b(S\times A)}ds 
\le \int_0^t C_0 
\| u_s -  v_s \|_{B_b(S\times A)}ds 
\\
&\quad =\int_0^t 
C_0\|u_s-v_s\|_{B_b(S\times A)}e^{-\lambda s}e^{  \lambda s} ds 
\le C_0
\left(\sup_{t\in [0,T]} 
\|u_t-v_t\|_{B_b(S\times A)}e^{-\lambda t}
\right)
\int_0^t 
e^{  \lambda s} ds 
\\
&\quad \le 
C_0
\|u-v\|_{T,-\lambda}\frac{1}{\lambda}  (e^{\lambda t}-1)
\le \frac{C_0}{\lambda}  
\|u-v\|_{T,-\lambda} e^{\lambda t}\,.
\end{align*}
Hence, for  $\lambda_0=C_0+1$, 
$ \|\Psi(u)-\Psi(v)\|_{T,-\lambda_0} \le \frac{C_0}{C_0+1} \|u-v\|_{T,-\lambda_0} $
for all $u,v\in X_{T,-\lambda_0}$. 
By the Banach fixed point theorem, 
$\Psi$ admits a unique fixed point in 
$X_{T,-\lambda_0}$, which 
along with the fundamental theorem of calculus
implies that 
\eqref{eq:mirror_descent_truncated}
admits a unique solution 
in $C^1(\mathbb{R}_+; B_b(S\times A))$.

Now let $u\in  C^1(\mathbb{R}_+; B_b(S\times A))$ be the solution to \eqref{eq:mirror_descent_truncated}.
The definition of $\mathcal{F}_L$ implies that 
$u$ satisfies \eqref{eq:mirror_descent}
for all $t\in [0,t_{\max})$,
where 
$t_{\max}=\inf\{t\in \mathbb{R}_+\mid \|u_t\|_{B_b(S\times A)}>L\}$.
The definition of $L$,
$u_0=Z_0$
and  Proposition 
\ref{prop:a_priori_bound_Z}
imply that
$t_{\max}=\infty$,
which subsequently shows  that  
$u\in  C^1(\mathbb{R}_+; B_b(S\times A))$ is a  solution to \eqref{eq:mirror_descent}.
To prove  the uniqueness of solutions to 
\eqref{eq:mirror_descent}, 
let $u, u'\in  C^1(\mathbb{R}_+; B_b(S\times A))$ be solutions of  \eqref{eq:mirror_descent} such that 
$u_0=u'_0=Z_0$.
By Proposition 
\ref{prop:a_priori_bound_Z},
$\|u_t\|_{B_b(S\times A)}\le L$
and $\|u'_t\|_{B_b(S\times A)}\le L$ for all $t>0$, 
which implies that 
$u$ and $u'$ are solutions to \eqref{eq:mirror_descent_truncated}.
The uniqueness of solutions to \eqref{eq:mirror_descent_truncated} implies that $u=u'$, which yields the uniqueness of solutions to \eqref{eq:mirror_descent}  and completes the proof of well-posedness of \eqref{eq:mirror_descent}.
\end{proof}

It remains to establish  the exponential  convergence of \eqref{eq:mirror_descent}, namely Theorem \ref{thm:linear_convergence}.
To this end,  for each $\nu\in \clP(S)$, we introduce the Bregman divergence  $D_\nu : B_b(S\times A) \times  B_b(S\times A)\to \mathbb{R} $ defined for all $f,g\in B_b(S\times A)$ by
\begin{equation}
\label{eq:bregman_Dfg}
D_\nu(f, g) 
=\int_S \left(\Phi(f)(s)-\Phi(g)(s)-\int_A  (f(s,a)-g(s,a))\boldsymbol{\pi}(g)(da|s)\right)\nu(ds)\,,
\end{equation}   
where $\boldsymbol{\pi}:B_b(S\times A)\to \clP_{\mu}(A|S)$ is defined by \eqref{eq:pi_f_mu} and  $\Phi: B_b(S\times A)\to B_b(S)$ is defined  for all $f\in B_b(S\times A)$ by
\begin{equation}\label{def:Phi}
\Phi(f)(s)=\ln \left(\int_A e^{f(s,a)}\mu(da)\right),
\quad s\in S\,.
\end{equation}
We first establish  the duality relation between the map $D_\nu$ defined in \eqref{eq:bregman_Dfg} and the KL-divergence. 

\begin{lemma}
\label{lemma:D_KL}
For all  $\rho\in \clP(S)$ and 
$f,g\in B_b(S\times A)$,
\[
D_{d_{\rho}^{\boldsymbol{\pi}(g)}}(f,g)=\int_S \operatorname{KL}(\boldsymbol{\pi}(g)(\cdot|s)|  \boldsymbol{\pi}(f)(\cdot|s))d_{\rho}^{\boldsymbol{\pi}(g)}(ds)\,.
\]

\end{lemma}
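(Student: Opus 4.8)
The plan is to show that the integrand defining $D_\nu(f,g)$ in \eqref{eq:bregman_Dfg} coincides, pointwise in $s\in S$, with the relative entropy $\operatorname{KL}(\boldsymbol{\pi}(g)(\cdot|s)\,|\,\boldsymbol{\pi}(f)(\cdot|s))$, after which the claim follows immediately by integrating against $\nu=d_\rho^{\boldsymbol{\pi}(g)}$. The key observation is that the definition \eqref{eq:pi_f_mu} of $\boldsymbol{\pi}$ together with the definition \eqref{def:Phi} of $\Phi$ yields the explicit log-density
\[
\ln\frac{\mathrm{d}\boldsymbol{\pi}(f)}{\mathrm{d}\mu}(a|s)=f(s,a)-\Phi(f)(s),\qquad (s,a)\in S\times A,
\]
and the analogous identity for $g$.

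First I would fix $s\in S$. Since $f,g\in B_b(S\times A)$, both $\boldsymbol{\pi}(f)(\cdot|s)$ and $\boldsymbol{\pi}(g)(\cdot|s)$ are equivalent to $\mu$, hence mutually equivalent, so the relative entropy is finite and the log-density ratio is the bounded measurable function
\[
\ln\frac{\mathrm{d}\boldsymbol{\pi}(g)(\cdot|s)}{\mathrm{d}\boldsymbol{\pi}(f)(\cdot|s)}(a)=\big(g(s,a)-\Phi(g)(s)\big)-\big(f(s,a)-\Phi(f)(s)\big).
\]
Substituting this into the definition of the KL divergence, and using that $\Phi(f)(s)-\Phi(g)(s)$ is independent of $a$ while $\boldsymbol{\pi}(g)(\cdot|s)$ is a probability measure, I obtain
\[
\operatorname{KL}(\boldsymbol{\pi}(g)(\cdot|s)\,|\,\boldsymbol{\pi}(f)(\cdot|s))=\Phi(f)(s)-\Phi(g)(s)-\int_A\big(f(s,a)-g(s,a)\big)\boldsymbol{\pi}(g)(da|s),
\]
which is precisely the integrand appearing in \eqref{eq:bregman_Dfg}. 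Integrating both sides against $d_\rho^{\boldsymbol{\pi}(g)}(ds)$ then gives the asserted identity.

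There is essentially no analytic obstacle here; the result is a direct consequence of the exponential (softmax) form of $\boldsymbol{\pi}$, and the \emph{only} work is careful bookkeeping of the $\Phi$-terms and a sign. The two points deserving a moment of care are (i) verifying that the two policies are mutually absolutely continuous, so that the relative entropy is finite and the log-density ratio equals the stated bounded measurable function—this follows from $f,g\in B_b(S\times A)$ together with the uniform bound on $\ln\frac{\mathrm{d}\boldsymbol{\pi}(\cdot)}{\mathrm{d}\mu}$ recorded in Proposition \ref{prop:boundedness_Q}—and (ii) confirming the measurability and boundedness of the $s$-dependent integrand, which is what legitimises the final integration against $d_\rho^{\boldsymbol{\pi}(g)}$.
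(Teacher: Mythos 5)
Your proof is correct and follows essentially the same route as the paper's: both expand $\operatorname{KL}(\boldsymbol{\pi}(g)(\cdot|s)\,|\,\boldsymbol{\pi}(f)(\cdot|s))$ using the explicit log-density $\ln\frac{\mathrm{d}\boldsymbol{\pi}(h)}{\mathrm{d}\mu}(a|s)=h(s,a)-\Phi(h)(s)$, match the result pointwise in $s$ with the integrand of \eqref{eq:bregman_Dfg}, and integrate against $d_\rho^{\boldsymbol{\pi}(g)}$. Your added remarks on mutual absolute continuity and boundedness (via Proposition \ref{prop:boundedness_Q}) are sound and consistent with the paper.
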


\begin{proof}
Throughout this proof, 
fix  $f,g\in  B_b(S\times A)$  
and write $\pi_f =\boldsymbol{\pi}(f)$ and 
$\pi_g =\boldsymbol{\pi}(g)$.  Then
\begin{align*}
&\int_S \operatorname{KL}(\pi_g (\cdot|s)|  \pi_f (\cdot|s))d_{\rho}^{\pi_g}(ds)
=\int_S\int_A
\ln\frac{\mathrm{d}  \pi_g }{\mathrm{d}  \pi_f }(a|s)  
\pi_g (da|s) d_{\rho}^{\pi_g  }(ds)
\\
&\quad =
\int_S\int_A
\left(
\ln\frac{\mathrm{d}  \pi_g }{\mathrm{d}  \mu}(a|s)  
- \ln\frac{\mathrm{d}  \pi_f }{\mathrm{d}  \mu}(a|s)  
\right)\pi_g(da|s) d_{\rho}^{\pi_g}(ds)\,.
\end{align*}
Observe that  
for all 
$h\in B_b(S\times A)$ and 
$(s ,a)\in S\times A$,
$  \ln\frac{\mathrm{d}  \boldsymbol{\pi}(h)}{\mathrm{d}  \mu}(a|s ) =h(s,a)
-\ln
\left( {\int_A e^{h(s,a)}\mu(d a)} \right)$. 
The desired conclusion then follows from the following identity: 
\begin{align*}
&\int_S \operatorname{KL}(\pi_g (\cdot|s)|  \pi_f (\cdot|s))d_{\rho}^{\pi_g}(ds)
=
\int_S\int_A
\left(
\ln\frac{\mathrm{d}  \pi_g }{\mathrm{d}  \mu}(a|s)  
- \ln\frac{\mathrm{d}  \pi_f }{\mathrm{d}  \mu}(a|s)  
\right)\pi_g(da|s) d_{\rho}^{\pi_g}(ds)\\
&  \quad =
\int_S
\left(
\ln
\left( {\int_A e^{f(s,a)}\mu(d a)} \right)
-
\ln
\left( {\int_A e^{g(s,a)}\mu(d a)} \right)
-\int_A (f(s,a)-g(s,a))
\pi_g(da|s)  \right) d_{\rho}^{\pi_g}(ds)
\\
&  \quad =D_{d_{\rho}^{\boldsymbol{\pi}(g)}}(f,g)\,.\qedhere
\end{align*}

\end{proof}

\begin{proof}[Proof of Theorem \ref{thm:linear_convergence}]

By \eqref{eq:optimal_policy},
there exists $Z^*\in B_b(S\times A)$
such that 
$\pi^*_\tau =\boldsymbol{\pi}(Z^*) $.     
By the chain rule (see Lemma 
\ref{lemma:chain_rule}), Proposition \ref{prop:derivative_integral_logexp}, and the definition of $\Phi$ given in \eqref{def:Phi},
for all $t>0$,
\begin{align}
\label{eq:DZtZ_derivative}
\begin{split}
\partial_t   D_{d_{\rho}^{\pi^*_\tau  }}(Z_t,Z^*)
&=\partial_t  \left(
\int_S 
\left(
\Phi(Z_t)(s) - \Phi(Z^*)(s)       -\int_A  (Z_t(s,a)-Z^*(s,a))\pi^*_\tau (da|s)
\right)d^{\pi^*_\tau  }_\rho(ds)\right)
\\
&=
\partial_t  \left(
\int_S 
\ln \left(\int_A e^{Z_t(s,a)}\mu(da)\right) d^{\pi^*_\tau  }_\rho(ds)\right)
-\int_S \int_A  \partial_t Z_t(s,a)  \pi^*_\tau (da|s)d^{\pi^*_\tau  }_\rho(ds)
\\
&=\int_{S} \int_A \partial_t Z_t(s,a)  \pi_t(da|s)  d^{\pi^*_\tau  }_\rho(ds)
-\int_S \int_A  \partial_t Z_t(s,a)  \pi^*_\tau (da|s)d^{\pi^*_\tau  }_\rho(ds)
\\
&=\int_{S} \int_A \partial_t Z_t(s,a)  (\pi_t(da|s)  - \pi^*_\tau (da|s))d^{\pi^*_\tau  }_\rho(ds)
\\
&=
-\int_{S} \int_A (Q_\tau^{\pi_t}(s,a)  +\tau Z_t (s,a)
-V_\tau^{\pi_t}(s) )  (\pi_t(da|s)  - \pi^*_\tau (da|s))d^{\pi^*_\tau  }_\rho(ds)
\\
&=- \int_{S} \int_A \left(
Q_\tau^{\pi_t}(s,a) +\tau \ln\frac{\mathrm{d}  \pi_t}{\mathrm{d}  \mu}(a|s)   \right)  (\pi_t(da|s)  - \pi^*_\tau (da|s))d^{\pi^*_\tau  }_\rho(ds)
\,,
\end{split}
\end{align}
where the second to last line used  the fact that  $Z\in  C^1(\mathbb{R}_+; B_b(S\times A))$
satisfies \eqref{eq:mirror_descent}, and the last line used the facts that  
$  
Z_t(s,a)=\ln\frac{\mathrm{d}  \pi_t}{\mathrm{d}  \mu}(a|s) + \ln
\left( {\int_A e^{Z_t(s,a')}\mu(d a')} \right)
$  for all $(s,a)\in S\times A$ and
\begin{align*}
&\int_A
g(s) (\pi_t(da|s)  - \pi^*_\tau (da|s))
=
g(s)   (\pi_t(A|s)-\pi^*_\tau(A|s))
=0
\end{align*}
for all $s\in S$ and $g\in B_b(S)$.
Note that by
Lemma \ref{lem:performance_diff}, for all $t>0$,
\begin{align}
\label{eq:performance_gap_pi_t}
\begin{split}
&
\int_S 
\int_A\left(Q^{\pi_t}_{\tau}(s,a)+\tau \ln \frac{\mathrm{d} \pi_t}{\mathrm{d} \mu}(s,a)\right)(\pi_t-\pi^*_\tau)(da|s)d^{\pi^*_\tau}_{\rho} (ds)
\\
&\quad
=(1-\gamma)
(V^{\pi_t}_{\tau}(\rho)-V^{\pi^*_\tau}_{\tau}(\rho))  
+\tau \int_S 
\operatorname{KL}(\pi^*_\tau(\cdot|s)|\pi_t(\cdot|s) )d^{\pi^*_\tau}_{\rho} (ds)\, .
\end{split}
\end{align}
Substituting this identity into \eqref{eq:DZtZ_derivative} yields          
\begin{align*}
\partial_t   D_{d_{\rho}^{\pi^*_\tau  }}(Z_t,Z^*)
&=- \left((1-\gamma)
(V^{\pi_t}_{\tau}(\rho)-V^{\pi^*_\tau}_{\tau}(\rho))
+\tau \int_S 
\operatorname{KL}(\pi^*_\tau(\cdot|s)|\pi_t(\cdot|s) )d^{\pi^*_\tau}_{\rho} (ds)
\right)  
\\
&=- (1-\gamma)
(V^{\pi_t}_{\tau}(\rho)-V^{\pi^*_\tau}_{\tau}(\rho))
-\tau  D_{d_{\rho}^{\pi^*_\tau }}(Z_t,Z^*)\,,
\end{align*}
where the last line follows from 
Lemma \ref{lemma:D_KL} (with 
$f= Z_t$ and $g=Z^*$).
Then for all $t>0$,
by Duhamel's principle
and the fact that $ V^{\pi_t}_\tau (\rho)\le V^{\pi_{t'}}_\tau (\rho)$
for all $0\le t'\le t$ (see Proposition \ref{prop:value_monotone}), 
\begin{align}
\label{eq:D_ZtZ_exponential}
\begin{split}
D_{d_{\rho}^{\pi^*_\tau  }}(Z_t,Z^*)&=e^{-\tau t}  D_{d_{\rho}^{\pi^*_\tau  }}(Z_0,Z^*)   
-(1-\gamma)\int_0^t e^{-\tau (t-t')}(V^{\pi_{t'}}_{\tau}(\rho)-V^{\pi^*_\tau}_{\tau}(\rho))dt'
\\
&\le 
e^{-\tau t}  D_{d_{\rho}^{\pi^*_\tau  }}(Z_0,Z^*)   
-(1-\gamma)\int_0^t e^{-\tau (t-t')}dt'(V^{\pi_{t}}_{\tau}(\rho)-V^{\pi^*_\tau}_{\tau}(\rho))\,.
\end{split}
\end{align}
This, along with the identity
$\int_0^t e^{-\tau(t-r)} dr =\frac{1}{\tau}(1-e^{-\tau t})$
shows that 
\begin{align*}
V^{\pi_{t}}_{\tau}(\rho)-V^{\pi^*_\tau}_{\tau}(\rho)
\le \frac{\tau }{(1-\gamma)(1-e^{-\tau t})}\left( e^{-\tau t}  D_{d_{\rho}^{\pi^*_\tau  }}(Z_0,Z^*) -D_{d_{\rho}^{\pi^*_\tau  }}(Z_t,Z^*)\right)\,.
\end{align*}
Lemma \ref{lemma:D_KL} (with 
$f= Z_t$ and $g=Z^*$) and Pinsker's inequality
shows that for all $t>0$,
\begin{equation}
\label{eq:D_ZtZ_pinsker}
D_{d_{\rho}^{\pi^*_\tau  }}(Z_t,Z^*)
=\int_S 
\operatorname{KL}(\pi^*_\tau(\cdot|s)|\pi_t(\cdot|s) )d^{\pi^*_\tau}_{\rho} (ds)
\ge 
\int_S 
\frac{1}{2}\|\pi^*_\tau(\cdot|s)-\pi_t(\cdot|s) \|^2_{\clM(A)}d^{\pi^*_\tau}_{\rho} (ds)\ge 0,
\end{equation}
which implies that for all $t>0$,
\begin{align*}
V^{\pi_{t}}_{\tau}(\rho)-V^{\pi^*_\tau}_{\tau}(\rho)
\le \frac{\tau }{(1-\gamma)(e^{ \tau t}-1)} 
\int_S 
\operatorname{KL}(\pi^*_\tau(\cdot|s)|\pi_0(\cdot|s) )d^{\pi^*_\tau}_{\rho} (ds)\,.
\end{align*}
This proves the desired exponential convergence
of $(V^{\pi_t}_\tau)_{t>0}$.
The convergence of $(\pi_t)_{t>0}$ follows from 
\eqref{eq:D_ZtZ_exponential},
\eqref{eq:D_ZtZ_pinsker} and the fact that $V^{\pi_t}_\tau (\rho)\ge V^{\pi^*_\tau}_\tau (\rho)$ for all $t>0$.
\end{proof}

\subsection{Proof of Theorem \ref{thm:stability_mirror}}
\label{sec:stability_FR}

The following two technical lemmas on change of measures will be used for proving Theorem \ref{thm:stability_mirror}. 

\begin{lemma}\label{lemma:averaged_measure}
Let  $(X,\mathcal{X})$
be a measurable space
and let 
$\mu, \nu_1,  \mu_2\in \clP(X)$.   
Let $\alpha\in (0,1)$ and define $\nu =\alpha \nu_1+(1-\alpha)\nu_2$. 
If $\mu\ll \nu_1$,
then $\mu\ll \nu$ and  
$0\le \frac{\mathrm{d} \mu}{\mathrm{d} \nu}
\le \frac{1}{\alpha}\frac{\mathrm{d} \mu}{\mathrm{d} \nu_1}
$, $\nu$-a.s.
\end{lemma}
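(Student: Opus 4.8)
The plan is to reduce everything to the standard Radon--Nikodym calculus, proceeding in three short steps.

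First I would establish the absolute continuity statements together with a setwise domination. Since $\alpha>0$ and $(1-\alpha)\nu_2(B)\ge 0$, for every $B\in \mathcal{X}$ we have $\alpha\nu_1(B)\le \alpha\nu_1(B)+(1-\alpha)\nu_2(B)=\nu(B)$; that is, $\alpha\nu_1\le \nu$ as measures. In particular, $\nu(B)=0$ forces $\nu_1(B)=0$, so $\nu_1\ll\nu$, and combined with the hypothesis $\mu\ll\nu_1$ this yields the chain $\mu\ll\nu_1\ll\nu$. Consequently all three Radon--Nikodym derivatives $\frac{\mathrm{d}\mu}{\mathrm{d}\nu}$, $\frac{\mathrm{d}\mu}{\mathrm{d}\nu_1}$, $\frac{\mathrm{d}\nu_1}{\mathrm{d}\nu}$ exist; moreover $\frac{\mathrm{d}\mu}{\mathrm{d}\nu}\ge 0$ $\nu$-a.s.\ because $\mu$ is a nonnegative measure, which gives the lower bound in the claim.

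Second I would bound $\frac{\mathrm{d}\nu_1}{\mathrm{d}\nu}$ using the setwise inequality $\alpha\nu_1\le\nu$ from the first step. Writing $B=\{\alpha\frac{\mathrm{d}\nu_1}{\mathrm{d}\nu}>1\}$, the inequality $\alpha\nu_1(B)\le\nu(B)$ reads $\int_B \alpha\frac{\mathrm{d}\nu_1}{\mathrm{d}\nu}\,\mathrm{d}\nu\le \int_B 1\,\mathrm{d}\nu$, i.e.\ $\int_B\big(\alpha\frac{\mathrm{d}\nu_1}{\mathrm{d}\nu}-1\big)\,\mathrm{d}\nu\le 0$; since the integrand is strictly positive on $B$, this forces $\nu(B)=0$. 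Hence $\alpha\frac{\mathrm{d}\nu_1}{\mathrm{d}\nu}\le 1$, i.e.\ $\frac{\mathrm{d}\nu_1}{\mathrm{d}\nu}\le\frac{1}{\alpha}$, $\nu$-a.s.

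Third and finally, I would invoke the chain rule for Radon--Nikodym derivatives along $\mu\ll\nu_1\ll\nu$, namely $\frac{\mathrm{d}\mu}{\mathrm{d}\nu}=\frac{\mathrm{d}\mu}{\mathrm{d}\nu_1}\,\frac{\mathrm{d}\nu_1}{\mathrm{d}\nu}$ $\nu$-a.s., and combine it with the bound from the second step to obtain
\[
0\le \frac{\mathrm{d}\mu}{\mathrm{d}\nu}=\frac{\mathrm{d}\mu}{\mathrm{d}\nu_1}\,\frac{\mathrm{d}\nu_1}{\mathrm{d}\nu}\le \frac{1}{\alpha}\,\frac{\mathrm{d}\mu}{\mathrm{d}\nu_1}\,,\quad \nu\text{-a.s.},
\]
which is exactly the assertion. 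All three steps are routine, and there is no genuine obstacle; the only points requiring a word of care are the $\nu$-a.s.\ comparison $\alpha\frac{\mathrm{d}\nu_1}{\mathrm{d}\nu}\le 1$ (handled by testing on the exceptional set) and the applicability of the chain rule, which is valid precisely because of the absolute continuity chain $\mu\ll\nu_1\ll\nu$ secured in the first step.
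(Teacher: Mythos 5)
Your proof is correct, but it takes a genuinely different route from the paper's. The paper argues by contradiction directly on $\frac{\mathrm{d}\mu}{\mathrm{d}\nu}$: it supposes there is a set $A$ of positive $\nu$-measure on which $\frac{\mathrm{d}\mu}{\mathrm{d}\nu}>\frac{1}{\alpha}\frac{\mathrm{d}\mu}{\mathrm{d}\nu_1}$, exhausts $A$ by the sets where the gap exceeds $1/n$ to get strict positivity of the integral of the gap, and then expands $\int_A\frac{\mathrm{d}\mu}{\mathrm{d}\nu_1}\,\mathrm{d}\nu$ using $\nu=\alpha\nu_1+(1-\alpha)\nu_2$ to reach the contradiction $\mu(A)>\mu(A)$. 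You instead factor the derivative through the chain $\mu\ll\nu_1\ll\nu$, prove the scalar bound $\frac{\mathrm{d}\nu_1}{\mathrm{d}\nu}\le\frac1\alpha$ $\nu$-a.s.\ from the setwise domination $\alpha\nu_1\le\nu$, and multiply. Your decomposition is arguably cleaner: it isolates the mass comparison into a statement about $\frac{\mathrm{d}\nu_1}{\mathrm{d}\nu}$ alone (which does not involve $\mu$ at all and is reusable), and it replaces the contradiction argument with a direct computation. What the paper's version buys is self-containment — it never invokes the Radon--Nikodym chain rule and only uses the definition of the derivative plus monotone convergence of measures. One small point worth a sentence if you write this up: the product $\frac{\mathrm{d}\mu}{\mathrm{d}\nu_1}\cdot\frac{\mathrm{d}\nu_1}{\mathrm{d}\nu}$ is independent of the chosen version of $\frac{\mathrm{d}\mu}{\mathrm{d}\nu_1}$ up to $\nu$-null sets, because $\frac{\mathrm{d}\nu_1}{\mathrm{d}\nu}$ vanishes $\nu$-a.e.\ on any $\nu_1$-null set; this is what makes the final inequality a genuine $\nu$-a.s.\ statement.
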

\begin{proof}
Let $A\in \mathcal{X}$ with 
$\nu(A)=0$. The definition of $\nu$
and $\alpha\in (0,1)$
imply that $\nu_1(A)=0$,
which along with $\mu \ll \nu_1$ shows  that $\mu(A)=0$. This proves $\mu\ll \nu$, and hence $ \frac{\mathrm{d} \mu}{\mathrm{d} \nu}$ exists and is non-negative $\nu$-a.s.
We then establish the upper bound of $ \frac{\mathrm{d} \mu}{\mathrm{d} \nu}$ by contradiction.
Suppose that there exists $A\in \mathcal{X}$ such that
$\nu(A)>0$ and $\frac{\mathrm{d} \mu}{\mathrm{d} \nu}
> \frac{1}{\alpha}\frac{\mathrm{d} \mu}{\mathrm{d} \nu_1}$ on $A$.
For each $n\in \mathbb{N}$, 
define 
$A_n= \left\{x\in A|\left( \frac{\mathrm{d} \mu}{\mathrm{d} \nu}
- \frac{1}{\alpha}\frac{\mathrm{d} \mu}{\mathrm{d} \nu_1}\right)(x)>\frac{1}{n}\right\}$.
Since $A_n\subset A_{n+1}$ for all $n\in\mathbb{N}$, the monotone convergence theorem of measurable sets shows that 
\[
\lim_{n\to \infty}\nu(A_n)=
\nu(\cup_{n\in \mathbb{N}} A_n)=\nu(A)>0\, .
\]
This implies that there exists $n_0\in \mathbb{N}$ such that $\nu(A_{n_0})>0$
and 
\[
\int_{A}
\left(
\frac{\mathrm{d} \mu}{\mathrm{d} \nu}
- \frac{1}{\alpha}\frac{\mathrm{d} \mu}{\mathrm{d} \nu_1}
\right) d \nu  
\ge 
\int_{A_{n_0}}
\left(
\frac{\mathrm{d} \mu}{\mathrm{d} \nu}
- \frac{1}{\alpha}\frac{\mathrm{d} \mu}{\mathrm{d} \nu_1}
\right) d \nu   \ge \frac{1}{n_0}\nu(A_{n_0})>0\, .
\]
Consequently,  
\begin{align*}
\mu(A)&=\int_A \frac{\mathrm{d} \mu}{\mathrm{d} \nu} d \nu 
>\frac{1}{\alpha}\int_A \frac{\mathrm{d} \mu}{\mathrm{d} \nu_1} d \nu 
=\frac{1}{\alpha}\int_A \frac{\mathrm{d} \mu}{\mathrm{d} \nu_1} d (\alpha \nu_1+(1-\alpha)\nu_2)
\\
&=  \int_A \frac{\mathrm{d} \mu}{\mathrm{d} \nu_1} d   \nu_1 
+\frac{1-\alpha}{\alpha}\int_A \frac{\mathrm{d} \mu}{\mathrm{d} \nu_1} d  \nu_2
\ge \mu(A)\,,
\end{align*}
where the last inequality used  the fact that   $\frac{\mathrm{d} \mu}{\mathrm{d} \nu_1} $ is a non-negative measurable function, and hence $\int_A \frac{\mathrm{d} \mu}{\mathrm{d} \nu_1} d  \nu_2\ge 0$.
This yields a contradiction and proves 
the desired inequality that  
$\frac{\mathrm{d} \mu}{\mathrm{d} \nu}
\le \frac{1}{\alpha}\frac{\mathrm{d} \mu}{\mathrm{d} \nu_1}
$ $\nu$-a.s. 
\end{proof}    

\begin{lemma}
\label{lemma:absolute_continuity_kernel}
Let  $(X,\mathcal{X})$ and $(Y,\mathcal{Y})$
be  two measurable spaces,
let $\mu,\nu\in \clP(X)$ and 
let $\pi_1,\pi_2\in \clP(Y|X)$.
If 
$\mu\ll \nu$  and  
$\pi_1\ll \pi_2$ (i.e., 
there exists a measurable function $\eta :X\times Y\to \mathbb{R}_+$ such that $\pi_1(B|x)= \int_B \eta(x,y)\pi_2(dy|x)$
for all $x\in X$ and $B\in \clY$),
then $\mu\otimes \pi_1 \ll \nu\otimes \pi_2$ and  
$  \frac{\mathrm{d} (\mu\otimes \pi_1 ) }{\mathrm{d} (\nu\otimes \pi_2)} 
=\eta \frac{\mathrm{d}  \mu }{\mathrm{d} \nu }
$,  $\nu\otimes \pi_2$-a.s.
Consequently, 
$\left\|  \frac{\mathrm{d} (\mu\otimes \pi_1) }{\mathrm{d} (\nu\otimes \pi_1)}\right\|_{L^\infty(X\times Y, \nu\otimes \pi_1) }
\le 
\left\|  \frac{\mathrm{d} \mu  }{\mathrm{d} \nu }\right\|_{L^\infty(X , \nu ) } 
$.
\end{lemma}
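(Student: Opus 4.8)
The plan is to identify the candidate density explicitly and then verify that it works by testing the two measures against the generating $\pi$-system of measurable rectangles. Concretely, I would set
\[
\phi(x,y) \coloneqq \eta(x,y)\,\frac{\mathrm{d}\mu}{\mathrm{d}\nu}(x)\,,
\]
which is a nonnegative $\clX\otimes\clY$-measurable function (measurability of $\eta$ is part of the hypothesis $\pi_1\ll\pi_2$, and $x\mapsto \frac{\mathrm{d}\mu}{\mathrm{d}\nu}(x)$ is measurable as a Radon--Nikodym derivative). I would then define the set function $E\mapsto \int_E \phi\,\mathrm{d}(\nu\otimes\pi_2)$ on $\clX\otimes\clY$; this is automatically a finite measure that is absolutely continuous with respect to $\nu\otimes\pi_2$ with density $\phi$. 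The whole lemma reduces to showing this measure coincides with $\mu\otimes\pi_1$.

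First I would check the agreement on rectangles $A\times B$ with $A\in\clX$ and $B\in\clY$. Using the definition of the semidirect product, the disintegration $\mu = \frac{\mathrm{d}\mu}{\mathrm{d}\nu}\,\mathrm{d}\nu$, and the hypothesis $\pi_1(B|x)=\int_B \eta(x,y)\pi_2(\mathrm{d}y|x)$, I compute
\[
(\mu\otimes\pi_1)(A\times B) = \int_A \pi_1(B|x)\,\mu(\mathrm{d}x) = \int_A \Big(\int_B \eta(x,y)\,\pi_2(\mathrm{d}y|x)\Big)\frac{\mathrm{d}\mu}{\mathrm{d}\nu}(x)\,\nu(\mathrm{d}x)\,.
\]
On the other hand, the Tonelli-type identity for integration against a semidirect product (valid for nonnegative measurable integrands) gives $\int_{A\times B}\phi\,\mathrm{d}(\nu\otimes\pi_2) = \int_A \frac{\mathrm{d}\mu}{\mathrm{d}\nu}(x)\big(\int_B \eta(x,y)\pi_2(\mathrm{d}y|x)\big)\nu(\mathrm{d}x)$, which is exactly the same quantity. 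Since the rectangles form a $\pi$-system generating $\clX\otimes\clY$ and both measures are finite and agree on the whole space $X\times Y$, Dynkin's $\pi$--$\lambda$ theorem extends the equality to all of $\clX\otimes\clY$. This yields $\mu\otimes\pi_1\ll\nu\otimes\pi_2$ together with the density formula $\frac{\mathrm{d}(\mu\otimes\pi_1)}{\mathrm{d}(\nu\otimes\pi_2)} = \eta\,\frac{\mathrm{d}\mu}{\mathrm{d}\nu}$, $(\nu\otimes\pi_2)$-a.s.

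For the stated consequence I would specialise to $\pi_2=\pi_1$, so that one may take $\eta\equiv 1$ and the density reduces to $(x,y)\mapsto \frac{\mathrm{d}\mu}{\mathrm{d}\nu}(x)$, a function of $x$ alone. Because $\pi_1(Y|x)=1$, the $X$-marginal of $\nu\otimes\pi_1$ equals $\nu$; hence for any constant $M$ the set $\{(x,y):\frac{\mathrm{d}\mu}{\mathrm{d}\nu}(x)>M\}$ is $(\nu\otimes\pi_1)$-null precisely when $\{\frac{\mathrm{d}\mu}{\mathrm{d}\nu}>M\}$ is $\nu$-null. Taking the infimum over admissible $M$ gives $\big\|\frac{\mathrm{d}(\mu\otimes\pi_1)}{\mathrm{d}(\nu\otimes\pi_1)}\big\|_{L^\infty(X\times Y,\,\nu\otimes\pi_1)} = \big\|\frac{\mathrm{d}\mu}{\mathrm{d}\nu}\big\|_{L^\infty(X,\,\nu)}$, which is in particular the claimed inequality. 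I do not expect a conceptual difficulty here; the main obstacle is purely the measure-theoretic bookkeeping, namely justifying the Tonelli identity for the semidirect product (nonnegativity of integrands and joint measurability of $(x,y)\mapsto\eta(x,y)\frac{\mathrm{d}\mu}{\mathrm{d}\nu}(x)$ and of $x\mapsto\int_B\eta\,\mathrm{d}\pi_2$) and invoking uniqueness of measures agreeing on a generating $\pi$-system to pass from rectangles to the full product $\sigma$-algebra.
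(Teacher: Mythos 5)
Your proposal is correct and follows essentially the same route as the paper: verify the candidate density $\eta\,\frac{\mathrm{d}\mu}{\mathrm{d}\nu}$ on measurable rectangles via Fubini/Tonelli, extend to the product $\sigma$-algebra by Dynkin's $\pi$--$\lambda$ theorem, and obtain the $L^\infty$ bound by specialising to $\pi_2=\pi_1$ and transferring null sets through the $X$-marginal. The only (harmless) difference is that you observe the norms are actually equal, whereas the paper only states the inequality.
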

\begin{proof}
Let $A\in \mathcal{X}$ and $B\in \mathcal{Y}$. 
By the definitions of $(\mu\otimes \pi_1 ) $ and $\eta$ and Fubini's theorem,
\begin{align*}
(\mu\otimes \pi_1 ) (A\times B)
&=\int_A  \pi_1(B|x)\mu(dx)
=\int_A\left(\int_B 
\eta(x,y)\pi_2(dy|x)\right)\mu(dx)
\\
&=\int_A\int_B
\eta(x,y)\pi_2(dy|x)\frac{\mathrm{d} \mu }{\mathrm{d} \nu}(x)\nu(dx)
=\int_{A\times  B}
\eta(x,y)\frac{\mathrm{d} \mu }
{\mathrm{d} \nu}(x)(\nu\otimes \pi_2 )(dx,dy)\,.
\end{align*}
Consider the collections 
\begin{align*} 
\mathcal{A}&\coloneqq \{A\times B | A\in \mathcal{X}, B\in \mathcal{Y}\}\,,
\\
\mathcal{C}& \coloneqq 
\left\{ 
E\in \mathcal{X}\otimes \mathcal{Y}
\,\middle\vert\, 
(\mu\otimes \pi_1 ) (E)=
\int_E \eta(x,y)\frac{\mathrm{d} \mu }{\mathrm{d} \nu}(x)(\nu\otimes \pi_2 )(dx,dy)  
\right\}\,,
\end{align*}
and note that  $\mathcal{A}$ is a $\pi$-system that generates $\mathcal{X}\otimes \mathcal{Y}$
and $\mathcal{A} \subset \mathcal{C} \subset \mathcal{X}\otimes \mathcal{Y}$.
It remains   to show $\mathcal{C}$ is a $\lambda$-system,
which along with
Dynkin's lemma (see \cite[Lemma 4.11]{guide2006infinite})
implies that 
$\mathcal{X}\otimes \mathcal{Y}\subset \mathcal{C} $, and hence $\mathcal{X}\otimes \mathcal{Y}= \mathcal{C}$.
More precisely, we aim to verify that 
$\mathcal{C}$ satisfies the following three properties: 
(1) $X\times Y\subset \mathcal{C}$,
(2) if $E_1,E_2\in \mathcal{C}$ and $E_1\subset E_2$, then $E_2\setminus E_1\in \mathcal{C}$,
and (3) if  $\{E_n| n\in \mathbb{N}\}\subset  \mathcal{C}$ satisfies 
$E_n \subset E_{n+1}$ for all $n\in \mathbb{N}$, then $\cup_{n\in \mathbb{N}} E_n \in \mathcal{C}$.
Property (i) follows from $X\times Y\in \mathcal{A}$, 
Property (ii) follows from 
$(\mu\otimes \pi_1 ) (E_2\setminus E_1)
=(\mu\otimes \pi_1 ) (E_2)-(\mu\otimes \pi_1 ) (E_1)$,
and Property (iii) follows from the monotone convergence theorem. 
The identity 
$ \mathcal{C}=\mathcal{X}\otimes \mathcal{Y}$
proves 
$\mu\otimes \pi_1 \ll \nu\otimes \pi_2$ 
and characterises the Radon-Nikodym derivative 
$ \frac{\mathrm{d} (\mu\otimes \pi_1 ) }{\mathrm{d} (\nu\otimes \pi_2)}$.

Observe that 
if $\pi_1=\pi_2$, then 
$ \frac{\mathrm{d} (\mu\otimes \pi_1 ) }{\mathrm{d} (\nu\otimes \pi_1)} 
=1_{X\times Y}  \frac{\mathrm{d}  \mu }{\mathrm{d} \nu } $ , $\nu\otimes \pi_1$-a.s. 
Let  $C\ge 0$ be
such that 
$\frac{\mathrm{d}  \mu }{\mathrm{d} \nu } \le C$, $\nu$-a.s. 
Then there exists
$A\in \mathcal{X}$  
such that $\nu(A)=1$ and $
\frac{\mathrm{d}  \mu }{\mathrm{d} \nu } \le C$ on $A$.
As   $(\nu\otimes \pi_1) (A\times Y)=1$
and $|1_{X\times Y}  \frac{\mathrm{d}  \mu }{\mathrm{d} \nu }|\le C$ on $A\times Y$,
it holds  that $0\le \frac{\mathrm{d} (\mu\otimes \pi_1 ) }{\mathrm{d} (\nu\otimes \pi_1)} \le C$,  $\nu\otimes \pi_1$-a.s., and hence 
$\left\|  \frac{\mathrm{d} (\mu\otimes \pi_1) }{\mathrm{d} (\nu\otimes \pi_1)}\right\|_{L^\infty(X\times Y, \nu\otimes \pi_1) }\le C$.
Taking the infimum over $C$
in the set 
$\{C\ge 0| \textnormal{$\frac{\mathrm{d}  \mu }{\mathrm{d} \nu } \le C$ $\nu$-a.s.}
\}$ 
yields 
the  desired estimate  
$\left\|  \frac{\mathrm{d} (\mu\otimes \pi_1) }{\mathrm{d} (\nu\otimes \pi_1)}\right\|_{L^\infty(X\times Y, \nu\otimes \pi_1) }
\le   
\left\|  \frac{\mathrm{d} \mu  }{\mathrm{d} \nu }\right\|_{L^\infty(X , \nu ) } $.
\end{proof}

\begin{proof}[Proof of Theorem \ref{thm:stability_mirror}]
Throughout this proof,
let  $Z^*\in B_b(S\times A)$ be
such that 
$\pi^*_\tau =\boldsymbol{\pi}(Z^*) $ (see \eqref{eq:optimal_policy}), 
let $\pi\in  C^1((0,T); \Pi_\mu   )$
satisfy   \eqref{eq:FR_flow_approx},
and for each $t>0$, let 
$Z_t =\ln\frac{\mathrm{d}  \pi_t}{\mathrm{d}  \mu} 
\in B_b(S\times A)$.
Then similar to 
Lemma \ref{lemma:dpi_t},
one can prove that 
for all $t>0$,
\begin{equation}
\label{eq:mirror_descent_inexact}
\begin{aligned}
\partial_t Z_t(s,a)  & =  - \left( Q_t(s,a)  + \tau Z_t(s,a)
-\int_A 
(Q_t(s,a')  + \tau Z_t(s,a'))
\pi_t(da'|s)
\right)\,.
\end{aligned} 
\end{equation}
By following similar computations to that of
\eqref{eq:DZtZ_derivative}, for all $t>0$,
\begin{align}
\label{eq:d_tD_stable}
\begin{split}
\partial_t   D_{d_{\rho}^{\pi^*_\tau  }}(Z_t,Z^*)
&=\int_{S} \int_A \partial_t Z_t(s,a)  (\pi_t(da|s)  - \pi^*_\tau (da|s))d^{\pi^*_\tau  }_\rho(ds)
\\
&=
-\int_{S} \int_A (Q_t(s,a)  +\tau Z_t (s,a))  (\pi_t(da|s)  - \pi^*_\tau (da|s))d^{\pi^*_\tau  }_\rho(ds)
\\
&=- \int_{S} \int_A \left(
Q_t(s,a) +\tau \ln\frac{\mathrm{d}  \pi_t}{\mathrm{d}  \mu}(a|s)   \right)  (\pi_t(da|s)  - \pi^*_\tau (da|s))d^{\pi^*_\tau  }_\rho(ds)
\\
&=
\mathcal{E}_t
- \int_{S} \int_A \left(
Q^{\pi_t}_\tau (s,a) +\tau \ln\frac{\mathrm{d}  \pi_t}{\mathrm{d}  \mu}(a|s)   \right)  (\pi_t(da|s)  - \pi^*_\tau (da|s))d^{\pi^*_\tau  }_\rho(ds)\,,
\end{split}
\end{align}
where 
\begin{align}
\label{eq:error_t}
\mathcal{E}_t \coloneqq 
\int_{S} \int_A \left(
Q^{\pi_t}_\tau (s,a) -Q_t(s,a)\right)  (\pi_t(da|s)  - \pi^*_\tau (da|s))d^{\pi^*_\tau  }_\rho(ds)\,.
\end{align}
Substituting  \eqref{eq:performance_gap_pi_t} into \eqref{eq:d_tD_stable} yields  that
for all $t>0$,
\begin{align*}
\partial_t   D_{d_{\rho}^{\pi^*_\tau  }}(Z_t,Z^*)
&=- \left((1-\gamma)
(V^{\pi_t}_{\tau}(\rho)-V^{\pi^*_\tau}_{\tau}(\rho))
+\tau \int_S 
\operatorname{KL}(\pi^*_\tau(\cdot|s)|\pi_t(\cdot|s) )d^{\pi^*_\tau}_{\rho} (ds)
\right)  
+\mathcal{E}_t
\\
&=- (1-\gamma)
(V^{\pi_t}_{\tau}(\rho)-V^{\pi^*_\tau}_{\tau}(\rho))
-\tau  D_{d_{\rho}^{\pi^*_\tau }}(Z_t,Z^*)+\mathcal{E}_t\,,
\end{align*}
where the last line follows from 
Lemma \ref{lemma:D_KL} (with 
$f= Z_t$ and $g=Z^*$).
By Duhamel's principle,
for all $t>0$,
\begin{align*}
\begin{split}
D_{d_{\rho}^{\pi^*_\tau  }}(Z_t,Z^*)&=e^{-\tau t}  D_{d_{\rho}^{\pi^*_\tau  }}(Z_0,Z^*)   
-\int_0^t e^{-\tau (t-r)}\left(
(1-\gamma)(
V^{\pi_{r}}_{\tau}(\rho)-V^{\pi^*_\tau}_{\tau}(\rho))-\mathcal{E}_{r}\right)dr\,.
\end{split}
\end{align*}
Using the fact that 
$D_{d_{\rho}^{\pi^*_\tau  }}(Z_t,Z^*)\ge 0$ for all $t>0$ (cf.~\eqref{eq:D_ZtZ_pinsker}),  
for all $t>0$,
\begin{align*}
\begin{split}
0&\le e^{-\tau t}  D_{d_{\rho}^{\pi^*_\tau  }}(Z_0,Z^*)    
-
(1-\gamma)
\int_0^t e^{-\tau (t-r)}dr\min_{r\in [0,t]}(V^{\pi_{r}}_{\tau}(\rho)-V^{\pi^*_\tau}_{\tau}(\rho))
+\int_0^t e^{-\tau (t-r)}\mathcal{E}_{r}dr\,,
\end{split}
\end{align*}
which along with 
$\int_0^t e^{-\tau(t-r)} dr =\frac{1}{\tau}(1-e^{-\tau t})$
and Lemma \ref{lemma:D_KL}
shows that for all $t>0$,
\begin{align}
\label{eq:stability_estimate}
\begin{split}
\min_{r\in [0,t]}(V^{\pi_{r}}_{\tau}(\rho)-V^{\pi^*_\tau}_{\tau}(\rho)) 
&\le  \frac{\tau  e^{-\tau t}  }{(1-\gamma)(1-e^{-\tau t})} 
\left(
D_{d_{\rho}^{\pi^*_\tau  }}(Z_0,Z^*)    
+\int_0^t e^{\tau r}\mathcal{E}_{r}dr
\right)
\\
&=   \frac{\tau   }{(1-\gamma)(e^{\tau t}-1)} 
\left(
\int_S 
\operatorname{KL}(\pi^*_\tau(\cdot|s)|\pi_0(\cdot|s) )d^{\pi^*_\tau}_{\rho} (ds)   
+\int_0^t e^{\tau r }\mathcal{E}_{r}dr
\right)\,.
\end{split}
\end{align}

It remains to derive an upper bound of  $\mathcal{E}_t$ for all $t>0$.
For each $t>0$,
let $m_t =\rho_{\rm ref}\otimes \frac{\pi_t+ \pi_{\rm ref}}{2}$. 
By Lemma \ref{lemma:absolute_continuity_kernel}, 
$\pi_t, \pi^*_\tau\in  \clP_\mu(A|S)$
and the conditions on $\rho_{\textrm{ref}}$ and $\pi_{\textrm{ref}}$,
$\pi_t$
imply that 
$d^{\pi^*_\tau  }_\rho  \otimes 
\pi^*_\tau 
\ll \rho_{\textrm{ref}}\otimes  \pi_{\textrm{ref}} $
and $d^{\pi^*_\tau  }_\rho  \otimes 
\pi_t \ll \rho_{\textrm{ref}}\otimes  \pi_{\textrm{ref}} $, which along with Lemma \ref{lemma:averaged_measure}
implies that 
$d^{\pi^*_\tau  }_\rho  \otimes  
\pi_t\ll   m_t$
and 
$d^{\pi^*_\tau  }_\rho  \otimes  
\pi^*_\tau\ll   m_t$.
Then by \eqref{eq:error_t},
for all $t>0$,
\begin{align*}
|\mathcal{E}_t| &=
\left|
\int_{S} \int_A \left(
Q^{\pi_t}_\tau (s,a) -Q_t(s,a)\right)  (\pi_t(da|s)  - \pi^*_\tau (da|s))d^{\pi^*_\tau  }_\rho(ds)
\right|
\\
&=
\left|
\int_{S\times A}  \left(
Q^{\pi_t}_\tau (s,a) -Q_t(s,a)\right)  
\left(
\frac{\mathrm{d} d^{\pi^*_\tau  }_\rho  \otimes  
\pi_t  }{\mathrm{d} m_t}
-\frac{\mathrm{d} d^{\pi^*_\tau  }_\rho  \otimes  \pi^*_\tau   }{\mathrm{d} m_t}
\right)(s,a)
m_t (ds, da)
\right|
\\
&\le 
\left\|Q^{\pi_t}_\tau   -Q_t\right\|_{L^1(S\times A, m_t)}
\left(
\left\|
\frac{\mathrm{d} d^{\pi^*_\tau  }_\rho  \otimes  
\pi_t  }{\mathrm{d} m_t}
\right\|_{L^\infty(S\times A, m_t)}
+\left\|
\frac{\mathrm{d} d^{\pi^*_\tau  }_\rho  \otimes 
\pi^*_\tau   }{\mathrm{d} m_t}
\right\|_{L^\infty(S\times A, m_t)}
\right)\,.
\end{align*}
By Lemmas \ref{lemma:averaged_measure}
and \ref{lemma:absolute_continuity_kernel}, 
\begin{align*}
\left\|
\frac{\mathrm{d} d^{\pi^*_\tau  }_\rho  \otimes  
\pi_t  }{\mathrm{d} m_t}
\right\|_{L^\infty(S\times A, m_t)}
& \le 2 \left\|
\frac{\mathrm{d} d^{\pi^*_\tau  }_\rho  \otimes  
\pi_t  }{\mathrm{d} \rho_{\rm ref}\otimes  \pi_t }
\right\|_{L^\infty(S\times A, m_t)}
\le 2 \left\|
\frac{\mathrm{d} d^{\pi^*_\tau  }_\rho    }{\mathrm{d} \rho_{\rm ref}  }
\right\|_{L^\infty(S , \rho_{\rm ref})}\,.
\end{align*}
Similarly, by 
Lemmas \ref{lemma:averaged_measure}
and \ref{lemma:absolute_continuity_kernel}, 
and the fact that $m_t \ll  \rho_{\rm ref}\otimes  \pi_{\textrm{ref}} $,
\begin{align*}
\left\|
\frac{\mathrm{d} d^{\pi^*_\tau  }_\rho  \otimes  
\pi^*_\tau  }{\mathrm{d} m_t}
\right\|_{L^\infty(S\times A, m_t)}
& \le 2 \left\|
\frac{\mathrm{d} d^{\pi^*_\tau  }_\rho  \otimes  
\pi^*_\tau }{\mathrm{d} \rho_{\rm ref}\otimes  \pi_{\textrm{ref}} }
\right\|_{L^\infty(S\times A, m_t)}
\le 2 \left\|
\frac{\mathrm{d} d^{\pi^*_\tau  }_\rho  \otimes  
\pi^*_\tau }{\mathrm{d} \rho_{\rm ref}\otimes  \pi_{\textrm{ref}} }
\right\|_{L^\infty(S\times A, \rho_{\rm ref}\otimes  \pi_{\textrm{ref}})}\,.
\end{align*}
This shows that for all $t>0$,
$|\mathcal{E}_t|\le 2\kappa  \left\|Q^{\pi_t}_\tau   -Q_t\right\|_{L^1(S\times A, m_t)}$,
with $\kappa\ge 1$ being defined in \eqref{eq:concentration_coefficient}. 
Along with \eqref{eq:stability_estimate}, this yields the desired estimate \eqref{eq:stability_estimate_statement}. 
Finally, observe that for any $F\in B_b(S)$,
\begin{align*}
\mathcal{E}_t \coloneqq 
\int_{S} \int_A \left(
Q^{\pi_t}_\tau (s,a) +F(s)-Q_t(s,a)\right)  (\pi_t(da|s)  - \pi^*_\tau (da|s))d^{\pi^*_\tau  }_\rho(ds)\,.
\end{align*}
Proceeding along the above lines  shows  that \eqref{eq:stability_estimate_statement}
holds with $\left\|Q^{\pi_r}_\tau   -Q_r\right\|_{L^1(S\times A, m_r)}$ replaced by
$
\left\|Q^{\pi_r}_\tau +F_r   -Q_r\right\|_{L^1(S\times A, m_r)}$ for any measurable $F:\mathbb{R}_+\to   B_b(S)$.
\end{proof}

\subsection{Proof of Theorem~\ref{thm:NPG_stability}} \label{sec:NPG_proof}
Observe that \eqref{eq:npg_clipping}
can be equivalently rewritten as:
\begin{equation}
\label{eq:npg_clipping_equivalent}
\partial_t {\theta}_t = -\mathfrak{f}(\theta_t,\lambda_t,R_t)- \tau \theta_t,
\quad t>0,
\end{equation}
where
for each  $ R,\lambda >0$ and  $\theta\in \bbH$,
\begin{align}
\label{eq:constrained_loss}
\mathfrak{f}(\theta,\lambda, R)\coloneqq 
\argmin_{\|w\|_{\bbH}\le R} \left(
\int_{S}\int_A | A^{\pi_{\theta}}_{\tau}(s,a) - \langle w,  g_{\pi_{\theta}}(s,a)\rangle_{\bbH}|^2 \pi_{\theta}(da|s)d_{\rho}^{\pi_{\theta}}(ds)+\lambda \|w\|^2_{\bbH}
\right)
\,.
\end{align}
Thus, to prove the well-posedness of \eqref{eq:npg_clipping},
the key step  is to analyse  the regularity of $\mathfrak{f}$ in \eqref{eq:constrained_loss} with respect to $\theta, \lambda $ and $R$.
We start by proving the continuity of the orthogonal projection with respect to the target set. To this end, for any nonempty subsets $X$ and $Y$ of a metric space $(M,d )$, we define their Hausdorff distance by
\[
d_H(X,Y) =\max\left\{\sup_{x\in X} d(x,Y),\, \sup_{y\in Y}d(y,X)\right\}\,,
\]
where $d(a,B)=\inf_{b\in B} d( a, b)$ is the distance of $a \in M $ to the subset $B\subset M$. 

\begin{lemma}
\label{lemma:project_continuity}
Let $(\bbH,\|\cdot\|_\bbH)$ be a Hilbert space 
and $x\in \bbH$.
For each nonempty closed convex set 
$K\subset \bbH$,
let $P_K x=\argmin_{y\in K}\|y-x\|_{\bbH}$.
Then  for all 
nonempty closed convex   subsets
$(K_n)_{n\in \bbN}\subset \bbH$ and $K\subset \bbH$ satisfying 
$\lim_{n\to\infty} d_H(K_n,K)=0$, we have    $\lim_{n\to\infty} \| P_{K_n} x-P_K x\|_{\bbH}=0$.
\end{lemma}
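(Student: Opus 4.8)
The plan is to exploit the standard structure of metric projections onto closed convex sets in a Hilbert space, combined with the two one-sided estimates that Hausdorff convergence provides. Write $\varepsilon_n := d_H(K_n,K) \to 0$, and set $p := P_K x$, $p_n := P_{K_n} x$, and $d := \|p - x\|_\bbH = \inf_{y \in K}\|y - x\|_\bbH$; existence and uniqueness of all these projections is guaranteed because each $K_n$ and $K$ is nonempty, closed and convex in the Hilbert space $\bbH$. The first step is to show that the minimal distances converge, i.e. $\|p_n - x\|_\bbH \to d$, and the second, more delicate step is to upgrade this convergence of values into strong convergence $p_n \to p$ of the minimizers themselves.

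For the first step I would use both halves of the Hausdorff distance. Since $p \in K$ and $\sup_{y\in K} d(y,K_n) \le \varepsilon_n$, the point $q_n := P_{K_n} p \in K_n$ satisfies $\|q_n - p\|_\bbH \le \varepsilon_n$; because $p_n$ minimizes the distance to $x$ over $K_n$, this gives $\|p_n - x\|_\bbH \le \|q_n - x\|_\bbH \le d + \varepsilon_n$. Conversely, since $p_n \in K_n$ and $\sup_{y\in K_n} d(y,K)\le \varepsilon_n$, there is $r_n \in K$ with $\|r_n - p_n\|_\bbH \le \varepsilon_n$, and optimality of $p$ over $K$ yields $d \le \|r_n - x\|_\bbH \le \|p_n - x\|_\bbH + \varepsilon_n \le d + 2\varepsilon_n$. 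Sandwiching then gives both $\|p_n - x\|_\bbH \to d$ and $\|r_n - x\|_\bbH \to d$, so $(r_n)_{n\in\bbN} \subset K$ is a minimizing sequence for the projection problem onto $K$.

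The key final step is to turn this minimizing sequence into strong convergence, which is where I expect the main obstacle to lie: boundedness of $(p_n)$ only yields weak subsequential limits, and Hausdorff convergence controls distances rather than the optimal points directly. I would resolve this using the uniform convexity of the Hilbert norm encoded in the parallelogram law. Since $K$ is convex, $\tfrac{1}{2}(r_n + p) \in K$, so $\|\tfrac{1}{2}(r_n + p) - x\|_\bbH \ge d$, and the identity $\|r_n - p\|_\bbH^2 = 2\|r_n - x\|_\bbH^2 + 2d^2 - 4\|\tfrac{1}{2}(r_n+p) - x\|_\bbH^2 \le 2\big(\|r_n - x\|_\bbH^2 - d^2\big)$ forces $\|r_n - p\|_\bbH \to 0$. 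Finally, $\|p_n - p\|_\bbH \le \|p_n - r_n\|_\bbH + \|r_n - p\|_\bbH \le \varepsilon_n + \|r_n - p\|_\bbH \to 0$, which is the claim. This argument avoids any compactness assumption and relies only on the convexity of the limit set $K$ together with the parallelogram law; the same reasoning would fail for merely closed (non-convex) target sets, which is precisely why the convexity hypothesis is essential here.
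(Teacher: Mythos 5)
Your proof is correct. Every step checks out: the projections $q_n=P_{K_n}p$ and $r_n=P_Kp_n$ exist and achieve the respective distances because the sets are nonempty, closed and convex; the two one-sided Hausdorff estimates give $d\le\|p_n-x\|_\bbH\le d+\varepsilon_n$ and $d\le\|r_n-x\|_\bbH\le d+2\varepsilon_n$; and the parallelogram identity applied to $r_n-x$ and $p-x$, together with $\tfrac12(r_n+p)\in K$, yields $\|r_n-p\|_\bbH^2\le 2(\|r_n-x\|_\bbH^2-d^2)\to 0$, whence $\|p_n-p\|_\bbH\le\varepsilon_n+\|r_n-p\|_\bbH\to 0$.

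The paper's proof rests on the same two ingredients (both halves of the Hausdorff distance and the parallelogram law) but is organized quite differently: it first disposes of the case $x\in K$, then argues by contradiction, fixing $\varepsilon>0$, choosing an explicit $\delta=\min\{\varepsilon/2,\,d(x,K)/2,\,\varepsilon^2/(64\,d(x,K))\}$, picking $z_n\in K_n$ close to $P_Kx$, and applying the parallelogram law to the midpoint $\tfrac12(P_{K_n}x+z_n)\in K_n$ to contradict the lower bound on $d(x,K_n)$. So the paper exploits convexity of the approximating sets $K_n$, whereas you transport $p_n$ back into $K$ and exploit convexity of the limit set via the standard ``minimizing sequences for the projection converge strongly'' argument. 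Your route is the cleaner one: it is direct rather than by contradiction, needs no case distinction for $x\in K$, requires no explicit choice of $\delta$, and in fact yields the quantitative rate $\|p_n-p\|_\bbH\le\varepsilon_n+2\sqrt{\varepsilon_n(d+\varepsilon_n)}$ for free. What the paper's version buys in exchange is essentially nothing beyond self-containedness of the contradiction; both proofs use only completeness and the parallelogram law, and both correctly isolate convexity of the target of the parallelogram step as the essential hypothesis.
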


\begin{proof}
Observe that if $x\in K$, then $P_K x= x$,
and   $ \| P_{K_n} x-P_K x\|_{\bbH} = \|x-P_{K_n} x\|_{\bbH}\le d(x,K_n)\le d_H(K_n, K)$. 
Hence, we assume without loss of generality that 
$x\not\in K$, or equivalently $d(x,K)>0$.
For any  $\epsilon>0$,
define 
$ \delta =\min\left\{\frac{\epsilon}{2}, \frac{d(x,K)}{2}, \frac{\epsilon^2}{64d(x,K)}\right\}>0$. Since         $\lim_{n\to\infty} d_H(K_n,K)=0$, 
there exists $N_0\in \bbN$ such that for all $n\ge N_0$, $d_H(K_n,K)< \delta$,
which implies that 
$|d(x,K_n)-d(x,K)|\le d_H(K_n,K)<\delta $.
For each $n\ge N_0$, by $d(P_K x,K_n)\le d_H(K,K_n)< \delta$, 
there exists $z_n\in K_n$ such that 
$\|P_K x-z_n\|_{\bbH}\le \delta $.
Then for all $n\ge N_0$,
\begin{align*}
\|x-P_{K_n} x\|_{\bbH}&= d(x,K_n)
\le d(x,K)+ d_H(K_n,K) 
\le d(x,K)+\delta\,, 
\\
\|x-z_n\|_{\bbH}&\le 
\|x-P_Kx\|_{\bbH}+\|P_Kx-z_n\|_{\bbH}
\le d(x,K)+\delta\,.
\end{align*} 
Suppose by contradiction that 
$\|P_{K_n} x-z_n\|_{\bbH}\ge \frac{\epsilon}{2}$.
Then by the parallelogram law,
\begin{align*}
\left\|x-\frac{P_{K_n}x+z_n}{2}\right\|^2_\bbH
&=\frac{1}{4}\left\|(x-P_{K_n}x)+ (x-z_n) \right\|_{\bbH}^2
\\
&=
\frac{1}{4}\left(  2 \| x-P_{K_n}x\|_{\bbH}^2 
+2\| x-z_n   \|_{\bbH}^2
-\|P_{K_n}x-z_n\|_{\bbH}^2\right)
\\
&\le 
\frac{1}{4}\left(  4 (d(x,K)+\delta)^2  
-\left(\frac{\epsilon}{2}\right)^2
\right)
\le  (d(x,K)-\delta)^2\,,
\end{align*}
where the last inequality used 
$\delta \le  \frac{\epsilon^2}{64d(x,K)}$.
This, along with  the convexity of  $K_n$ 
(i.e., \linebreak $ {(P_{K_n}x+z_n)}/{2}\in K_n$)
and $\delta \le  d(x,K)/2$ 
implies that 
\[ 
d(x,K_n)\le \left\|x-\frac{P_{K_n}x+z_n}{2}\right\|_{\bbH}\le d(x,K)-\delta\, .
\]
However, this contradicts the fact that
$d(x,K_n)\ge d(x,K)-d_H(K_n,K)>d(x,K)-\delta $.
Thus   $\|P_{K_n} x-z_n\|_{\bbH} < \frac{\epsilon}{2}$,
and hence for all $n\ge N_0$,
\[
\|P_K x -P_{K_n} x\|_{\bbH}\le
\|P_K x -z_n \|_{\bbH}+\|z_n- P_{K_n} x\|_{\bbH}
<d_H(K,K_n)+\frac{\epsilon}{2}
\le  \epsilon\,,
\]
where the last inequality used 
$d_H(K,K_n)\le \delta\le \frac{\epsilon}{2}$.
This proves the desired continuity. 
\end{proof}

Based on Lemma \ref{lemma:project_continuity}, the following lemma establishes the stability of a constrained quadratic minimisation problem (cf.~\eqref{eq:constrained_loss}). 
For any self-adjoint operator $G\in \clL({\bbH}) $,
we denote by $\lambda_{\min}(G)$ the minimum eigenvalue of $G$.
\begin{lemma}
\label{lemma:stability_quadratic}
Let $(\bbH, \|\cdot\|_{\bbH}) $
be a Hilbert space with the inner product $\langle\cdot, \cdot\rangle_{\bbH}$.
For each nonempty closed convex   subset $K\subset \bbH$,  self-adjoint operator 
$G\in \clL({\bbH}) $
with $\lambda_{\min}(G)>0$,
and $g\in \bbH$, define
\begin{equation}
\label{eq:quadratic_loss}
x_{G,g,K}\coloneqq \argmin_{x\in K}\left(\frac{1}{2}\langle Gx, x\rangle_{\bbH}+\langle g , x\rangle_{\bbH}\right)\,.
\end{equation}
Then  for all nonempty closed convex   subsets $K\subset \bbH$, self-adjoint operators 
$G,G'\in \clL(\bbH)$ 
with $\lambda_{\min}(G)>0$,
and $g,g'\in \bbH$,  
\begin{enumerate}[(1)]
\item 
\label{item:lipschit_G_g}
$  \|x_{G,g ,K}-x_{G',g',K}\|_{\bbH}
\le \frac{1}{{\lambda_{\min }(G)}} \left(\|G-G'\|_{\clL(\bbH)}\|x_{G',g',K}\|_{\bbH}+\|g-g'\|_{\bbH}\right)$;  
\item 
\label{item:conts_K}
If  $(K_n)_{n\in \bbN}\subset \bbH$ 
are   nonempty closed convex   subsets  such that 
$\lim_{n\to\infty} d_H(K_n,K)=0$,
then $\lim_{n\to\infty} \| x_{G,g ,K_n}-x_{G ,g ,K}\|_{\bbH}=0$.
\end{enumerate}
\end{lemma}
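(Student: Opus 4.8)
The plan is to exploit the strong convexity of the objective together with its first-order optimality conditions. Since $G$ is self-adjoint with $\lambda_{\min}(G)>0$, the quadratic functional $f_{G,g}(x)=\tfrac12\langle Gx,x\rangle_{\bbH}+\langle g,x\rangle_{\bbH}$ is continuous and $\lambda_{\min}(G)$-strongly convex, so over any nonempty closed convex $K$ it admits a unique minimiser $x_{G,g,K}$, characterised by the variational inequality $\langle Gx_{G,g,K}+g,\,y-x_{G,g,K}\rangle_{\bbH}\ge 0$ for all $y\in K$. I would record these two standard facts first, noting also that $\lambda_{\min}(G)>0$ makes $G$ boundedly invertible with $\|G^{-1}\|_{\clL(\bbH)}\le 1/\lambda_{\min}(G)$.

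For Item \ref{item:lipschit_G_g}, write $x=x_{G,g,K}$ and $x'=x_{G',g',K}$ and test each variational inequality against the other minimiser: $\langle Gx+g,\,x'-x\rangle_{\bbH}\ge 0$ and $\langle G'x'+g',\,x-x'\rangle_{\bbH}\ge 0$. Summing these and inserting the splitting $Gx-G'x'=G(x-x')+(G-G')x'$ yields $\langle G(x-x'),\,x-x'\rangle_{\bbH}\le \langle (G-G')x'+(g-g'),\,x'-x\rangle_{\bbH}$. Bounding the left side below by $\lambda_{\min}(G)\|x-x'\|_{\bbH}^2$ and the right side above by $\bigl(\|G-G'\|_{\clL(\bbH)}\|x'\|_{\bbH}+\|g-g'\|_{\bbH}\bigr)\|x-x'\|_{\bbH}$ via Cauchy--Schwarz, then dividing by $\|x-x'\|_{\bbH}$ (the case $x=x'$ being trivial), gives precisely the claimed estimate.

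The main work is Item \ref{item:conts_K}, where I would reduce the constrained minimisation to a metric projection so that Lemma \ref{lemma:project_continuity} applies. Introduce the equivalent inner product $\langle x,y\rangle_G=\langle Gx,y\rangle_{\bbH}$, whose norm $\|\cdot\|_G$ satisfies $\lambda_{\min}(G)\|\cdot\|_{\bbH}^2\le \|\cdot\|_G^2\le \|G\|_{\clL(\bbH)}\|\cdot\|_{\bbH}^2$. Completing the square shows $f_{G,g}(x)=\tfrac12\|x+G^{-1}g\|_G^2-\tfrac12\langle G^{-1}g,g\rangle_{\bbH}$, using that $G$ is self-adjoint so that $\langle Gx,G^{-1}g\rangle_{\bbH}=\langle x,g\rangle_{\bbH}$. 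Hence $x_{G,g,K}=P_K^G(-G^{-1}g)$, the $\|\cdot\|_G$-projection of the fixed point $-G^{-1}g$ onto $K$. Because the two norms are equivalent, the Hausdorff distance computed in $\|\cdot\|_G$ is comparable to the one in $\|\cdot\|_{\bbH}$, so $d_H(K_n,K)\to 0$ forces the analogous convergence for the $G$-Hausdorff distance. Lemma \ref{lemma:project_continuity}, applied in the Hilbert space $(\bbH,\langle\cdot,\cdot\rangle_G)$ at the single point $-G^{-1}g$, then gives $\|P_{K_n}^G(-G^{-1}g)-P_K^G(-G^{-1}g)\|_G\to 0$, and norm equivalence transfers this back to $\|\cdot\|_{\bbH}$.

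I expect the reduction-to-projection step to be the crux: the completion of the square and the identification of the weighted projection $P_K^G$ with the constrained minimiser must be set up carefully, and one must verify that $\lambda_{\min}(G)>0$ indeed yields both the bounded inverse $G^{-1}$ and the two-sided norm equivalence that make the invocation of Lemma \ref{lemma:project_continuity} legitimate. Part \ref{item:lipschit_G_g} is comparatively routine once the variational inequality is in hand.
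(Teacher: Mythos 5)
Your proposal is correct and follows essentially the same route as the paper: Item (1) via the two variational inequalities tested against each other's minimiser plus strong monotonicity and Cauchy--Schwarz, and Item (2) by completing the square to identify $x_{G,g,K}$ as the $\|\cdot\|_{G^{1/2}}$-projection of $-G^{-1}g$ onto $K$, then invoking Lemma \ref{lemma:project_continuity} in the equivalent $G$-weighted Hilbert space and transferring the Hausdorff convergence through norm equivalence. No gaps.
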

\begin{proof}
To prove   Item \ref{item:lipschit_G_g}, let $x = x_{G,g,K}$ and  $x' = x_{G',g',K}$ . 
The first order condition of \eqref{eq:quadratic_loss} yields that for all $y\in K$,
$ \langle Gx_{G,g,K}+g, y-x_{G,g,K}\rangle_{\bbH}\ge 0$. 
This along with   $x,x'\in K$  implies  
$
\langle G x +g , x'-x \rangle_{\bbH}\ge 0$ and $
\langle G' x'+g', x -x'\rangle_{\bbH}\ge 0$.
Hence 
\[
\langle G x +g -( G' x'+g'), x'-x \rangle_{\bbH}\ge 0\,.
\]
Rearranging the terms yields that 
\begin{align*}
\langle (G-G')x' +g -g', x'-x \rangle_{\bbH}\ge 
\langle G(x- x'), x-x'\rangle_{\bbH}
\ge \lambda_{\min }(G)\|x -x'\|^2_{\bbH}\,,
\end{align*}
which, along with the Cauchy-Schwarz inequality, leads to the desired estimate. 

To prove Item \ref{item:conts_K}, observe by completing the square of \eqref{eq:quadratic_loss} that 
\[
x_{G,g,K}= 
\argmin_{x\in K}\left\|G^{\frac{1}{2}}(x+G^{-1}g)\right\|^2_{\bbH}
=
\argmin_{x\in K}\left\|G^{\frac{1}{2}}(x+G^{-1}g)\right\|_{\bbH} 
= 
\argmin_{x\in K}\left\| x-(-G^{-1}g)\right\|_{G^{\frac{1}{2}}} \,,
\]
where
$G^{\frac{1}{2}}$ is the  positive square root of $G$,
and the norm 
$ \| \cdot  \|_{ G^{\frac{1}{2}}}$
is defined by 
$ \| z  \|_{ G^{\frac{1}{2}}}=\|G^{\frac{1}{2}} z\|_{\bbH}$
for all $z\in  \bbH $. 
As
$G\in \clL(\bbH)$
is self-adjoint and 
$\lambda_{\min}(G)>0$,
$(\bbH, \| \cdot  \|_{ G^{\frac{1}{2}}})$
is a Hilbert space with the inner product
$\langle   x,y\rangle_{G}
=\langle Gx,   y\rangle_{\bbH}$ for all $x,y\in \bbH$,
and   $\|\cdot\|_{G^{\frac{1}{2}}}$ and $\|\cdot\|_{\bbH}$
are equivalent norms on $\bbH$.
Then by Lemma \ref{lemma:project_continuity},
$\lim_{n\to\infty} \| G^{\frac{1}{2}} (x_{G,g ,K_n}-x_{G ,g ,K})\|_{\bbH} =0$,
provided that 
\[
\lim_{n\to\infty}\max\left\{\sup_{x\in K_n} 
\inf_{y\in K}\|G^{\frac{1}{2}} (x-y)\|_{\bbH},
\;
\sup_{y\in K}
\inf_{y\in K_n}\|G^{\frac{1}{2}} (x-y)\|_{\bbH}\right\}=0\,.
\]
This yields the desired convergence in Item \ref{item:conts_K},  due to the equivalence of  $\|\cdot\|_{G^{\frac{1}{2}}}$ and $\|\cdot\|_{\bbH}$.
\end{proof}

The following proposition proves the continuity of $\mathfrak{f}$ defined in \eqref{eq:constrained_loss}. 
\begin{proposition}
\label{prop:lipschitz_npg} 
For each  $ R,\lambda >0$ and  $\theta\in \bbH$,
let   $\mathfrak{f}(\theta,\lambda, R)$
be defined as in  \eqref{eq:constrained_loss}.
Then there exists 
$\mathfrak{m}\in C(\bbR_+;\bbR_+)$
such that for all $R, \lambda,\lambda'>0$ and  $\theta , \theta'\in\bbH$,
\[
\|\mathfrak{f}(\theta,\lambda, R)-\mathfrak{f}(\theta',\lambda', R)\|_{\bbH}
\le \frac{\max(R,1)}{\lambda}
\left(|\lambda-\lambda'| +
\mathfrak{m}( \|\theta\|_{\bbH}+ \|\theta'\|_{\bbH}) \|\theta-\theta'\|_{\bbH}\right)\,,
\]
and the map $(0,\infty)\ni R\mapsto \mathfrak{f}(\theta,\lambda, R)\in \bbH$ is continuous. 
\end{proposition}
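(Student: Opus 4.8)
The plan is to recognise $\mathfrak{f}(\theta,\lambda,R)$ as the minimiser of a strongly convex constrained quadratic programme and then invoke Lemma~\ref{lemma:stability_quadratic}. Expanding the square in \eqref{eq:constrained_loss} shows that, up to an additive $w$-independent constant and multiplication by the positive factor $\tfrac12$ (neither of which changes the minimiser), the objective equals $\tfrac12\langle G_\theta^\lambda w,w\rangle_\bbH-\langle\beta(\theta),w\rangle_\bbH$, where
\[
G_\theta^\lambda \coloneqq \mathscr F(\theta)+\lambda\operatorname{id},\qquad \beta(\theta)\coloneqq \int_S\int_A A^{\pi_\theta}_\tau(s,a)\,g_{\pi_\theta}(s,a)\,\pi_\theta(da|s)\,d^{\pi_\theta}_\rho(ds)\in\bbH .
\]
Since $\mathscr F(\theta)$ is positive semidefinite, $\lambda_{\min}(G_\theta^\lambda)\ge\lambda>0$, and as $K_R\coloneqq\{w\in\bbH:\|w\|_\bbH\le R\}$ is nonempty, closed and convex, the strongly convex quadratic has a unique minimiser; thus $\mathfrak f(\theta,\lambda,R)=x_{G_\theta^\lambda,-\beta(\theta),K_R}$ in the notation of \eqref{eq:quadratic_loss}. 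The continuity in $R$ is then immediate: for $R_n\to R$ one has $d_H(K_{R_n},K_R)=|R_n-R|\to0$, so Lemma~\ref{lemma:stability_quadratic} Item~\ref{item:conts_K} gives $\mathfrak f(\theta,\lambda,R_n)\to\mathfrak f(\theta,\lambda,R)$. It remains to prove the Lipschitz estimate, for which the substance lies in controlling $\theta\mapsto\mathscr F(\theta)$ and $\theta\mapsto\beta(\theta)$.

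First I would collect the auxiliary regularity estimates. By Proposition~\ref{prop:differentiability_Pi} and the mean value theorem, $\|\pi_\theta-\pi_{\theta'}\|_{b\clM(A|S)}\le 2\|g\|_{B_b(S\times A;\bbH)}\|\theta-\theta'\|_\bbH$, whence $\|g_{\pi_\theta}-g_{\pi_{\theta'}}\|_{B_b(S\times A;\bbH)}\le\|g\|_{B_b(S\times A;\bbH)}\|\pi_\theta-\pi_{\theta'}\|_{b\clM(A|S)}$ and $\|g_{\pi_\theta}\|_{B_b(S\times A;\bbH)}\le2\|g\|_{B_b(S\times A;\bbH)}$. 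For the occupancy measures, writing $d^{\pi}_\rho=(1-\gamma)\sum_{n\ge0}\gamma^n\,\rho P_\pi^n$ (the pushforward of $\rho$ through $P_\pi^n$) and telescoping $\rho P_\pi^n-\rho P_{\pi'}^n=\sum_{k=0}^{n-1}\rho P_\pi^k(P_\pi-P_{\pi'})P_{\pi'}^{n-1-k}$, together with $\|P_\pi-P_{\pi'}\|_{b\clM(S|S)}\le\|\pi-\pi'\|_{b\clM(A|S)}$ and $\|P_\pi^k\|_{b\clM(S|S)}=1$, yields the global bound $\|d^{\pi}_\rho-d^{\pi'}_\rho\|_{\clM(S)}\le\frac{\gamma}{1-\gamma}\|\pi-\pi'\|_{b\clM(A|S)}$ (using $\sum_n n\gamma^n=\gamma/(1-\gamma)^2$). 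Finally, Lemma~\ref{lemma:Q_local_lipschitz} and Proposition~\ref{prop:boundedness_Q}, applied with $f=\langle\theta,g(\cdot)\rangle_\bbH$ (for which $\|f\|_{B_b(S\times A)}\le\|g\|_{B_b(S\times A;\bbH)}\|\theta\|_\bbH$), show that $\theta\mapsto Q^{\pi_\theta}_\tau$, and hence $\theta\mapsto A^{\pi_\theta}_\tau=Q^{\pi_\theta}_\tau-\int_A Q^{\pi_\theta}_\tau\pi_\theta(da|\cdot)$, is locally Lipschitz in $B_b(S\times A)$ with Lipschitz constant an explicit continuous (indeed affine) function of $\|\theta\|_\bbH+\|\theta'\|_\bbH$, and that $\|A^{\pi_\theta}_\tau\|_{B_b(S\times A)}$ is bounded by a continuous function of $\|\theta\|_\bbH$.

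Next I would assemble these into Lipschitz bounds for $\mathscr F$ and $\beta$. Writing $\Lambda_\theta=d^{\pi_\theta}_\rho\otimes\pi_\theta\in\clP(S\times A)$ and splitting each difference into a varying-integrand, fixed-measure part and a fixed-integrand, varying-measure part, the uniform boundedness of $g_{\pi_\theta}$ and $A^{\pi_\theta}_\tau$ controls the integrands, while $\|\Lambda_\theta-\Lambda_{\theta'}\|_{\clM(S\times A)}\le\|\pi_\theta-\pi_{\theta'}\|_{b\clM(A|S)}+\|d^{\pi_\theta}_\rho-d^{\pi_{\theta'}}_\rho\|_{\clM(S)}$ controls the measures. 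Because $\mathscr F$ involves only the globally bounded and globally Lipschitz data $g_{\pi_\theta}$ and $\Lambda_\theta$, this gives a global bound $\|\mathscr F(\theta)-\mathscr F(\theta')\|_{\clL(\bbH)}\le C_{\mathscr F}\|\theta-\theta'\|_\bbH$ with $C_{\mathscr F}$ independent of $\theta$; for $\beta$, the $\theta$-dependent bound and modulus of $A^{\pi_\theta}_\tau$ give $\|\beta(\theta)-\beta(\theta')\|_\bbH\le C_\beta(\|\theta\|_\bbH+\|\theta'\|_\bbH)\|\theta-\theta'\|_\bbH$ with $C_\beta$ continuous and nondecreasing. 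Feeding $G=G_\theta^\lambda$, $G'=G_{\theta'}^{\lambda'}$, $g=-\beta(\theta)$, $g'=-\beta(\theta')$ into Lemma~\ref{lemma:stability_quadratic} Item~\ref{item:lipschit_G_g}, and using $\lambda_{\min}(G_\theta^\lambda)\ge\lambda$, $\|x_{G',g',K_R}\|_\bbH\le R$, and $\|G_\theta^\lambda-G_{\theta'}^{\lambda'}\|_{\clL(\bbH)}\le\|\mathscr F(\theta)-\mathscr F(\theta')\|_{\clL(\bbH)}+|\lambda-\lambda'|$, gives
\[
\|\mathfrak f(\theta,\lambda,R)-\mathfrak f(\theta',\lambda',R)\|_\bbH\le\frac1\lambda\Big(R|\lambda-\lambda'|+\big(RC_{\mathscr F}+C_\beta(\|\theta\|_\bbH+\|\theta'\|_\bbH)\big)\|\theta-\theta'\|_\bbH\Big).
\]
Bounding $R\le\max(R,1)$ in both occurrences and setting $\mathfrak m(r)\coloneqq C_{\mathscr F}+C_\beta(r)\in C(\bbR_+;\bbR_+)$ produces exactly the claimed inequality.

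The main obstacle is the middle step: the occupancy-measure Lipschitz estimate via the Neumann-series telescoping, and the careful bookkeeping needed to see that $\theta\mapsto(\mathscr F(\theta),\Lambda_\theta)$ is \emph{globally} Lipschitz while only $\beta$ carries the $\theta$-dependent modulus, so that the final constant factorises through $\max(R,1)/\lambda$ precisely as stated. Once this global/local split is in place, the remainder is a direct substitution into Lemma~\ref{lemma:stability_quadratic}.
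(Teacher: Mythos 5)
Your proposal is correct and follows essentially the same route as the paper: recast \eqref{eq:constrained_loss} as a constrained quadratic programme with operator $\mathscr F(\theta)+\lambda\,\mathrm{id}$ and linear term built from $A^{\pi_\theta}_\tau$, apply Lemma~\ref{lemma:stability_quadratic} with $\lambda_{\min}\ge\lambda$ and $\|x_{G',g',K_R}\|_{\bbH}\le R$, and reduce everything to Lipschitz bounds on $\pi_\theta$, $g_{\pi_\theta}$, $Q^{\pi_\theta}_\tau$, $A^{\pi_\theta}_\tau$ and $d^{\pi_\theta}_\rho$. The only divergence is that you prove the occupancy-measure bound $\|d^{\pi_\theta}_\rho-d^{\pi_{\theta'}}_\rho\|_{\clM(S)}\le\frac{\gamma}{1-\gamma}\|\pi_\theta-\pi_{\theta'}\|_{b\clM(A|S)}$ directly by telescoping the Neumann series, where the paper cites \cite[Lemma 5]{leahy2022convergence}; your version is self-contained but otherwise identical in substance.
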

\begin{proof}
For each $R>0$,
let $B_R=\{x\in \bbR^N\mid \|x\|_{\bbH}\le R\}$.
By the definition of $f$ in  \eqref{eq:constrained_loss}, 
\begin{align*}
\mathfrak{f}(\theta,\lambda, R)
&=
\argmin_{w\in B_R}
\left(\frac{1}{2}
\langle(G_\theta+\lambda I_{\bbH}) w,
w\rangle_{\bbH}- \langle h_\theta,  w\rangle_{\bbH}\right)\,,
\end{align*}
where  $I_{\bbH}\in \clL(\bbH) $ is the  identity operator, and 
$G_\theta\in 
\clL(\bbH)$ and $h_\theta\in \bbH$
are defined as follows:  
\[
G_\theta = 
\int_S\int_A 
\big(g_{\pi_{\theta}}(s,a)     
\otimes 
g_{\pi_{\theta}}(s,a)
\big)
\pi_{\theta}(da|s)d_{\rho}^{\pi_{\theta}}(ds),
\quad 
h_\theta =  \int_S\int_A g_{\pi_{\theta}}(s,a)  A^{\pi_{\theta}}_{\tau}(s,a)   \pi_{\theta}(da|s)d_{\rho}^{\pi_{\theta}}(ds)\,,
\]
with $ A^{\pi_{\theta}}_{\tau}$ defined in \eqref{eq:loss_Q}.
Then the continuity of $R\mapsto \mathfrak{f}(\theta,\lambda, R)$
follows directly from
Lemma \ref{lemma:stability_quadratic}
and   that 
$ d_H(B_{R'},B_R)\le |R-R'|$
for all $R,R'>0$.
Moreover, 
let  $\theta , \theta'\in \bbH$
and $\lambda,\lambda'>0$ be given. 
By the variational characterization of eigenvalues and symmetric positive semidefinites of $G_{\theta}$,  $\lambda_{\min}(G_\theta+\lambda I_{\bbH})
\ge 
\lambda$. Thus, since $\|\mathfrak{f}(\theta',\lambda',R)\|_{\bbH}\le R$,
by Lemma \ref{lemma:stability_quadratic},
\begin{align*}
\|\mathfrak{f}(\theta,\lambda,R)-\mathfrak{f}(\theta',\lambda',R)\|_{\bbH}
\le 
\frac{1}{\lambda } \left((\|G_\theta-G_{\theta'}\|_{\clL(\bbH) }+|\lambda-\lambda'|)R+\|h_\theta -h_{\theta'}\|_{\bbH}\right)\,.    
\end{align*}

It remains to  prove that 
there exists $\mathfrak{m}\in C(\bbR_+;\bbR_+)$, independent of $\lambda$ and $R$, such that 
for all   $\theta,  \theta'\in \bbH$,
$
\|h_\theta -h_{\theta'}\|_{\bbH} 
+\|G_\theta -G_{\theta'}\|_{\clL(\bbH) }
\le \mathfrak{m}( \|\theta\|_{\bbH}+ \|\theta'\|_{\bbH}) \|\theta-\theta'\|_{\bbH}$.
Observe that 
\begin{align*}
\begin{split}
\left\|h_{\theta} - h_{\theta'}\right\|_{\bbH}
&\le
\left\|\int_S\int_A g_{\pi_{\theta}}(s,a) A^{\pi_{\theta}}_{\tau}(s,a)\pi_{\theta}(da|s)(d^{\pi_{\theta}}_{\rho} - d^{\pi_{\theta'}}_{\rho})(ds)\right\|_{\bbH}
\\
& \quad + \left\|\int_S\int_A g_{\pi_{\theta}}(s,a) A^{\pi_{\theta}}_{\tau}(s,a)(\pi_{\theta} - \pi_{\theta'})(da|s) d^{\pi_{\theta'}}_{\rho}(ds)\right\|_{\bbH}
\\
&\quad  + \left\|\int_S\int_A g_{\pi_{\theta}}(s,a) \left(A^{\pi_{\theta}}_{\tau}(s,a) - A^{\pi_{\theta'}}_{\tau}(s,a)\right) \pi_{\theta'}(da|s) d^{\pi_{\theta'}}_{\rho}(ds)\right\|_{\bbH}
\\
&\quad  + \left\|\int_S\int_A (g_{\pi_{\theta}}(s,a) - g_{\pi_{\theta'}}(s,a) )  A^{\pi_{\theta'}}_{\tau}(s,a) \pi_{\theta'}(da|s) d^{\pi_{\theta'}}_{\rho}(ds)\right\|_{\bbH}
\\
&\le \|g_{\pi_{\theta}}\|_{B_b(S\times A;\bbH)}  \|A^{\pi_{\theta}}_{\tau}\|_{B_b(S\times A)} \left\|d^{\pi_{\theta}}_{\rho} - d^{\pi_{\theta'}}_{\rho}\right\|_{\clM(S)}
\\
&\quad  +  \|g_{\pi_{\theta}}\|_{B_b(S\times A;\bbH)}  \|A^{\pi_{\theta}}_{\tau}\|_{B_b(S\times A)} \left\|\pi_{\theta} - \pi_{\theta'}\right\|_{b\clM(A|S)}
\\
&\quad +  \|g_{\pi_{\theta}}\|_{B_b(S\times A;\bbH)} \left\|A^{\pi_{\theta}}_{\tau} - A^{\pi_{\theta'}}_{\tau}\right\|_{B_b(S\times A)}
+  \|g_{\pi_{\theta}} - g_{\pi_{\theta'}}\|_{B_b(S\times A;\bbH)} \left\| A^{\pi_{\theta'}}_{\tau}\right\|_{B_b(S\times A)}\,.
\end{split}
\end{align*}
By the definition of $g_{\pi_{\theta}}$ in \eqref{eq:linear_policy},  
$\|g_{\pi_{\theta}}\|_{B_b(S\times A; \bbH)} \le 2  \|g\|_{B_b(S\times A; \bbH)}$
and 
\[
\|g_{\pi_{\theta}} - g_{\pi_{\theta'}}\|_{B_b(S\times A; \bbH)} \le \|g\|_{B_b(S\times A; \bbH)} \left\|\pi_{\theta} - \pi_{\theta'}\right\|_{b\clM(A|S)}\,.
\]
By the definition of $A^{\pi_\theta}_\tau $ in  \eqref{eq:loss_Q}, 
$
\|A^{\pi_{\theta}}_\tau\|_{B_b(S\times A)} \le 2 \|Q^{\pi_{\theta}}_\tau\|_{B_b(S\times A)}$
and
\[
\|A^{\pi_{\theta}}_\tau - A^{\pi_{\theta'}}_\tau\|_{B_b(S\times A)} \le 2 \|Q^{\pi_{\theta}}_\tau - Q^{\pi_{\theta'}}_\tau\|_{B_b(S\times A)} + \|Q^{\pi_{\theta'}}_\tau\|_{B_b(S\times A)} \left\|\pi_{\theta} - \pi_{\theta'}\right\|_{b\clM(A|S)}\,.
\]
Propositions \ref{prop:boundedness_Q}
and \ref{lemma:Q_local_lipschitz}
and the Cauchy-Schwarz inequality implies that 
\begin{align*}
\|Q^{\pi_{\theta}}_\tau\|_{B_b(S\times A)} &\le \frac{1}{1-\gamma}\left(\|c\|_{B_b(S\times A)} + 2\tau \gamma\|g\|_{B_b(S\times A;\bbH)}\|\theta\|_{\bbH}\right)\,, \\
\left\|Q^{\pi_{\theta}}_\tau -Q^{\pi_{\theta'}}_\tau \right\|_{B_b(S\times A)}&\le 
\frac{2\gamma^2 }{(1-\gamma)^2}\big(\|c\|_{B_b(S\times A)}+2\tau \|g\|_{B_b(S\times A; \bbH)} (\|\theta\|_{\bbH}+\|\theta'\|_{\bbH}) \big)\\
&\qquad \qquad \qquad  \times \|g\|_{B_b(S\times A;\bbH)} \|\theta - \theta'\|_{\bbH}\,.
\end{align*}
By \cite[Lemma 5]{leahy2022convergence},  
\begin{align*}
\left\|d^{\pi_{\theta}}_{\rho} - d^{\pi_{\theta'}}_{\rho}\right\|_{\clM(S)} \le \|d^{\pi_{\theta}} - d^{\pi_{\theta'}}\|_{b\clM(S|S)} \le \frac{\gamma}{1-\gamma}\|\pi_{\theta} - \pi_{\theta'}\|_{b\clM(A|S)}
\,.
\end{align*}
Finally, by Proposition \ref{prop:differentiability_Pi} and the mean value theorem,
\begin{equation*}
\begin{split}
&\left\|\pi_{\theta} - \pi_{\theta'}\right\|_{b\clM(A|S)} 
\\
&\quad \le \sup_{\zeta \in [0,1]}  \left\| \mathfrak{d} \boldsymbol{\pi}(
\langle \theta'+\zeta(\theta-\theta'), g\rangle_{\bbH}) 
\right\|_{\clL(B_b(S\times A), b\clM(A|S))} \|g\|_{B_b(S\times A; \bbH)} \|\theta-\theta'\|_{\bbH}\\
&\quad \le 2 \|g\|_{B_b(S\times A;\bbH)} \|\theta-\theta'\|\,.
\end{split}
\end{equation*}
Combining the above estimates 
yields the desired locally Lipschitz continuity of $h_\theta$. 
The  estimate of  $
\|G_\theta -G_{\theta'}\|_{\clL(\bbH) }$
follows from similar arguments,
and hence is omitted. This finishes the proof. 
\end{proof}

We are now ready to prove Theorem \ref{thm:NPG_stability}. 
\begin{proof}[Proof of Theorem~\ref{thm:NPG_stability}]
Let  
$R,\lambda\in C(\mathbb{R}_+; (0,\infty))$, $\bar{\theta}\in \bbH$ and $\rho \in \clP(S)$ be fixed.  
We first prove 
\eqref{eq:npg_clipping_equivalent}
(or equivalently \eqref{eq:npg_clipping}) 
has a unique solution on $[0,T]$
for any $T>0$.
Indeed, 
for any given 
$T>0$, consider the following dynamics 
\begin{equation}
\label{eq:npg_T}
\partial_t \theta_t = \mathcal{F}(t, \theta_t),\quad t\in (0,T];
\quad \theta_0=\bar{\theta}\,,
\end{equation}
where for all $t\in [0,T]$ and $\theta\in \bbH$,
\[
\mathcal{F}(t, \theta)
\coloneqq -\mathfrak{f}(\theta, \lambda_t,R_t)- \tau \theta\,,
\]
with $\mathfrak{f}$
defined by  \eqref{eq:constrained_loss}.
As $R,\lambda\in C([0,T]; (0,\infty))$, 
there exists $r_T>0$ and $M_T\ge 1 $  such that for all $t\in [0,T]$,
$\lambda_t \ge r_T>0$ and $0<R_t\le M_T$.
Then by Proposition \ref{prop:lipschitz_npg},
for all $\theta\in \bbH$, 
$\mathcal{F}(t, \theta)
\in C([0,T]; \bbH)$, 
and there exists $\mathfrak{m}\in C(\bbR_+;\bbR_+)$ such that for all $\theta,\theta' \in \bbH$, 
\begin{align}
\label{eq:npg_locally_lipschitz_T}
\|\mathcal{F}(t, \theta)
-\mathcal{F}(t, \theta')
\|_{\bbH}\le \frac{CM_T}{r_T}\mathfrak{m}( \|\theta\|_{\bbH}+ \|\theta'\|_{\bbH}) \|\theta-\theta'\|_{\bbH}\,.
\end{align}
Observe that   the definition 
\eqref{eq:constrained_loss} 
of  $\mathfrak{f}$ implies that 
if $\theta \in C^1([0,T];\bbH)$,
then for all $t\in [0,T]$,
\begin{align}
\label{eq:bound_theta_T}
\|\theta_t\|_{\bbH}=
\left\|e^{-\tau t} \bar{\theta}-
e^{-\tau t} \int_0^t e^{\tau r} 
\mathfrak{f}(\theta_r, \lambda_r,R_r)dr 
\right\|_{\bbH}
\le \|\bar{\theta}\|_{\bbH}
+e^{-\tau t}\int_0^t e^{\tau r} 
R_r dr
\le 
\|\bar{\theta}\|_{\bbH}
+\frac{M_T}{\tau }\,.
\end{align}
Combining the locally Lipschitz estimate \eqref{eq:npg_locally_lipschitz_T}, 
the a-priori bound  \eqref{eq:bound_theta_T}
and a truncation argument 
as in the proof of   Theorem \ref{thm:wp_MD}
yields that 
\eqref{eq:npg_T}
admits a unique solution in 
$C^1([0,T];\bbH)$.
As $T>0$ is arbitrarily given, 
\eqref{eq:npg_clipping} admits     a unique solution in 
$C^1(\bbR_+;\bbH)$. 

By Lemma  \ref{lemma:chain_rule}
and Proposition \ref{prop:differentiability_Pi},
$(\pi_{\theta_t})_{t\ge 0}$
is in $C^1(\mathbb{R}_+; \Pi_\mu)$
and 
satisfies 
\begin{align}
\label{eq:npg_Fisher_Rao}
\begin{split}
&  \partial_t \pi_{\theta_t} (da|s)
=
(\mathfrak{d}\boldsymbol{\pi}(\langle \theta_t, g\rangle_{\bbH})(
\langle\partial_t \theta _t,  g\rangle_{\bbH})(da|s)
\\
&\quad =
\left(
\langle \partial_t \theta _t, g(s,a)
\rangle_{\bbH}
-
\int_A 
\langle
\partial_t \theta_t,    g(s,a') 
\rangle_{\bbH}
\pi_{\theta_t}  (da'|s)
\right)
\pi_{\theta_t} (da|s)
\\
&\quad  =
-     \left(
\langle w_t+\tau   \theta _t, g(s,a)
\rangle_{\bbH}
-
\int_A 
\langle
w_t+\tau   \theta _t,    g(s,a') 
\rangle_{\bbH}
\pi_{\theta_t}  (da'|s)
\right)
\pi_{\theta_t} (da|s)
\\
&\quad  =   - \left(
Q_t(s,a)  +\tau   \ln\frac{\mathrm{d}  \pi_{\theta_t}}{\mathrm{d}  \mu}(a|s)
-
\int_A 
\left(Q_t(s,a')  +\tau   \ln\frac{\mathrm{d}  \pi_{\theta_t}}{\mathrm{d}  \mu}(a'|s)
\right)
\pi_{\theta_t}  (da'|s)
\right)
\pi_{\theta_t} (da|s)   
\end{split}
\end{align}
with $Q_t (\cdot) \coloneqq \langle w_t,  g(\cdot)\rangle_{\bbH}$,
where the last line 
used  
\[
\ln\frac{\mathrm{d}  \pi_{\theta_t}}{\mathrm{d}  \mu}(a|s)= \langle \theta_t,  g(s,a) \rangle_{\bbH}   -
\ln
\left( {\int_A e^{
\langle \theta_t,  g(s,a') \rangle_{\bbH} 
}\mu(d a')} \right).
\]
The estimate \eqref{eq:stability_estimate_NPG}
then follows from 
\eqref{eq:stability_estimate_statement}
with 
$Q^{\pi_{\theta_r}}_\tau   -Q_r$
replaced by 
$
A^{\pi_{\theta_r}}_{\tau}   -
\langle w_r,  g_{\pi_{\theta_r} }\rangle_{\bbH}
=
Q^{\pi_{\theta_r}}_\tau   -Q_r
+F_r$,
where 
\[
F_r(s) = -\int_{A} (Q^{\pi_{\theta_r}}_{\tau}(s,a')
-
\langle
w_r,  g(s,a')\rangle_{\bbH})
\pi_{\theta_r}(da'|s),
\quad \forall s\in S\,.
\]
This finishes the proof.
\end{proof}

\subsection{Proof of  Theorem \ref{thm:convergence_mirror_steps}}
\label{sec:proof_mirror_descent}
To ease the notation we will write $\operatorname{KL}(\pi|\pi')(s) := \operatorname{KL}(\pi(\cdot|s)|\pi'(\cdot|s))$ whenever $\pi,\pi' \in \mathcal P(A|S)$ and $s\in S$.
Let us recall that due to~\eqref{eq:delta_V_delta_pi-intro-state-dep-pairing} we have $\frac{\delta V^{\pi}_{\tau}}{\delta \pi} = Q^{\pi}_\tau  + \tau \ln\frac{\mathrm{d}  \pi}{\mathrm{d}  \mu}  -V^{\pi}_\tau\,.
$ 
To prove Theorem \ref{thm:convergence_mirror_steps},
we introduce the set  $M_\mu = \{\mathbf m(f) | f \in B_b(A) \}\subset \mathcal P(A)$, 
where $\boldsymbol{m} :B_b(A) \to \clP_{\mu}(A)$ is defined by 
\begin{equation}
\label{eq:M_f_mu}
\boldsymbol{m}(f)(da)= \frac{e^{f(a)}}{\int_A e^{f(a')}\mu(d a')}\mu(d a),
\quad \forall f\in B_b(A) \,.    
\end{equation}
Notice that $M_\mu$ is a convex subspace of $\mathcal P(A)$. 
The operator $\boldsymbol{m}$ is   analogous to the mirror map $\boldsymbol{\pi}:B_b(S\times A)\to \mathcal P(A|S)$.
We will need the three point lemma, which is proved e.g., in~\cite{aubin2022mirror} noting that the flat derivative of KL is well defined on $M_\mu$, see e.g.~\cite[Lemma 3.8]{kerimkulov2024mirror}.
\begin{lemma}[Three point lemma\,/\,Bregman proximal inequality] \label{lem three point}
Let $G:M_\mu \rightarrow \mathbb R$ be convex.
For all $m' \in M_\mu$ let 
\begin{equation}
m^\ast = \argmin_{m \in M_\mu}\left\{ G(m) + \operatorname{KL}(m|m') \right\}\,.
\end{equation}
Then, for all $m\in M_\mu$, we have 
\begin{equation}
G(m) + \operatorname{KL}(m|m') \geq G(m^\ast) + \operatorname{KL}(m|m^\ast) + \operatorname{KL}(m^\ast|m')\,.
\end{equation}
\end{lemma}

Next, we observe that the performance difference lemma shows policy improvement.

\begin{lemma}[Policy improvement]
\label{lemma:improve}
Let $V^n_\tau := V^{\pi^n}_\tau$ for $n\in \mathbb N$ and $\pi^n \in \Pi_\mu$ given by~\eqref{eq:mirror_des_direct_appendix}.
If $\tau \leq \lambda$ then for any $\rho \in \mathcal P(S)$ we have $V^{n+1}_\tau(\rho) \leq  V^n_\tau(\rho)$. 
\end{lemma}
\begin{proof}
From the performance difference lemma, see Lemma~\eqref{lem:performance_diff}, we see that 
\begin{equation}
\label{eq:imp_proof_1}
\begin{split}
(V^{n+1}_\tau - V^n_\tau)(\rho) & = \frac{1}{1-\gamma} \int_S \bigg(\int_A \frac{\delta V^n_\tau}{\delta \pi}(s,a)(\pi^{n+1}-\pi^n)(da|s) + \tau \operatorname{KL}(\pi^{n+1}|\pi^n)(s) \bigg)\,d^{\pi^{n+1}}_\rho (ds)\\
& \leq \frac{1}{1-\gamma} \int_S \bigg(\int_A \frac{\delta V^n_\tau}{\delta \pi}(s,a)(\pi^{n+1}-\pi^n)(da|s) + \lambda \operatorname{KL}(\pi^{n+1}|\pi^n)(s) \bigg)\,d^{\pi^{n+1}}_\rho (ds)\,.
\end{split}
\end{equation}	
From the mirror descent update~\eqref{eq:pointwise_min} we have, for all $\pi \in \Pi_\mu$ and $s\in S$ that
\begin{equation*}
\begin{split}
& \int_A \frac{\delta V^n_\tau}{\delta \pi}(s,a)(\pi-\pi^n)(da|s) + \lambda \operatorname{KL}(\pi|\pi^n)(s)\\
& \geq \int_A \frac{\delta V^n_\tau}{\delta \pi}(s,a)(\pi^{n+1}-\pi^n)(da|s) +  \lambda \operatorname{KL}(\pi^{n+1}|\pi^n)(s)\,.	
\end{split}
\end{equation*} 
This with $\pi = \pi^n$ allows us to conclude that for all $s\in S$ we have 
\begin{equation}
\label{eq:imp_proof_2}
\int_A \frac{\delta V^n_\tau}{\delta \pi}(s,a)(\pi^{n+1}-\pi^n)(da|s) +  \lambda \operatorname{KL}(\pi^{n+1}|\pi^n)(s) \leq 0\,.
\end{equation}
This with~\eqref{eq:imp_proof_1} concludes the proof. 
\end{proof}

The following L-smoothness is analogous to one established by~\cite{lan2022policy,lan2023policy}.
\begin{lemma}[L-smoothness]
\label{lemma:pointwise_estimate}
Let $\pi,\pi' \in \Pi_\mu$
satisfy  
$\int_A \frac{\delta V^{\pi'}_\tau}{\delta \pi}(s,a)(\pi-\pi')(da|s) + \tau \operatorname{KL}(\pi|\pi')(s) \leq 0$ for all 
$s\in S$.
Then for all $s\in S$,
\begin{equation*}
(V^{\pi}_\tau - V^{\pi'}_\tau)(s) \leq \int_A \frac{\delta V^{\pi'}_\tau}{\delta \pi}(s,a)(\pi-\pi')(da|s) + \tau \operatorname{KL}(\pi|\pi')(s) \,.
\end{equation*}	
\end{lemma}
\begin{proof}
Using Lemma~\ref{lem:performance_diff} (performance difference) we get for all $s\in S$ that 
\begin{equation}
\label{eq:sharper_proof_1}
(V^{\pi}_\tau - V^{\pi'}_\tau)(s) = \frac{1}{1-\gamma} \int_S F(s') \, d^{\pi}_s (ds')\,,
\end{equation}
where $F(s) := \int_A \frac{\delta V^{\pi'}_\tau}{\delta \pi}(s,a)(\pi-\pi')(da|s) + \tau \operatorname{KL}(\pi|\pi')(s)$.
From~\eqref{eq:occupancy_s} and~\eqref{eq:sharper_proof_1} we have  for all $s\in S$ that
\begin{equation*}
\!\!(V^{\pi}_\tau - V^{\pi'}_\tau)(s)\leq \int_S F(s') P^0_{\pi}(ds'|s) + \sum_{k=1}^\infty \int_S \gamma^k F(s') P^k_{\pi}(ds'|s)	 \leq \int_S F(s') \delta_s(ds') = F(s)\,.
\end{equation*} 
This concludes the proof.
\end{proof}

\begin{proof}[Proof of Theorem \ref{thm:convergence_mirror_steps}]

Let   $V^n_\tau := V^{\pi^n}_\tau$ for $n\in \mathbb N$.
We begin with an application of Bregman proximal inequality, see Lemma~\ref{lem three point}.
Fix $s\in S$ and $\pi^n \in \Pi_\mu$ and define $G:M_\mu\to \mathbb R$ by 
\begin{equation*}
G(m) = \frac1\lambda \int_A \frac{\delta V^n_\tau}{\delta \pi}(s,a)(m(da)-\pi^n(da|s))\,.
\end{equation*}
It is linear and thus clearly convex and hence due to the mirror descent update~\eqref{eq:pointwise_min} we have, for all $\pi \in \Pi_\mu$, $s\in S$ and $n\in \mathbb N$ that 
\begin{equation*}
\begin{split}
& \frac1\lambda\int_A \frac{\delta V^n_\tau}{\delta \pi}(s,a)(\pi-\pi^n)(da|s) +  \operatorname{KL}(\pi|\pi^n)(s)\\
& \geq \frac1\lambda\int_A \frac{\delta V^n_\tau}{\delta \pi}(s,a)(\pi^{n+1}-\pi^n)(da|s) + \operatorname{KL}(\pi|\pi^{n+1})(s) + \operatorname{KL}(\pi^{n+1}|\pi^n)(s)\,.	
\end{split}
\end{equation*} 
Re-arranging this leads to 
\begin{equation}
\label{eq:proof_of_convergence_after_3_point}
\begin{split}
& \operatorname{KL}(\pi|\pi^{n+1})(s) - \operatorname{KL}(\pi|\pi^n)(s)\\
& \leq \frac1\lambda \int_A \frac{\delta V^n_\tau}{\delta \pi}(s,a)(\pi-\pi^n)(da|s) - \frac1\lambda \int_A \frac{\delta V^n_\tau}{\delta \pi}(s,a)(\pi^{n+1}-\pi^n)(da|s)  - \operatorname{KL}(\pi^{n+1}|\pi^n)(s)\,.\\
\end{split}
\end{equation} 
From this with $\pi = \pi^\ast_\tau$ and from Lemma~\ref{lemma:pointwise_estimate} {%\color{cyan} 
applied to $\pi^{n+1}$ and $\pi^n$ which satisfy~\eqref{eq:imp_proof_2}} we thus have, for all $s\in S$, that 
\begin{equation}
\label{eq:proof_of_convergence_after_3_mdp_step}
\begin{split}
\operatorname{KL}(\pi^\ast_\tau|\pi^{n+1})(s) - \operatorname{KL}(\pi^\ast_\tau|\pi^n)(s)
& \leq \frac1\lambda \int_A \frac{\delta V^n_\tau}{\delta \pi}(s,a)(\pi^\ast_\tau-\pi^n)(da|s) - \frac1\lambda (V^{n+1}_\tau - V^n_\tau)(s) \\
& \qquad  - \frac1\lambda(\lambda - \tau)\operatorname{KL}(\pi^{n+1}|\pi^n)(s)\,.\\
\end{split}
\end{equation} 
As $\tau \leq \lambda$ the last KL term above is negative and we can drop it.
Summing up over $n=0,1,\ldots, N-1$ we see (spotting the telescoping sums) that for all $s\in S$,
\begin{equation}
\label{eq:convergence_proof_1}
\begin{split}
& \operatorname{KL}(\pi^\ast_\tau|\pi^{N})(s) - \operatorname{KL}(\pi^\ast_\tau|\pi^0)(s)
\leq \sum_{n=0}^{N-1}\frac1\lambda \int_A \frac{\delta V^n_\tau}{\delta \pi}(s,a)(\pi^\ast_\tau-\pi^n)(da|s) - \frac1\lambda (V^N_\tau - V^0_\tau)(s)\,.\\
\end{split}
\end{equation} 
Notice that $V^N_\tau(s) \geq V^\ast_\tau(s)$ and so $(V^N_\tau-V^0_\tau)(s) \geq (V^\ast_\tau-V^0_\tau)(s)$ for all $N\in \mathbb N$.
Let 
\begin{equation*}
y^n := \int_S \operatorname{KL}(\pi^\ast_\tau|\pi^n)(s) d^{\pi^\ast_\tau}_\rho(ds)\,\,\,\text{and}\,\,\, \alpha := - \int_S (V^\ast_\tau-V^0)(s)	d^{\pi^\ast_\tau}_\rho(ds)
\end{equation*}
so that, after integrating~\eqref{eq:convergence_proof_1} over $d^{\pi^\ast_\tau}_\rho$ we have
\begin{equation*}
\label{eq:convergence_proof_2}
y^N - y^0
\leq \sum_{n=0}^{N-1}\frac1\lambda \int_S \int_A \frac{\delta V^n_\tau}{\delta \pi}(s,a)(\pi^\ast_\tau-\pi^n)(da|s)d^{\pi^\ast_\tau}_\rho(ds) + \frac\alpha\lambda\,.
\end{equation*} 
Using the performance difference lemma, see Lemma~\ref{lem:performance_diff}, we get
\begin{equation*}
\label{eq:convergence_proof_3}
y^N - y^0
\leq \sum_{n=0}^{N-1}\bigg[\frac{1-\gamma}{\lambda}(V^{\pi^\ast_\tau} - V^{\pi^n})(\rho) - \frac\tau\lambda \int_S \operatorname{KL}(\pi^\ast_\tau|\pi^n)(s) d^{\pi^\ast_\tau}_\rho(ds) \bigg]   + \frac\alpha\lambda\,.
\end{equation*} 
Since $V^{\pi^N}_\tau(\rho) \leq V^{\pi^n}_\tau(\rho)$ for all $n = 0,1,\ldots,N$ and since $\operatorname{KL}(\cdot | \cdot) \geq 0$ we get that  
\begin{equation*}
y^N - y^0
\leq N\frac{1-\gamma}{\lambda}(V^{\pi^\ast_\tau}_\tau - V^{\pi^{N}}_\tau)(\rho) + \frac\alpha\lambda\,.	
\end{equation*}
Hence
\begin{equation*}
N\frac{1-\gamma}{\lambda}(V^{\pi^{N}}_\tau-V^{\pi^\ast_\tau}_\tau)(\rho) 
\leq  \frac\alpha\lambda + y^0	
\end{equation*}
and so
\begin{equation*}
0 \leq (V^{\pi^{N}}_\tau - V^{\pi^\ast_\tau}_\tau)(\rho) \leq (1-\gamma)^{-1}(\alpha + \lambda y^0)N^{-1}\,.
\end{equation*}
This completes the proof of~\eqref{eq:linear_convergence}. 

We will now prove that~\eqref{eq:mirror_exp_conv_with_dist_mismatch_coeff} holds. 
Define  $\xi := (1-\gamma)^{-1}\big\|\frac{\mathrm d d^{\pi^\ast_\tau}_\rho}{\mathrm d \rho}\big\|_{B_b(S)}$.
Note that $\frac{\mathrm d d^{\pi}_\rho}{\mathrm d \rho}(s)\geq 1-\gamma$ for any $\pi \in \mathcal P(A|S)$ and $s\in S$. 
Hence 
$
\frac{\mathrm d d^{\pi^\ast_\tau}_\rho}{\mathrm d \rho}(s) = \frac{\mathrm d d^{\pi^\ast_\tau}_\rho}{\mathrm d d^\pi_\rho}(s) \frac{\mathrm d d^{\pi}_\rho}{\mathrm d \rho}(s) \geq \frac{\mathrm d d^{\pi^\ast_\tau}_\rho}{\mathrm d d^\pi_\rho}(s)(1-\gamma)
$ for any $\pi \in \mathcal P(A|S)$ and $s\in S$.
We thus have $\vartheta_n := \big\|\frac{\mathrm d d^{\pi^\ast_\tau}_\rho}{\mathrm d d^{\pi^n}_\rho}\big\|_{B_b(S)} \le  \xi$ for all $n\in \mathbb N$.
Observe that
\[
\begin{split}
& \int_S \bigg(\int_A \frac{\delta V^n_\tau}{\delta \pi}(s,a)(\pi^{n+1}-\pi^n)(da|s) +\lambda  \operatorname{KL}(\pi^{n+1}|\pi^n)(s)\bigg)\,d^{\pi^\ast_\tau}_\rho(ds)\\
& = \int_S \bigg(\int_A \frac{\delta V^n_\tau}{\delta \pi}(s,a)(\pi^{n+1}-\pi^n)(da|s) +\lambda  \operatorname{KL}(\pi^{n+1}|\pi^n)(s)\bigg)\frac{\mathrm d d^{\pi^\ast_\tau}_\rho}{\mathrm d d^{\pi^{n+1}}_\rho}(s)\,d^{\pi^{n+1}}_\rho(ds)\\
& \geq \bigg\|\frac{\mathrm d d^{\pi^\ast_\tau}_\rho}{\mathrm d d^{\pi^{n+1}}_\rho}\bigg\|_{B_b(S)}\int_S \bigg(\int_A \frac{\delta V^n_\tau}{\delta \pi}(s,a)(\pi^{n+1}-\pi^n)(da|s) +\lambda  \operatorname{KL}(\pi^{n+1}|\pi^n)(s)\bigg)\,d^{\pi^{n+1}}_\rho(ds)	\\
& \geq \vartheta^{n+1}\int_S \bigg(\int_A \frac{\delta V^n_\tau}{\delta \pi}(s,a)(\pi^{n+1}-\pi^n)(da|s) +\tau  \operatorname{KL}(\pi^{n+1}|\pi^n)(s)\bigg)\,d^{\pi^{n+1}}_\rho(ds)\\
& = \vartheta^{n+1}(1-\gamma)\Big(V^{n+1}-V^n\Big)(\rho)\,,
\end{split}
\]
where the first inequality is due the integrand in the $s\in S$ variable being non-positive, see~\eqref{eq:imp_proof_2}, the second inequality is just that $\lambda \geq \tau$ and $\operatorname{KL}(\cdot|\cdot)\geq 0$ and the final equality is performance difference, i.e., Lemma~\ref{lem:performance_diff}. 
Due to this and~\eqref{eq:proof_of_convergence_after_3_point} we have  
\begin{equation*}
\begin{split}
& \lambda\int_S\operatorname{KL}(\pi^\ast|\pi^{n+1})(s)d^{\pi^\ast_\tau}_\rho(ds) - \lambda\int_S\operatorname{KL}(\pi^\ast|\pi^n)(s)d^{\pi^\ast_\tau}_\rho(ds)\\
& \leq \int_S \int_A \frac{\delta V^n_\tau}{\delta \pi}(s,a)(\pi^\ast-\pi^n)(da|s)\,d^{\pi^\ast_\tau}_\rho(ds) - (1-\gamma)\vartheta_{n+1}\Big(V^{n+1}_\tau-V^n_\tau\Big)(\rho)\,.\\
\end{split}
\end{equation*}
One more application of Lemma~\ref{lem:performance_diff} leads to
\begin{equation*}
\begin{split}
& \lambda\int_S\operatorname{KL}(\pi^\ast_\tau|\pi^{n+1})(s)d^{\pi^\ast_\tau}_\rho(ds) - \lambda\int_S\operatorname{KL}(\pi^\ast_\tau,\pi^n)(s)d^{\pi^\ast_\tau}_\rho(ds)\\
& \leq (1-\gamma)(V^\ast_\tau - V^n_\tau)(\rho) - \tau \int_S \operatorname{KL}(\pi^\ast_\tau|\pi^n)\,d^{\pi^\ast_\tau}_\rho(ds) - (1-\gamma)\vartheta_{n+1}(V^{n+1}_\tau-V^n_\tau)(\rho)\,.\\
\end{split}
\end{equation*}
Writing $y^n := \int_S \operatorname{KL}(\pi^\ast_\tau|\pi^n)(s) d^{\pi^\ast_\tau}_\rho(ds)$ and $\delta^n := (V^n_\tau - V^\ast_\tau)(\rho)$ we thus have 
\[
\lambda y^{n+1} - \lambda y^n  \leq -(1-\gamma)\delta^n - \tau y^n - (1-\gamma)\vartheta^{n+1}\delta^{n+1} + (1-\gamma)\vartheta^{n+1}\delta^n\,.
\]
Hence
\[
\delta^n + \vartheta^{n+1}(\delta^{n+1} - \delta^n) \leq  \frac{\lambda-\tau}{1-\gamma} y^n - \frac{\lambda}{1-\gamma} y^{n+1} \,.
\]
Further note  that $\delta^{n+1} - \delta^n = (V^{n+1}_\tau - V^n_\tau)(\rho)\leq 0$ due to the policy improvement property of the scheme, Lemma~\ref{lemma:improve}, and hence 
\[
\delta^n + \xi(\delta^{n+1} - \delta^n) \leq  \frac{\lambda-\tau}{1-\gamma} y^n - \frac{\lambda}{1-\gamma} y^{n+1} \,.
\]
We can divide be $\xi>0$, re-arrange and get that
\[
\delta^{n+1} + \frac{\lambda}{(1-\gamma)\xi}y^{n+1} \leq \frac{\xi-1}{\xi}\bigg(\delta^n +\frac{\lambda-\tau}{(1-\gamma)(\xi-1)}y^n\bigg)\,. 
\]
Recall our assumption that $\lambda \leq \xi \tau$ which implies that $\lambda-\tau \leq \lambda \frac{\xi-1}{\xi}$.
Then 
\[
\delta^{n+1} + \frac{\lambda}{(1-\gamma)\xi}y^{n+1} \leq \frac{\xi-1}{\xi}\bigg(\delta^n +\frac{\lambda}{(1-\gamma)\xi}y^n\bigg)\,. 
\]
We can iterate this recursive inequality and get~\eqref{eq:mirror_exp_conv_with_dist_mismatch_coeff}.
\end{proof}

\appendix

\section{Policy gradient formula for general state and action spaces}
\label{sec:policy_gradient}

This section proves the policy gradient formula \eqref{eq:delta_V_delta_pi-intro} for Polish state and action spaces and compares it with existing policy gradient formulas for tabular MDPs.

\begin{proposition}
\label{prop:pg}
Let  $\tau\ge 0$ and  $\rho\in \clP(S)$. 
For all
$\pi,\pi'\in \Pi_\mu \subset \clP(A|S)$ (cf.~Definition \ref{def:pi_mu}),
\begin{align}
\label{eq:pg_normalized}
\begin{split}
& \lim_{\varepsilon\searrow 0}\frac{V^{(1-\varepsilon)\pi+\varepsilon\pi'}_\tau(\rho) - V^\pi_\tau(\rho)}{\varepsilon}
\\
&\quad = \frac{1}{1-\gamma}\int_S \int_A\left(Q^{\pi}_{\tau}(s,a)+\tau \ln \frac{\mathrm d \pi}{\mathrm d\mu}(a|s)-V^\pi_\tau(s)\right)(\pi'-\pi)(da|s)d^{\pi}_{\rho}(ds)\,.    
\end{split}
\end{align}
Consequently, given  $\nu\in \clP(S)$
satisfying $d^{\pi}_{\rho}\ll \nu$,
$
\lim_{\varepsilon\searrow 0}\frac{V^{(1-\varepsilon)\pi+\varepsilon\pi'}_\tau(\rho) - V^\pi_\tau(\rho)}{\varepsilon}
= \left\langle \frac{\delta V^{\pi}_{\tau}(\rho)}{\delta \pi}\Big|_{\nu}, \pi' - \pi  \right\rangle_\nu
$,
with  
the dual pair 
$\langle \cdot, \cdot \rangle_\nu$
defined in \eqref{eq:dual_pair}, 
and  
$\frac{\delta V^{\pi}_{\tau}(\rho)}{\delta \pi}\big|_{\nu}\in B_b(S\times A)$
defined in \eqref{eq:delta_V_delta_pi-intro}.
\end{proposition}
\begin{proof}
Fix $\pi,\pi'\in \Pi_\mu \subset P(A|S)$. For each  $\varepsilon>0$ let $\pi^\varepsilon = \pi + \varepsilon(\pi'-\pi) \in \Pi_\mu$.
By  the performance difference lemma (Lemma \ref{lem:performance_diff}),  
\[
\begin{split}
\frac1\varepsilon (V^{\pi}_\tau(\rho) - V^{\pi^\varepsilon}_\tau(\rho)) 
& = \frac{1}{1-\gamma}\int_S \int_A\left(Q^{\pi^\varepsilon}_{\tau}(s,a)\right)(\pi-\pi')(da|s)d^{\pi}_{\rho}(ds)\\	
&\quad +  \frac{\tau}{1-\gamma}\int_S \frac1\varepsilon\Big(\operatorname{KL}(\pi(\cdot|s)|\mu(\cdot|s)) - \operatorname{KL}(\pi^\varepsilon(\cdot|s)|\mu(\cdot|s))\Big)d^{\pi}_{\rho}(ds)\,,
\end{split}
\]
which implies that 
\[
\begin{split}
\lim_{\varepsilon\to 0} \frac1\varepsilon (V^{\pi^\varepsilon}_\tau - V^{\pi}_\tau)(\rho) 
& = \lim_{\varepsilon\to 0}  \bigg[ \frac{1}{1-\gamma}\int_S \int_A\left(Q^{\pi^\varepsilon}_{\tau}(s,a)\right)(\pi'-\pi)(da|s)d^{\pi}_{\rho}(ds)\\	
&\quad \qquad +  \frac{\tau}{1-\gamma}\int_S \frac1\varepsilon\Big(\operatorname{KL}(\pi^\varepsilon(\cdot|s)|\mu(\cdot|s)) - \operatorname{KL}(\pi(\cdot|s)|\mu(\cdot|s))\Big)d^{\pi}_{\rho}(ds)\bigg]\,.
\end{split}
\]
By Proposition \ref{prop:Q_continuity},
$\lim_{\varepsilon\to 0}
\|
Q^{\pi^\varepsilon}_{\tau}
-Q^{\pi}_{\tau}\|_{B_b(S\times A)}=0$. 
Moreover, 
as $\pi,\pi'\in \Pi_\mu$,
for all $s\in S $, 
by \cite[Lemma 3.8]{kerimkulov2024mirror},
\[
\lim_{\varepsilon\searrow 0}\frac1\varepsilon\Big(\operatorname{KL}(\pi^\varepsilon(\cdot|s)|\mu(\cdot|s)) - \operatorname{KL}(\pi(\cdot|s)|\mu(\cdot|s))\Big)
=\int_A 
\ln \frac{\mathrm d \pi}{\mathrm d\mu}(a|s)  (\pi'-\pi)(d a|s)\,,
\]
which along with Proposition \ref{prop:boundedness_Q} and the  dominated   convergence theorem shows 
\begin{align*}
\label{eq:pg_unnormalized}
& \lim_{\varepsilon\searrow 0}\frac{V^{(1-\varepsilon)\pi+\varepsilon\pi'}_\tau(\rho) - V^\pi_\tau(\rho)}{\varepsilon}
=\frac{1}{1-\gamma}\int_S \int_A\left(Q^{\pi}_{\tau}(s,a)+\tau \ln \frac{\mathrm d \pi}{\mathrm d\mu}(a|s) \right)(\pi'-\pi)(da|s)d^{\pi}_{\rho}(ds).
\end{align*}
This along with 
the facts that for all $s\in S$, $V^\pi_\tau(s)$ is independently of $a$, and $\pi'(\cdot|s), \pi(\cdot|s)\in \clP(A)$
yields 
the identity 
\eqref{eq:pg_normalized}. 
\end{proof}

Proposition~\ref{prop:pg} 
holds for MDPs with arbitrary Polish state and action spaces.
To see its connection with  existing policy gradient formulas for discrete state and action spaces, consider a tabular MDP
whose state space $S$ has cardinality $|S|\in \bbN$,
and action space $A$ has cardinality $|A|\in \bbN$. 
Take the reference measure $\nu$
to be the uniform distribution on $A$, i.e., $\mu(a) = 1/|A|$ for all $a\in A$.
As $A$ is a finite set, 
any $\mu\in \clP(A)$ is absolutely continuous with respect to $\mu$, and hence $\Pi_\mu= \clP(A|S)$,
which  can be further  identified as  $\Delta (A)^{|S|}\subset \bbR^{|S|\times |A|}$, 
with $\Delta(A)$
being the probability simplex over $A$,
through  the direct parameterization $ \pi_\theta(a|s)\coloneqq \theta_{s,a} $, 
for all $s\in S$ and $a\in A$.
In this case, 
\eqref{eq:pg_normalized} implies  for all $\theta,\theta'\in \Delta(A)^{|S|}$,
\begin{equation}
\label{eq:pg_tabular}
\begin{split}
& \lim_{\varepsilon\searrow 0}\frac{V^{\pi_{\theta+ \varepsilon (\theta'-\theta)}}  - V^{\pi_{\theta}}_\tau(\rho)}{\varepsilon}
= \lim_{\varepsilon\searrow 0}\frac{V^{(1-\varepsilon) 
\pi_{\theta}+\varepsilon \pi_{\theta'}}  - V^{\pi_{\theta}}_\tau(\rho)}{\varepsilon}
\\
&\quad = \frac{1}{1-\gamma}\sum_{s\in S} \sum_{a\in A}\left(Q^{\pi_\theta}_{\tau}(s,a)+\tau \ln \theta_{s,a}-V^{\pi_\theta}_\tau(s)\right)(\theta'_{s,a}-\theta_{s,a})d^{\pi_\theta}_{\rho,s}\,,
\end{split}
\end{equation}
with $d^{\pi_\theta}_{\rho,s} \coloneqq d^{\pi_\theta}_{\rho}(\{s\}) $
for all $s\in S$, and  the flat derivative 
$\frac{\delta V^{\pi_\theta}_{\tau}(\rho)}{\delta \pi}\big|_{d^{\pi_\theta}_{\rho}}$
(taking $\nu =d^{\pi_\theta}_{\rho}$  in \eqref{eq:delta_V_delta_pi-intro}) 
is given by
\begin{align}
\label{eq:pg_tabular_flat}
\begin{split}
& \frac{\delta V^{\pi_\theta}_{\tau}(\rho) }{\delta \pi} (s,a)
\coloneqq \frac{\delta V^{\pi_\theta}_{\tau}(\rho)}{\delta \pi}\bigg|_{d^{\pi_\theta}_{\rho}}(s,a)
=  Q^{\pi_\theta}_{\tau}(s,a)+\tau \ln \theta_{s,a}-V^{\pi_\theta}_\tau(s)\,,
\end{split}
\end{align}
which is the gradient direction
\eqref{eq:delta_V_delta_pi-intro-state-dep-pairing} used in the Fisher--Rao gradient flow \eqref{eq:gradient_flow_introduction}.  
The right-hand side of 
\eqref{eq:pg_tabular_flat}
is often referred to as the regularized   advantage function
(see e.g., \cite{neu2017unified,mei2020global,lan2022policy}).

Note that  
the gradient expression
\eqref{eq:pg_tabular_flat}
is derived by constraining the perturbed directions
in \eqref{eq:pg_tabular}
to the form 
$\theta' - \theta$, with $\theta, \theta' \in \Delta(A)^{|S|}$.
This stands in contrast to the Euclidean gradient used in classical policy gradient formulas (see e.g., \cite{sutton1999policy}),
which is required to satisfy 
\eqref{eq:pg_tabular} for arbitrary directions
$\theta' - \theta \in \mathbb{R}^{|S| \times |A|}$. 
Relaxing the gradient requirement by only allowing perturbations from the probability simplex allows the policy gradient for the direct parameterization to be expressed in terms of advantage functions as in \eqref{eq:pg_tabular_flat}.

Indeed, 
standard policy gradient formulas typically require the policy parameterization   $\pi_\theta$ 
to  take  values within the simplex,
i.e., 
$\pi_\theta\in \Delta(A)^{|S|}$ for all $\theta\in \bbR^d$,
in order to express the policy gradient in terms of the advantage function; 
see 
\cite[Equation (6)]{agarwal2020optimality}
for unregularized MDPs
and 
\cite[Lemma 10]{mei2020global}
for regularized MDPs. 
Consequently,
those gradient formulas do not apply to  the direct parameterization $\pi_{\theta}=\theta$.

Here, by restricting the perturbed directions, 
the flat derivative \eqref{eq:pg_tabular_flat} represents a gradient constrained within the policy space $\Delta(A)^{|S|}$, rather than the full Euclidean gradient. Such a constrained gradient enables the design of a gradient flow that remains within the manifold $\clP(A|S)$ and optimizes the value function over it.

\section{Bellman equations for entropy-regularised MDPs}
\label{sec:Bellman}

This section establishes a dynamical programming principle for the regularised MDP (see Theorem \ref{thm:DPP}) and proves that for each $\pi\in \Pi_\mu$, the value function $V^{\pi}_\tau$ satisfies a Bellman equation (see Lemma \ref{lem:on_policy}).

\begin{theorem}
[Dynamic programming principle]
\label{thm:DPP}
Let $\tau>0$.
The optimal value function $V^*_{\tau}$ 
is the unique bounded solution of the following Bellman equation:
\[
V^*_{\tau}(s)=\inf_{m \in \clP(A)}\int_{A}\left(c(s,a)+\tau \ln \frac{\mathrm{d} m}{\mathrm{d} \mu}(a)+\gamma \int_{S}V^*_{\tau}(s')P(ds'|s,a)\right)m(da),\quad \forall s\in S,.
\]
Consequently, 
for all $s\in S$, 
\[
V^{\ast}_{\tau}(s)=-\tau\ln\int_{A}\exp\left(-
\frac{1}{\tau}Q^{\ast}_{\tau}(s,a)\right)\mu(da),
\]
where
$Q^*\in B_b(S\times A)$ is defined by  
\[
Q^{*}_{\tau}(s,a)=c(s,a)+\gamma\int_S V_{\tau}^{*}(s')P(ds'|s,a)\,,
\quad \forall (s,a)\in S\times A.
\]
Moreover, 
there is an optimal policy $\pi^*_{\tau} \in \clP_{\mu}(A|S)$  given by
\[
\pi^*_{\tau}(da|s) = \exp\left(-(Q^{\ast}_{\tau}(s,a)-V^{\ast}_{\tau}(s))/\tau\right)\mu(da)\,,
\quad \forall s\in S.
\]
\end{theorem}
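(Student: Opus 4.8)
The plan is to realise $V^*_\tau$ as the unique fixed point of a contractive Bellman operator and then verify optimality of the associated Gibbs policy. First I would define $\clT:B_b(S)\to B_b(S)$ by
\[
(\clT V)(s)=\inf_{m\in \clP(A)}\int_A\left(c(s,a)+\tau\ln\tfrac{\mathrm d m}{\mathrm d\mu}(a)+\gamma\int_S V(s')P(ds'|s,a)\right)m(da)\,.
\]
The inner problem is a Gibbs variational problem in $m$ with linear coefficient $h_V(s,a):=c(s,a)+\gamma\int_S V(s')P(ds'|s,a)\in B_b(S\times A)$, so by the Donsker--Varadhan / Gibbs variational principle (\cite[Lemma 1.4.3]{dupuis1997weak}) the infimum is attained at $m^*(da)\propto e^{-h_V(s,a)/\tau}\mu(da)$ with value $-\tau\ln\int_A e^{-h_V(s,a)/\tau}\mu(da)$. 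This yields the closed form $(\clT V)(s)=-\tau\ln\int_A e^{-h_V(s,a)/\tau}\mu(da)$, which is bounded and measurable in $s$.

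Next I would show $\clT$ is a $\gamma$-contraction on $(B_b(S),\|\cdot\|_{B_b(S)})$. Since $|h_{V_1}-h_{V_2}|\le\gamma\|V_1-V_2\|_{B_b(S)}$ and the soft-min map $u\mapsto-\tau\ln\int_A e^{-u/\tau}\mu(da)$ is monotone and translation-equivariant, one obtains $\|\clT V_1-\clT V_2\|_{B_b(S)}\le\gamma\|V_1-V_2\|_{B_b(S)}$. Banach's fixed point theorem then provides a unique $\tilde V\in B_b(S)$ with $\tilde V=\clT\tilde V$, and I set $Q^*_\tau(s,a)=c(s,a)+\gamma\int_S\tilde V(s')P(ds'|s,a)$ and $\pi^*_\tau(da|s)=e^{-(Q^*_\tau(s,a)-\tilde V(s))/\tau}\mu(da)$; note $\ln\tfrac{\mathrm d\pi^*_\tau}{\mathrm d\mu}=-(Q^*_\tau-\tilde V)/\tau\in B_b(S\times A)$, so $\pi^*_\tau\in\clP_\mu(A|S)$.

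It remains to identify $\tilde V$ with the value function $V^*_\tau$ of \eqref{eq:optimal_value}. For the lower bound, the fixed-point identity gives, for every $m\in\clP(A)$, the pointwise inequality $\tilde V(s)\le\int_A\big(c+\tau\ln\tfrac{\mathrm dm}{\mathrm d\mu}+\gamma\int_S\tilde V\,P\big)\,m(da)$; applying this with $m=\pi_n(\cdot|h_n)$ along a trajectory under an arbitrary $\pi\in\Pi$ and telescoping gives
\[
\tilde V(s)\le\bbE^\pi_s\!\left[\sum_{n=0}^{N-1}\gamma^n\big(c(s_n,a_n)+\tau\operatorname{KL}(\pi_n(\cdot|h_n)|\mu)\big)\right]+\gamma^N\|\tilde V\|_{B_b(S)}\,.
\]
Letting $N\to\infty$ (the tail vanishes since $\gamma<1$ and $\tilde V$ is bounded; the running sum converges by monotone convergence with a nonnegative KL integrand, and the inequality holds trivially when $V^\pi_\tau=\infty$) gives $\tilde V\le V^\pi_\tau$, hence $\tilde V\le V^*_\tau$. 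For the matching bound, the same telescoping with $m=\pi^*_\tau(\cdot|s)$ is an equality because $\pi^*_\tau$ attains the infimum defining $\clT$ pointwise; this identifies $V^{\pi^*_\tau}_\tau=\tilde V$, so $V^*_\tau\le V^{\pi^*_\tau}_\tau=\tilde V$. Combining the two bounds yields $V^*_\tau=\tilde V$, the uniqueness of the bounded Bellman solution, optimality of $\pi^*_\tau$, and the stated explicit formulae.

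The main obstacle I anticipate is not the contraction but the verification step on a general Polish action space: one must ensure joint measurability of the Gibbs optimiser and of $\clT V$ so that $s\mapsto\pi^*_\tau(\cdot|s)$ is a genuine element of $\clP_\mu(A|S)$, and rigorously control the $N\to\infty$ limit for non-Markovian policies whose per-step KL terms may be infinite. The variational identity itself holds on any measurable space, so the delicate points are the measurable selection and the telescoping limit rather than the variational computation.
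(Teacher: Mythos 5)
Your proposal is correct and follows essentially the same route as the paper's proof: the paper likewise defines the soft Bellman operator, evaluates the inner infimum via \cite[Proposition 1.4.2]{dupuis1997weak} to obtain the log-integral-exponential form and the Gibbs minimiser, proves the $\gamma$-contraction on $B_b(S)$, and identifies the fixed point with $V^*_\tau$ by the same two telescoping arguments (equality along $\pi^*_\tau$, inequality for arbitrary $\pi\in\Pi$ via the tower property). The measurability and $N\to\infty$ issues you flag are handled in the paper exactly as you anticipate, by Fubini for measurability of $T_\tau u$ and by dominated convergence for the tail term.
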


\begin{lemma}\label{lem:on_policy} 
Let $\tau>0$ and $\pi \in \Pi_{\mu}.$ The value function $V^{\pi}_{\tau}$ 
is the unique bounded solution of the following Bellman equation:
\[
V^{\pi}_{\tau}(s)=\int_{A}\left(c(s,a)+\tau \ln \frac{\mathrm{d} \pi}{\mathrm{d} \mu}(a|s)+\gamma \int_{S}V^{\pi}_{\tau}(s')P(ds'|s,a)\right)\pi(da|s),\quad \forall s\in S\,.
\]
\end{lemma}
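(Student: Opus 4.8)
The plan is to verify that $V^{\pi}_{\tau}$ is a fixed point of the associated Bellman operator and then to obtain uniqueness from a contraction estimate. Throughout, I would write $r^{\pi}(s,a)\coloneqq c(s,a)+\tau\ln\frac{\mathrm d\pi}{\mathrm d\mu}(a|s)$ for the stage cost of the stationary policy (identifying $\pi=\{\pi_n\}_{n}$ with its common value $\pi\in\clP_\mu(A|S)$ as in the definition of $\Pi_\mu$). Boundedness is already available: since $\pi\in\Pi_\mu$, Proposition~\ref{prop:boundedness_Q} gives $\ln\frac{\mathrm d\pi}{\mathrm d\mu}\in B_b(S\times A)$ and $V^{\pi}_{\tau}\in B_b(S)$, so $r^{\pi}\in B_b(S\times A)$ and every series below converges absolutely.

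First I would establish the fixed-point property. Because $\pi$ is a stationary Markov policy, the canonical process $\{s_n\}$ is a time-homogeneous Markov chain with kernel $P_\pi$, as recorded after \eqref{eq:markov_like}. Splitting off the $n=0$ term in the series defining $V^{\pi}_{\tau}$ gives
\[
V^{\pi}_{\tau}(s)=\int_A r^{\pi}(s,a)\pi(da|s)+\gamma\,\bbE_s^{\pi}\!\left[\sum_{n=0}^\infty \gamma^n r^{\pi}(s_{n+1},a_{n+1})\right].
\]
The main step is to identify the tail expectation with $\bbE_s^{\pi}[V^{\pi}_{\tau}(s_1)]$. Conditioning on the first transition $(s_0,a_0,s_1)$ and exploiting the product structure of $\bbP^{\pi}_s$ from \cite[Proposition 7.28]{bertsekas2004stochastic} together with the stationarity of $\pi$, the shifted sequence $(s_1,a_1,s_2,\ldots)$ has, conditionally on $s_1=s'$, the same law as $(s_0,a_0,s_1,\ldots)$ under $\bbP^{\pi}_{s'}$; hence the tower property yields $\bbE_s^{\pi}[\sum_{n\ge0}\gamma^n r^{\pi}(s_{n+1},a_{n+1})]=\int_A\int_S V^{\pi}_{\tau}(s')P(ds'|s,a)\pi(da|s)$. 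Substituting and recombining the two integrals against $\pi(da|s)$ produces exactly the asserted Bellman equation.

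For uniqueness, I would show the Bellman operator $\clT^{\pi}:B_b(S)\to B_b(S)$, $(\clT^{\pi}V)(s)=\int_A\big(r^{\pi}(s,a)+\gamma\int_S V(s')P(ds'|s,a)\big)\pi(da|s)$, is a $\gamma$-contraction. Indeed, for $V,W\in B_b(S)$ one has $(\clT^{\pi}V-\clT^{\pi}W)(s)=\gamma\int_S (V-W)(s')P_\pi(ds'|s)$, so $\|\clT^{\pi}V-\clT^{\pi}W\|_{B_b(S)}\le\gamma\|V-W\|_{B_b(S)}$ because $P_\pi(\cdot|s)\in\clP(S)$. Since $\gamma\in[0,1)$, the Banach fixed point theorem gives at most one bounded fixed point, which the first part identifies as $V^{\pi}_{\tau}$. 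Equivalently, one may subtract two bounded solutions and apply Lemma~\ref{lem:inverse_1-gammaP} with $g=0$ to conclude they coincide.

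The main obstacle is the rigorous justification of the shift identity in the second paragraph, namely making precise that under a stationary Markov policy the post-$s_1$ trajectory restarts as an independent copy driven from $s_1$. This is where the product-measure construction of $\bbP^{\pi}_\rho$ and the time-homogeneity of $\pi$ must be invoked carefully; once this Markov/shift property is in hand, the remaining manipulations are routine applications of Fubini's theorem, legitimate by the absolute convergence guaranteed by the boundedness of $r^{\pi}$.
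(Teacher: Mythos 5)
Your proposal is correct, but it identifies the fixed point with $V^{\pi}_{\tau}$ in the opposite direction from the paper, and the two directions carry different technical burdens. The paper first runs the Banach fixed point argument (with the same $\gamma$-contraction estimate you give) to obtain a unique bounded solution $V$ of the Bellman equation, and then shows $V=V^{\pi}_{\tau}$ by \emph{unrolling} the fixed-point identity $N$ times, which yields $V(s)=\bbE^{\pi}_{s}\sum_{n=0}^{N-1}\gamma^{n}\big(c(s_n,a_n)+\tau\ln\frac{\mathrm{d}\pi}{\mathrm{d}\mu}(a_n|s_n)\big)+\gamma^{N}\int_A P^{(N)}V(s,a)\pi(da|s)$, and letting $N\to\infty$ via boundedness of $V$ and dominated convergence. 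This only ever invokes the one-step conditional laws \eqref{eq:markov_like} finitely many times. Your route instead starts from the infinite series defining $V^{\pi}_{\tau}$, peels off the $n=0$ term, and identifies the tail with $\bbE^{\pi}_{s}[V^{\pi}_{\tau}(s_1)]$; as you correctly flag, this requires the shift/Markov identity for the infinite-horizon tail under the product measure $\bbP^{\pi}_{s}$, which is true for stationary Markov policies but is precisely the step the paper's finite-truncation argument avoids having to formalise. Both arguments are sound and both rest on the same contraction estimate for uniqueness (your alternative via Lemma~\ref{lem:inverse_1-gammaP} with $g=0$ also works); the paper's ordering simply trades your shift lemma for a routine limit in $N$.
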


The proofs of Theorem \ref{thm:DPP}
and Lemma \ref{lem:on_policy} are given below. 

\begin{proof}[Proof of Theorem \ref{thm:DPP}]
This proof can mostly be seen as a special case of the proof of the DPP for generic Borel state and action spaces (e.g., \cite[Theorem   4.2.3]{hernandez2012discrete}) once one enriches the action space to $\clP(A)$ and understands the entropy/KL as an additional cost. Here, we present a self-contained proof for the reader's convenience.

Let $\tau>0$ be fixed. For each $u\in B_b(S)$ and each $s\in S$, define
\begin{align*}
T_{\tau} u(s) &= \inf_{m \in \clP(A)}\int_{A}\left[c(s,a) + \tau \ln \frac{\mathrm{d} m}{\mathrm{d} \mu}(a) + \gamma \int_{S} u(s')P(ds'|s,a)\right]m(da)\\
&=\tau \inf_{m \in \clP(A)}\left[\tau^{-1}\int_{A}Q_u(s,a) m(da)  + \operatorname{KL}(m|\mu)\right]\,,
\end{align*}
where
$
Q_u(s,a):=c(s,a) + \gamma \int_{S} u(s') P(ds'|s,a)\,.
$
Since 
$
\|Q_u\|_{B_b(S\times A)}\le \|c\|_{B_b(S\times A)} + \gamma \|u\|_{B_b(S)}\,,
$
by \cite[Proposition  1.4.2]{dupuis1997weak}, for each $s\in s$, we have
\[
T_\tau u(s) = - \tau \ln \int_A \exp\left(\tau^{-1}Q_u(s,a)\right)\,\mu(da)\,,
\]
where the infimum is uniquely attained at $\pi_{u}\in \clP_{\mu}(A|S)$ given by
\[
\pi_u(da|s)= \frac{\exp\left(-\tau^{-1} Q_u(s,a)\right)}{\int_{A}\exp\left(-\tau^{-1}Q_u(s,a) \right)\mu(da)} \mu(da).
\]
It is clear that $T_\tau u:S\to \mathbb{R}$ is measurable by Fubini's theorem. Moreover, 
since the natural logarithm is increasing, for all $s\in S$,  we have
\[
|T_\tau u(s)| 
\le \tau \left\vert \ln \int_A \exp\left(\tau^{-1}\|Q_u\|_{B_b(S\times A)}\right)\,\mu(da)\right\vert \le  \|c\|_{B_b(S\times A)} + \gamma \|u\|_{B_b(S)}\,.
\]
Thus, the Bellman operator $T_{\tau}:B_b(S)\rightarrow B_b(S)$ is well defined.

We will now show that $T_{\tau}$ is a contraction on the Banach space $B_b(S)$, following the proof in \cite{haarnoja2017reinforcement}. Let $u,v\in B_b(S)$ be fixed. Note that for all $(s,a)\in  S\times A$, we have
\[
Q_v(s,a) - Q_u(s,a) 
= \gamma \int_S \left(v(s') - u(s')\right) P(ds'|s,a) \leq \gamma \|u-v\|_{B_b(S)}\,.
\]
Using that the natural logarithm is increasing, for all $s\in S$, we get
\[
\begin{split}
-T_\tau u(s) 
&= \tau \ln \int_A \exp (\tau^{-1}Q_v(s,a) - \tau^{-1}Q_u(s,a) - \tau^{-1}Q_v(s,a)) \mu(da)\\    
& \leq \tau \ln \left(\exp \left(\frac\gamma\tau \|u-v\|_{B_b(S)}\right)\int_A \exp \left( -\tau^{-1} Q_v(s,a)\right) \mu(da)\right)\\
&=  \gamma \|u-v\|_{B_b(S)} - T_\tau v(s)\,,
\end{split}
\]
and hence  $T_\tau v(s) - T_\tau u(s) \leq  \gamma \|u-v\|_{B_b(S)}\,.$
Swapping the roles of $u$ and $v$ in the above, we find 
$T_\tau u - T_\tau v \leq  \gamma \|u-v\|_{B_b(S)}$, and thus 
\[
\|T_\tau u - T_\tau v\|_{B_b(S)} \leq \gamma \|u-v\|_{B_b(S)} \,. 
\]
Since $\gamma\in [0,1)$, $T_\tau:B_b(S) \to B_b(S)$ is a contraction, and there is a unique fixed point $\bar{V}\in B_b(S)$ such that
$T_\tau \bar{V}=  \bar{V}$. In particular,
for all $s\in S$,
\begin{align}
\bar{V}(s) &=  \inf_{m \in \clP(A)}\int_{A}\left[c(s,a) + \tau \ln \frac{\mathrm{d} m}{\mathrm{d} \mu}(a) + \gamma \int_{S} \bar{V}(s') P(ds'|s,a)\right]m(da)\label{eq:inf_Bellman}\\
&=\int_{A}\left[c(s,a) + \tau \ln \frac{\mathrm{d} \bar{\pi}}{\mathrm{d} \mu}(a|s) + \gamma \int_{S} \bar{V}(s') P(ds'|s,a)\right]\bar{\pi}(da|s),\label{eq:op_Bellman}
\end{align}
where the the unique infimum is attained at $\bar{\pi}\in \clP_{\mu}(A|S)$ given by 
\[
\bar{\pi}(da|s)= \frac{\exp\left(- \tau^{-1} Q_{\bar{V}}(s,a)\right)}{\int_{A}\exp\left(-\tau^{-1} Q_{\bar{V}}(s,a) \right)\mu(da)} \mu(da)\,.
\]
Thus, we have  proved that $\bar{V}$ is the unique bounded solution of the Bellman equation \eqref{eq:op_Bellman}.

It remains to  show  that $\bar{V}(s)=V^*_\tau(s)$ for all $s\in S$. We will first show $\bar{V}(s)\ge V^*_{\tau}(s)$ for all $s\in S$. Iterating \eqref{eq:op_Bellman} and using  \eqref{eq:markov_like}, we get that for all $N\in \bbN$,
\[
\bar{V}(s)  = \bbE^{\bar{\pi}}_{s}\sum_{n=0}^{N-1}\gamma^n\left(c(s_n, a_n) + \tau \ln \frac{\mathrm{d} \bar{\pi} }{\mathrm{d} \mu}(a_n|s_n)\right) + 
\gamma^{N}
\int_{A}P^{(N)}\bar{V}(s,a) \bar{\pi}(da|s),
\]
where $P^{(N)}\in \clL(B_b(S),B_b(S\times A))
$ is the operator induced by the $N$-step transition kernel.
Since $P^{(N)}$  has operator norm less than one, we have 
$
\int_{A}P^{(N)}\bar{V}(s,a) \bar{\pi}(da|s)  \le  \|\bar{V}\|_{B_b(S)}\,,
$
and hence by Lebesgue's dominated convergence theorem,
for all $s\in S$,
\[
\bar{V}(s)  = \bbE^{\bar{\pi}}_{s}\sum_{n=0}^{\infty}\gamma^n\left(c(s_n,a_n) + \tau \ln \frac{\mathrm{d} \bar{\pi} }{\mathrm{d} \mu}(a_n|s_n)\right)  \ge V^*_{\tau}(s).
\]
We will now show that $\bar{V}(s) \le V_{\tau}^{\pi}(s)$ for all $\pi\in \Pi$ and $s\in S$, which then implies that $\bar{V}(s) \le V^*(s)$ for all $s\in S$. 
Let $\pi=\{\pi_n\}_{n\in \mathbb{N}_0}\in \Pi$, so that for each $n\in \bbN_0$,   $\pi_n \in \clP(A|H_n)$. Let $s\in S$ denote an arbitrary fixed initial state. 
Without loss of generality, we assume  
$\pi_n\in \clP_{\mu}(A|H_n)$ for all $n\in \bbN_0$, since otherwise 
$V^{\pi}(s)=\infty$. For each $n\in \bbN$, applying \eqref{eq:markov_like} and adding and subtracting, we find 
\begin{equation*}
\begin{split}
& \gamma^{n+1}\bbE^{\pi}_{s}\left[ \bar{V}(s_{n+1})| h_n, a_n\right]=\gamma^{n+1}\int_{S}\bar{V}(s')P(ds'|s_n,a_n)\\
& \quad = \gamma^{n}\left[c(s_n,a_n) + \tau \ln \frac{\mathrm{d} {\pi} }{\mathrm{d} \mu}(a_n|h_n) + \gamma \int_{S} \bar{V}(s')P(ds'|s_n,a_n)\right]
-\gamma^{n} \left(c(s_n,a_n) + \tau \ln \frac{\mathrm{d} {\pi} }{\mathrm{d} \mu}(a_n|h_n) \right)\,.
\end{split}
\end{equation*}
By the tower property of conditional expectations,
\begin{equation*}
\begin{split}
& \gamma^{n+1}\bbE^{\pi}_{s}\left[ \bar{V}(s_{n+1})| h_n\right]
=\gamma^{n+1}\bbE^{\pi}_{s}\left[\bbE^{\pi}_{s}\left[ \bar{V}(s_{n+1})| h_n, a_n\right]|h_n\right]\\
&\quad
=\gamma^{n}\bbE^{\pi}_{s}\bigg[
c(s_n,a_n) + \tau \ln \frac{\mathrm{d} {\pi} }{\mathrm{d} \mu}(a_n|h_n) + \gamma \int_{S} \bar{V}(s')P(ds'|s_n,a_n)\bigg|h_n\bigg]
\\
&\qquad -\gamma^{n}  \bbE^{\pi}_{s}\left[
c(s_n,a_n) + \tau \ln \frac{\mathrm{d} {\pi} }{\mathrm{d} \mu}(a_n|h_n) \bigg|h_n\right]
\\
&\quad
= \gamma^{n}
\bbE^{\pi}_{s}\bigg[ 
\int_A
\left(c(s_n,a) + \tau \ln \frac{\mathrm{d} {\pi} }{\mathrm{d} \mu}(a|h_n) + \gamma \int_{S} \bar{V}(s')P(ds'|s_n,a)\right)\pi_n(da|h_n)
\bigg|h_n\bigg]
\\
&\qquad 
-  \gamma^{n}  \bbE^{\pi}_{s}\left[  c(s_n,a_n) + \tau \ln \frac{\mathrm{d} {\pi} }{\mathrm{d} \mu}(a_n|h_n)  
\bigg|h_n\right]\,,
\end{split}
\end{equation*}
where we have used \eqref{eq:markov_like} in the last identity.  

Applying \eqref{eq:inf_Bellman} (with 
$m= \pi_n(da|h_n)$),
\begin{align*}
\int_A
\left(c(s_n,a) + \tau \ln \frac{\mathrm{d} {\pi} }{\mathrm{d} \mu}(a|h_n) + \gamma \int_{S} \bar{V}(s')P(ds'|s_n,a)\right)\pi_n(da|h_n)
\ge \bar{V}(s_n),
\end{align*}
and hence 
\begin{equation*}
\begin{split}
& \gamma^{n+1}\bbE^{\pi}_{s}\left[ \bar{V}(s_{n+1})| h_n\right]
\ge
\gamma^{n}
\bbE^{\pi}_{s}[\bar{V}(s_n)|h_n]
-  \gamma^{n}  \bbE^{\pi}_{s}\left[  c(s_n,a_n) + \tau \ln \frac{\mathrm{d} {\pi} }{\mathrm{d} \mu}(a_n|h_n) 
\bigg|h_n\right]\,.
\end{split}
\end{equation*}
Rearranging the inequality, applying the expectation operator $\bbE^{\pi}$, and using a telescoping sum argument, we get
\[
\bbE^{\pi}_s\left[\sum_{n=0}^{N-1}\gamma^n\left(c(s_n,a_n) + \tau  \ln \frac{\mathrm{d} \pi }{\mathrm{d} \mu}(a_n|s_n)\right)\right]\ge \bar{V}(s) - \gamma^N\bbE^{\pi}_{s}\left[\bar{V}(s_{N})\right] \,.
\]
Letting $N\rightarrow \infty$ and using that $\bar{V}\in B_b(S)$, we find $V^{\pi}(s)\ge  \bar{V}(s)$ for all $s\in S$, which gives $\bar{V}(s)\le V^*_{\tau}(s)$ for all $s\in S$, and finally $\bar{V}\equiv V^*_{\tau}$. This completes the proof.
\end{proof}

\begin{proof} [Proof of Lemma \ref{lem:on_policy}]
For  each $u\in B_b(S)$, $\pi\in \Pi_{\mu}$, and $s\in S$, define
\[
L_{\tau}^{\pi}u(s)=\int_{A}\left(c(s,a) + \tau \ln \frac{\mathrm{d}\pi}{\mathrm{d}\mu}(a|s) + \gamma \int_{S} u(s')P(ds'|s,a)\right)\pi(da|s)\,,
\]
which is well-defined as 
$\pi\in \Pi_\mu$
and
$
\left\|\int_{S} u(s')P(ds'|s,\cdot)\right\|_{B_b(A)}\le \|u\|_{B_b(S)}$.
Recalling that $\pi=\boldsymbol{\pi}(f)$ for some $f\in B_b(S\times A)$, and thus  by Proposition \ref{prop:boundedness_Q},  
\[
\left\|c + \tau \ln \frac{\mathrm{d}\pi}{\mathrm{d}\mu}\right\|_{B_b(S\times A)} \le \|c\|_{B_b(S\times A)} + \tau \left\|\ln \frac{\mathrm{d}\pi}{\mathrm{d}\mu}\right\|_{B_b(S\times A)} \le \|c\|_{B_b(S\times A)} + 2\tau \|f\|_{B_b(S\times A)}\,.
\]
Thus for all $u\in B_b(S)$, $L_{\tau}u\in B_b(S)$ and 
\[
\|L_{\tau}^{\pi}u\|_{B_b(S)} \le \|c\|_{B_b(S\times A)} + 2 \tau \|f\|_{B_b(S\times A)} + \gamma \|u\|_{B_b(S)}\,.
\]
Moreover, for all $u,v\in B_b(S)$, we have 
\[
\left\|L_{\tau}^{\pi} u - L_{\tau}^{\pi}v\right\|_{B_b(S)}=  \gamma \left\|\int_{A}\int_{S}(u(s')-v(s'))P(ds'|\cdot,a) \pi(da|\cdot)\right\|_{B_b(s)} \le \gamma \|u-v\|_{B_b(S)}\,.
\]
Since $\gamma\in [0,1)$, the map $L_\tau: B_b(S) \to B_b(S)$ is a contraction, and thus there is a unique  $V\in B_b(S)$ such that for all $s\in S$,
\begin{equation}\label{eq:lin_bellman_pf}
V(s)=\int_{A}\left(c(s,a) + \tau \ln \frac{\mathrm{d}\pi}{\mathrm{d}\mu}(a|s) + \gamma \int_{S} V(s')P(ds'|s,a)\right)\pi(da|s)\,.
\end{equation}
To verify $V=V^{\pi}_{\tau}$, 
iterating \eqref{eq:lin_bellman_pf} and using  \eqref{eq:markov_like}, we get that for all $N\in \bbN$,
\[
V(s)  = \bbE^{\pi}_{s}\sum_{n=0}^{N-1}\gamma^n\left(c(s_n, a_n) + \tau \ln \frac{\mathrm{d}\pi}{\mathrm{d}\mu}(a_n|s_n)\right)+ \gamma^{N}\int_{A}P^{(N)}V(s,a)\pi(da|s),
\]
where $P^{(N)}\in \clL(B_b(S),B_b(S\times A))
$ is the operator induced by the $N$-step transition kernel.
Since $P^{(N)}$  has an operator norm less than one, we have 
$
\int_{A}P^{(N)}V(s,a) {\pi}(da|s)  \le  \|V\|_{B_b(S)}\,,
$
and hence by Lebesgue's dominated convergence theorem,
for all $s\in S$,
\[
V(s)  = \bbE^{\pi}_{s}\sum_{n=0}^{\infty}\gamma^n\left(c(s_n,a_n) + \tau \ln \frac{\mathrm{d}\pi}{\mathrm{d}\mu}(a_n|s_n)\right)=V^{\pi}_{\tau}(s),
\]
where the last identity used the definition of $V^{\pi}_\tau$ 
in \eqref{eq:V_pi_tau}.
This proves the desired identity. 
\end{proof}

\section{Natural gradient flows}
\label{sec:natural_gradient_flows}

In this appendix, we provide more details on how to interpret \eqref{eq:gradient_flow_introduction} and \eqref{eq:mirror_descent_intro} as natural gradient flows with respect to suitable Riemannian metrics.
These arguments utilize the directional derivatives established in Section \ref{sec:derivatives}.

\subsection{Convex conjugate of negative entropy and the Fisher--Rao metric}

Let $\nu\in \clP(S)$ denote an arbitrary measure and define the state-integrated negative entropy $h_{\nu}: \clP(A|S)\rightarrow \overline{\bbR}$ by
\begin{equation}\label{def:state_integrated_entropy}
h_{\nu}(\pi)=\frac{1}{1-\gamma}\int_{S}\operatorname{KL}(\pi(\cdot|s)|\mu)\nu(ds)\,.
\end{equation}
An extension of Lemma 1.4.3.\ in \cite{dupuis1997weak} shows that the convex conjugate of integrated negative entropy $h_{\nu}$, understood as a mapping $h^*_{\nu}: B_b(S\times A)\rightarrow \overline{\bbR}$, is given by
\begin{align}
\label{eq:KL_fenchel-legendre}
h^*_{\nu}(Z)= \max_{\pi\in \clP(A|S)}\left[\langle Z, \pi\rangle_{\nu}  - h_{\nu}(\pi) \right]=\int_{S}\ln \left(\int_A e^{Z(s,a)} \mu(da)\right)\nu(ds) \,.
\end{align}
By Proposition \ref{prop:derivative_integral_logexp},  for every $Z,Z'\in B_b(S\times A)$, 
\[
\lim_{\varepsilon\searrow 0}\frac1\varepsilon(h^\ast_\nu(Z+\varepsilon Z') - h^\ast_\nu(Z)) = [\mathfrak{d} h^\ast_\nu(Z)]Z' = \left\langle Z', \frac{\delta h^\ast_\nu}{\delta Z}\big|_{\nu}(Z)  \right\rangle_\nu\,,
\]
where  the first variation $\frac{\delta h^*_{\nu}}{\delta Z}\big|_{\nu}: B_b(S\times A)\rightarrow \Pi_{\mu}\subset  b\clM(A|S)$ is given by
\begin{equation}
\label{eq:delta_Phi_delta_Z}
\frac{\delta h^\ast_\nu}{\delta Z}\bigg|_{\nu}(Z)(da|s) = \frac{\delta h^*_{\nu}}{\delta Z}(Z)(da|s)=\boldsymbol{\pi}(Z)(da|s)\,.
\end{equation}
The relation~\eqref{eq:delta_Phi_delta_Z} is expected since by Lemma 1.4.3.\ in \cite{dupuis1997weak},
\begin{align*}
\boldsymbol{\pi}(Z) 
\in \underset{m \in \mathcal P(A|S)}{\operatorname{arg\,max}}\left[\langle Z, m\rangle_{\nu}  - h_{\nu}(m) \right]\,,
\end{align*}
and the `gradient' of the convex conjugate should be equal to the $\operatorname{arg \,max}$ in the Legendre--Fenchel transformation~\eqref{eq:KL_fenchel-legendre}.

It follows from Proposition \ref{prop:differentiability_Pi} that the Hessian $\frac{\delta^2 h^*_{\nu}}{\delta Z^2}\big|_{\nu}: B_b(S\times A)\rightarrow \clL(B_b(S\times A), b\clM(A|S))$ relative to the pairing $\langle \cdot, \cdot\rangle_{\nu}$ is given by
\begin{equation}\label{eq:hessian}
\frac{\delta^2 h^*_{\nu}}{\delta Z^2}\bigg|_\nu(Z)(f)(da|s) = \frac{\delta^2 h^*_{\nu}}{\delta Z^2}(Z)(f)(da|s) = f_{\boldsymbol{\pi}(Z)}(s,a)\boldsymbol{\pi}(Z)(da|s)\,,
\end{equation}
where 
\[
f_{\boldsymbol{\pi}(Z)}(s,a) = f(s,a) - \int_{A}f(
s, a')\boldsymbol{\pi}(Z)(da'|s)\,.
\]
In particular, for all $f,g\in B_b(S\times A)$,
\begin{align*}
[\mathfrak d [\mathfrak d h_{\nu}^*(Z)](f)](g) &= \left\langle g,\frac{\delta^2 h^*_{\nu}}{\delta Z^2}\bigg|_\nu(Z)(f)\right\rangle_{\nu} = \frac{1}{1-\gamma}\int_{S}\int_{A}g(s,a)f_{\boldsymbol{\pi}(Z)}(s,a)\boldsymbol{\pi}(Z)(da|s)\nu(ds)\\
&=\frac{1}{1-\gamma}\int_{S}\int_{A}g_{\boldsymbol{\pi}(Z)}(s,a)f_{\boldsymbol{\pi}(Z)}(s,a)\boldsymbol{\pi}(Z)(da|s)\nu(ds)\,,
\end{align*}
where  the last line   used   
\[
\int_{A}\int_{A}g(s,a')\boldsymbol{\pi}(Z)(da'|s)f_{\boldsymbol{\pi}(Z)}(s,a)\boldsymbol{\pi}(Z)(da|s)=0\,,\quad   \forall s\in S\,.
\]
Two crucial properties of the Hessian are that i) for any $v \in B_b(S)$ and $f\in B_b(S\times A)$,  
\begin{equation}\label{eq:hessian_diagonal}
\frac{\delta^2 h^*_{\nu}}{\delta Z^2}(Z)(vf )(da|s) = v(s)\frac{\delta^2 h^*_{\nu}}{\delta Z^2}(Z)(f )(da|s)
\end{equation}
and ii) for all $\nu,\nu'\in \clP(S)$,
\begin{equation}\label{eq:hessian_change_of_measure}
\frac{\delta^2 h^*_{\nu}}{\delta Z^2}\bigg|_{\nu}=\frac{\delta^2 h^*_{\nu'}}{\delta Z^2}\bigg|_{\nu'}\,.
\end{equation}
Proposition~\ref{prop:differentiablity_log_density} implies that the first variation of the map $\ln \frac{d\boldsymbol{\pi}}{d\mu}:B_b(S\times A)\rightarrow B_b(S\times A)$, understood as a mapping  $\frac{\delta \ln \frac{d\boldsymbol{\pi}}{d\mu}}{\delta Z}:B_b(S\times A)\rightarrow \clL(B_b(S\times A), B_b(S\times A))$, is given by
\[
\frac{\delta \ln \frac{d\boldsymbol{\pi}}{d\mu}}{\delta Z}(Z)(g)(s,a)= g_{\boldsymbol{\pi}(Z)}(s,a)\,.
\]
Thus,  for all $f,g\in B_b(S\times A)$,
\[
[\mathfrak d [\mathfrak d h_{\nu}^*(Z)](f)](g) =\frac{1}{1-\gamma}\int_{S}\int_{A}\frac{\delta \ln \boldsymbol{\pi}}{\delta Z}(Z)(g)(s,a)\frac{\delta \ln \boldsymbol{\pi}}{\delta Z}(Z)(f)(s,a)\boldsymbol{\pi}(Z)(da|s)\nu(ds)\,,
\]
which can be interpreted as a Fisher--Rao  metric on the dual space $B_b(S\times A)$. It is the Fisher information matrix originally derived in \cite{kakade2001natural} that corresponds to the entire policy class $\{\boldsymbol{\pi}(Z)\mid Z\in B_b(S\times A)\}$.

\subsection{Mirror descent flow as natural gradient flow}
Propositions~\ref{prop:differentiability_Pi} and~\ref{prop:differentiability_dV_df} can be used to argue that the first variation of $V^{\boldsymbol{\pi}(\cdot)}_{\tau}(\rho): B_b(S\times A)\rightarrow \bbR$, relative to duality pairing $\langle \cdot, \cdot\rangle_{\nu}$, understood as a mapping $\frac{\delta V_{\tau}^{\boldsymbol{\pi}(\cdot)(
\rho)}}{\delta Z}\big|_{\nu}: B_b(S\times A)\rightarrow  b\clM(A|S)$, is given by
\begin{equation}
\label{eq:delta_V_delta_Z_intro}
\frac{\delta V^{\boldsymbol \pi(Z)}_\tau(\rho)}{\delta Z}\bigg|_{\nu}(da|s) = \frac{\delta V^{\boldsymbol{\pi}(Z)}_{\tau}(\rho)}{\delta \pi}\bigg|_{\nu}(s,a)\boldsymbol{\pi}(Z)(da|s)= \frac{\delta^2 h^*_{\nu}}{\delta Z^2}(Z)\left(\frac{\delta V^{\boldsymbol{\pi}(Z)}_{\tau}(\rho)}{\delta \pi}\bigg|_{\nu}\right)(da|s)\,,
\end{equation}
where we have used  $\left\langle \frac{\delta V^{\pi}_{\tau}(\rho)}{\delta \pi}\big|_{\nu}, \pi  \right \rangle_\nu = 0$ in the first and second equality and~\eqref{eq:hessian} in the second equality. Next,  we observe that
\begin{align*}
\partial_t Z_t 
&= -\frac{\delta V^{\boldsymbol \pi(Z_t)}_\tau(\rho)}{\delta \pi}\bigg|_{d^{\boldsymbol{\pi}(Z_t)}_{\rho}}
=-\frac{\mathrm d\nu}{\mathrm d d^{\boldsymbol \pi(Z_t)}_{\rho}}\frac{\delta V^{\boldsymbol \pi(Z_t)}_\tau(\rho)}{\delta \pi}\bigg|_{\nu}
=\frac{\mathrm d\nu}{\mathrm d d^{\boldsymbol \pi(Z_t)}_{\rho}}\left(\frac{\delta^2 h^*_{\nu}}{\delta Z^2}(Z_t)\right)^{-1}\left(\frac{\delta V^{\boldsymbol \pi(Z_t)}_\tau(\rho)}{\delta Z}\bigg|_{\nu}\right)\,,
\end{align*}
where the first equality uses the dynamics~\eqref{eq:mirror_descent_intro}, the second is obtained by changing the duality pairing of the flat derivative, and the third follows from applying the inverse of the Hessian to~\eqref{eq:delta_V_delta_Z_intro}. 
By \eqref{eq:hessian_diagonal}, we have
\begin{align}
\label{eq:Z_flow_with_nu_on_Hessian_pi_on_der}
\partial_t Z_t  & =-\left(\frac{\delta^2 h^*_{\nu}}{\delta Z^2}(Z_t)\right)^{-1}\left(\frac{\delta V^{\boldsymbol \pi(Z_t)}_\tau(\rho)}{\delta Z}\bigg|_{d^{\boldsymbol{\pi}(Z_t)}_{\rho}}\right)\,,
\end{align}
which by \eqref{eq:hessian_change_of_measure} implies  
\begin{equation}\label{eq:natural_gradient_mirror_Z}
\partial_t Z_t 
=-\left(\frac{\delta^2 h^*_{d_{\rho}^{\boldsymbol{\pi}(Z_t)}}}{\delta Z^2}(Z_t)\right)^{-1}\left(\frac{\delta V^{\boldsymbol \pi(Z_t)}_\tau(\rho)}{\delta Z}\bigg|_{d^{\boldsymbol{\pi}(Z_t)}_{\rho}}\right)\,.
\end{equation}
Therefore, ~\eqref{eq:mirror_descent_intro} is a natural gradient flow on the dual space $B_b(S\times A)$ in the sense that the derivative of the objective function in the dual variable is pre-conditioned by the inverse of the Hessian of the conjugate of negative entropy, which is itself induces the Fisher--Rao metric on the dual $B_b(S\times A)$.
In particular, we see that ~\eqref{eq:mirror_descent_intro} is an infinite-dimensional version of the natural policy gradient update originally proposed in~\cite{kakade2001natural}.

\subsection{Fisher--Rao flow as   natural gradient flow}
Formally, by convex duality, we expect the inverse of the Hessian of negative entropy $h_{\nu}$ to be equal to the Hessian of its conjugate; that is,
$\left(\frac{\delta^2 h_\nu}{\delta \pi^2}\right)^{-1} \circ \boldsymbol{\pi}: B_b(S\times A)\rightarrow \clL(B_b(S\times A); b\clM(A|S))$ is given by
\begin{equation}\label{eq:hessian_neg_ent}
\left(\frac{\delta^2 h_\nu}{\delta \pi^2}(\boldsymbol{\pi}(Z))\right)^{-1}(f)=f_{\boldsymbol{\pi}(Z)}(s,a)\boldsymbol{\pi}(Z)(da|s)\,.
\end{equation}
To obtain the primal flow, we formally compute the time derivative of the mirror map $\boldsymbol{\pi}$ applied to the mirror flow \eqref{eq:natural_gradient_mirror_Z}:
\begin{equation*}
\begin{split}
\partial_t \boldsymbol{\pi}(Z_t) & =\partial_t\left[\frac{\delta h^\ast_\nu}{\delta Z}(Z_t)\right]= \frac{\delta^2 h^\ast_\nu}{\delta Z^2}(Z_t) \partial_t Z_t=\frac{\delta V^{\boldsymbol \pi(Z_t)}_\tau(\rho)}{\delta Z}\bigg|_{d^{\boldsymbol{\pi}(Z_t)}_{\rho}}\\
&=\left(\frac{\delta^2 h^*_{d^{\boldsymbol{\pi}(Z_t)}_{\rho}}}{\delta Z^2}(Z_t) \right)\frac{\delta V^{\boldsymbol{\pi}(Z_t)}_{\tau}}{\delta \pi}\bigg|_{d^{\boldsymbol{\pi}(Z_t)}_{\rho}}  =  -\left(\frac{\delta^2 h_{d^{\boldsymbol{\pi}(Z_t)}_{\rho}}}{\delta \pi^2}(\boldsymbol{\pi}(Z_t))\right)^{-1}\left(\frac{\delta V^{\boldsymbol{\pi}(Z_t)}_{\tau}}{\delta \pi}\bigg|_{d^{\boldsymbol{\pi}(Z_t)}_{\rho}}\right)\,,
\end{split}
\end{equation*}
where   the first equality used \eqref{eq:delta_Phi_delta_Z},  the second equality   used the chain rule,   the third equality used~\eqref{eq:Z_flow_with_nu_on_Hessian_pi_on_der},   the fourth equality  used~\eqref{eq:delta_V_delta_Z_intro}, and   the fifth equality used \eqref{eq:hessian_neg_ent}. In particular, $\pi_t=\boldsymbol{\pi}(Z_t)$ satisfies
\begin{equation}\label{eq:natural_gradient_pi_flow}
\partial_t \pi_t
= -\left(\frac{\delta^2 h_{d^{\pi_t}_{\rho}}}{\delta \pi^2}(\pi_t)\right)^{-1}\left(\frac{\delta V^{\pi_t}_{\tau}}{\delta \pi}\bigg|_{d^{\pi_t}_{\rho}}\right)\,,
\end{equation}
which by  $\left\langle \frac{\delta V^{\pi}_{\tau}(\rho)}{\delta \pi}\big|_{\nu}, \pi  \right \rangle_\nu = 0$ and \eqref{eq:hessian_neg_ent} is~\eqref{eq:gradient_flow_introduction}. 
Therefore,  ~\eqref{eq:gradient_flow_introduction} is a state-dependent Fisher--Rao gradient flow in the primal space $\Pi_{\mu}$.

\section{Numerical experiments}
\label{sec:numerical}

This section  examines   the performance of various time discretizations of the mirror flow \eqref{eq:mirror_descent} through a simple 
Gridworld environment  (see e.g.~\cite[Example 3.5]{sutton2018reinforcement}).\footnote{
Code available at \href{https://github.com/deterministicdavid/mirror_descent_for_gworld_mdp}{\texttt{https://github.com/deterministicdavid/mirror\_descent\_for\_gworld\_mdp}}.
} 
The results demonstrate
that various Runge-Kutta (RK) discretizations of \eqref{eq:mirror_descent} achieve exponential convergence, and that suitable higher-order discretizations can yield smaller errors compared to the Euler discretization
(equivalently the
mirror descent analyzed in Section~\ref{sec:unregularised})
when using relatively small steps.

More precisely, consider the environment with the action space $A$ containing four actions (up, down, left, and right)  and
the state space 
$S$ being an $n\times n$ grid (we take $n=11$) consisting of  several ``trap'' states which incur high cost, one reward state with negative cost and the remaining states leading to small cost. 
The goal is to navigate from a given starting position to the reward state while avoiding trap states along the path.
At each state, the agent selects an action corresponding to the desired moving direction. The agent then moves in the chosen direction with probability $p \in (0,1)$ (we take $p=0.7$) or moves to one of the orthogonal directions with equal probability.
We set the discount factor $\gamma=0.8$,
the reference measure $\mu$ to be the uniform distribution,
and 
the entropy regularization parameter 
$\tau =0.1$.

For clarity, we assume all coefficients are known and focus on analyzing the impact of different time discretizations of the flow~\eqref{eq:mirror_descent}, excluding the effects of gradient estimation errors.
To quantify algorithm convergence, a reference optimal value function $V^*_\tau$ is computed  by solving the corresponding Bellman equation using the policy iteration algorithm (see, e.g., \cite{bertsekas2004stochastic}).

To derive the discrete-time policy updates, 
we write the mirror flow \eqref{eq:mirror_descent} 
as the ODE
$\partial_t Z_t =f(Z_t)$, 
where $f:B_b(S\times A)\to B_b(S\times A)$ is defined by
\[
f(Z)= -Q^{\boldsymbol{\pi}(Z)}_\tau -\tau Z+ V^{\boldsymbol{\pi}(Z)}_\tau,
\]
with $\boldsymbol{\pi}$ defined in \eqref{eq:pi_f_mu}.
Given an initial guess $Z^0\in B_b(S\times A)$, consider the following explicit discretizations of the flow  \eqref{eq:mirror_descent}
with a time step size $h>0$: 
\begin{enumerate}
\item The 1st-order RK  method (known as the Euler method) given by 
\[
\label{eq:mirro_euler}
Z^{n+1}  = Z^n + h f(Z^n), \quad n\in \bbN_0\,,
\]
which
corresponds to the policy mirror descent update 
\eqref{eq:mirror_descent_euler}
with $\lambda =1/h$.

\item The 2nd-order  RK method (known as the midpoint method) given by  
\[
\label{eq:midpoint}
Z^{n+1}  = Z^n + h f\left(Z^n + \tfrac12 h f(Z^n)\right),\quad n\in \bbN_0\,.
\]
\item The  4th-order RK method:
\[
\begin{split}
\label{eq:RK4}
Z^{n+1} & = Z^n + \tfrac16 (k_1 + 2 k_2 + 2 k_3 + k_4)\,, 
\quad 
n\in \bbN_0\,,
\quad \text{where}\\
k_1 & = h f(Z^n)\,,\,\,\, 
k_2 = h f(Z^n + \tfrac12 k_1)\,,\,\,\, 
k_3 = h f(Z^n + \tfrac12 k_2)\,,\,\,\,
k_4 = h f(Z^n + k_3)\,.	
\end{split}
\]
\end{enumerate}
In the sequel, we take the initial guess $Z^0 = 0$ and evaluate the convergence of the error term  
$\sup_{s \in S} \left| V^{\pi^n_h}_\tau(s) - V^*_\tau(s) \right|$,
where $\pi^n_h \coloneqq \boldsymbol{\pi}(Z^N_h)$,   $Z^N_h$ is obtained using one of the aforementioned discretizations with a  time step size $h>0$ and iteration number $N\in \bbN$,
and $V^*_\tau$ is the refernce optimal solution.

Figure~\ref{fig:plots1} illustrates the convergence of the aforementioned 
RK  discretizations with various choices of time step sizes and iteration numbers, where all three methods use the same step size $h$.
Figure~\ref{subfig:plots1_left}
shows that    all three methods (with a fixed time stepsize) converge exponentially to the optimal value function as the iteration number tends to infinity. 
This exponential convergence aligns with the convergence result of the continuous-time flow presented in Theorem~\ref{thm:linear_convergence}. 
The empirical performance of the mirror descent update exceeds the theoretical convergence result presented in Theorem~\ref{thm:convergence_mirror_steps}, which only guarantees convergence when averaging over a suitable initial distribution, rather than uniform convergence across all initial states.
Notably, schemes based on higher-order discretizations achieve greater accuracy than the mirror descent obtained through the Euler discretization, although the 4th-order method performs similarly to the 2nd-order method.
A similar conclusion holds when the running time horizon $T$ is fixed, and the algorithms are executed with $h = T/N$ for increasing $N \in \mathbb{N}$, as illustrated in Figure~\ref{subfig:plots1_right}.

\begin{figure}
\centering
\begin{subfigure}[t]{0.48\textwidth}
\centering
\includegraphics[width=\textwidth]{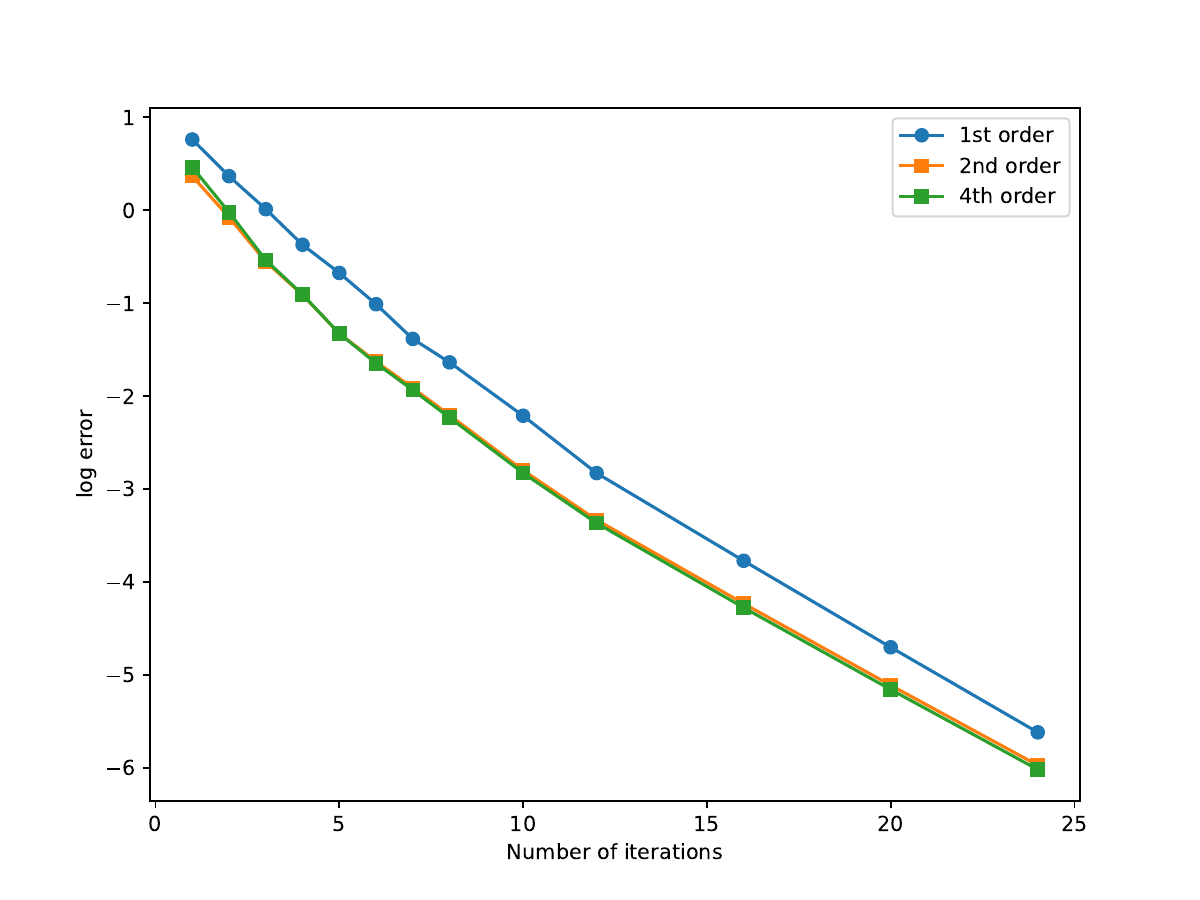} 
\caption{
Increasing the number of iterations $N$ while fixing the time step size $h = 1$, resulting in an increasing time horizon $T = hN$.}
\label{subfig:plots1_left}
\end{subfigure}
\hfill
\begin{subfigure}[t]{0.48\textwidth}
\centering
\includegraphics[width=\textwidth]{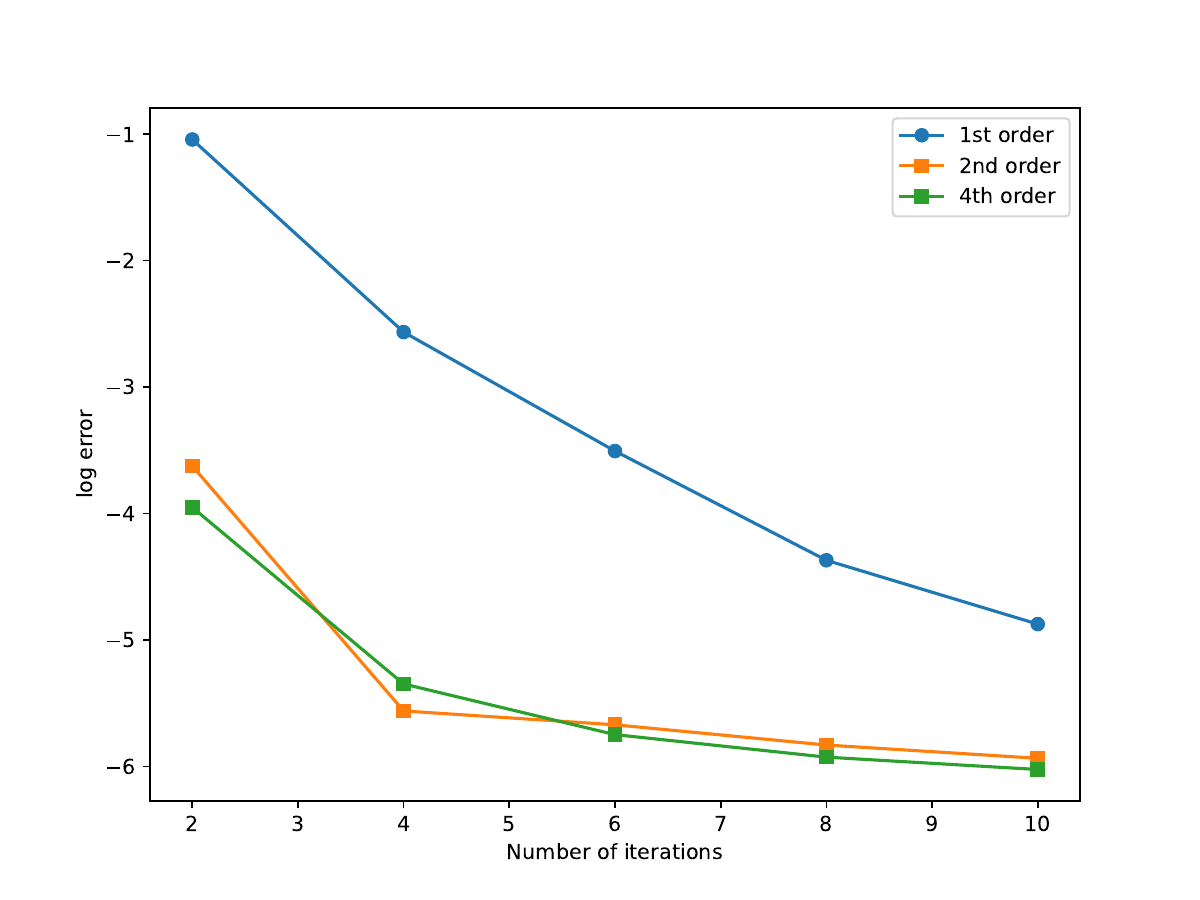}
\caption{
Increasing the number of iterations $N$ while fixing the  time horizon $T = 25$, resulting in a decreasing time step size $h = T/N$.
}
\label{subfig:plots1_right}
\end{subfigure}

\caption{Convergence of various Runge–Kutta discretizations of the mirror flow, with all methods using the same time step size $h$.}
\label{fig:plots1}
\end{figure}

We further compare the performance of the above discretization methods by fixing the computational cost for each policy update, represented by the number of required policy evaluations.
Observe that for each policy update, the 2nd-order and 4th-order RK methods require twice and four times as many policy evaluations as the 1st-order method (i.e.,   mirror descent), respectively.
To equalize the computational costs across all methods, we run the 2nd-order method with twice the step size used in the 1st-order  method and the 4th-order method with four times the step size of the 1st-order  method. The results are presented in Figure \ref{fig:plots2}.

The results show that all three methods still converge exponentially as  the number of iterations tends to infinity, but the performance differences among them are   reduced (Figure~\ref{subfig:plots2_left}).
Moreover, the 2nd-order method achieves greater accuracy than both the 1st-order and 4th-order methods for the same computational cost, as illustrated in Figure~\ref{subfig:plots2_right}.
This suggests that in some cases, especially when taking small step sizes,  higher-order discretization of the continuous-time flow can produce more efficient algorithms than the standard mirror descent update.
Further analysis would be needed to determine whether this insight can be utilised when the updates are estimated from samples which is a situation where small updates are natural. 
We leave this for future work.

\begin{figure}
\centering
\begin{subfigure}[t]{0.48\textwidth}
\centering
\includegraphics[width=\textwidth]{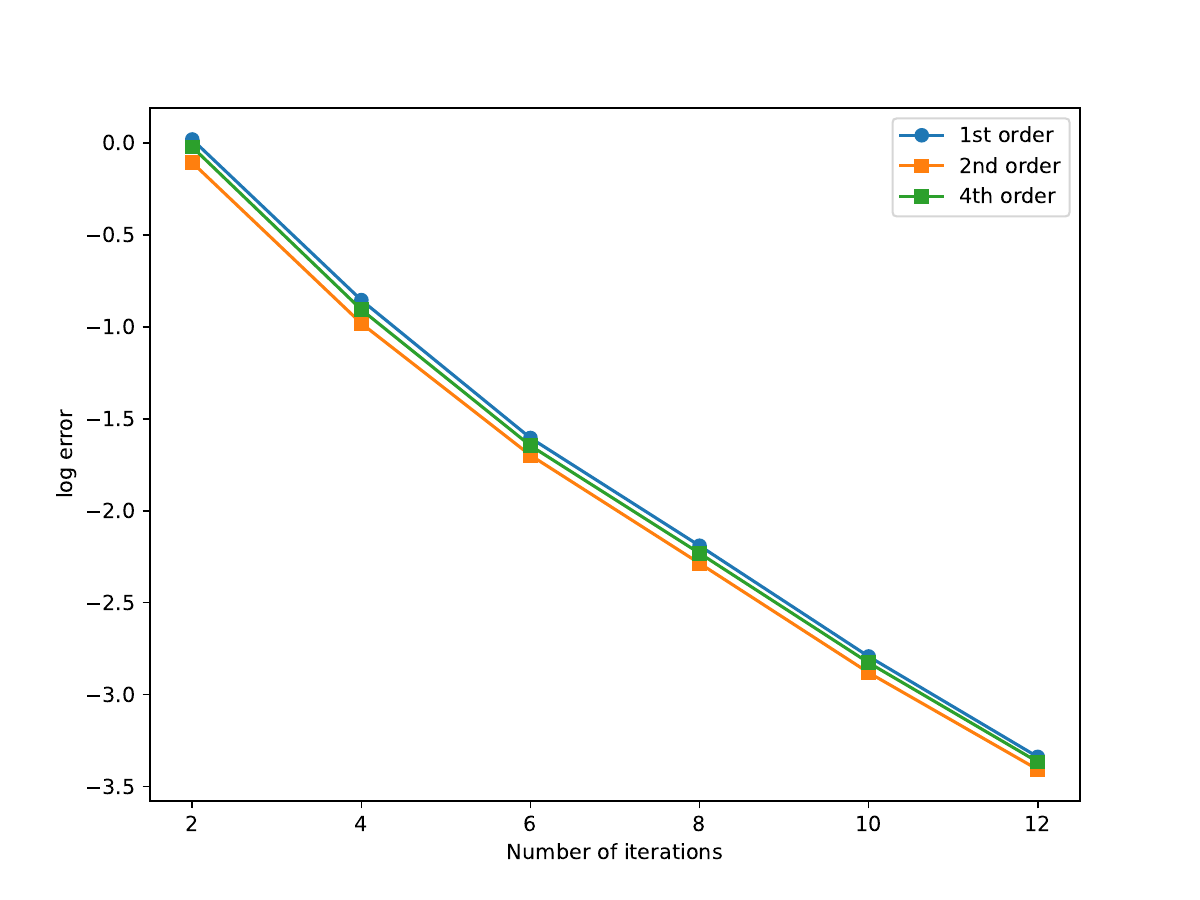}
\caption{
Increasing the time horizon while fixing the time step sizes proportional to the number of policy evaluations required, ensuring
equal total policy
evaluations across all methods. The 1st-order, 2nd-order, and 4th-order methods use step sizes $h$ of $0.25$, $0.5$, and $1$, respectively. The x-axis represents the number of iterations for the 4th-order method. }
\label{subfig:plots2_left}
\end{subfigure}
\hfill
\begin{subfigure}[t]{0.48\textwidth}
\centering
\includegraphics[width=\textwidth]{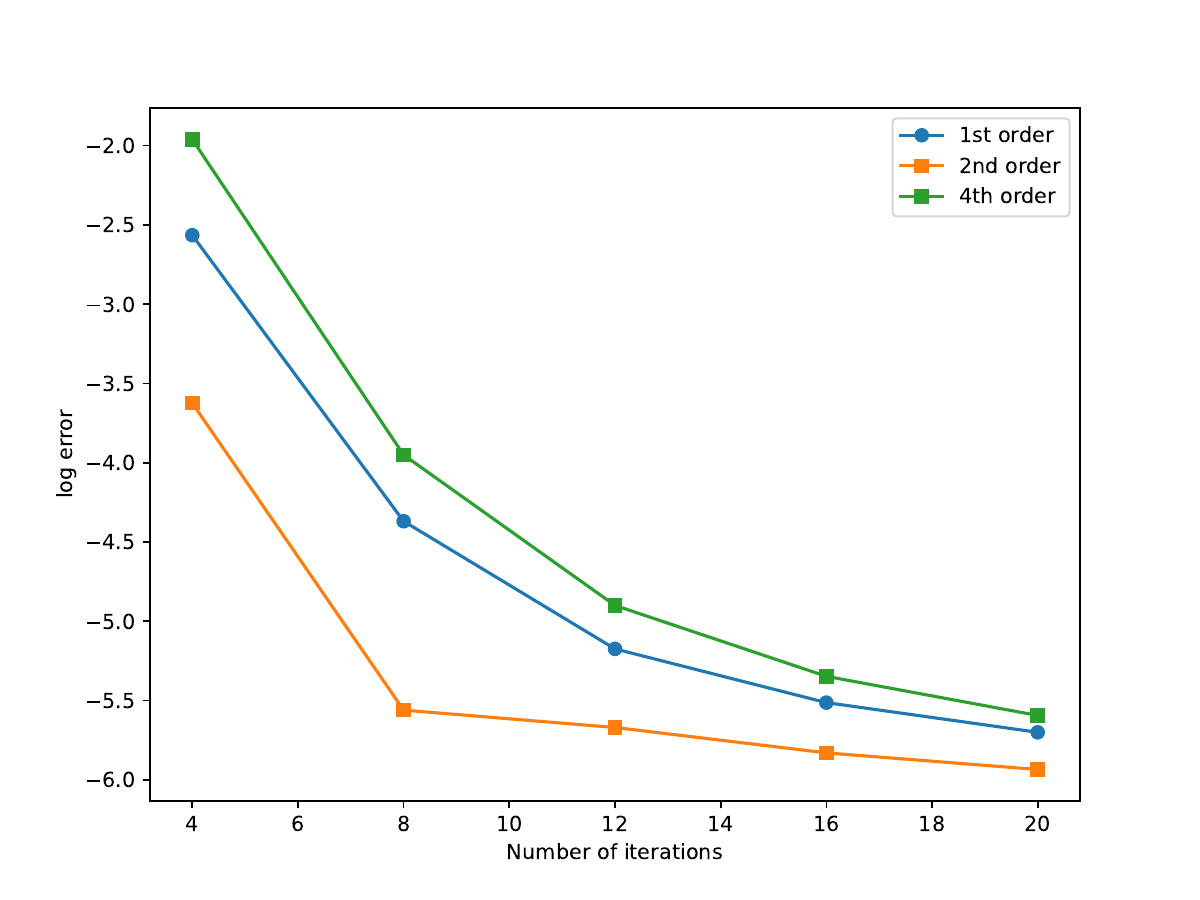}
\caption{
Increasing the number of iterations by reducing the time step size while fixing the time horizon    $T = 25$.
The x-axis represents the number of iterations for the 1st-order method, while the 2nd-order and 4th-order methods use half and one-fourth as many iterations, respectively, ensuring equal total policy evaluations across all methods. }
\label{subfig:plots2_right}
\end{subfigure}
\caption{
Convergence of various Runge–Kutta discretizations of the mirror flow, with all methods sharing the same budget for policy evaluations.}
\label{fig:plots2}
\end{figure}

\bibliographystyle{siam}
\bibliography{bibliography}
\end{document}